\newtheorem  {theorem}       {Theorem}[section]
\newtheorem  {lemma}         [theorem]{Lemma}
\newtheorem  {corollary}     [theorem]{Corollary}
\newtheorem  {proposition}   [theorem]{Proposition}
\newtheorem  {definition}    [theorem]{Definition}
\newtheorem  {remark}        [theorem]{Remark}
\newtheorem  {claim}        [theorem]{Claim}
\newtheorem  {question}        [theorem]{Question}
\newcommand{\Z}{\mathbb{Z}}
\newcommand{\sstar}{* \hspace{-.05cm} *}
\newcommand{\ddiamond}{\diamond \hspace{-.05cm} \diamond}
\newlength{\dhatheight}
\newcommand{\doublehat}[1]{%
    \settoheight{\dhatheight}{\ensuremath{\hat{#1}}}%
    \addtolength{\dhatheight}{-0.35ex}%
    \hat{\vphantom{\rule{1pt}{\dhatheight}}%
    \smash{\hat{#1}}}}
\begin{document}

\title{On the  threshold of spread-out \\ contact process percolation}

\author{Bal\'azs R\'ath\textsuperscript{1}, Daniel Valesin\textsuperscript{2}}
\footnotetext[1]{MTA-BME Stochastics Research Group, Budapest University of Technology and Economics, Egry J\'ozsef u. 1, 1111 Budapest, Hungary.  rathb@math.bme.hu}
\footnotetext[2]{\noindent University of Groningen, Nijenborgh 9, 9747 AG Groningen, The Netherlands. \\ d.rodrigues.valesin@rug.nl}
\date{\today}
\maketitle

\begin{abstract}
\noindent We study the stationary distribution of the (spread-out) $d$-dimensional contact process from the point of view of site percolation. In this process, vertices of~$\mathbb{Z}^d$ can be healthy (state 0) or infected (state 1). With rate one infected sites recover, and with rate~$\lambda$ they transmit the infection to some other vertex chosen uniformly within a ball of radius~$R$. The classical phase transition result for this process states that there is a critical value~$\lambda_c(R)$ such that the process has a non-trivial stationary distribution if and only if~$\lambda > \lambda_c(R)$. In configurations sampled from this stationary distribution, we study nearest-neighbor site percolation of the set of infected sites; the associated percolation threshold is denoted~$\lambda_p(R)$. We prove that~$\lambda_p(R)$ converges to~$1/(1-p_c)$ as~$R$ tends to infinity, where~$p_c$ is the threshold for Bernoulli site percolation on~$\mathbb{Z}^d$. As a consequence, we prove that~$\lambda_p(R) > \lambda_c(R)$ for large enough~$R$, answering an open question of \cite{LS06} in the spread-out case.
\end{abstract}

\noindent \textsc{Keywords:} interacting particle systems, contact process, percolation\\
\textsc{AMS MSC 2010:} 60K35; 82C22; 82B43.

\newpage

\tableofcontents

\newpage

\section{Introduction}

\subsection{Nearest-neighbour contact process}\label{subsect_intro_near_neigh_cont_proc}
The (nearest-neighbor) \textit{contact process} on~$\mathbb{Z}^d$ with infection rate~$\lambda > 0$ is the continuous-time Markov process~$(\xi_t)_{t\geq 0}$ with state space~$\{0,1\}^{\mathbb{Z}^d}$ and infinitesimal pregenerator given by
\begin{equation}\label{eq:generator_nn}
(\mathcal{L}f)(\xi) = \sum_{\substack{x \in \mathbb{Z}^d:\\\xi(x) = 1}} \left( f(\xi^{0 \to x}) - f(\xi) + \frac{\lambda}{  2d}  \sum_{y
	\sim x} (f(\xi^{1 \to y}) - f(\xi))\right),
\end{equation}
where~$\xi \in \{0,1\}^{\mathbb{Z}^d}$,~$f: \{0,1\}^{\mathbb{Z}^d} \to \mathbb{R}$ is a local function,~$y \sim x$ indicates that the~$\ell^1$-norm of~$y-x$ is one,  and~$\xi^{i \to z}$ is the configuration obtained by changing~$\xi$ so that the state of vertex~$z$ is set to~$i$ and the states of other vertices are left unchanged. 

\smallskip

The common interpretation of the dynamics is that sites of~$\mathbb{Z}^d$ are individuals, which can be healthy (state~0) or infected (state~1). Infected individuals recover with rate one, and with rate~$\lambda$ they transmit the  infection; the target of the transmission is chosen uniformly among the~$2d$ neighbors. This process was introduced in~\cite{Har74} and is treated in the expository text~\cite{Li99}. Here we list some definitions and statements that will be important in order to explain our results.

\smallskip

We denote by~$\underline{0}$ and~$\underline{1}$ the identically-zero and identically-one element of~$\{0,1\}^{\mathbb{Z}^d}$, respectively. Note that~$\underline{0}$ is an absorbing state for the contact process. The \textit{survival probability} of the infection is
$$\rho(\lambda):= \mathbb{P}_\lambda( \, \xi^{\{0\}}_t \neq \underline{0} \text{ for all } t \, ),$$
where~$\mathbb{P}_\lambda$ is a probability measure under which the contact process on~$\mathbb{Z}^d$  with infection rate~$\lambda$  is defined and~$(\xi^{\{0\}}_t)_{t \geq 0}$ is the contact process started with a single infected site at the origin.
In our notation here and in what follows, we omit the dependence on the dimension~$d$. Having in mind the simple observation that~$\lambda \mapsto \rho(\lambda)$ is non-decreasing, one then defines the critical infection rate as
$$\lambda_c := \sup\{ \, \lambda > 0: \rho(\lambda) = 0 \, \}.$$
The contact process exhibits a phase transition, manifested by the fact that~$\lambda_c \in (0,\infty)$, see Corollary~4.4, page~308 in \cite{Li85}. It is also known that
\begin{equation}\label{eq:bez_grimm}
\rho(\lambda_c) = 0\quad \text{and}\quad \lim_{\lambda \searrow \lambda_c} \rho(\lambda) = 0.
\end{equation}
The first equality is the celebrated result by Bezuidenhout and Grimmett~\cite{BG90}. The second equality is a consequence of the first and the fact (whose proof preceded~\cite{BG90}) that~$\lambda \mapsto \rho(\lambda)$ is right continuous on~$[0,\infty)$ (see the proof of Theorem~1.6 in~\cite{Li85}; the assumption there is~$d=1$, but the proof is the same for any dimension).

\smallskip

Finally, the \textit{upper invariant measure} of the contact process, denoted~$\mu_\lambda$, is defined  as
$$\mu_\lambda:= \lim_{\substack{t\to \infty\\ \text{(d)}}} \xi^{\underline{1}}_t,$$
where~$(\xi^{\underline{1}}_t)_{t\geq 0}$ is the contact process started from the identically-one configuration, and the limit in distribution can be shown to exist. This distribution is invariant under the contact process dynamics, and also invariant and ergodic with respect to translations of~$\Z^d$. Moreover,
\begin{equation}\label{eq:density_nn}
\mu_\lambda(\, \xi(0)=1 \, ) = \rho(\lambda).
\end{equation}
 In particular,~$\mu_{\lambda}$ is the Dirac measure concentrated on the identically-zero configuration when~$\lambda \le \lambda_c$, and is a non-trivial measure supported on configurations with infinitely many 1's when~$\lambda > \lambda_c$.

\subsection{Spread-out contact process}\label{subsect_spread_out_cont_proc_def}
The main process of interest in this work is the \textit{spread-out contact process}, studied in~\cite{bds89}. Apart from the infection rate~$\lambda$, this process has as an additional parameter, the range~$R \in \mathbb{N}$; its infinitesimal pregenerator is
\begin{equation}\label{eq:generator}
(\mathcal{L}f)(\xi) = \sum_{\substack{x \in \Z^d:\\\xi(x) = 1}} \left( f(\xi^{0 \to x}) - f(\xi) + \frac{\lambda}{  |B(R)|} \sum_{\substack{y \in B(x,R)}} (f(\xi^{1 \to y}) - f(\xi))\right),
\end{equation}
where we repeat the notation of~\eqref{eq:generator_nn},~$B(x,R)$ is the~$\ell^\infty$-ball with center~$x$ and radius~$R$ in~$\Z^d$, and~$|B(R)|$ is the cardinality of the~$\ell^\infty$-ball with center~0 and radius~$R$ in~$\Z^d$, see~\eqref{ball_def} below. Hence, in this modified version of the contact process, infected vertices  again recover with rate one and transmit the infection with rate~$\lambda$; however, the target of the transmission is chosen uniformly at random in the translate of~$B(R)$ centered at the position of the vertex that transmits the infection.


\smallskip

Making the dependence on~$R$ explicit, we again define the survival probability
\begin{equation*}
\rho(\lambda,R):=\mathbb{P}_{\lambda,R}( \, \xi^{\{0\}}_t \neq \underline{0} \text{ for all }t \, ),
\end{equation*}
where~$\mathbb{P}_{\lambda,R}$ is a probability measure under which the spread-out contact process on~$\Z^d$ with infection rate~$\lambda$ and range~$R$ is defined. The critical infection rate is then
$$\lambda_c(R):= \sup\{ \, \lambda: \rho(\lambda, R) = 0 \, \},$$
we again have~$\lambda_c(R) \in (0,\infty)$, and we believe that
$$\rho(\lambda_c(R),R) = 0\quad \text{and}\quad \lim_{\lambda \searrow \lambda_c(R)}\rho(\lambda,R)=0$$
holds, but we will neither prove nor use this spread-out analogue of \eqref{eq:bez_grimm}.

\smallskip

We obtain the upper invariant measure, now denoted~$\mu_{\lambda,R}$, in the same way as before, and have
\begin{equation}\label{eq:density}
\mu_{\lambda,R}( \, \xi(0)=1   \, ) = \rho(\lambda,R),
\end{equation}
see Claim \ref{claim_graphical_upper_invariant} below.
 The main results of~\cite{bds89} imply that as~$\lambda$ is kept fixed and~$R$ is taken to infinity, the spread-out contact process started from a single infected site becomes in some respects similar to a continuous-time branching process in which individuals die with rate one and give birth with rate~$\lambda$.
This similarity is captured by the convergences (see Theorems~1 and~2 in~\cite{bds89}):
\begin{equation}
\label{eq:first_asymp}
\lim_{R \to \infty} \lambda_c(R) = 1
\end{equation}
and
\begin{equation}
\label{eq:sec_asymp}
\lim_{R \to \infty} \rho(\lambda,R) = \rho(\lambda,\infty) := 1 - \frac{1}{\lambda} \text{ for }\lambda > 1.
\end{equation}
 Indeed, for the aforementioned branching process, the critical value of the birth rate~$\lambda$ is one, and, in case~$\lambda > 1$, then the probability of survival of a population started from one individual is equal to~$\rho(\lambda,\infty)$. Let us  note that \cite[Theorem 1]{bds89} also identifies the rate of convergence in \eqref{eq:first_asymp} (which depends on $d$), but our current interest lies elsewhere.

\subsection{Percolation under the upper invariant measure}
In this paper, we will investigate \textit{site percolation} on~$\Z^d$, with~$d \ge 2$, where the underlying configurations  are sampled from the upper stationary distribution of the (spread-out) contact process.

\smallskip

Define $\text{Perc} \subset \{0,1\}^{\Z^d}$ as the set of configurations~$\xi \in \{0,1\}^{\Z^d}$ for which the subgraph of the nearest-neighbour lattice~$\Z^d$ induced by~$\{x: \xi(x) = 1\}$ has an infinite connected component.  See \eqref{perc_def_eq} below for a formal definition. Then let
$$
 \lambda_p := \sup\{ \, \lambda: \mu_\lambda(\text{Perc}) = 0 \, \},
 \quad
  \lambda_p(R):= \sup\{ \, \lambda: \mu_{\lambda,R}(\text{Perc}) = 0 \, \}.
 $$
It follows from the definition of~$\lambda_c$ and~\eqref{eq:density_nn} (respectively, from the definition of~$\lambda_c(R)$ and~\eqref{eq:density}) that~$\lambda_p \ge \lambda_c$ and~$\lambda_p(R) \ge \lambda_c(R)$.

It is known that
\begin{equation*}
\lambda_p < \infty \quad \text{and}\quad \lambda_p(R) < \infty \text{ for all }R.
\end{equation*}
The first inequality follows from the fact, given in~\cite[Theorem~2.1]{LS06}, that
\begin{equation}\label{liggett_steif_stoch_dom_result}
\text{ $\forall\, p \in (0,1)\; \; \exists\,  \lambda > 0 \; : \; \mu_\lambda$ stochastically dominates $\pi_p$,}
  \end{equation}
where $\pi_p$ denotes  the Bernoulli product measure $\pi_p$ on $\{0,1\}^{\mathbb{Z}^d}$ with parameter~$p$.
 Note that the stochastic domination result is given there for the one-dimensional contact process, but the result for~$\Z^d$ readily follows, since the contact process on~$\Z^d$ dominates independent one-dimensional contact processes on lines parallel to one of the coordinate axes.

 It also follows from~$\lambda_p < \infty$ that~$\lambda_p(R) < \infty$ for any~$R$. Indeed, by using the graphical construction of the process (which we review in Section~\ref{subsection_graphical_constr_of_cont_proc} below), it is possible to give a coupling~$(\xi_t,\xi'_t)_{t \ge 0}$, where~$(\xi_t)$ is a nearest-neighbor contact process (as defined from the generator~\eqref{eq:generator_nn}) with infection rate~$\lambda$ and~$(\xi'_t)$ is a spread-out contact process (as defined from the generator~\eqref{eq:generator}) with range~$R$ and infection rate~$\frac{|B(R)|}{2d}\lambda$, both processes started with the identically-one configuration, and so that~$\xi_t'(x) \ge \xi_t(x)$ for all~$t \ge 0$ and~$x \in \Z^d$. Since~$\mu_\lambda$ is the limiting distribution of~$\xi_t$ as~$t \to \infty$ and~$\mu_{\frac{|B(R)|}{2d}\lambda,R}$ is the limiting distribution of~$\xi'_t$ as~$t \to \infty$, we readily obtain that~$\mu_\lambda$ is stochastically dominated by~$\mu_{\frac{|B(R)|}{2d}\lambda,R}$. In particular, if for some~$\lambda > 0$ we have~$\mu_\lambda(\text{Perc}) > 0$, then~$\mu_{\frac{|B(R)|}{2d}\lambda,R}(\text{Perc}) > 0$ also holds.

\smallskip

We address the following question.
\begin{question}\textup{\cite[Section 6, Question 2]{LS06} \label{conj:LS}}
	For the nearest-neighbour contact process on~$\Z^d$,~$d \ge 2$, do we have~$\lambda_c  < \lambda_p$?
\end{question}
In light of~\eqref{eq:bez_grimm} and the fact that the set of infected sites under~$\mu_\lambda$ has density~$\rho(\lambda)$, it is natural to expect that we indeed have the strict inequality. For example,  \cite{rob_sharp} showed that in the two-dimensional case the cluster size distribution of infected sites under $\mu_\lambda$ has exponential decay if the value of the infection parameter $\lambda$  belongs to the (presumably non-empty) interval of parameters~$(\lambda_c,\lambda_p)$.

\smallskip

 Although we could not settle Question~\ref{conj:LS} (which, to the best of our knowledge, is currently still open), we have made progress on the corresponding statement for the spread-out contact process:

\begin{theorem}\label{thm_main} For the spread-out contact process on~$\Z^d$,~$d \ge 2$, we have
	\begin{equation}\label{limit_lambda_p_R}
	\lim_{R \to \infty} \lambda_{p}(R) = \frac{1}{1-p_c},
	\end{equation}
where~$p_c=p_c(d)$ is the critical parameter for Bernoulli site percolation on~$\Z^d$.
\end{theorem}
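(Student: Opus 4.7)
The overall idea is to exploit the heuristic that, as $R \to \infty$, the measure $\mu_{\lambda, R}$ becomes asymptotically a Bernoulli product measure with parameter $\rho(\lambda, \infty) = 1 - 1/\lambda$ on bounded regions; consequently $\lambda_p(R)$ should tend to the value of $\lambda$ for which $1 - 1/\lambda = p_c$, namely $1/(1 - p_c)$. The plan is to turn this heuristic into rigorous upper and lower bounds on $\lambda_p(R)$.

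The technical backbone is a finite-dimensional convergence statement: for every finite $\Lambda \subset \mathbb{Z}^d$ and every $\lambda > 1$,
\[ \mu_{\lambda, R}\big|_\Lambda \;\xrightarrow[R \to \infty]{}\; \pi_{1 - 1/\lambda}\big|_\Lambda. \]
I would establish this via the graphical construction. Writing $\mu_{\lambda, R}$ as the time-$0$ marginal of the process started from $\underline{1}$ at $-\infty$, the event $\{\xi(x) = 1\}$ becomes survival back into the past of the ancestral cluster of $(x,0)$. The branching-process approximation behind \eqref{eq:first_asymp}--\eqref{eq:sec_asymp} implies that each individual ancestral cluster survives with probability tending to $1 - 1/\lambda$, and in the limit $R \to \infty$ the ancestral clusters emanating from distinct vertices of $\Lambda$ decouple (two spread-out ancestral processes of branching type fail to meet with probability tending to $1$), yielding independence.

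For the upper bound $\limsup_R \lambda_p(R) \le 1/(1 - p_c)$, fix $\lambda > 1/(1 - p_c)$ and pick $p \in (p_c,\, 1 - 1/\lambda)$. By the Grimmett--Marstrand finite-size criterion for supercritical Bernoulli percolation, one can choose a monotone ``good crossing'' event in a box $[-L, L]^d$ of $\pi_p$-probability at least $1 - \varepsilon$; the finite-dimensional convergence then guarantees the same event has $\mu_{\lambda, R}$-probability at least $1 - 2\varepsilon$ for large $R$. I would next tile $\mathbb{Z}^d$ by translates of $[-L, L]^d$ separated by a gap that grows with $R$, and use the graphical construction to argue that the crossing events in distinct tiles are near-independent for large $R$ (ancestral clusters seated in different tiles seldom meet). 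The Liggett--Schonmann--Stacey comparison theorem then provides a stochastic domination of a supercritical Bernoulli site percolation on the coarse lattice, which propagates (via a standard bridging argument across the gaps, facilitated by the high density $\approx 1 - 1/\lambda > p_c$) to an infinite infected cluster in $\mathbb{Z}^d$.

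For the lower bound $\liminf_R \lambda_p(R) \ge 1/(1 - p_c)$, fix $\lambda < 1/(1 - p_c)$, so $1 - 1/\lambda < p_c$, and pick $p \in (1 - 1/\lambda,\, p_c)$. I would aim to upgrade the finite-dimensional convergence into a stochastic domination $\mu_{\lambda, R} \preceq \pi_p$ for $R$ large. A natural route is to produce, again via the graphical construction, a conditional upper bound of the form $\mu_{\lambda, R}(\xi(x) = 1 \mid \text{already revealed configuration}) \le p$ up to an error vanishing with $R$, and feed this into Liggett--Schonmann--Stacey in its upward-domination form. Subcriticality of $\pi_p$ (sharpness by Menshikov, or Aizenman--Barsky, or Duminil-Copin--Tassion) then rules out percolation under $\mu_{\lambda, R}$.

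The main obstacle is the lower bound: finite-dimensional convergence controls each fixed window, but percolation is inherently a large-scale event, so one needs a uniform comparison (stochastic domination, or at worst summable-over-paths correlation bounds) with a strictly subcritical Bernoulli product, not merely a finite-window comparison. The graphical construction gives tractable access to $\mu_{\lambda, R}$, but the long-range dependencies introduced by the spread-out kernel preclude any naive finite-range approximation, forcing a delicate balancing of the spatial scales against $R$.
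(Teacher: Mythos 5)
Your heuristic and overall two-sided strategy match the paper's, but the paper explicitly warns (see the discussion after Corollary~\ref{corollary_l_c_l_p_different}, citing~\cite{bgp12}) that finite-dimensional convergence of~$\mu_{\lambda,R}$ to~$\pi_{1-1/\lambda}$ \emph{cannot} by itself yield convergence of percolation thresholds: there exist measures whose $K$-point marginals match a Bernoulli product for any fixed $K$ and which nonetheless percolate, and others that do not. So the local convergence statement you take as your technical backbone would not close the argument even if you established it.

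For the lower bound ($\lambda<\frac{1}{1-p_c}$, no percolation), you propose to upgrade window-by-window comparison to a full stochastic domination~$\mu_{\lambda,R}\preceq\pi_p$ for some $p<p_c$; this is essentially Question~\ref{question_sandwich}\eqref{q_ub_sandwich}, equivalently~\cite[Section 6, Question~1]{LS06}, which the paper leaves explicitly open and which would indeed make the result immediate. The paper avoids it: since $\mathrm{Perc}$ is increasing and the finite-time marginal~$\mu_{\lambda,R,t}$ stochastically dominates~$\mu_{\lambda,R}$, it suffices to rule out percolation under~$\mu_{\lambda,R,t}$ for a fixed~$t$. On a fixed time horizon one can couple the ancestral-infection indicators with a family of independent branching random walks (Section~\ref{section_graphical_construction}) and run the multi-scale renormalization of Section~\ref{section_renorm_intro} over proper embeddings of dyadic trees; what then needs to be controlled is the number of BRW collisions, not a uniform conditional one-site probability. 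You gesture at ``summable-over-paths correlation bounds'' but do not supply them, and there is no reason to expect the conditional bound you aim for to hold uniformly over revealed configurations.

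For the upper bound ($\lambda>\frac{1}{1-p_c}$, percolation), the tile-with-growing-gaps scheme is structurally problematic: growing the gaps makes the good tiles a sparser and sparser subset of~$\mathbb{Z}^d$, so ``bridging across gaps'' is the whole difficulty rather than a standard step, and $\mu_{\lambda,R}$ has no usable sprinkling or FKG structure to make it routine; if instead the gaps stay bounded, near-independence of tiles fails. Moreover the local-supercriticality events one ends up needing are not monotone (cf.\ Remark~\ref{remark_not_monotone}), which blocks any argument relying only on stochastic comparison. The paper's crucial ingredient here, missing from your proposal, is Theorem~\ref{thm:density_good_boxes}: the coarse-grained indicator of ``good'' boxes of side~$\kappa R$ under~$\mu_{\lambda,R}$ dominates a Bernoulli product measure of density~$1-\exp(-\gamma R^d)$. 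Proving it is a substantial separate block of work (Propositions~\ref{prop:growth_to_adjacent_box} and~\ref{prop:second_domination} and a discrete-time Liggett--Steif theorem in the Appendix), and it is then combined with the BRW coupling and the same renormalization in a careful three-configuration comparison ($\xi^*$, $\zeta^*$, $\zeta$ in Section~\ref{subsection_upper_bound}).
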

\begin{corollary}\label{corollary_l_c_l_p_different}
By~\eqref{eq:first_asymp} and~\eqref{limit_lambda_p_R}, we have~$\lambda_c(R) < \lambda_p(R)$ if~$R$ is large enough.
\end{corollary}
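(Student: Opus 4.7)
The corollary reduces to an elementary comparison of the two limits, so the plan is simply to verify that they are strictly separated and then quantify the separation for large~$R$.

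First I would record that for Bernoulli site percolation on~$\mathbb{Z}^d$ with~$d \ge 2$ the critical parameter satisfies~$p_c \in (0,1)$; this is classical (for~$d=2$ it is a well-known duality-type argument together with a Peierls-type bound, and for larger~$d$ the result follows by comparison with~$d=2$ via projecting onto a coordinate plane). In particular~$1 - p_c < 1$, so that
\[
\frac{1}{1-p_c} > 1.
\]
Hence the quantity
\[
2\varepsilon := \frac{1}{1-p_c} - 1
\]
is strictly positive.

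Next I would combine the two cited asymptotics. By~\eqref{eq:first_asymp} there is~$R_1$ such that~$\lambda_c(R) < 1 + \tfrac{\varepsilon}{2}$ for every~$R \ge R_1$, and by~\eqref{limit_lambda_p_R} there is~$R_2$ such that~$\lambda_p(R) > \tfrac{1}{1-p_c} - \tfrac{\varepsilon}{2}$ for every~$R \ge R_2$. Setting~$R_0 := \max(R_1, R_2)$, for every~$R \ge R_0$ we obtain
\[
\lambda_p(R) - \lambda_c(R) \;>\; \Bigl(\tfrac{1}{1-p_c} - \tfrac{\varepsilon}{2}\Bigr) - \Bigl(1 + \tfrac{\varepsilon}{2}\Bigr) \;=\; 2\varepsilon - \varepsilon \;=\; \varepsilon \;>\; 0,
\]
which is the claimed strict inequality.

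There is no real obstacle here: the substantive work is done in~\eqref{eq:first_asymp} (due to~\cite{bds89}) and in Theorem~\ref{thm_main}. The only conceptual ingredient that is not imported verbatim from the two displayed limits is the strict positivity of~$p_c$ for~$d \ge 2$, which is why the hypothesis~$d \ge 2$ in Theorem~\ref{thm_main} is essential at this step: if~$p_c$ were zero, the two limits would coincide and the argument above would collapse.
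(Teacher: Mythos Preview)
Your proof is correct and matches the paper's approach: the paper gives no separate proof beyond the sentence ``By~\eqref{eq:first_asymp} and~\eqref{limit_lambda_p_R}'', and your argument is precisely the standard $\varepsilon$-spelling-out of that sentence. Your explicit remark that $p_c>0$ (so that the two limits are strictly separated) is the only point the paper leaves implicit.
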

Note that if $d=2$ then the statement of Corollary \ref{corollary_l_c_l_p_different} also follows from known results
and an elementary observation, see Remark \ref{remark_stein_ideas} below.

\smallskip

The limiting value~$\lambda = \frac{1}{1-p_c}$ in~\eqref{limit_lambda_p_R} arises as the solution to the equation~$\rho(\lambda,\infty) = p_c$, cf.\ \eqref{eq:sec_asymp}. To make sense of this, we observe that as the range $R$ of the spread-out contact process is taken to infinity (with~$\lambda$ fixed), one can expect the measure~$\mu_{\lambda,R}$ to converge weakly to a Bernoulli product measure $\pi_{\rho(\lambda, \infty)}$ with density~$\rho(\lambda, \infty)$. This guess comes from~\eqref{eq:sec_asymp} and by analogy with the asymptotic behavior of the (nearest-neighbor) contact process as the dimension is taken to infinity; see the theorem in~\cite[page~388]{SV86}. Although our methods could also prove the weak convergence to Bernoulli product measure in our context, we abstain from giving a full proof for the sake of brevity, and only provide a sketch, see Remark~\ref{rem_sketch_measure} below.

\smallskip

Let us stress that weak convergence of $\mu_{\lambda,R}$ to $\pi_{\rho(\lambda, \infty)}$ does not automatically imply the convergence \eqref{limit_lambda_p_R} of the corresponding percolation thresholds.
It is believed that in many  circumstances the probability of the percolation event changes continuously with the underlying measure on configurations (and is thus ``decided locally'', since weak convergence of measures is a local phenomenon). This is related to the so-called Schramm locality conjecture; see for instance~\cite{bnp11, MT17,sxz14}. However, there are also known exceptions and pathological situations. For instance,~\cite[Theorem~19]{bgp12} states that for any~$p \in (0,1)$ and any~$K \in \mathbb{N}$, there exists a probability measure~$\mu$ on~$\{0,1\}^{\Z^d}$ satisfying
\begin{equation}\label{eq_bgpeq}
	\mu\big( (\xi(x_1),\ldots, \xi(x_K)) = (i_1,\ldots,i_K)\big) = p^{\sum_k i_k}\cdot (1-p)^{K-\sum_k i_k}
\end{equation}
for any~$(i_1,\ldots,i_K)\in \{0,1\}^K$ and any~$K$-tuple~$(x_1,\ldots,x_K)$ of distinct vertices of~$\Z^d$ and so that~$\mu(\text{Perc}) = 0$, and there exists another measure~$\nu$ on~$\{0,1\}^{\Z^d}$ also satisfying the property expressed in~\eqref{eq_bgpeq}, but so that~$\nu(\text{Perc}) = 1$. In words: both of the measures $\mu$ and $\nu$  ``locally'' look like  $\pi_p$, yet $\mu$ percolates while $\nu$ one doesn't.

\smallskip

In~\cite[Section 6, Question~1]{LS06}, it is asked if for any~$p \in (0,1)$ there exists~$\lambda > \lambda_c$ such that~$\mu_\lambda$ is stochastically dominated by the Bernoulli product measure $\pi_p$ with parameter~$p$ (and the same question can be asked for the spread-out process). Note that an affirmative answer would immediately imply that~$\lambda_p > \lambda_c$ (resp.\ $\lambda_p(R)> \lambda_c(R)$). As far as we know, this question remains open, and we do not provide an answer here. We do not know if taking~$R$ large makes this question any easier. However, we pose the following question about the spread-out model.
\begin{question}\label{question_sandwich}
 Given any $\lambda>1$ and $\varepsilon>0$, can one find $R_0$ such that
\begin{enumerate}[(i)]
\item\label{q_lb_sandwich}
 $\mu_{\lambda,R}$ stochastically dominates  $\pi_{\rho(\lambda,\infty)-\varepsilon}$ if $R \geq R_0$?
 \item\label{q_ub_sandwich} $\mu_{\lambda,R}$ is stochastically dominated by $\pi_{\rho(\lambda,\infty)+\varepsilon}$ if $R \geq R_0$?
\end{enumerate}
\end{question}
Note that an affirmative answer to Question \ref{question_sandwich}\eqref{q_lb_sandwich}\&\eqref{q_ub_sandwich}  would immediately imply our Theorem \ref{thm_main}.

\medskip

Let us now mention some related results (other then the already mentioned~\cite{LS06} and~\cite{rob_sharp}). In~\cite{vB20} it is proved that, if~$d \ge 2$ and~$\lambda > \lambda_p$, then almost surely under~$\mu_\lambda$, there is a unique infinite percolation cluster. (Note that the value defined by us as~$\lambda_p$ is denoted in~\cite{vB20} by~$\lambda_c$). This result is proved by means of a standard Burton-Keane approach \cite{bk89}, which would also work for the spread-out contact process. In \cite[Theorem 1.5]{rob_stein} it is proved that the stationary
 supercritical contact process on $\mathbb{Z}^d$ observed on certain space-time slabs stochastically dominates an i.i.d.\ Bernoulli product measure.
 Sharpness of the percolation phase transition for the stationary configuration of a variant of the two-dimensional contact process with three states is proved in \cite{markus_jakob_rob_sharp}. The percolation phase transition of the invariant distributions of the (nearest-neighbor and spread-out) voter model is studied in~\cite{RV15} and~\cite{RV17}. The latter reference has as main theorem an asymptotic result for the relevant percolation threshold, as the range of interaction of the model is taken to infinity; it is analogous to our Theorem~\ref{thm_main} (with a much shorter proof).\medskip

The following argument was pointed out to us by Stein Andreas Bethuelsen.
\begin{remark}\label{remark_stein_ideas}
Let us note that it is easy to prove that
\begin{equation}\label{stein_dom}
\pi_{\frac{\lambda}{1+\lambda}} \text{ stochastically dominates } \mu_{\lambda,R}
\end{equation}
 (see below Claim \ref{claim_graphical_upper_invariant} for the proof).
From \eqref{stein_dom} one obtains that $\frac{\lambda}{1+\lambda} < p_c$ implies $\lambda \leq \lambda_p(R)$ for any $\lambda >0$, from which $\frac{p_c}{1-p_c} \leq \lambda_p(R)$ follows.
Thus if $d=2$ then the statement of Corollary \ref{corollary_l_c_l_p_different} also follows using \eqref{eq:first_asymp} and the fact that $p_c(2)>\frac{1}{2}$, cf.\ \cite{H82}.
However, this elementary argument breaks down for any $d\geq 3$, since $p_c(d) \leq p_c(3) < \frac{1}{2}$, cf.\ \cite{CR85}.

\smallskip

One may also consider the percolation threshold $\lambda^0_p(R)$ of the set of healthy sites: given $\xi \in \{0,1\}^{\Z^d}$, let us define $\overline{\xi} \in \{0,1\}^{\Z^d}$
by letting $\overline{\xi}(x)=1-\xi(x)$. Let $\lambda^0_p(R):= \inf\{ \, \lambda: \mu_{\lambda,R}( \overline{\xi} \in \text{Perc}) = 0 \, \}$. Now $\frac{1-p_c}{p_c} \leq \lambda^0_p(R)$ follows from \eqref{stein_dom}, thus if $d \geq 3$ then $\lambda_c(R) < \lambda^0_p(R)$ holds using \eqref{eq:first_asymp} and $p_c(d) < \frac{1}{2}$.
Note that
$\lambda^0_p(R)<+\infty$ follows from \eqref{liggett_steif_stoch_dom_result} (which also holds for $\mu_{\lambda,R}$). Also note that the method of proof of Theorem \ref{thm_main}
could be easily modified to give $\lim_{R \to \infty} \lambda^0_p(R)=1/p_c$.

\smallskip

If we analogously define $\lambda^0_p$ to be the percolation threshold of the set of healthy sites under  the upper invariant measure $\mu_\lambda$ of the nearest-neighbour contact process (cf.\ \eqref{eq:generator_nn}), then
$ \lambda_c < \lambda^0_p < +\infty  $ holds if $d$ is large enough using \eqref{stein_dom} (which also holds in the nearest-neighbour case), $ \lim_{d \to \infty} \lambda_c=1$
(cf.\ \cite[(7)]{SV86}), $p_c(d) < \frac{1}{2}$ and \eqref{liggett_steif_stoch_dom_result}.
 \end{remark}

Our overall method of proof of Theorem \ref{thm_main} is very similar to the ones in~\cite{RV15} and~\cite{RV17}. However, while in~\cite{RV15} we could establish a non-trivial percolation phase transition for the nearest-neighbor voter model in dimension~5 and higher, here we were not able to settle Question~\ref{conj:LS} (concerning the nearest-neighbor contact process) even in high dimensions. The technical difficulty that explains this disparity is that the renormalization scheme we employ involves a competition between a combinatorial complexity factor (pertaining to the number of so-called proper embeddings of binary trees into~$\Z^d$), on the one hand, and the rate of site correlations under the invariant measure, on the other hand. In the voter model, the correlation structure is governed by coalescing random walks, for which good estimates are available, and become better for our purposes as the dimension increases. In the contact process (especially when~$\lambda$ is only slightly larger than~$\lambda_c$), one would expect the correlations to be much more complicated to analyze.

\begin{remark}
	Although up to this point we have distinguished the nearest-neighbour contact process from the spread-out contact process, from now on we only deal with the latter, since our results only refer to it. We also frequently drop the qualification `spread-out', and write simply `contact process'.
	\end{remark}

\subsection{Ideas and structure of proof}
Our proof of Theorem~\ref{thm_main} deals separately with the two statements
\begin{align}
\label{eq:want_main1}
	\lambda < \frac{1}{1-p_c} \quad &\Longrightarrow \quad \mu_{\lambda, R}(\mathrm{Perc}) = 0 \; \text{ for $R$ large enough,}\\
\label{eq:want_main2}
	\lambda >  \frac{1}{1-p_c} \quad &\Longrightarrow \quad \mu_{\lambda, R}(\mathrm{Perc}) > 0 \; \text{ for $R$ large enough.}
\end{align}
In our proofs of both statements, we use a renormalization scheme introduced in~\cite{Ra15} and employed in our works concerning percolation under the invariant distribution of the voter model,~\cite{RV15} and~\cite{RV17}. In very rough terms, the scheme depends on obtaining an upper bound for the probability of the event that inside many disjoint translates of a large box of~$\Z^d$, a configuration~$\xi \sim \mu_{\lambda,R}$ misbehaves in some prescribed way. In order to yield useful bounds, the renormalization scheme needs to be combined with some argument to guarantee that the restrictions of~$\xi \sim \mu_{\lambda,R}$ to boxes that are distant from each other are not too strongly correlated.

\smallskip

This kind of decorrelation result is easier to achieve if instead of considering~$\mu_{\lambda,R}$, we could consider (for some~$t > 0$) the measure~$\mu_{\lambda,R,t}$, defined as the distribution at time~$t$ of the contact process with parameters~$\lambda,R$ and started from all vertices infected at time zero (recall that~$\mu_{\lambda,R}$ is the weak limit of~$\mu_{\lambda,R,t}$ as~$t\to \infty$). To argue that this truncation to a finite-time horizon exhibits the desired spatial independence (as~$R$ gets larger, for~$t$ fixed), we rely on a coupling between the contact process and the related model of \textit{branching random walks}. While a coupling between these two processes is already given in~\cite[page 34]{Li99}, here we need a more careful construction in order to guarantee that a certain comparison property is satisfied -- see Section \ref{section_graphical_construction}.

\smallskip

Replacing~$\mu_{\lambda,R}$ by~$\mu_{\lambda,R,t}$ in the proof of~\eqref{eq:want_main1} is justified by the well-known fact that~$\mu_{\lambda,R,t}$ stochastically dominates~$\mu_{\lambda,R}$, see the last paragraph in~\cite[page 34]{Li99}. With this at hand, in order to prove~\eqref{eq:want_main1} (which already implies Corollary \ref{corollary_l_c_l_p_different}) it is sufficient to prove
$$\lambda < \frac{1}{1-p_c} \quad \Longrightarrow \quad \mu_{\lambda, R,t}(\mathrm{Perc}) = 0 \; \text{ for some $t$ and $R$ large enough.}$$

However, for~\eqref{eq:want_main2} this stochastic domination does not help, as it goes in the opposite direction as the one desired. Our treatment of~\eqref{eq:want_main2} involves a result allowing us to (stochastically) bound a coarse-graining of the upper invariant configuration from below. As already mentioned,~\cite[Theorem~2.1]{LS06} shows that~$\mu_{\lambda,R}$ stochastically dominates a Bernoulli product measure, but that result is only applicable when the infection rate is sufficiently large, so it does not fit our purposes. For this reason, we establish a result in the same spirit of the stochastic domination of Liggett and Steif, valid for all~$\lambda > 1$ and~$R$ large enough (depending on~$\lambda$), except that the domination is valid for a coarse grained version of~$\mu_{\lambda,R}$ instead of the measure itself.

Let us briefly describe this domination result (see Theorem~\ref{thm:density_good_boxes} below and the definitions that precede it). Under the assumption that~$\lambda > 1$, we prove that there are constants~$\kappa \in \mathbb{N}$ and~$\alpha \in (0,1)$ (depending on~$d$ and~$\lambda$, but not on~$R$) such that the following holds. We divide space into boxes of the form~$\kappa R \cdot z + [0,\kappa R)^d$, for~$z \in \Z^d$, and declare that each such box is \textit{good} for a configuration~$\xi \in \{0,1\}^{\Z^d}$ if it is ``sufficiently infected'', meaning that many sub-boxes of side length~$R$ have more than~$\alpha R^d$ infected vertices; see Definition~\ref{def:good_boxes} below for the precise condition. We then show that if~$\xi \sim \mu_{\lambda,R}$, then the distribution of the indicator of the set~$\{z: \text{the box~$\kappa R\cdot z + [0,\kappa R)^d$ is good} \}$ stochastically dominates a Bernoulli product measure. As~$R \to \infty$, the density of the product measure converges to one, and this convergence is exponentially fast in~$R^d$. We believe this result to be of independent interest. A similar coarse-graining technique has been employed in~\cite{bds89}; see the explanation in the second paragraph in page~448 of that paper. However, the emphasis of~\cite{bds89} was on proving that the $R$-spread out contact process survives when $\lambda=1+\varepsilon(R)$ (if $\varepsilon(R)$ goes to zero slowly enough as $R \to \infty$) using coarse-graining, while our emphasis is on proving that the $R$-spread-out contact process with a fixed $\lambda>1$ and $R \gg 1$ looks massively supercritical if we view it through the lens of coarse-graining.

\medskip

The rest of the paper is organized as follows. Section~\ref{s:notation} introduces notation that is used throughout the paper as well as the graphical construction of the contact process. In Section~\ref{s:domination}, we establish Theorem~\ref{thm:density_good_boxes}, which, as already mentioned, shows that a coarse graining of the upper invariant measure of the supercritical contact process stochastically dominates a Bernoulli product measure. Part of the proof of this result involves adapting to the context of oriented percolation a result of~\cite{LS06} pertaining to the contact process; this is done in the Appendix.
 Section~\ref{section_graphical_construction} develops our coupling between the contact process and branching random walks, and also contains some estimates concerning the collision probabilities of these branching random walks. Section~\ref{section_renorm_intro} reviews the renormalization technique from~\cite{Ra15}. Finally, Section~\ref{section_proof_of_main} contains the proof of Theorem~\ref{thm_main}.

\section{Basic notation}
\label{s:notation}

We denote $\mathbb{N}=\{1,2,\dots,\}$ and $\mathbb{N}_0=\{0,1,2,\dots\}$. For any set $A$, the cardinality of $A$ is denoted by $|A|$.

\subsection{Geometry of the lattice}

For a vector $x=(x_1,\dots,x_d) \in \mathbb{Z}^d$, the $\ell^\infty$-norm of $x$ is defined by $|x|=\max_{1 \leq i \leq d} |x_i| $ and the $\ell^1$-norm of $x$ is defined by $|x|_1=\sum_{i=1}^d |x_i|$. Two vertices $x,y$ are \textit{ nearest neighbors} if $|x-y|_1 = 1$; we denote this by $x \sim y$. Vertices $x$ and $y$ are $*$-neighbors if $|x-y| = 1$.

\begin{definition}\label{def_paths}
A \textit{nearest-neighbor} path in $\Z^d$ is a finite or infinite sequence $\gamma = (\gamma(0),\gamma(1),\ldots)$ such that $\gamma(i)\sim \gamma(i+1)$ for each $i$. A \textit{$*$-connected path} is a sequence $\gamma = (\gamma(0), \gamma(1),\ldots)$ such that $\gamma(i)$ and $\gamma(i+1)$ are $*$-neighbors for each $i$.
\end{definition}
Observe that any nearest-neighbor path is also a $*$-connected path.

\begin{definition}\label{def_connected_in_a_config}
Given disjoint sets $A, B \subset \Z^d$ and a configuration $\xi \in \{0,1\}^{\Z^d}$, we say that $A$ and $B$ are connected by an \emph{open} path in $\xi$ (and write $A \stackrel{\xi}{\longleftrightarrow} B$) if there exists a nearest-neighbor path $\gamma = (\gamma(0), \ldots, \gamma(n))$ such that $\gamma(0) \in A$, $\gamma(n) \in B$ and $\xi(\gamma(i)) = 1$ for all $i$. Similarly, we write $A\stackrel{*\xi}{\longleftrightarrow} B$ if there exists a $*$-connected path from a vertex in $A$ to a vertex in $B$ and $\xi$ is equal to 1 at all points in this path.
\end{definition}

\begin{definition}\label{def_connected_to_infty_in_a_config}
Given a set $A \subset \Z^d$ and a configuration $\xi \in \{0,1\}^{\Z^d}$, we say that $A$ is connected to infinity by an \emph{open} path in $\xi$ (and write $A \stackrel{\xi}{\longleftrightarrow} \infty$) if there exists an infinite nearest-neighbor simple path from a vertex in $A$ and $\xi$ is equal to 1 at all points in this path.
\end{definition}

The balls and spheres with respect to the $\ell^\infty$-norm are given by
\begin{align}
\label{ball_def}
&B(L) = \{x \in \Z^d: |x| \leq L\}, &B(x,L) = \{y \in \Z^d: |x-y| \leq L\},\\
\label{sphere_def}
&S(L) = \{x \in \Z^d: |x|= L\}, &S(x,L) = \{y \in \Z^d: |x-y|= L\}.
\end{align}

\subsection{Set of configurations }
\label{subsetion_set_of_config}

The indicator of an event  $B$ is denoted by $\mathds{1}[B]$.

\smallskip


We endow $\{0,1\}^{\Z^d}$ with the $\sigma$-algebra $\mathcal{F}$ generated by all the cylinder sets.

\smallskip

Let us denote by $\underline{1}$ the element of $\{0,1\}^{\Z^d}$ for which $\underline{1}(x)=1$ for all $x \in \mathbb{Z}^d$.

\smallskip

We adopt the convention of associating a configuration~$\xi \in \{0,1\}^\Z$ with the set~$\{x: \xi(x)=1\}$. For example, if
$H \subseteq \mathbb{Z}^d$ then $|\xi \cap H |:=\sum_{x \in H} \xi(x)$.

We endow~$\{0,1\}^{\Z^d}$ with the partial order under which~$\xi \leq \xi'$ if~$\xi(x) \leq \xi'(x)$ for all~$x \in \mathbb{Z}^d$. An event~$A \in \mathcal{F}$ is called increasing if, for any~$\xi \in A$ and~$\xi' \geq \xi$, we have~$\xi' \in A$. If $\mu$ and $\mu'$ are both probability measures on $(\{0,1\}^{\Z^d},\mathcal{F})$ then we
say that $\mu'$ stochastically dominates $\mu$ if for any increasing event $A$ we have $\mu'(A)\geq \mu(A)$.

\smallskip

Let us define the $\mathcal{F}$-measurable percolation event $\mathrm{Perc}$ by
\begin{equation}\label{perc_def_eq}
 \{\, \xi \in \mathrm{Perc} \, \} := \cup_{x \in \mathbb{Z}^d } \{ x \, \stackrel{\xi}{\longleftrightarrow} \, \infty \}.
\end{equation}
Note that $\mathrm{Perc}$ is an increasing event. In the terminology of percolation theory,~$\{x: \xi(x)=1\}$ is the set of $\xi$-\emph{open} sites, while~$\{x: \xi(x)=0\}$ is the set of $\xi$-\emph{closed} sites.



\smallskip

For $p \in [0,1]$, we denote by $\pi_p$ the Bernoulli($p$) product measure on $(\{0,1\}^{\Z^d},\mathcal{F})$.

\subsection{Graphical construction of the contact process}
\label{subsection_graphical_constr_of_cont_proc}

We recall the graphical construction of the contact process from \cite[Part I., Section 1]{Li99}. 

\smallskip

For each $x \in \mathbb{Z}^d$, let $N_x$
denote a Poisson process of rate $1$ on $[0,\infty)$,  and for each $x,y \in \mathbb{Z}^d$ satisfying $y \in B(x,R)$ let $N_{x,y}$
denote a Poisson process of rate $\lambda/|B(R)|$ on $[0,\infty)$.
 All Poisson processes are independent.

\smallskip

  We decorate the space-time picture $\mathbb{Z}^d \times [0,+\infty)$ by placing a recovery symbol~$\times$ at $(x,t)$ if
 $t$ is an arrival time of $N_x$ and placing an infection arrow pointing from $(x,t)$ to $(y,t)$ if $t$ is an arrival time of $N_{x,y}$.
 An \emph{infection path} in  $\mathbb{Z}^d \times [0,+\infty)$ is a connected oriented path which moves along
the time lines in the increasing $t$ direction without passing through a recovery
symbol, and along infection arrows in the direction of the arrow.

 If $x,y \in \mathbb{Z}^d$ and $0\leq s \leq t$ then we denote by $(x,s)\rightsquigarrow (y,t)$
the event that there is an infection path connecting $(x,s)$ to $(y,t)$.

\begin{definition}[Infection path indicators]\label{Xi_graph_def}
  Let us define the  $\{0,1 \}$-valued random variables
\begin{equation}\label{Xi_graphical_def_eq}
 \Xi_t(x,y) := \mathds{1}\left[(x,0)\rightsquigarrow (y,t) \right],\qquad x, y \in \Z^d,\; t \ge 0.
 \end{equation}
 \end{definition}
 For any  $\xi_0 \in \{0,1\}^{\mathbb{Z}^d}$, the contact process $(\xi_t)_{t \geq 0}$ with infection rate $\lambda$, range $R$ and initial  state $\xi_0$ can be constructed by letting
\begin{equation}
\xi_t(y)= \max_{x \in \mathbb{Z}^d} \left\{ \xi_0(x) \cdot \Xi_t(x,y) \right\} , \qquad y \in \mathbb{Z}^d, \quad t \geq 0.
\end{equation}

We will also need the following claim.

\begin{claim}[Graphical construction of $\mu_{\lambda,R}$] \label{claim_graphical_upper_invariant}
 If we define
\begin{equation}\label{upper_invariant_active_path}
  \xi(x):=  \lim_{t \to \infty} \max_{y \in \mathbb{Z}^d} \Xi_t(x,y) , \qquad x \in \mathbb{Z}^d,
\end{equation}
then $(\xi(x))_{x \in \mathbb{Z}^d}$ has the law $\mu_{\lambda,R}$ of the upper invariant measure of the contact process with infection rate $\lambda$ and range $R$.
\end{claim}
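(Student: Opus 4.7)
My plan is to identify $\xi(x)$ with the indicator of survival of a single-site contact process, and then match its joint law with the upper invariant measure via the time-reversal duality of the graphical representation.

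First I would observe that for fixed $x$, the quantity $\eta_t(x):=\max_{y\in\mathbb{Z}^d}\Xi_t(x,y)$ is nothing but the indicator of the event $\{\xi^{\{x\}}_s\neq\underline 0\text{ for all }s\leq t\}$, which is monotone non-increasing in $t$ (since $\underline 0$ is absorbing). Hence the limit in \eqref{upper_invariant_active_path} exists pointwise, and more generally, for any finite set $A\subset\mathbb{Z}^d$,
\begin{equation*}
\mathbb{P}\Big(\bigcap_{x\in A}\{\xi(x)=1\}\Big)\;=\;\lim_{t\to\infty}\mathbb{P}\Big(\bigcap_{x\in A}\{\exists\,y:(x,0)\rightsquigarrow(y,t)\}\Big),
\end{equation*}
with the limit taken along a monotone decreasing sequence of events.

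Next I would invoke the symmetry of the infection kernel: because $y\in B(x,R)$ iff $x\in B(y,R)$, the collection $(N_x)_x\cup(N_{x,y})_{x,y}$ of Poisson processes, when restricted to $[0,t]$ and reflected across time $t/2$ (with each arrow $x\to y$ replaced by an arrow $y\to x$), has the same joint law as the original family on $[0,t]$. Under this measure-preserving time reversal, the event $\{\exists\,y:(x,0)\rightsquigarrow(y,t)\}$ is mapped to $\{\exists\,y:(y,0)\rightsquigarrow(x,t)\}=\{\xi^{\underline 1}_t(x)=1\}$. Therefore
\begin{equation*}
\mathbb{P}\Big(\bigcap_{x\in A}\{\exists\,y:(x,0)\rightsquigarrow(y,t)\}\Big)\;=\;\mathbb{P}\Big(\bigcap_{x\in A}\{\xi^{\underline 1}_t(x)=1\}\Big).
\end{equation*}
Since $\xi^{\underline 1}_t$ converges in distribution to $\mu_{\lambda,R}$ as $t\to\infty$ (this is the definition of the upper invariant measure and a standard consequence of attractiveness: the law of $\xi^{\underline 1}_t$ is stochastically decreasing in $t$), the right-hand side converges to $\mu_{\lambda,R}\big(\bigcap_{x\in A}\{\xi(x)=1\}\big)$.

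Combining the two displays shows that the law of $(\xi(x))_{x\in\mathbb{Z}^d}$ agrees with $\mu_{\lambda,R}$ on every event of the form $\{\xi\equiv 1\text{ on }A\}$ with $A$ finite, and by inclusion–exclusion on every cylinder event; this identifies the two measures on the $\sigma$-algebra $\mathcal F$. The only point that requires any care is verifying that the time-reversal really is distribution-preserving, for which the symmetry $B(x,R)\ni y\Leftrightarrow B(y,R)\ni x$ of the spread-out kernel and the time-reversibility of homogeneous Poisson processes are exactly what is needed; there is no serious obstacle.
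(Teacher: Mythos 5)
Your proof is correct and follows essentially the same route as the paper, which simply cites self-duality of the contact process (the reference to \cite[Part I, Section 1]{Li99}) together with the observation that $\xi(x)$ is the indicator that an infection path from $(x,0)$ reaches infinity; you have merely unpacked what self-duality means at the level of the graphical construction (time-reversal of the Poisson processes on $[0,t]$, symmetry of the spread-out kernel $y\in B(x,R)\Leftrightarrow x\in B(y,R)$, and monotone convergence of the finite-dimensional events), which is exactly the argument the citation points to.
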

The proof of this claim follows from the self-duality of the contact process (see \cite[Part I., Section 1.]{Li99})
and the fact that $\xi(x)$ in \eqref{upper_invariant_active_path}  is the indicator of the event that there is an infection path from $(x,0)$ to infinity.

\begin{proof}[Proof of \eqref{stein_dom}]
If we define $\xi^\circ(x)$ to be the indicator of the event that the first infection arrow pointing out of $x$ arrives earlier than the first recovery mark at $x$ then
$\xi^\circ \sim \pi_{\frac{\lambda}{1+\lambda}}$ and $\xi \leq \xi^{\circ}$, from which
 \eqref{stein_dom} readily follows.
\end{proof}

\section{Coarse-grained upper invariant measure dominates Bernoulli}\label{section_domination_liggett_steif}
\label{s:domination}

The goal of this section is to establish a stochastic domination result pertaining to the upper invariant distribution~${\mu}_{\lambda, R}$ of the contact process when~$\lambda > 1$ and~$R$ is large enough.
The main result we will obtain is Theorem~\ref{thm:density_good_boxes} below; we will need many preliminary results in order to prove it. The following diagram depicts all of them, and an arrow means that a result is used in the proof of the result to which it points.

\begin{tikzpicture}
	\node at (-14.7,1.3) {\small Theorem \ref{thm:lss} };
	\node at (-13.7,0.75) {\small (Liggett-Schonmann-Stacey) };
	\node at (-22.5,-1.25) {\small Theorem 3.9};
	\node at (-22,-1.75) {\small (proved in Appendix)};
	\node at (-18.6,0) {\small Theorem \ref{thm:density_good_boxes} $\xleftarrow{\makebox[.4cm]{}}$ Proposition \ref{prop:second_domination}$\xleftarrow{\makebox[.4cm]{}}$ Lemma \ref{lem:domination_time_one} $\xleftarrow{\makebox[.4cm]{}}$ Proposition \ref{prop:growth_to_adjacent_box}};
	\node at (-15,-1.5) {\small Lemma \ref{lem:growth_to_neighbor} $\xrightarrow{\makebox[.4cm]{}}$ Lemma \ref{lem:infect_small_boxes} \hspace{.3cm} Lemma \ref{lem:growth_in_box}};
	\node at (-18,-3) {\small Lemma \ref{lem:calculus}$ \xrightarrow{\makebox[.2cm]{}}$ Lemma \ref{lem:first_bound_density}$\xrightarrow{\makebox[.2cm]{}}$ Lemma \ref{lem:low_density} $\xrightarrow{\makebox[.2cm]{}}$ Lemma \ref{lem:for_supermartingale}$\xrightarrow{\makebox[.2cm]{}}$   Proposition \ref{prop:supermartingales}};
	\draw [->] (-15,-1.2) -- (-14,-0.3);
	\draw [->] (-12.2,-2.7) -- (-12.2,-1.8);
	\draw [->] (-12.2,-1.2) -- (-13.2,-0.3);
	\draw [->] (-23,-1.05) -- (-23,-.35);
	\draw [->] (-16,1.2) -- (-17,0.3);
\end{tikzpicture}\\

 We will consider large boxes of the form~$ [0,\kappa R)^d + \kappa R z$, where~$z \in \Z^d$ and~$\kappa$ is a large constant. In a configuration~$\xi$ sampled from~${\mu}_{\lambda,R}$, we will want to argue that inside most boxes of this form, the infected set~$\{x:\xi(x) = 1\}$ satisfies a certain high density condition, see Definition~\ref{def:good_boxes}.  This is achieved in Theorem~\ref{thm:density_good_boxes} below, which states that the set of such good boxes stochastically dominates a Bernoulli product measure with very high density.
The results of this section  will only be used for the proof of $\limsup_{R \to \infty } \lambda_p(R) \leq \frac{1}{1-p_c}$ in Section \ref{subsection_upper_bound}.

\smallskip

Recalling the notation used in \eqref{eq:generator}, we define the contact process with range $R$ and infection rate $\lambda$ on a finite subset $B$ of $\mathbb{Z}^d$
as the Markov process with state space $\{0,1\}^B$ and infinitesimal generator
\begin{equation}\label{eq:generator_on_B}
 (\mathcal{L}f)(\xi) = \sum_{\substack{x \in B:\\\xi(x) = 1}} \left( f(\xi^{0 \to x}) - f(\xi) + \frac{\lambda}{  |B(R)|} \cdot \sum_{\substack{y \in B(x,R)\cap B  }} (f(\xi^{1 \to y}) - f(\xi))\right).
\end{equation}

\begin{proposition}[Infection spreads to adjacent box]
\label{prop:growth_to_adjacent_box}
Let us fix~$d \geq 1$ and~$\lambda > 1$. There exist constants
$$\kappa \in \mathbb{N},\quad 0<\alpha<1,\quad  R_{*} > 0, \quad 0<T <\infty,\quad 0< c <\infty$$
(that only depend on $d$ and $\lambda$) such that the following holds  for any $R \geq R_*$.
Let
\begin{equation}
\label{eq:boxes12}
B = [0,\kappa R)^d,\quad B'= B+ \kappa R \cdot \vec{e}_1, \quad \widehat{B} = B \cup B',
\end{equation}
where~$\vec{e}_1=(1,0,\dots,0)$ is the first canonical vector of~$\Z^d$.

Let~$({\xi}_t)_{t \geq 0}$ be the contact process with infection rate $\lambda$ and range~$R$ on~$\widehat{B}$. If
\begin{equation}
\label{eq:initial_conf_good}
|\xi_0 \cap ([0,R)^d + R \cdot z)| \geq \alpha R^d \text{ for all $z \in \mathbb{Z}^d$ such that } [0,R)^d +R \cdot z \subset B,
\end{equation}
then
$$
\mathbb{P}\left(\begin{array}{c}
|\xi_{T} \cap ([0,R)^d + R \cdot z) | \geq \alpha R^d\text{ for all }
\\
 z \in \mathbb{Z}^d \text { such that } [0,R)^d +R \cdot z \subset \widehat{B}
\end{array}\right) > 1-\exp\left( -c R^d \right).
$$
\end{proposition}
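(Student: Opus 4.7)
The plan is to argue at the scale of the $R$-sub-boxes $C_z := [0,R)^d + R z$, of which there are $2\kappa^d$ inside $\widehat{B}$, and to decompose the target event into a \emph{spreading} part (every sub-box of $B'$ acquires $\geq \alpha R^d$ infected sites) and a \emph{maintenance} part (every already-dense sub-box retains $\geq \alpha R^d$ throughout $[0,T]$). I would fix $\alpha$ strictly below $\rho(\lambda,\infty) = 1-1/\lambda$, which is possible for all large $R$ by~\eqref{eq:sec_asymp}, so that the equilibrium density of the finite-range process sits strictly above the threshold we must maintain.

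The proof would rest on two building blocks. First, a \emph{maintenance estimate}, provided by Lemma~\ref{lem:growth_in_box}: starting from $|\xi \cap C_z| \geq \alpha R^d$, the confined dynamics retains $|\xi_s \cap C_z| \geq \alpha R^d$ for all $s \in [0,\tau]$, with failure probability at most $\exp(-c_1 R^d)$; this is a quantitative concentration statement that exploits the $R^d$ independent Poisson clocks inside $C_z$ and the gap between $\alpha$ and the equilibrium density. Second, an \emph{elementary spreading step}, obtained by applying Lemma~\ref{lem:growth_to_neighbor} followed by Lemma~\ref{lem:infect_small_boxes}: from an $\alpha R^d$-dense $C_z$ one first implants a seed in an adjacent sub-box $C_{z'}$ within time $\tau_0$ (for $x \in C_z$ within distance $R$ of the interface, a positive fraction of $B(x,R)$ lies in $C_{z'}$, so the Poisson rate of infection arrows landing in $C_{z'}$ is $\Theta(R^d)$) and then grows the seed up to density $\alpha R^d$ within time $\tau_1$, with total failure probability at most $\exp(-c_2 R^d)$.

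Given these two building blocks, I would iterate the spreading step along a deterministic schedule sweeping a ``dense front'' across $B'$ in the $\vec{e}_1$ direction, starting from the column of $B$ adjacent to the interface. Since $B'$ has linear extent $\kappa R$, $O(\kappa)$ stages suffice to visit all $\kappa^d$ sub-boxes of $B'$; at each stage, the sub-boxes not actively spreading have only to maintain their density via the maintenance estimate. Setting $T = O(\kappa)(\tau_0 + \tau_1)$ and union-bounding over the $O(\kappa^{d+1})$ sub-box/stage pairs yields a total failure probability of order $\kappa^{d+1} \exp(-\min(c_1,c_2) R^d) \leq \exp(-c R^d)$ for $R \geq R_*$, with $c$, $R_*$, $\kappa$, $\alpha$, $T$ depending only on $d$ and $\lambda$.

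The step I expect to be the main obstacle is the maintenance estimate with the sharp rate $\exp(-c R^d)$: keeping the density above a fixed threshold for a positive time is a global event inside $C_z$, and naive second-moment or Chernoff bounds applied to the density process alone deliver only polynomial deviations. One therefore needs either a quantitative stochastic domination by supercritical oriented percolation (in the spirit of~\cite{bds89}, combined with a Liggett--Schonmann--Stacey-type conversion into a Bernoulli lower bound, as supplied by Theorem~\ref{thm:lss}) or a direct supermartingale treatment of the density process. An additional technicality is that the confinement of the dynamics to $\widehat{B}$ introduces a boundary layer of width $R$ around $\partial\widehat{B}$; choosing $\kappa$ large makes its relative contribution to each bulk sub-box negligible at scale $R^d$, but the sub-boxes abutting $\partial\widehat{B}$ must still be handled by the cited lemmas using only interior infection arrows.
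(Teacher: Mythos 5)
Your proposal contains a genuine gap that you yourself identify as the main obstacle: the \emph{maintenance} estimate, asserting that a sub-box which is dense at some intermediate stage stays at density $\geq \alpha R^d$ for all subsequent times up to $T$, with failure probability $\exp(-cR^d)$. None of the cited lemmas supplies such a statement. In particular, Lemma~\ref{lem:growth_in_box} is a statement about a single terminal time $T_0$, not about the whole interval $[0,T_0]$, and it controls only the \emph{total} number of infected sites in $B$, not the count in any individual sub-box; reading it as a per-sub-box maintenance bound is a misattribution. The concern you raise about global exit-time events and polynomial deviations is real, and it is exactly what your iterated-sweep schedule would need to overcome.

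The paper avoids this obstacle entirely by a different and simpler decomposition that controls only the state at a single final time and tracks only the total count in between. From the hypothesis, each sub-box of $B$ is dense, so $|\xi_0| \geq \alpha R^d$. Lemma~\ref{lem:growth_in_box} (a supermartingale argument, not a stochastic domination by Bernoulli) then yields that $|\xi_{T_0} \cap B| \geq \delta^{-1}\alpha R^d$ with probability $\geq 1-\exp(-c_0 R^d)$, with no constraint on where these sites sit or on what the per-sub-box densities do in $[0,T_0]$. Then Lemma~\ref{lem:infect_small_boxes} (which internally iterates Lemma~\ref{lem:growth_to_neighbor} along short chains of overlapping $\lfloor R/2 \rfloor$-boxes covering $\widehat{B}$) takes any configuration with $m = \delta^{-1}\alpha R^d$ infected sites and, in a further fixed time $(4\kappa)^d$, produces density $\geq \delta m = \alpha R^d$ in every sub-box of $\widehat{B}$, with failure probability $\leq \exp(-\alpha R^d)$. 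Composing these gives the proposition with $T = T_0 + (4\kappa)^d$. Because "grow the total count" and "redistribute it everywhere" are separated, no sub-box ever needs to hold its density in transit, so there is nothing to maintain; and because Lemma~\ref{lem:infect_small_boxes} reaches all of $\widehat{B}$ in one bounded time window, the $O(\kappa)$-stage front sweep across $B'$ is also unnecessary.
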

The above proposition suggests the following definition.
\begin{definition}[Good box]\label{def:good_boxes}
Given~$d \geq 1$ and~$\lambda > 1$, let~$\kappa$ and~$\alpha$ be as in Proposition~\ref{prop:growth_to_adjacent_box}, and fix~$R \in \mathbb{N}$.
\begin{enumerate}
\item Given~$z \in \Z^d$, we say that the box~$B= [0,\kappa R)^d  + \kappa R \cdot z$ is \emph{good} for configuration~$\xi \in \{0,1\}^{\Z^d}$ (or~$\xi \in \{0,1\}^B$) if
$$
|\xi \cap ([0,R)^d+ R  \cdot z' )| \geq \alpha R^d \text{ for all } z' \in \mathbb{Z}^d \text{ such that } [0,R)^d +R \cdot z' \subset B.
$$
\item Given~$\xi \in \{0,1\}^{\Z^d}$, define~$\omega_\xi \in \{0,1\}^{\Z^d}$ by
\begin{equation} \label{eq:def_of_omega_xi}
\omega_\xi(z) = \mathds{1}\left[\text{ the box }[0,\kappa R)^d + \kappa R\cdot  z \text{ is good for } \xi \,  \right],\quad z \in \mathbb{Z}^d.
\end{equation}
\end{enumerate}
\end{definition}
\begin{theorem}[Good boxes under $\mu_{\lambda,R}$ dominate product Bernoulli]\label{thm:density_good_boxes}
Let us fix~$d \geq 1$ and~$\lambda > 1$. There exist~$R_{\sstar} >0$ and~$\gamma > 0$ (that only depend on $d$ and $\lambda$) such that the following holds for any~$R \ge R_{\sstar}$. Let~$\xi$ be a random configuration sampled from~${\mu}_{\lambda,R}$, and let~$\omega_\xi$ be the corresponding configuration of good boxes, as in~\eqref{eq:def_of_omega_xi}. Then the law of~$\omega_\xi$ stochastically dominates a Bernoulli product  measure with density~$1 - \exp(-\gamma R^d)$.
\end{theorem}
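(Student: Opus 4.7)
The plan is to use Proposition~\ref{prop:growth_to_adjacent_box} as the microscopic growth input, iterate it in time against the stationarity of $\mu_{\lambda,R}$ to obtain a bound on the one-step transitions of the coarse-grained ``good box'' indicators, and then combine the Liggett-Schonmann-Stacey theorem (Theorem~\ref{thm:lss}) with the oriented-percolation variant of Liggett-Steif (Theorem~3.9) to deduce the claimed Bernoulli domination on a single time slice.

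To this end, I would realize $\xi \sim \mu_{\lambda,R}$ as the time-zero marginal of a two-sided stationary contact process $(\xi_t)_{t \in \mathbb{R}}$, built from a graphical construction on $\Z^d \times \mathbb{R}$ (extending the one in Section~\ref{subsection_graphical_constr_of_cont_proc} in the natural way). Let $\kappa, \alpha, T, c$ be as in Proposition~\ref{prop:growth_to_adjacent_box}, and for $(z,n) \in \Z^d \times \Z$ set $\omega_n(z) := \mathds{1}[B_z \text{ is good for } \xi_{nT}]$, where $B_z = [0,\kappa R)^d + \kappa R \cdot z$. Proposition~\ref{prop:growth_to_adjacent_box}, applied to the contact process restricted to $B_z \cup B_{z'}$ (which is stochastically dominated by the full process via the graphical coupling), should yield the following one-step transition estimate (this is what Lemma~\ref{lem:domination_time_one} is set up to encode): for each nearest neighbor $z'$ of $z$ in $\Z^d$,
\[
\mathbb{P}\bigl( \omega_{n+1}(z') = 1 \;\bigm|\; \mathcal F_{nT} \bigr) \;\geq\; 1 - e^{-c R^d} \quad \text{almost surely on } \{\omega_n(z) = 1\},
\]
where $\mathcal F_{nT}$ is the graphical $\sigma$-algebra up to time $nT$. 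Crucially, the event on the left depends only on the Poisson data inside $(B_z \cup B_{z'}) \times [nT,(n+1)T]$, so the one-step transitions enjoy uniformly bounded range in both space and time.

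With this localized transition estimate in hand, Proposition~\ref{prop:second_domination} would upgrade it into a genuine stochastic domination: applying Theorem~\ref{thm:lss} slice-by-slice, the conditional law of $\omega_{n+1}$ given $(\omega_m)_{m \leq n}$ dominates a Bernoulli product measure on $\Z^d$ whose density can be made arbitrarily close to $1$ by taking $R$ large. Viewing $(\omega_n)_{n \in \Z}$ as a stationary space-time process (stationarity inherited from $\mu_{\lambda,R}$ and time-translation invariance of the graphical construction), this places $\{(z,n):\omega_n(z)=1\}$ stochastically above a stationary supercritical oriented percolation on $\Z^d \times \Z$. Theorem~3.9 --- the oriented-percolation analogue of Liggett-Steif proved in the Appendix --- then guarantees that the time-zero marginal of such an oriented percolation stochastically dominates a Bernoulli product measure on $\Z^d$ with density $1 - e^{-\gamma R^d}$. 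Since the time-zero marginal of $(\omega_n)$ is precisely the configuration $\omega_\xi$ of Definition~\ref{def:good_boxes}, the theorem follows.

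The step I expect to be the main obstacle is that $\omega_n(z)$ depends on the \emph{entire} past of the graphical construction inside $B_z$ back to $t = -\infty$, so a naive Liggett-Schonmann-Stacey argument applied to a single time slice cannot work: at fixed $n$ there is no localized dependence to exploit. The detour through the discrete-time dynamics --- where the one-step transitions \emph{are} spatially localized --- and then back to a stationary time slice via the oriented-percolation version of Liggett-Steif is precisely what makes the argument go through, and also explains why that appendix result (Theorem~3.9) is a necessary separate ingredient rather than just a direct invocation of Theorem~\ref{thm:lss}.
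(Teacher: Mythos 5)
Your proposal correctly identifies all the main ingredients --- Proposition~\ref{prop:growth_to_adjacent_box} as the microscopic input, Theorem~\ref{thm:lss} (Liggett--Schonmann--Stacey) for the local Bernoulli control, the oriented percolation process~$(\eta_n)$ of Definition~\ref{def_cellular_automaton} with its upper invariant measure~$\nu_p$, and Theorem~\ref{thm:apply_ls} for the final domination. You also correctly diagnose why the detour through oriented percolation is needed: $\omega_\xi(z)$ has no spatially localized dependence under~$\mu_{\lambda,R}$, so a single-time-slice LSS argument cannot be applied directly. The overall architecture matches the paper's.

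However, there is a genuine gap in the middle of your argument. You claim that ``the conditional law of $\omega_{n+1}$ given $(\omega_m)_{m \leq n}$ dominates a Bernoulli product measure on $\Z^d$ whose density can be made arbitrarily close to $1$.'' This cannot be right: on the event that a box~$B_z$ and all its coarse neighbors are bad at step~$n$, Proposition~\ref{prop:growth_to_adjacent_box} gives no control at all, and $\omega_{n+1}(z)$ will typically be~$0$, not close to a Bernoulli($1-e^{-cR^d}$) variable. What Lemma~\ref{lem:domination_time_one} of the paper actually proves is the weaker but correct statement that, under the right coupling, $\omega_{\xi_T}(z) \geq X_z \cdot \max\bigl(\omega_{\xi_0}(z),\, \omega_{\xi_0}(z+\vec{e}_1)\bigr)$, where only the noise field~$(X_z)$ is upgraded by LSS to dominate a Bernoulli product measure; the whole right-hand side then matches the one-step oriented-percolation update~\eqref{eq:def_of_eta}, so $\omega_{\xi_T}$ stochastically dominates~$\eta_1$ started from $\eta_0=\omega_{\xi_0}$. (Also, Proposition~\ref{prop:growth_to_adjacent_box} propagates only to the $\vec{e}_1$-neighbor, which is why the auxiliary process~$(\eta_n)$ is a one-directional oriented percolation rather than a nearest-neighbor one; this is harmless because Claim~\ref{cl:oriented_percolation} and Theorem~\ref{thm:apply_ls} handle exactly that model.)

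The second issue is the passage to stationarity. You invoke a two-sided stationary coarse-grained process and assert that it stochastically dominates a ``stationary supercritical oriented percolation on $\Z^d \times \Z$,'' but this coupling is not constructed, and it does not follow from the (corrected) one-step domination without some limiting argument. The paper's Proposition~\ref{prop:second_domination} sidesteps this cleanly: start both the contact process and the oriented percolation from~$\underline{1}$ at time zero, iterate Lemma~\ref{lem:domination_time_one} using attractiveness to get that the law of $\omega_{\xi_{mT}}$ dominates the law of $\eta_m$ for every~$m$, and then let~$m \to \infty$, so that the left-hand side converges to the law of~$\omega_\xi$ under~$\mu_{\lambda,R}$ and the right-hand side to~$\nu_p$. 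This gives exactly the single-time-slice comparison needed to feed into Theorem~\ref{thm:apply_ls}; no two-sided stationary coupling is required or used.

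In short: right skeleton, right reason for the detour, but the one-step domination needs to be against the oriented-percolation update rather than a Bernoulli product measure, and the stationary comparison should be obtained by a monotone limit from~$\underline{1}$ rather than asserted for a two-sided process.
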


\begin{remark} It follows from \eqref{stein_dom} that there exists $\gamma'>0$ that only depends on $d$ and $\lambda$ such that
 $\mu_{\lambda,R}( |\xi \cap [0,\kappa R)^d |=0 ) \geq \exp(-\gamma' R^d)$ for all $R \in \mathbb{N}$, which certainly implies
	$\mu_{\lambda,R}( \omega_\xi(0)=1 ) \leq 1- \exp(-\gamma' R^d)$ (for any~$\alpha \in (0,1)$),
  thus Theorem \ref{thm:density_good_boxes} is sharp in this sense. However,  the values of  $\alpha$ and $\gamma$ that we produce in our proofs
   are far from being optimal. Note that an affirmative answer to Question \ref{question_sandwich}\eqref{q_lb_sandwich} would easily imply Theorem \ref{thm:density_good_boxes}.
       Let us also note that in our proof of
       $\limsup_{R \to \infty } \lambda_p(R) \leq \frac{1}{1-p_c}$ in Section \ref{subsection_upper_bound}, we will only use that~$\omega_\xi$ stochastically dominates a Bernoulli product
    measure with density~$1 - \exp(-R^{\delta})$, where $\delta >\frac{\ln(2)}{\ln(6)}$.
\end{remark}
The rest of Section \ref{section_domination_liggett_steif} is devoted to the proof of the above stated results.
We encourage the reader to skip to Section \ref{section_graphical_construction} at first reading.

The rest of Section \ref{section_domination_liggett_steif} is organized as follows: we first deduce Theorem \ref{thm:density_good_boxes} from Proposition \ref{prop:growth_to_adjacent_box} in Section \ref{ss:domination}, then we prove Proposition \ref{prop:growth_to_adjacent_box} in Section \ref{ss:propagation}.

\subsection{Good boxes under $\mu_{\lambda,R}$ dominate product Bernoulli}
\label{ss:domination}

In this section, we prove Theorem~\ref{thm:density_good_boxes} using Proposition~\ref{prop:growth_to_adjacent_box}. We fix~$d \geq 1$ and~$\lambda > 1$ throughout.

We start by defining an auxiliary discrete-time Markov process~$(\eta_n)_{n \geq 0}$ taking values in~$\{0,1\}^{\Z^d}$.

\begin{definition}[Oriented percolation]\label{def_cellular_automaton}
Given~$p \in [0,1]$, let~$P_p$ be a probability measure under which we have defined random variables~$Z(z,n)$, for~$z \in \mathbb{Z}^d$ and~$n \in \mathbb{N}_0$, all independent and Bernoulli($p$).
 Define~$(\eta_n)_{n \geq 0}$ on this probability space by taking~$\eta_0 \in \{0,1\}^{\Z^d}$ arbitrarily and letting
\begin{equation} \label{eq:def_of_eta}\eta_{n+1}(z) = Z(z,n+1)\cdot \max\left(\eta_n(z),\; \eta_n(z + \vec{e}_1)\right),\quad z \in \mathbb{Z}^d,\; n \in \mathbb{N}_0,\end{equation}
where~$\vec{e}_1$ denotes the first canonical vector of~$\Z^d$.
\end{definition}

  Note that~$(\eta_n)$ is a ``probabilistic cellular automaton'', or ``oriented percolation process", in which any space-time point~$(z,n+1)$ is in state one with probability~$p$ in case at least one of~$(z,n)$ and~$(z+\vec{e}_1,n)$ is in state one; otherwise~$(z,n+1)$ is in state zero. We note the following for future use:
\begin{claim} \label{cl:oriented_percolation}
The evolution of~$(\eta_n)$ in each line of the form~$\{{z + k\vec{e}_1}: k \in \mathbb{Z}\}$, with~$z\in \{0\} \times \mathbb{Z}^{d-1}$, is independent of the evolution in all other such lines.
\end{claim}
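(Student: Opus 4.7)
The plan is to exploit the local structure of the update rule~\eqref{eq:def_of_eta}: the value $\eta_{n+1}(z')$ is determined by $Z(z',n+1)$ together with $\eta_n(z')$ and $\eta_n(z'+\vec{e}_1)$, and both $z'$ and $z'+\vec{e}_1$ lie in the same line $L_z := \{z + k\vec{e}_1 : k \in \mathbb{Z}\}$ as soon as $z' \in L_z$. Consequently the dynamics splits completely along the $\vec{e}_1$-direction, and each line $L_z$ evolves using only Bernoulli variables indexed by sites of that same line.

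More concretely, for each $z \in \{0\} \times \mathbb{Z}^{d-1}$ I would introduce the $\sigma$-algebra
\[
\mathcal{G}_z := \sigma\bigl( \{Z(y,m) : y \in L_z,\ m \in \mathbb{N}\} \bigr).
\]
Since the collection $\{L_z\}$ partitions $\mathbb{Z}^d$ and the family $\{Z(y,m)\}_{y,m}$ is independent under $P_p$, the $\sigma$-algebras $\{\mathcal{G}_z\}_{z \in \{0\} \times \mathbb{Z}^{d-1}}$ are mutually independent.

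The next step is a short induction on $n$ showing that, for every such $z$, the restriction of $\eta_n$ to $L_z$ is $\mathcal{G}_z$-measurable. The base case $n=0$ is trivial because $\eta_0$ is deterministic. For the inductive step, note that if $z' \in L_z$ then $z'+\vec{e}_1 \in L_z$ as well, so by the inductive hypothesis both $\eta_n(z')$ and $\eta_n(z'+\vec{e}_1)$ are $\mathcal{G}_z$-measurable; since $Z(z',n+1) \in \mathcal{G}_z$ too, \eqref{eq:def_of_eta} yields $\eta_{n+1}(z') \in \mathcal{G}_z$. The independence asserted in the claim then follows immediately from the mutual independence of the $\mathcal{G}_z$'s.

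There is no real technical obstacle here: the only conceptual point to verify is that the update rule never couples sites across distinct lines, which is immediate from inspection of~\eqref{eq:def_of_eta}. (Had $\eta_0$ been random rather than deterministic, the same argument would still go through provided that $\eta_0$ were itself taken independent across the lines $L_z$.)
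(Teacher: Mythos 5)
Your proof is correct. The paper states Claim~\ref{cl:oriented_percolation} without a formal proof (it is presented as a direct observation from the update rule~\eqref{eq:def_of_eta}), and your argument — partitioning the independent Bernoulli field $\{Z(y,m)\}$ into the $\sigma$-algebras $\mathcal{G}_z$ along the lines $L_z$ and inducting on $n$ to show measurability of $\eta_n|_{L_z}$ with respect to $\mathcal{G}_z$ — is exactly the natural formalization, including the appropriate caveat about $\eta_0$ being deterministic or independent across lines.
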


\begin{definition}\label{def_cellular_upper_inv}
We denote by~${\nu}_p$ the upper invariant distribution of the process $(\eta_n)_{n \geq 0}$, that is,~${\nu}_p$ is the weak limit, as~$n \to \infty$, of the law of $\eta_n$ started from~$\eta_0 = \underline{1}$.
\end{definition}

We will establish Theorem~\ref{thm:density_good_boxes} as a consequence of the following two results.
\begin{proposition}[$\omega_\xi$ under $\mu_{\lambda,R}$ dominates $\nu_{\theta_1(R)}$]
\label{prop:second_domination} Let $\kappa, \alpha,  R_{*},  c$
  be as in Proposition~\ref{prop:growth_to_adjacent_box}, and assume~$R \ge R_*$. Let~$\xi$ be a random configuration sampled from~$\mu_{\lambda,R}$, and let~$\omega_\xi$ be the corresponding configuration of good boxes, as in~\eqref{eq:def_of_omega_xi}. Then the law of $\omega_\xi$ stochastically dominates~$\nu_{\theta_1(R)}$, where
\begin{equation}\label{theta_1_R_def}
\theta_1(R) := \left(1-\exp\left\{-\frac{c}{3}R^d\right\}\right)^2.
\end{equation}
\end{proposition}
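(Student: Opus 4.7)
\textbf{Plan of proof of Proposition~\ref{prop:second_domination}.} The natural strategy is to realize both $\mu_{\lambda,R}$ and $\nu_{\theta_1(R)}$ as time--$nT$ marginals of processes started from $\underline{1}$, and to establish the domination at each finite $n$. Concretely, let $(\xi_t)_{t\geq 0}$ be the contact process on $\Z^d$ constructed via the graphical representation of Section~\ref{subsection_graphical_constr_of_cont_proc}, started from $\xi_0=\underline{1}$. By the weak convergence of $\xi_t$ to $\mu_{\lambda,R}$, setting $\omega^{(n)} := \omega_{\xi_{nT}}$ we have $\omega^{(n)}$ converging in distribution to $\omega_\xi$ with $\xi\sim \mu_{\lambda,R}$. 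Similarly, starting the PCA $(\eta_n)$ of Definition~\ref{def_cellular_automaton} with parameter $\theta_1(R)$ from $\eta_0=\underline{1}$ yields $\eta_n \Rightarrow \nu_{\theta_1(R)}$. Since stochastic domination with respect to the pointwise order passes to weak limits on $\{0,1\}^{\Z^d}$, it will suffice to construct, for every $n$, a coupling under which $\omega^{(n)} \geq \eta_n$ pointwise.

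The construction of the coupling proceeds by induction on $n$. The base case $n=0$ is trivial since both configurations equal $\underline{1}$. For the inductive step, assume we have coupled so that $\omega^{(n)}\geq \eta_n$; then, using the independence of the graphical construction on disjoint time intervals, we need to build, for each $z\in\Z^d$, a Bernoulli$(\theta_1(R))$ variable $Z(z,n+1)$, with the family $(Z(z,n+1))_{z\in\Z^d}$ independent and independent of $\omega^{(n)}$, such that
\[
\omega^{(n+1)}(z) \;\geq\; Z(z,n+1)\cdot \max\bigl(\omega^{(n)}(z),\,\omega^{(n)}(z+\vec{e}_1)\bigr).
\]
The natural candidate for $\{Z(z,n+1)=1\}$ is the intersection of two events defined from the graphical construction inside the space--time window $(\widehat{B}+\kappa R z)\times[nT,(n+1)T]$: one that guarantees box $z$ stays good (apply Proposition~\ref{prop:growth_to_adjacent_box} to $B+\kappa R z$), and one that guarantees the infection propagates from box $z+\vec{e}_1$ to box $z$ (apply the $-\vec{e}_1$ variant of Proposition~\ref{prop:growth_to_adjacent_box}, which follows by reflection invariance of the graphical construction). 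Splitting the interval $[nT,(n+1)T]$ into sub-intervals so that Proposition~\ref{prop:growth_to_adjacent_box} can be applied with its time parameter rescaled, each sub-event can be made to occur with probability at least $1-\exp(-\tfrac{c}{3}R^d)$, which explains both the $\tfrac{c}{3}$ in the exponent and the square in~\eqref{theta_1_R_def}.

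The principal obstacle is that, as defined above, the events supporting $Z(z,n+1)$ for different $z$ use overlapping Poisson processes and are therefore not independent. To recover independence, I would first exploit the fact that a window $\widehat{B}+\kappa Rz$ intersects only a bounded number (depending on $d$ alone) of other such windows, so the events $\{Z(z,n+1)=1\}$ form a $k$-dependent Bernoulli field; then apply Theorem~\ref{thm:lss} (the Liggett--Schonmann--Stacey-type result stated in the excerpt, proved in the Appendix in the oriented-percolation formulation) to stochastically dominate this $k$-dependent field from below by a genuine product Bernoulli field. A mild adjustment of constants---absorbed into the $\tfrac{c}{3}$ factor---ensures that the resulting parameter is still at least $\theta_1(R)$. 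Combining the one-step coupling across all $n$ through the Markov property of the graphical construction, and then passing to the limit $n\to\infty$ as in the first paragraph, yields the stochastic domination claimed in Proposition~\ref{prop:second_domination}.
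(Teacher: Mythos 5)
Your overall strategy matches the paper's: approximate $\mu_{\lambda,R}$ and $\nu_{\theta_1(R)}$ at finite time from the all-ones state, establish a one-step domination of the good-box configuration over one PCA update via the graphical construction, iterate, and pass to the limit. This is exactly the content of Lemma~\ref{lem:domination_time_one} and the short proof of Proposition~\ref{prop:second_domination} that follows it.

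The genuine gap is in your account of where the specific value $\theta_1(R)=(1-\exp(-\tfrac{c}{3}R^d))^2$ comes from; since the proposition asserts domination by $\nu_{\theta_1(R)}$ with this exact parameter, ``a mild adjustment of constants absorbed into the $\tfrac{c}{3}$ factor'' is not acceptable. You attribute the $\tfrac{c}{3}$ and the square to splitting $[nT,(n+1)T]$ into sub-intervals and intersecting two propagation events, but neither mechanism produces this form: intersecting two events of probability $1-\exp(-\tfrac{c}{3}R^d)$ gives the lower bound $1-2\exp(-\tfrac{c}{3}R^d)$, not the square, and Proposition~\ref{prop:growth_to_adjacent_box} is stated for a single fixed $T$, so there is no ``rescaled-time'' version to invoke. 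The paper's one-step event is simpler and makes the constant transparent: for each $z$, restrict the graphical construction to $\widehat{\mathscr{B}}(z)\times[0,T]$, and let $X_z$ indicate that $\mathscr{B}(z)$ is good at time $T$ whenever at least one of $\mathscr{B}(z),\mathscr{B}'(z)$ is initially good (with $X_z=1$ trivially otherwise). By Proposition~\ref{prop:growth_to_adjacent_box} and its reflection under $\vec{e}_1\mapsto-\vec{e}_1$, $\mathbb{P}(X_z=1)\geq 1-\exp(-cR^d)$ uniformly over initial data. Since $X_z$ uses only noise inside $\widehat{\mathscr{B}}(z)$, the field $(X_z)$ is independent across the lines $\{z+k\vec{e}_1\}_{k\in\Z}$ and $1$-dependent within each line, so Theorem~\ref{thm:lss} applies with $d=1$, $k=1$, $|B_1(0)|=3$; solving $1-(1-\sqrt p)^{3}=1-\exp(-cR^d)$ gives precisely $\theta_1(R)$. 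Your more generic observation that the windows have bounded overlap, hence the field is $k$-dependent, is true, but without the explicit one-dimensional structure it would hand LSS a larger $k$ and yield a strictly smaller parameter than $\theta_1(R)$.
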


Our next result is the discrete-time analogue of \cite[Theorem 2.1]{LS06} (which pertains to the upper invariant distribution of the contact process).

\begin{theorem}\label{thm:apply_ls}
If~$p \ge \frac34$, then~$\nu_p$ dominates a   Bernoulli product measure $\pi_{\theta_2(p)}$ with parameter
\begin{equation}\label{theta_2_R_def}
\theta_2(p) := 1 - \left(\frac{1-p + \sqrt{1-p}}{p} \right)^2.
 \end{equation}
\end{theorem}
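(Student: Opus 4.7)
My plan is to reduce the $d$-dimensional stochastic domination to its one-dimensional analogue via Claim~\ref{cl:oriented_percolation}, and then adapt to discrete time the strategy used in~\cite[Theorem~2.1]{LS06} for the upper invariant measure of the one-dimensional contact process. For the reduction, I will use that Claim~\ref{cl:oriented_percolation} states that the processes on distinct lines $L_y = y + \mathbb{Z}\vec{e}_1$ ($y \in \{0\}\times\mathbb{Z}^{d-1}$) are independent; so starting from $\underline{1}$ and sending $n \to \infty$, the measure $\nu_p$ factors as a product (over these lines) of the one-dimensional upper invariant measure $\nu_p^{1\mathrm{D}}$ of the process~\eqref{eq:def_of_eta} on $\mathbb{Z}$. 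Since a product of stochastic dominations of Bernoulli product measures is again a stochastic domination of a Bernoulli product measure of the same density, it will be enough to show that $\nu_p^{1\mathrm{D}}$ stochastically dominates $\pi_{\theta_2(p)}$ on $\{0,1\}^{\mathbb{Z}}$.

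For this one-dimensional statement I plan to invoke the standard criterion that $\nu_p^{1\mathrm{D}}$ dominates $\pi_\theta$ if and only if, for every finite $A \subset \mathbb{Z}$, every $y \notin A$, and every $(i_x)_{x \in A} \in \{0,1\}^A$,
\begin{equation*}
\nu_p^{1\mathrm{D}}\bigl(\eta(y)=1 \,\big|\, \eta(x) = i_x \text{ for all } x \in A\bigr) \ge \theta.
\end{equation*}
Since $\nu_p^{1\mathrm{D}}$ is the $n \to \infty$ limit of the attractive dynamics~\eqref{eq:def_of_eta} started from $\underline{1}$ and driven by the independent variables $\{Z(z,n)\}$, it will be FKG-positively-associated. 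In particular the conditional probability above will be a non-increasing function of $(i_x)_{x \in A}$, so it will suffice to show
\begin{equation*}
\nu_p^{1\mathrm{D}}\bigl(\eta(y)=1 \,\big|\, \eta(x) = 0 \text{ for all } x \in A\bigr) \ge \theta_2(p)
\end{equation*}
uniformly over finite $A \subset \mathbb{Z}$ and $y \notin A$.

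The final and most delicate step will be the quantitative bound on this conditional probability. Here I plan to exploit the one-sided dependence of the dynamics: each $\eta(z)$ is measurable with respect to $\{Z(w,m): w \ge z,\ m \le 0\}$, and $\{\eta(z)=1\}$ is exactly the event that the oriented binary tree rooted at $(z,0)$---in which the vertex $(w,m)$ branches to $(w,m-1)$ and $(w+1,m-1)$---contains an infinite path of vertices all carrying $Z$-value $1$. A tree-depth recursion, combined with a monotonicity argument reducing $A$ to an interval (and then taking the interval to be infinite), should yield a quadratic equation for a suitable survival probability $s$ whose relevant root is $s = (1-p+\sqrt{1-p})/p$; this root is real and lies in $[0,1]$ precisely when $p \ge 3/4$, and it produces the desired lower bound $\theta_2(p) = 1 - s^2$. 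The hardest point will be the tree recursion itself: trees rooted at different sites share vertices at lower levels, so conditioning on extinction of trees rooted at the sites of $A$ is nontrivially correlated with survival of the tree rooted at $y$. Controlling these shared-site correlations tightly enough to yield the sharp constant $\theta_2(p)$ is precisely the technical core of the adaptation of~\cite[Theorem~2.1]{LS06} to the discrete-time setting.
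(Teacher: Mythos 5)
Your reduction to $d=1$ via Claim~\ref{cl:oriented_percolation} and the shape of the argument (condition on finitely many sites, reduce to the all-zeros case, bound that conditional probability) match the paper's, and you even name the correct root $(1-p+\sqrt{1-p})/p$. But there are two genuine gaps.

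First, the monotonicity reduction is based on the wrong tool. You assert that positive association of $\nu_p^{1\mathrm{D}}$ implies that $\nu_p^{1\mathrm{D}}(\eta(y)=1 \mid \eta(x)=i_x,\, x\in A)$ is monotone in $(i_x)_{x\in A}$, so that the all-zeros conditioning is the worst case. Plain positive association does not yield this once $|A|\ge 2$: to reduce a mixed-value conditioning to the all-zeros conditioning, you need to know that the measure \emph{conditioned on $\eta\equiv 0$ on a finite set} is itself positively associated. That is precisely the \emph{downward FKG} property, Proposition~\ref{prop:vdb}, which is a nontrivial structural fact inherited from~\cite{vdBHK06}; the paper uses it twice (Lemma~\ref{lem:quase_dom} and Lemma~\ref{lem:quase2}), and it cannot be obtained from FKG-positive association of $\nu_p$ alone. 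In fact the paper is careful to perform the sequential coupling from one side only (conditioning on sites in $\mathbb{N}$), which is where the one-sided structure you observe in the last paragraph actually gets used.

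Second, and more seriously, the quantitative core of the proof is absent. You reduce to bounding $\nu_p^{1\mathrm{D}}(\eta(y)=1 \mid \eta\equiv 0 \text{ on }A)$ and propose a ``tree-depth recursion'' while explicitly acknowledging that controlling the shared-vertex correlations ``is precisely the technical core.'' That is the theorem. The paper does not run such a recursion; instead it imports Liggett's auxiliary renewal measure $\Psi$ (Section~\ref{subsect_liggett_renewal}, built from the explicit function $F$ of~\cite{Li95}) and proves the comparison $\Psi(\eta'\equiv 0 \text{ on } [0,k])\ge\nu_p(\eta\equiv 0\text{ on }[0,k+1])$ (Proposition~\ref{prop:ligg_dom}) by a time-reversal/duality argument for the oriented-percolation paths (Section~\ref{subsection_oriented_paths_and_duality}); the geometric decay rate $a^2 = 1-\theta_2(p)$ then comes from the explicit tail bound on $F$ (Lemma~\ref{lem:bound_psi}), yielding Corollary~\ref{corollary_liminf_nu}, which feeds into Lemma~\ref{lem:quase_dom}. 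Your outline correctly anticipates the final algebra (the quadratic whose root is $a$) but does not supply the mechanism --- renewal measure, duality, and the $\liminf$ estimate --- that makes it work.
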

Before we prove Proposition \ref{prop:second_domination} and Theorem \ref{thm:apply_ls}, let us deduce Theorem~\ref{thm:density_good_boxes} from them.
\begin{proof}[Proof of Theorem~\ref{thm:density_good_boxes}]
The theorem follows readily from Proposition \ref{prop:second_domination} and Theorem \ref{thm:apply_ls} by choosing~$\gamma > 0$ small enough and~$R_{**} \geq R_*$ large enough so that~$\theta_1(R) > \frac34$ and~$\theta_2(\theta_1(R)) > 1 - \exp\{-\gamma R^d\}$ for all~$R \ge R_{**}$.
\end{proof}

 As for the proof of Theorem \ref{thm:apply_ls}, the authors of \cite{LS06}  observe (see the remark following Theorem~2.1 there) that their proof can be adapted to discrete-time versions of the contact process, such as the oriented percolation process~$(\eta_n)$ under consideration here. We go over the main steps of the proof of Theorem~\ref{thm:apply_ls} in the Appendix, both for completeness and because we wish to be clear about how the exact value~$\theta_2(p)$  arises.

\smallskip

We now turn to Proposition~\ref{prop:second_domination}. Before we give its proof, we recall the well-known Liggett-Schonmann-Stacey \cite{LSS97} stochastic domination result:
\begin{theorem}[\cite{Li99}, Theorem B26]\label{thm:lss}
Let~$\{X_x:x\in\mathbb{Z}^d\}$ be $\{0,1\}$-valued random variables (jointly defined under some probability measure~$P$) satisfying, for some~$k \in \mathbb{N}$ and~$p \geq \tfrac14$,
$$P( \, X_x = 1 \mid X_y:\;|y-x| > k \, ) \geq 1-(1-\sqrt{p})^{|B_k(0)|} \quad \text{a.s. for all }x \in \Z^d.$$
Then the law of this family stochastically dominates the Bernoulli product measure $\pi_p$ with density~$p$ on $\mathbb{Z}^d$.
\end{theorem}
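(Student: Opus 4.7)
I would prove Theorem~\ref{thm:lss} by constructing, on a suitably enlarged probability space, a monotone coupling between $\{X_x\}_{x\in\mathbb{Z}^d}$ and an i.i.d.\ Bernoulli$(p)$ family $\{Y_x\}_{x\in\mathbb{Z}^d}$ with $Y_x\leq X_x$ almost surely for every $x\in\mathbb{Z}^d$; the claimed stochastic domination then follows from Strassen's theorem. By a standard weak-limit/projective-limit argument in $\{0,1\}^{\mathbb{Z}^d}$, it suffices to build the coupling on an arbitrary finite vertex set $F\subset\mathbb{Z}^d$ with bounds uniform in $F$, and then take $F\uparrow\mathbb{Z}^d$.

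On a finite $F$, I would fix an enumeration $x_1,\dots,x_N$ of $F$ and introduce i.i.d.\ $\mathrm{Unif}(0,1)$ auxiliary variables $U_1,\dots,U_N$ independent of $\{X_x\}$. Writing $\mathcal{F}_{n-1}:=\sigma(X_{x_1},\dots,X_{x_{n-1}},U_1,\dots,U_{n-1})$ and $q_n:=P(X_{x_n}=1\mid\mathcal{F}_{n-1})$, I would define the coupling inductively by conditional thinning:
$$
Y_{x_n}:=X_{x_n}\cdot\mathds{1}\{q_n\,U_n\leq p\}.
$$
A direct conditional calculation yields $P(Y_{x_n}=1\mid\mathcal{F}_{n-1})=p$ almost surely, so the $Y_{x_j}$'s form an i.i.d.\ Bernoulli$(p)$ family, and they are pointwise dominated by the $X_{x_j}$'s by construction. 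The whole coupling is well defined and delivers the desired conclusion provided the central estimate
\begin{equation}\label{eq:plan-key-lss}
q_n\geq p \quad\text{almost surely, for every }n
\end{equation}
holds (so that the indicator threshold $p/q_n\leq 1$ is well defined).

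The main obstacle is \eqref{eq:plan-key-lss}. The hypothesis supplies a pointwise lower bound on $P(X_{x_n}=1\mid\sigma(X_y:|y-x_n|>k))$---the conditional probability given \emph{all} far-away variables---whereas $q_n$ conditions on the enumeration past, a $\sigma$-algebra that is incomparable with the far-away one, since it excludes some far variables (indices $>n$) while including up to $|B_k(0)|-1$ nearby variables (indices $<n$ lying in $B(x_n,k)$). I would handle this in a two-stage conditional manipulation. First, after choosing a lexicographic enumeration, I would integrate out the unrevealed far variables via the tower property to obtain an intermediate bound of the form $P(X_{x_n}=0\mid\text{revealed far})\leq(1-\sqrt p)^{|B_k(0)|}$. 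Second, I would absorb the conditioning on the revealed nearby variables via a worst-case ratio argument; the precise exponent $|B_k(0)|$ in the hypothesis is calibrated so that dividing by the worst-case conditional mass $(1-\sqrt p)^{|B_k(0)|-1}$ of the revealed nearby configuration reduces the exponential bound to the linear one $P(X_{x_n}=0\mid\mathcal F_{n-1})\leq 1-\sqrt p$.

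The appearance of $\sqrt p$ (as opposed to $p$) in the hypothesis is precisely what makes this two-stage absorption work, and the assumption $p\geq 1/4$ ensures that all the resulting constants line up so that $q_n\geq \sqrt p\geq p$, closing \eqref{eq:plan-key-lss}. The technical execution of the second stage is the only nontrivial piece beyond the above setup, and is the reason the result has its particular form; I would follow the argument of~\cite{LSS97}, as presented in~\cite[Theorem B26]{Li99}, for the detailed bookkeeping.
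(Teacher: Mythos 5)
The paper does not prove Theorem~\ref{thm:lss}; it is quoted from \cite[Theorem~B26]{Li99}, which in turn follows \cite{LSS97}, so there is no in-paper proof to compare against. Your high-level strategy --- a sequential coupling via conditional thinning with auxiliary uniforms on a finite window, reducing the domination to the uniform bound $q_n := P(X_{x_n}=1\mid\mathcal F_{n-1})\geq p$, followed by a projective limit --- is indeed the structure of the argument in those references, and you also correctly identify the target form of the central estimate, $q_n\geq\sqrt p\geq p$.

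The specific mechanism you propose for establishing $q_n\geq p$, however, does not work. You want to lower-bound the conditional probability of the revealed near configuration by $(1-\sqrt p)^{|B_k(0)|-1}$ and divide. But the hypothesis supplies only \emph{upper} bounds on quantities of the form $P(X_y=0\mid\text{far of }y)$; it gives no lower bound on the probability that a near site takes the value $0$, let alone on the joint probability of a near configuration containing several $0$'s. A single near $0$-site already has conditional probability at most on the order of $(1-\sqrt p)^{|B_k(0)|}\ll(1-\sqrt p)^{|B_k(0)|-1}$, so the claimed denominator lower bound is false in general and the one-shot ratio you describe does not close. The actual proof in \cite{LSS97}/\cite[Theorem~B26]{Li99} is an induction over the revealed set, peeling off near sites one at a time; removing a near site conditioned to equal $1$ costs a multiplicative factor $(1-\sqrt p)/\sqrt p$, and requiring this factor to be at most $1$ is precisely where $p\geq\tfrac14$ enters (a point your outline leaves unlocalized), while the near $0$-sites are the genuinely delicate case and cannot be dispatched by the worst-case division you sketch. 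Since you explicitly defer to the references for the ``detailed bookkeeping,'' the plan is incomplete rather than outright wrong, but the quantitative claim in your second stage is incorrect as stated and understates the difficulty of the estimate.
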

We apply this result to obtain the following.
\begin{lemma}[One-step domination] \label{lem:domination_time_one}
Let~$\kappa, \alpha,  R_{*},  c$ and~$T$ be as in Proposition~\ref{prop:growth_to_adjacent_box}. Assume~$R \ge R_*$ and~$\xi_0 \in \{0,1\}^{\Z^d}$. Let~$(\xi_t)_{t \geq 0}$ be the contact process with infection rate $\lambda$ and range~$R$ started from~$\xi_0$, and let~$(\eta_n)_{n \ge 0}$ be the process of Definition \ref{def_cellular_automaton} with~$p = \theta_1(R)$ (cf.\ \eqref{theta_1_R_def}) and started from~$\eta_0 = \omega_{\xi_0}$. Then the law of $\omega_{\xi_T}$ stochastically dominates the law of $\eta_1$.
\end{lemma}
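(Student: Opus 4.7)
My plan is to use the graphical construction to build auxiliary local indicators $(X_z)_{z \in \Z^d}$ that are pathwise dominated by $\omega_{\xi_T}$, satisfy the marginal bound from Proposition~\ref{prop:growth_to_adjacent_box} at every site where $\eta_1$ can be $1$, and have a $1$-dependent structure along the $\vec{e}_1$-direction to which Theorem~\ref{thm:lss} can be applied. For each $z \in \Z^d$, let $\widehat{B}_z := B_z \cup B_{z + \vec{e}_1}$ (with $B_z = [0,\kappa R)^d + \kappa R z$) and define $X_z$ as the indicator of the event that the contact process \emph{restricted to $\widehat{B}_z$} (generator~\eqref{eq:generator_on_B}, built from the Poisson clocks inside $\widehat{B}_z \times [0,T]$), started from $\xi_0|_{\widehat{B}_z}$, leaves $B_z$ good at time $T$ in the sense of Definition~\ref{def:good_boxes}. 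Monotonicity of the graphical construction in both the initial condition and the set of available infection arrows yields $X_z \leq \omega_{\xi_T}(z)$ for every $z$.

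For the marginal lower bound I would argue by cases. If $\omega_{\xi_0}(z) = 1$, Proposition~\ref{prop:growth_to_adjacent_box} is applied directly with $B = B_z$ and $B' = B_{z+\vec{e}_1}$, giving $\mathbb{P}(X_z = 1) \geq 1 - e^{-cR^d}$. If instead $\omega_{\xi_0}(z+\vec{e}_1) = 1$, the reflected form of Proposition~\ref{prop:growth_to_adjacent_box} (valid because the generator is invariant under reflection of the first coordinate) is applied with $B = B_{z+\vec{e}_1}$, $B' = B_z$, yielding the same bound. Writing $G' := \{z \in \Z^d : \max(\omega_{\xi_0}(z), \omega_{\xi_0}(z+\vec{e}_1)) = 1\}$ for the support of $\eta_1$, one has $\mathbb{P}(X_z = 1) \geq 1 - e^{-cR^d}$ for all $z \in G'$.

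The dependence structure is transparent: $X_z$ is measurable with respect to the Poisson clocks in $\widehat{B}_z \times [0,T]$, so $X_z$ and $X_{z'}$ are independent whenever $\widehat{B}_z \cap \widehat{B}_{z'} = \emptyset$, i.e.\ whenever $z' - z \notin \{-\vec{e}_1, 0, \vec{e}_1\}$. In particular, on each line $L_y := y + \Z\vec{e}_1$ with $y \in \{0\} \times \Z^{d-1}$, the family $(X_z)_{z \in L_y}$ is $1$-dependent, and the families on distinct lines are mutually independent. To obtain a globally uniform marginal, I would augment the probability space with independent Bernoulli$(1 - e^{-cR^d})$ variables $(U_z)_{z \notin G'}$ and set $\tilde X_z := X_z$ on $G'$, $\tilde X_z := U_z$ off $G'$; this preserves the $1$-dependence along lines and ensures $\mathbb{P}(\tilde X_z = 1) \geq 1 - e^{-cR^d}$ for every $z$.

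Applying the one-dimensional case of Theorem~\ref{thm:lss} (so $k = 1$ and $|B_1(0)| = 3$) on each line yields stochastic domination by $\pi_p$ on $L_y$ as soon as $(1-\sqrt{p})^3 \geq e^{-cR^d}$, and the tight choice is exactly $p = \theta_1(R) = (1 - e^{-cR^d/3})^2$, which is where the factor of $c/3$ in the definition of $\theta_1$ comes from. Combining with independence across lines upgrades this to $(\tilde X_z)_{z \in \Z^d} \succeq \pi_{\theta_1(R)}$ globally. By Strassen's theorem we can couple $(\tilde X_z)$ with an i.i.d.\ Bernoulli$(\theta_1(R))$ family $(Z(z,1))_z$ so that $\tilde X_z \geq Z(z,1)$ almost surely, and then
$$\omega_{\xi_T}(z) \;\geq\; X_z \cdot \mathds{1}[z \in G'] \;\geq\; Z(z,1) \cdot \max(\eta_0(z), \eta_0(z+\vec{e}_1)) \;=\; \eta_1(z)$$
for every $z \in \Z^d$, which is the desired stochastic domination. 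The main points requiring care are the reflected use of Proposition~\ref{prop:growth_to_adjacent_box} and the check that restricting the graphical construction to $\widehat{B}_z$ really does produce a process pathwise dominated by the full one; both are routine consequences of attractiveness and of the translation/reflection invariance of the Poisson clocks.
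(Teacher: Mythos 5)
Your proposal is correct and follows essentially the same route as the paper: define block indicators $X_z$ from the graphical construction restricted to $\widehat{B}_z \times [0,T]$, bound their marginals via Proposition~\ref{prop:growth_to_adjacent_box}, note the line-wise $1$-dependence, and apply Theorem~\ref{thm:lss} with $d=1$, $k=1$ to extract $\theta_1(R)$. The only cosmetic differences are that the paper folds the ``$z \notin G'$'' case into the definition of $X_z$ by declaring $X_z = 1$ whenever neither of $\mathscr{B}(z), \mathscr{B}'(z)$ is good for $\xi_0$, which makes the marginal bound hold uniformly without your auxiliary Bernoulli variables $U_z$, and the paper finishes by monotonicity of $x \mapsto x \cdot \max(\omega_{\xi_0}(\cdot),\omega_{\xi_0}(\cdot+\vec{e}_1))$ rather than invoking Strassen explicitly; both choices are interchangeable. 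You are also right that the case $\omega_{\xi_0}(z)=0$, $\omega_{\xi_0}(z+\vec{e}_1)=1$ requires the reflected form of Proposition~\ref{prop:growth_to_adjacent_box}---the paper uses this implicitly, and it is good that you flag the symmetry justification.
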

\begin{proof}
For each~$z \in \Z^d$, define the sets
$$\mathscr{B}(z) = [0,\kappa R)^d + \kappa R\cdot z,\quad \mathscr{B}'(z) = \mathscr{B}(z) + \kappa R \cdot \vec{e}_1,\quad \widehat{\mathscr{B}}(z) = \mathscr{B}(z) \cup \mathscr{B}'(z).$$
Let~$(\xi^{(z)}_t)_{t \geq 0}$ denote the  contact process with range $R$ and infection rate $\lambda$ on~$\widehat{\mathscr{B}}(z)$. Let us construct a joint realization of $(\xi^{(z)}_t)_{t \geq 0}$ for all
$z \in \Z^d$ simultaneously on the same probability space as follows.
 We let~$\xi^{(z)}_0$ be the restriction of~$\xi_0$ to~$\widehat{\mathscr{B}}(z)$, and the dynamics of~$(\xi_t^{(z)})_{t \geq 0}$ be given using the same graphical construction  as the one used for~$(\xi_t)_{t \geq 0}$ in Section \ref{subsection_graphical_constr_of_cont_proc}, but only the Poisson processes involving vertices and edges contained in the box~$\widehat{\mathscr{B}}(z)$.

We then define the family of $\{0,1\}$-valued random variables~$\{X_z:z\in\Z^d\}$ by prescribing that for each~$z$, we set~$X_z = 1$ if one of the following conditions hold:
\begin{itemize}
\item[(a)] both~$\mathscr{B}(z)$ and~$\mathscr{B}'(z)$ are not good for~$\xi_0$ (i.e., $\omega_{\xi_0}(z)=\omega_{\xi_0}(z + \vec{e}_1)=0$);
\item[(b)] at least one of~$\mathscr{B}(z)$ and~$\mathscr{B}'(z)$ is good for~$\xi_0$, and moreover~$\mathscr{B}(z)$ is good for~$\xi^{(z)}_T$
(i.e., $\max\{\omega_{\xi_0}(z),\omega_{\xi_0}(z + \vec{e}_1) \}=1$ and $\omega_{\xi^{(z)}_T}(z)=1$).
\end{itemize}
We set~$X_z = 0$ otherwise. This gives
\begin{equation} \label{eq:compare_omega}\omega_{\xi^{(z)}_T}(z) \geq X_z \cdot \max\{\omega_{\xi_0}(z),\omega_{\xi_0}(z + \vec{e}_1) \},\quad z \in \Z^d.\end{equation}

 By Proposition~\ref{prop:growth_to_adjacent_box}, we have
\begin{equation*}
\mathbb{P}(X_z = 1) \geq 1 - \exp\{-cR^d \}
\end{equation*}
(condition (a) in the definition of~$X_z$ is only present to make this lower bound trivially correct in case neither of the boxes involved are good for $\xi_0$).

\smallskip

Note that the values of~$X_\cdot$ in each line of the form~$\{z + k\vec{e}_1:k \in \Z\}$ are independent of the values of~$X_\cdot$ in all other such lines. Moreover,~$X_z$ is independent of~$\{X_{z + k\vec{e}_1}: k \in \Z\backslash \{-1,0,1\}\}$ under the joint realization described above.

\smallskip

By these considerations, we can use Theorem~\ref{thm:lss} with~$d=1$,~$k=1$ and~$p$ such that~$1-(1-\sqrt{p})^3 = 1 - \exp\{-cR^d\}$, that is,~$p = \theta_1(R)$ (cf.\ \eqref{theta_1_R_def}), to conclude that~$\{X_z:z \in \Z^d\}$ stochastically dominates a field of independent, Bernoulli($\theta_1(R)$) random variables. Comparing~\eqref{eq:def_of_eta} and~\eqref{eq:compare_omega}, the proof is then complete.
\end{proof}
\begin{proof}[Proof of Proposition~\ref{prop:second_domination}]
 Let~$\mu(t)$ be the distribution, at time~$t$, of the contact process $(\xi_.)$  with range~$R$, infection parameter $\lambda$ and initial state~$\xi_0  = \underline{1}$. Let~$\nu(n)$ be the distribution, at time~$n$, of the process $(\eta_.)$ from Definition~\ref{def_cellular_automaton} with~$p = \theta_1(R)$ and initial state~$\eta_0 = \underline{1}$. It is easy to see that~$\mu(mT)$ stochastically dominates~$\nu(m)$, for every~$m \in \mathbb{N}$ by using Lemma~\ref{lem:domination_time_one} iteratively together with the fact that the contact process is attractive (cf.\ below equation (1.1) of \cite[Part 1, Section 1]{Li99}).
  The result then follows by taking~$m \to \infty$.
\end{proof}

\subsection{Propagation of good boxes}
\label{ss:propagation}

In this section, we prove Proposition~\ref{prop:growth_to_adjacent_box}. Again we fix~$d \geq 1$ and~$\lambda > 1$.

We start by giving the value of the constant~$\kappa$ that appears in the statements of Proposition~\ref{prop:growth_to_adjacent_box}:
we choose (and fix)~$\kappa=\kappa(d,\lambda) \in \mathbb{N}$ large enough that
\begin{equation}
\label{eq:choice_of_kappa}
\kappa > 4,\qquad \lambda \left(1-\frac{1}{\kappa}\right)^d > 1 + \frac{3}{4}(\lambda - 1).
\end{equation}
The reason we need the second inequality will become apparent in Section~\ref{subsub_growth_in_box}, but heuristically we want
$\kappa$ to be big enough so that the contact process with infection rate $\lambda$ and range $R$ on $[0, \kappa R)^d$ already exhibits ``supercritical behavior".
 Proposition~\ref{prop:growth_to_adjacent_box} will follow from the next two lemmas.

\begin{lemma}[Infection spreads everywhere in a box]\label{lem:infect_small_boxes}
There exists~$\delta > 0$ and~$m_0 > 0$ (that only depend on $d$ and $\lambda$) such that the following holds. Let~$R > 1$ and
\begin{equation}\label{B_B_prime_B_hat_again}
B = [0,\kappa R)^d,\qquad B' = B + \kappa R \cdot \vec{e}_1,\qquad \widehat{B} = B \cup B',
\end{equation}
where~$\vec{e}_1$ denotes the first canonical vector. Let~$(\xi_t)$ be the contact process on~$\widehat{B}$ with infection rate $\lambda$ and range~$R$. If~$m := |\xi_0| \geq m_0$, then
\begin{equation}\label{eq_infect_small_boxes}
  \mathbb{P}\left(\begin{array}{l}
|\xi_{(4\kappa)^d} \cap ([0,R)^d + R\cdot z )| \geq \delta m \text{ for } \\[.15cm] \text{all~$z \in \Z^d$ such that } [0,R)^d + R\cdot z \subset \widehat{B}
\end{array}\right) > 1-\exp(-\delta m).
\end{equation}
\end{lemma}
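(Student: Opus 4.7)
The plan is to prove the lemma by iterative propagation of infection between adjacent sub-boxes of $\widehat{B}$. Observe that $\widehat{B}$ contains exactly $2\kappa^d$ sub-boxes of the form $[0,R)^d + R \cdot z$, and these form a connected ``super-graph'' (with the inherited nearest-neighbor adjacency of $\Z^d$) of graph-diameter at most $2\kappa$. The core idea is to locate a single seed sub-box that carries a significant fraction of the initial infection, and then repeatedly ``flood'' the infection to lattice-neighbor sub-boxes over a constant number of rounds, invoking Lemma~\ref{lem:growth_to_neighbor} at each round as the local spreading mechanism and Lemma~\ref{lem:growth_in_box} to keep previously-populated sub-boxes populated.

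First, since $|\xi_0| = m$ is distributed among at most $2\kappa^d$ sub-boxes, pigeonhole produces a sub-box $[0,R)^d + R \cdot z^\ast$ with at least $m/(2\kappa^d)$ infected sites at time~$0$. Next, I would run the process in $N$ discrete rounds of some constant length $\tau$ (with $N = O(\kappa)$ determined by the super-graph diameter and $N\tau \leq (4\kappa)^d$), maintaining the following invariant at the end of round $k$: every sub-box at super-graph distance $\leq k$ from $z^\ast$ contains at least $\delta_k \cdot m$ infected sites, where $\delta_0 = (2\kappa^d)^{-1}$ and $\delta_k = \delta' \cdot \delta_{k-1}$ for a constant $\delta' \in (0,1)$ supplied by Lemma~\ref{lem:growth_to_neighbor}. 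The inductive step applies Lemma~\ref{lem:growth_to_neighbor} to each ``newly reached'' sub-box $z$ together with its populated neighbor in $\mathcal{S}_{k-1}$: with failure probability exponentially small in $\delta_{k-1} m$, the infection transmits enough mass to satisfy the invariant at $z$. After $N$ rounds every sub-box of $\widehat{B}$ is covered, and one takes $\delta := \delta_N = (\delta')^N / (2\kappa^d)$, which depends only on $d$ and $\lambda$.

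The probability bound follows from a union bound over the at most $O(\kappa^{d+1})$ spreading events, each of which fails with probability at most $\exp(-c \cdot \delta m)$ for some $c = c(d,\lambda) > 0$; choosing $m_0$ large enough (and possibly shrinking $\delta$) absorbs the polynomial prefactor into the exponential, yielding the claimed bound $\exp(-\delta m)$. The main obstacle I anticipate is ensuring the compatibility of the consecutive spreading events: each round consumes Poisson marks on a specific set of space-time slabs, and one must be careful that the graphical-construction regions used across rounds either are disjoint (preserving independence) or can be coupled monotonically so that populated sub-boxes at the end of round $k-1$ really are available as ``donors'' at the start of round $k$. This will likely be handled by formulating each round on a fresh time interval of length $\tau$ (so the Poisson processes restricted to different time-slabs are independent) and invoking Lemma~\ref{lem:growth_in_box} to guarantee, with probability $1 - \exp(-\Theta(\delta_{k-1} m))$, that already-populated sub-boxes do not lose their $\delta_{k-1} m$ infected vertices over the round.
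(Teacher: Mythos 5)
Your high-level plan (pigeonhole a seed box, flood iteratively between adjacent boxes, union-bound over the failures) is the same as the paper's, but two specific choices you make do not survive scrutiny.

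First, there is a scale mismatch. Lemma~\ref{lem:growth_to_neighbor} is formulated for half-size boxes $A = [0,\lfloor R/2\rfloor)^d$ and $A' = A + u$ with the hypothesis $\max_{x\in A,\, y\in A'}|x-y|\leq R$. Two lattice-adjacent sub-boxes of the form $[0,R)^d + Rz$ contain pairs of points at $\ell^\infty$-distance up to $2R-1 > R$, so Lemma~\ref{lem:growth_to_neighbor} does not apply directly to your $2\kappa^d$ sub-boxes. The paper instead covers $\widehat{B}$ with $K = (4\kappa)^d$ boxes of side $\lfloor R/2\rfloor$ (these are what the propagation acts on; each target $[0,R)^d + Rz$ then contains at least one member of the cover, which converts the result back to your boxes).

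Second, and more fundamentally, invoking Lemma~\ref{lem:growth_in_box} to keep previously-populated sub-boxes populated is not valid here. That lemma requires at least $\alpha R^d$ infected sites in $[0,\kappa R)^d$ at the start and yields a failure probability of $\exp(-c_0 R^d)$; in the present lemma $m$ can be as small as a constant $m_0$, so $\delta_k m$ can be far below $\alpha R^d$ and the hypothesis simply fails (the error rate is also of the wrong shape, in $R^d$ rather than in $m$). The paper sidesteps maintenance entirely: it defines, for each path $\gamma=(A_0,\dots,A_K)$ in the cover graph starting at the seed box $A_*$, the event $E_\gamma = \bigcap_{i=1}^K\{|\xi_i\cap A_i|\geq (\delta_0)^i m/K\}$, and then intersects over all paths. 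Because Lemma~\ref{lem:growth_to_neighbor} allows $u=0$ (so $A'=A$), a path can ``wait'' at a box, which makes every box of the cover reachable at exactly time $K$, and also gives the one-step $\delta_0$-decay bound that you wanted from Lemma~\ref{lem:growth_in_box}. The number of paths is $\leq K^K$, a constant in $m$, so the union bound over all paths and all steps is dominated by the exponential decay in $m$. If you replace your maintenance step with ``apply Lemma~\ref{lem:growth_to_neighbor} with $u=0$'' and move to half-size boxes, your proof becomes essentially the paper's.
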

We will prove Lemma \ref{lem:infect_small_boxes} in Section \ref{subsub_infect_spreads}.

\begin{lemma}[Supercritical behavior in a box]
\label{lem:growth_in_box}
There exist constants
\begin{equation}\label{bar_R_alpha_T_0_c_0}
 \bar{R} > 0,\quad \alpha \in (0,1),\quad T_0 > 0, \quad c_0 > 0
 \end{equation}
(that only depend on $d$ and $\lambda$) such that the following holds  for any~$R \geq \bar{R}$. Letting~$B = [0,\kappa R)^d$ and~$(\xi_t)$ be the contact process on~$B$ with infection rate~$\lambda$, range~$R$ and initial configuration satisfying~$|\xi_0| \geq \alpha R^d$, we have
\begin{equation}\label{eq:growth_in_box}
\mathbb{P}\left(|\xi_{T_0}| \geq \delta^{-1} \cdot \alpha  R^d\right) > 1 - \exp(-c_0 R^d),
\end{equation}
where~$\delta$ is the constant of Lemma~\ref{lem:infect_small_boxes}.
\end{lemma}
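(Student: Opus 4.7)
I plan to prove Lemma~\ref{lem:growth_in_box} by showing that a suitable weighted occupation functional $W_t := \sum_{x \in \xi_t} w(x)$ has positive drift of order $W_t$ throughout the ``low-density growth window'' $\{|\xi_t| \le \delta^{-1}\alpha R^d\}$, and then promoting this into a high-probability lower bound on $|\xi_{T_0}|$ via a concentration inequality for jump martingales, exploiting the fact that the starting mass is macroscopic, $|\xi_0| \gtrsim R^d$.

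For the weight, I would take $w = w_R$ to be the Perron--Frobenius eigenfunction (normalised by $\|w\|_\infty = 1$) of the symmetric averaging operator $(Pf)(x) := |B(R)|^{-1}\sum_{y \in B(x,R) \cap B} f(y)$ on $\ell^2(B)$, with top eigenvalue $\rho_R$. A Rayleigh-quotient computation against $f \equiv 1$ gives $\rho_R \ge \langle 1, P 1\rangle/\langle 1, 1\rangle$, which converges to $(1 - 1/(2\kappa))^d$ as $R \to \infty$; combined with the inequality $(1-1/(2\kappa))^d > (1-1/\kappa)^d$ and the choice of $\kappa$ in \eqref{eq:choice_of_kappa}, this gives $\eta := \lambda \rho_R - 1 \ge (\lambda-1)/2 > 0$ for all $R$ large enough. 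Since $\kappa \ge 5$, a bounded number of iterates of $P$ has strictly positive kernel on $B \times B$, so Perron--Frobenius applies and $w_R > 0$; comparing $P$ with its continuous counterpart on $[0,\kappa]^d$ (whose top eigenfunction is continuous and strictly positive), one obtains a uniform-in-$R$ lower bound $\underline{w} := \min_{x \in B} w_R(x) \ge c_w > 0$ for all $R \ge \bar R$, with $c_w$ depending only on $d, \lambda$.

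Using the eigen-relation $Pw = \rho_R w$, a direct calculation from the generator \eqref{eq:generator_on_B} yields
\begin{equation*}
\mathcal{L} W_t \;=\; \eta\, W_t \;-\; \frac{\lambda}{|B(R)|} \sum_{x \in \xi_t} \sum_{y \in B(x, R) \cap \xi_t} w(y),
\end{equation*}
and the error term is crudely bounded by $\lambda |\xi_t|^2/|B(R)| \le \lambda \alpha^2 \delta^{-2} R^d/2^d$ in the growth window. Choosing $\alpha > 0$ small enough that this error is dominated by $(\eta/2) W_t \ge (\eta/2) c_w \alpha R^d$ gives $\mathcal{L} W_t \ge (\eta/2) W_t$ throughout the window. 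Set $\tau := \inf\{t : |\xi_t| \ge \delta^{-1}\alpha R^d\}$ and $T_0 := (2/\eta)\log(4/(c_w\delta))$. Decomposing $W_{t \wedge \tau} = W_0 + \int_0^{t \wedge \tau} \mathcal{L} W_s \, ds + M_{t \wedge \tau}$ with $M$ a pure-jump martingale whose jumps are bounded by $\|w\|_\infty = 1$ and whose predictable quadratic variation on $[0, T_0 \wedge \tau]$ is $O(T_0 \delta^{-1}\alpha R^d)$, Gr\"onwall gives $\int_0^{T_0 \wedge \tau}\mathcal{L} W_s\, ds \ge 3 \delta^{-1}\alpha R^d$ on $\{\tau > T_0\}$. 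Freedman's inequality for jump martingales then yields $\mathbb{P}(|M_{T_0 \wedge \tau}| \ge \delta^{-1}\alpha R^d) \le \exp(-c_0 R^d)$ for some $c_0 > 0$ depending only on $d, \lambda$. On the complementary event $W_{T_0 \wedge \tau} > 2\delta^{-1}\alpha R^d$, forcing $\tau \le T_0$ and hence $|\xi_{T_0}| \ge \delta^{-1}\alpha R^d$.

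The main obstacle is establishing the uniform positive lower bound $\underline{w} \ge c_w$ on $w_R$, without which one cannot control $W_0$ in terms of $|\xi_0|$ when the initial infection is concentrated near $\partial B$. Since $w_R$ could in principle decay near the boundary, the uniform bound requires a careful approximation argument relating $P$ to its continuous counterpart on $[0,\kappa]^d$: one invokes Perron--Frobenius for the continuous iterated operator (whose kernel is strictly positive and jointly continuous on the compact domain) to see that its top eigenfunction is continuous and strictly positive, hence bounded away from zero, and then transfers this bound to $w_R$ for $R$ sufficiently large.
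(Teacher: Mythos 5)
Your strategy (a weighted functional with positive drift in the low--density window, plus a concentration estimate for the compensated jump martingale) is the same in spirit as the paper's, but the execution has a fatal logical flaw in the last step and, even if that step is fixed, would need substantial repair work. Here are the issues, roughly in decreasing order of severity.

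\textbf{The good event is empty, and the conclusion does not follow from $\tau \le T_0$.} You set $\tau := \inf\{t : |\xi_t| \ge \delta^{-1}\alpha R^d\}$ and claim that on the good martingale event $W_{T_0\wedge\tau} > 2\delta^{-1}\alpha R^d$. But $\|w\|_\infty = 1$ gives $W_t \le |\xi_t|$ always; hence $W_{T_0\wedge\tau} \le |\xi_{T_0\wedge\tau}|$, and by definition of $\tau$ this is $\le \delta^{-1}\alpha R^d + O(1)$ whether $\tau \le T_0$ (jumps change $|\xi_t|$ by one) or $\tau > T_0$. So the event $\{W_{T_0\wedge\tau} > 2\delta^{-1}\alpha R^d\}$ is empty for $R$ large and the contrapositive you want has vacuous content. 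Moreover, even if the calculation is repaired to yield $\tau \le T_0$ with high probability (which it can be, see below), the lemma asks for a bound on $|\xi_{T_0}|$ at the \emph{fixed} time $T_0$, not at the hitting time $\tau$: once $|\xi_t|$ crosses $\delta^{-1}\alpha R^d$, nothing in your argument prevents it from dropping below again before $T_0$. This is not a cosmetic gap --- it is exactly the reason the paper tracks $X_t$ against a \emph{time-dependent, exponentially growing barrier} $a(t)$ with $a(T_0) = \delta^{-1}\alpha R^d$ (cf.\ \eqref{choose_T_0}): then the good event ``$X_t > a(t)$ for all $t \le T_0$'' directly implies $X_{T_0} > a(T_0) = \delta^{-1}\alpha R^d$, with no restart or post-$\tau$ monitoring needed. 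The paper also needs the second stopping time $\tau_b$ (for $N_t$ against the faster-growing barrier $b(t)$) precisely to guarantee that the drift estimate remains valid on $\{\tau_a > t\}$; your single stopping time conflates these two roles.

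\textbf{The Gr\"onwall step needs a martingale threshold below $W_0$, and maximal (not endpoint) control.} You write ``Gr\"onwall gives $\int_0^{T_0\wedge\tau}\mathcal{L}W_s\,ds \ge 3\delta^{-1}\alpha R^d$ on $\{\tau > T_0\}$'' --- this is not a deterministic consequence of $\{\tau > T_0\}$, since on that event you only have an \emph{upper} bound on $|\xi_s|$; if the martingale part pushes $W$ down, the integral can be arbitrarily small. To run Gr\"onwall pathwise you need $|M_t|$ controlled uniformly on $[0, T_0\wedge\tau]$ by some $K < W_0$ (with $W_0 \ge c_w \alpha R^d$, so $K$ should be e.g.\ $c_w\alpha R^d/2$), whereas you took $K = \delta^{-1}\alpha R^d > W_0$, which kills the exponential lower bound. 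And Freedman applied only to $M_{T_0\wedge\tau}$ (one stopping time) is not enough; you need the maximal version. Both points are fixable, and the paper sidesteps all of it by showing that the \emph{exponentiated} processes $U_{t\wedge\tau}$, $V_{t\wedge\tau}$ are supermartingales and applying optional stopping, which does the whole job in one shot.

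\textbf{Eigenfunction vs.\ sub-eigenfunction.} Using the genuine Perron--Frobenius eigenfunction $w_R$ is a reasonable idea and your Rayleigh-quotient estimate $\rho_R \ge \langle 1,P1\rangle/\langle 1,1\rangle \to (1-1/(2\kappa))^d$ is correct, but you correctly identify the hard part: a uniform-in-$R$ lower bound $\min_B w_R \ge c_w > 0$. Your sketch (continuous limit operator, Perron--Frobenius for the iterated kernel, then a transfer) is plausible but requires a non-trivial operator-perturbation argument, and the transfer from $L^2$-convergence of eigenfunctions to a pointwise lower bound needs care. The paper avoids the whole issue by writing down an explicit function $h(x) = \prod_i \ell(x_i/R)$ with $\ell(u) = 1/\kappa + \min(u,\kappa-u)$ that is \emph{manifestly} bounded below by $(1/\kappa)^d$ and satisfies $Ph \ge (1-1/\kappa)^d h$ (Lemma~\ref{lem:first_bound_density}, proved via the elementary calculus Lemma~\ref{lem:calculus}). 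Exchanging the calculus lemma for an operator-theoretic argument buys you nothing and is considerably more delicate; I would strongly recommend the explicit construction.
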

We will prove Lemma \ref{lem:growth_in_box} in Section \ref{subsub_growth_in_box}.

\begin{proof}[Proof of Proposition~\ref{prop:growth_to_adjacent_box}]
Let~$\kappa$ be as fixed in~\eqref{eq:choice_of_kappa},~$\delta, m_0$ as in Lemma~\ref{lem:infect_small_boxes} and $\bar{R}$, $\alpha$, $T_0$, $c_0$ as in Lemma~\ref{lem:growth_in_box}. Choose~$R \geq \bar{R}$, and also large enough that~$\delta^{-1}\alpha R^d \geq m_0$. Define boxes~$B,B',\widehat{B}$ as in~\eqref{eq:boxes12} and assume~$\xi_0$ is a configuration satisfying~\eqref{eq:initial_conf_good}. We clearly have~$|\xi_0 \cap B| \geq \alpha R^d$, hence by Lemma~\ref{lem:growth_in_box}, with probability above~$1-\exp(-c_0 R^d)$, we have~$|\xi_{T_0} \cap B| \geq \delta^{-1}\cdot \alpha R^d$. Conditioned on this, by Lemma \ref{lem:infect_small_boxes}, with probability above~$1 - \exp(- \delta \cdot \delta^{-1}\alpha R^d)$ we have~$|\xi_{T_0 + (4\kappa)^d} \cap ([0,R)^d + R \cdot z])| \geq \alpha R^d$ for all~$z \in \Z^d$ such that~$[0,R)^d + R\cdot z \subset \widehat{B}$. This implies that both~$B$ and~$B'$ are good (cf.\ Definition \ref{def:good_boxes}) at time~$T_0 + (4\kappa)^d$.

\smallskip

The constants in the statement of Proposition~\ref{prop:growth_to_adjacent_box} should thus be chosen as follows:~$\kappa$ and $\alpha$ as already described,~$T = T_0 + (4\kappa)^d$ and finally,~$R_*$ and~$c$ so that~$R_* \geq \max(\bar{R}, (\delta m_0/\alpha)^{1/d})$ and
$$ \exp(-c_0R^d) + \exp(-\alpha R^d) \leq \exp(-cR^d) \quad \text{for all } R \geq R_*. $$
\end{proof}

\subsubsection{Infection spreads everywhere in a box }\label{subsub_infect_spreads}

The goal of Section \ref{subsub_infect_spreads} is to prove Lemma~\ref{lem:infect_small_boxes}.
Let us note that this proof does not  use supercriticality (i.e., $\lambda>1$) in an essential way: in fact the very same proof works for
$\lambda=1$ as well. The proof of Lemma~\ref{lem:infect_small_boxes} will be obtained as a consequence of the following.
\begin{lemma}[Infection of a nearby box]\label{lem:growth_to_neighbor}
There exists~$\delta_0=\delta_0(d)  > 0$ such that the following holds for any~$R \geq 2$ and $\lambda>1$. Let
$$A = [0, \lfloor  R/2 \rfloor )^d,\qquad  A' = A +  u,\qquad  \widehat{A} = A \cup A',$$
where~$u \in \mathbb{Z}^d$ is chosen so that $\max_{x \in A,\; y \in A'} |x-y| \leq R$. Then, letting~$(\xi_t)_{t \geq 0}$ denote the contact process on~$\widehat{A}$ and
$m = |\xi_0 \cap A|$, we have
\begin{equation}\label{eq_growth_to_neigh}
\mathbb{P}\left(|\xi_1 \cap A'| > \delta_0 m \right) \ge 1-\exp\left(- \delta_0 m\right), \qquad m \geq 2.
\end{equation}
\end{lemma}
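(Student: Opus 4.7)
The plan is to work with the graphical construction restricted to $\widehat{A}$ and identify a single easy-to-control mechanism that infects sites of $A'$ by time~1. The key observation, which makes the statement non-trivial even when $m = 2$, is that the hypothesis $\max_{x \in A,\, y \in A'} |x - y| \leq R$ implies $A' \subseteq B(x, R)$ for every $x \in A$: every initially infected site in $A$ fires arrows to every $y \in A'$ at rate $\lambda/|B(R)|$. Combined with the ratio $|A'|/|B(R)| \geq c_d > 0$ (a constant depending only on $d$, since $|A'| = \lfloor R/2 \rfloor^d$ and $|B(R)| \leq (2R+1)^d$), the expected number of distinct sites of $A'$ directly infected by a single source is bounded below by a positive constant uniformly in $R \geq 2$.

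Enumerate $\xi_0 \cap A = \{x_1, \ldots, x_m\}$. Using the graphical construction of Section~\ref{subsection_graphical_constr_of_cont_proc}, define
\[
V_{i,y} := \mathds{1}[N_{x_i,y}([0,1/4]) \geq 1], \quad W_y := \mathds{1}[N_y([0,1]) = 0], \quad J := \{i : N_{x_i}([0,1/4]) = 0\},
\]
and $Y_y := W_y \cdot \mathds{1}[\exists\, i \in J : V_{i,y} = 1]$ for $y \in A'$. Whenever $Y_y = 1$, the graphical construction exhibits a direct infection path of the form $(x_i, 0) \rightsquigarrow (y, 1)$ via an arrow at some time $s \in [0, 1/4]$, so $\xi_1(y) = 1$; hence $|\xi_1 \cap A'| \geq Z := \sum_{y \in A'} Y_y$.

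The crucial independence property is that, conditionally on $J$, the variables $(Y_y)_{y \in A'}$ are i.i.d.\ Bernoulli with parameter $q_{|J|} := e^{-1}\bigl(1 - (1 - p_0)^{|J|}\bigr)$, where $p_0 := 1 - e^{-\lambda/(4|B(R)|)}$, because they are built from disjoint Poisson processes as $y$ varies. Meanwhile $|J| \sim \mathrm{Bin}(m, e^{-1/4})$ and is independent of $(V_{i,y})$ and $(W_y)$. Chernoff's inequality gives $\mathbb{P}(|J| \geq e^{-1/4} m /2) \geq 1 - e^{-c_1 m}$. On this event, an elementary estimate --- using $\lambda > 1$ to get $p_0 \geq 1/(8|B(R)|)$, splitting into the cases $|J| p_0 \leq 1$ and $|J| p_0 > 1$, and using $m \leq |A| = |A'|$ --- yields $|A'| q_{|J|} \geq c_2 m$ for a constant $c_2 = c_2(d)$. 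A second application of Chernoff to $Z \mid J \sim \mathrm{Bin}(|A'|, q_{|J|})$ then gives $\mathbb{P}(Z \geq c_2 m / 2 \mid J) \geq 1 - e^{-c_3 m}$, and combining the two bounds and choosing $\delta_0$ small enough in terms of $c_1, c_2, c_3$ completes the proof.

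The only moderately delicate point is ensuring that $\delta_0$ depends only on $d$ and not on $\lambda$; this works because $\lambda > 1$ enters only through the lower bound $p_0 \geq 1/(8|B(R)|)$, which is itself $\lambda$-free. The decomposition into the three independent Poisson components $N_{x_i}$, $N_{x_i,y}$, $N_y$ is essential: ignoring any one of them would either destroy the independence across $y$ or lead to a $q_{|J|}$ of the wrong order in $|J|$.
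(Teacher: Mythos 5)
Your strategy is essentially the paper's --- build a source set that survives an initial time window, exploit independence of the arrow clocks across targets $y \in A'$, and apply Chernoff twice --- but the argument as written has a genuine gap: the i.i.d.\ claim fails when $A$ and $A'$ overlap. You assert that, conditionally on $J$, the family $(Y_y)_{y \in A'}$ is i.i.d.\ Bernoulli$(q_{|J|})$ ``because they are built from disjoint Poisson processes as $y$ varies.'' But nothing in the statement forces $A \cap A' = \emptyset$ (the vector $u$ may be arbitrarily small, even zero), so $y$ can coincide with some $x_i \in \xi_0 \cap A$, and then $W_y = \mathds{1}[N_{x_i}([0,1])=0]$ and the event $\{i \in J\} = \{N_{x_i}([0,1/4])=0\}$ are read off the \emph{same} Poisson clock $N_y = N_{x_i}$. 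Concretely, given $J$: if $i \notin J$ then $W_y = 0$ deterministically, so $\mathbb{P}(Y_y = 1 \mid J) = 0 \neq q_{|J|}$; if $i \in J$ then $\mathbb{P}(W_y = 1 \mid J) = e^{-3/4} \neq e^{-1}$. Your closing remark that the decomposition into the three processes $N_{x_i}$, $N_{x_i,y}$, $N_y$ is essential presupposes exactly what fails here: $N_{x_i}$ and $N_y$ are not distinct when $x_i = y$.

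The paper sidesteps this with a case split: it first assumes $|\xi_0 \cap A'| \leq m/2$ and draws the source set $\mathcal{Z}$ from $A \setminus A'$, so that sources and targets are disjoint by construction; the complementary case $|\xi_0 \cap A'| \geq m/2$ is handled by a direct survival count in $A'$. Your proof can be repaired along the same lines --- for instance, restrict the target set to $A' \setminus \{x_1,\dots,x_m\}$ and supplement with sites of $\xi_0 \cap A \cap A'$ whose recovery clock is silent on $[0,1]$, dichotomizing on whether $|\xi_0 \cap A \cap A'| \geq m/2$ --- but as written the second Chernoff bound is applied to variables whose conditional law you have not correctly identified.
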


\begin{proof}[Proof of Lemma~\ref{lem:growth_to_neighbor}]
First assume that~$|\xi_0 \cap A'  | \leq m/2$, so that~$|\xi_0 \cap A \setminus A' | \geq m/2$. Recalling the graphical construction of Section \ref{subsection_graphical_constr_of_cont_proc}, let~$\mathcal{Z}$ be the set of~$x \in A \setminus A' $ such that~$\xi_0(x) = 1$ and there is no recovery mark at~$x$ in the time interval~$[0,1]$ (so that~$\xi_t(x) = 1$ for each~$t \in [0,1]$). Note that
\begin{equation}\label{eq:first_binomial}
|\mathcal{Z}| \text{ stochastically dominates } \mathrm{Bin}(\lfloor m/2 \rfloor,e^{-1}).
\end{equation}
Note that the expectation of this binomial distribution is greater than or equal to $ \frac{m}{4e}$.
For each~$y \in A'$, let~$Y_{y}$ be the indicator of the event that there is no recovery mark at~$y$ in the time interval~$[0,1]$, and moreover there is~$x \in \mathcal{Z}$ and~$t \in [0,1]$ such that there is an infection arrow from~$(x,t)$ to~$(y,t)$. Note that
\begin{equation}\label{eq:y_above}\xi_1(y) \geq Y_{y}\quad \text{for all } y \in A'.\end{equation}
We have  $\max_{x \in \mathcal{Z},\; y \in A'} |x-y| \leq R$, thus for all~$y \in A'$ we obtain that
\begin{equation}\label{Y_y_ind_parameter_lower_bound}
 \mathbb{P}(Y_{y} = 1 \mid \mathcal{Z}) = e^{-1} \cdot \left(1- \exp\left(-\frac{\lambda |\mathcal{Z}|}{|B(R)|} \right) \right) \stackrel{ \lambda >1 }{\geq}
  \frac{ e^{-1}|\mathcal{Z}|}{2|B(R)|}.
  \end{equation}
Next we show that
\begin{equation}\label{Y_y_ind_indep}
\text{conditioned on~$\mathcal{Z}$, the random variables~$(Y_{y})_{y  \in A'}$ are independent.}
\end{equation}
Indeed,
$\mathcal{Z}$ only depends on the recovery marks of the vertices of $\xi_0 \cap A \setminus A'$, and given $\mathcal{Z}$,  $Y_{y}$ (where $y \in A'$) only depends on the
recovery marks of $y$ and the infection arrows that point from $\mathcal{Z}$ to $y$. Recalling from Section \ref{subsection_graphical_constr_of_cont_proc} that these Poisson processes are all independent, we infer that \eqref{Y_y_ind_indep} holds.

 Let us define $Y = \sum_{y \in A'}Y_{y}$. Putting together \eqref{Y_y_ind_parameter_lower_bound} and \eqref{Y_y_ind_indep}, we obtain that
\begin{equation} \label{eq:second_binomial}
Y \text{ stochastically dominates $\mathrm{Bin}\left( \left\lfloor \frac{R}{2}\right\rfloor^d,\;
 \frac{ e^{-1}|\mathcal{Z}|}{2|B(R)|}  \right)$. }\end{equation}
Conditional on $\mathcal{Z}$, the expectation of this binomial distribution is greater than equal to $\delta'' |\mathcal{Z}|$, where $\delta''$
only depends on $d$ (but not on $R$).
We estimate
\begin{align}
\mathbb{P}\left(Y \geq \frac{\delta''}{16e} m\right) \ge  \mathbb{P}\left(\left. Y \geq \frac{\delta''  |\mathcal{Z}|}{2} \; \right| \;|\mathcal{Z}|= \left\lceil \frac{m}{8e} \right \rceil\right)  \mathbb{P}\left(|\mathcal{Z}| \geq \frac{m}{8e}\right).\label{eq:want_lower_bound}
\end{align}

We now use~\eqref{eq:first_binomial},~\eqref{eq:second_binomial} and the fact that
\begin{equation}\label{chernoff_bound}
Z \sim \mathrm{Bin}(n,p) \quad \Longrightarrow \quad \mathbb{P}(Z \geq \textstyle{\frac{1}{2}} np) \geq 1- \exp\left(- \textstyle{\frac{1}{8}} np \right),
\end{equation}
which is an easy consequence of the Chernoff bound, see for instance Theorem~2.21, page 70 in \cite{remco}. Combining \eqref{chernoff_bound} with the lower bounds on the expectations
of the binomial distributions that appear in \eqref{eq:first_binomial} and \eqref{eq:second_binomial}, we see that
the product on the r.h.s.\ of~\eqref{eq:want_lower_bound} is larger than
$$
 \left(1- \exp\left(- \frac{1}{8} \delta'' \frac{m}{8e}  \right)\right)\left(1- \exp\left(- \frac{1}{8} \frac{m}{4 e} \right)\right) .
$$
The desired bound \eqref{eq_growth_to_neigh} now follows using \eqref{eq:y_above} if~$\delta_0$ is taken small enough.

The case where~$|\xi_0 \cap A'| \geq m/2$ is much easier, so we omit it.
\end{proof}

\begin{proof}[Proof of Lemma~\ref{lem:infect_small_boxes}]
We abbreviate
$$K := (4\kappa)^d.$$
Let~$\Xi$ be a set of boxes of the form~$[0, \lfloor \tfrac{R}{2} \rfloor )^d + x$ with~$x\in \Z^d$ such that
\begin{equation}\label{cover_boxes}
 \widehat{B} = \bigcup_{A \in \Xi} A \qquad \text{and}\qquad |\Xi| \leq K,
 \end{equation}
where $\widehat{B}$ was defined in \eqref{B_B_prime_B_hat_again}.

Noting that~$m = |\xi_0| \leq K\cdot \max_{A \in {\Xi}} |\xi_0 \cap A|$,
there exists~$A_* \in \Xi$ such that
$$|\xi_0 \cap A_*| \geq \frac{m}{K}.$$

Now let~$\Gamma$ be the set of all sequences of the form~$\gamma = (A_0, A_1,\ldots, A_{K})$ of elements of~$\Xi$ with~$A_0 = A_*$ such that
$$
\max_{x \in A_i,\; y \in A_{i+1}} |x-y| \leq R, \qquad 0 \leq i \leq K-1.
$$
For each~$\gamma \in \Gamma$, define the event
$E_\gamma = \bigcap_{i = 1}^{K} \left\{|\xi_{i} \cap A_i| \geq (\delta_0)^i \cdot \frac{m}{K} \right\}$, where $\delta_0$ was defined in Lemma \ref{lem:growth_to_neighbor}.
Observe that it follows from \eqref{cover_boxes} and the definition of $\Gamma$ that
$$\bigcap_{\gamma \in \Gamma} E_\gamma \subset \left\{\begin{array}{l}
|\xi_{K} \cap ([0,R)^d + R\cdot z )| \geq \frac{(\delta_0)^{K}}{K} m \text{ for } \\[.15cm] \text{all~$z \in \Z^d$ such that } [0,R)^d + R\cdot z \subset \widehat{B}
\end{array}\right\}$$
and, by Lemma~\ref{lem:growth_to_neighbor} and a union bound, we obtain
$$\mathbb{P}\left(\bigcup_{\gamma \in \Gamma} E_\gamma^c \right) \leq (K)^{K} \cdot \sum_{i=1}^{K} \exp\left(-\delta_0\cdot \frac{(\delta_0)^i}{K} \cdot m \right),$$
thus we can choose~$\delta=\delta(d,\lambda)>0$  so that \eqref{eq_infect_small_boxes} holds for large enough~$m$.
\end{proof}

\subsubsection{Supercritical behavior in a box}\label{subsub_growth_in_box}

The goal of Section \ref{subsub_growth_in_box} is to prove Lemma~\ref{lem:growth_in_box}.
We will first need several preliminary definitions.
Recall that we have already fixed the value of $\kappa=\kappa(d,\lambda)$ in \eqref{eq:choice_of_kappa}.
Throughout this section, we write
$ B = [0,\kappa R)^d$.

Let~$P = (P(x,y): x,y \in B)$ denote the sub-stochastic matrix given by
\begin{equation}\label{P_substoch_matrix}
 P(x,y) := \frac{1}{|B(R)|}\cdot \mathds{1} [y \in B(x,R)].
\end{equation}
This is the transition matrix of the discrete-time random walk on $B$ with range $R$ which gets killed if it jumps out of $B$.

\begin{remark}
Let us denote by $\mu$ the principal eigenvalue of $P$.   It is easy to see that the expected population size of the branching random walk (BRW) on $B$ with range $R$, birth rate $\lambda$ and death rate $1$ grows/decays exponentially with rate $\lambda \mu -1$. We have $\mu<1$, but $\mu$ gets arbitrarily close to $1$ if $\kappa$ is big enough, so one can guarantee supercriticality  of the BRW on $B$ (i.e., $\lambda \mu >1$) for any given $\lambda>1$ by making $\kappa$ big enough.

\smallskip

We will show that the function $h$ defined in \eqref{h_prod_def} below satisfies
$P h\geq (1-\frac{1}{\kappa})^d h$  and our condition \eqref{eq:choice_of_kappa} on $\kappa$ will guarantee that contact process on $B$
with range $R$, birth rate $\lambda$ and death rate $1$ tends to behave in a supercritical fashion as long as the density of infected sites in $B$ is low enough.
\end{remark}

We define the functions~$\ell: [0,\kappa] \to \mathbb{R}_+$ and $h: B \to \mathbb{R}_+$ by
\begin{align}
\label{eq:def_of_ell}
\ell(u) := &
\frac{1}{\kappa} + \min\{u,\;\kappa -u \}, \\
\label{h_prod_def}
 h(x) := & \prod_{i=1}^d \ell \left( \frac{x_i}{R} \right), \qquad x=(x_1,\ldots, x_d) \in B.
\end{align}

\begin{lemma}[Perron-Frobenius bounds]
\label{lem:first_bound_density}
There exists~$R_\diamond = R_\diamond(d,\lambda)$ such that for any~$R \ge R_\diamond$ we have~$Ph\geq (1-\frac{1}{\kappa})^d h$, that is,
\begin{equation}
 \label{eq:first_bound_density}
 \frac{1}{|B(R)|}\cdot \sum_{y \in B \cap B(x,R)}\;h(y)\geq \left(1-\frac{1}{\kappa}\right)^d   h(x)\quad \text{for every~$x \in B$}.
 \end{equation}
\end{lemma}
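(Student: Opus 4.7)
The plan is to exploit the product structure of both the $\ell^\infty$-ball $B(x,R) = \prod_{i=1}^d ([x_i - R, x_i + R] \cap \mathbb{Z})$ and the function $h(x) = \prod_i \ell(x_i / R)$. Since $|B(R)| = (2R+1)^d$, the quantity on the left of \eqref{eq:first_bound_density} factorizes:
\begin{equation*}
\frac{1}{|B(R)|} \sum_{y \in B \cap B(x,R)} h(y) \;=\; \prod_{i=1}^d \left( \frac{1}{2R+1} \sum_{k = (x_i - R) \vee 0}^{(x_i + R) \wedge (\kappa R - 1)} \ell(k/R) \right).
\end{equation*}
Hence it suffices to establish the one-dimensional inequality
\begin{equation*}
S(n) := \frac{1}{2R+1} \sum_{k = (n - R) \vee 0}^{(n + R) \wedge (\kappa R - 1)} \ell(k/R) \;\geq\; \Bigl(1 - \tfrac{1}{\kappa}\Bigr) \ell(n/R) \qquad \text{for all } n \in \{0, \ldots, \kappa R - 1\}.
\end{equation*}

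My next step is to pass to the continuous limit. Since $\ell$ is $1$-Lipschitz, a standard Riemann-sum comparison gives $S(n) = F(n/R) + O(1/R)$, where
\begin{equation*}
F(u) := \tfrac12 \int_{(u-1) \vee 0}^{(u+1) \wedge \kappa} \ell(v) \, dv, \qquad u \in [0, \kappa].
\end{equation*}
So it is enough to prove the continuous inequality $F(u) \geq (1 - 1/\kappa) \ell(u)$ with a strictly positive slack depending only on $\kappa$ (equivalently, only on $d$ and $\lambda$); $R_\diamond$ is then chosen large enough that the $O(1/R)$ discretization error is strictly smaller than that slack. By the symmetry of $\ell$ about $\kappa/2$, I would reduce to $u \in [0, \kappa/2]$ and split into three cases. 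If $u \in [1, \kappa/2 - 1]$, then $\ell$ is affine on $[u - 1, u + 1]$ and symmetric averaging yields $F(u) = \ell(u)$, so the slack is $\ell(u)/\kappa \geq 1/\kappa^2$. If $u \in [\kappa/2 - 1, \kappa/2]$, writing $u = \kappa/2 - t$ with $t \in [0, 1]$ and splitting the integral at the ``tent tip'' $v = \kappa/2$ gives $\ell(u) - F(u) = (1-t)^2/2$; a direct comparison with $\ell(u)/\kappa$ (using $\kappa \geq 4$ from \eqref{eq:choice_of_kappa}) leaves a slack of at least $1/\kappa^2$, the worst point being $u = \kappa/2$. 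If $u \in [0, 1]$, the integration is truncated at $0$; here the additive offset $1/\kappa$ in the definition of $\ell$ provides the needed room, and an explicit evaluation of $F(u) = (u+1)/(2\kappa) + (u+1)^2/4$ against $(1 - 1/\kappa)(1/\kappa + u)$ delivers a uniformly positive slack for $\kappa \geq 4$.

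The hardest part is not conceptual but bookkeeping: one must verify a positive $\kappa$-dependent slack in each of the three subcases. No use of supercriticality ($\lambda > 1$) enters here; the lemma is a purely analytic Perron--Frobenius type lower bound on the sub-stochastic averaging operator $P$, and the tent-plus-offset shape of $\ell$ is engineered precisely so that the symmetric averaging loss near the apex $v = \kappa/2$ and the truncation loss near the boundaries both stay strictly within the $(1 - 1/\kappa)$ factor demanded by the inequality.
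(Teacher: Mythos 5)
Your proposal is correct and follows essentially the same route as the paper: factor the $d$-dimensional average into one-dimensional ones, pass to the continuous limit with an $O(1/R)$ discretization error, and verify strict positivity of $a(u) = F(u) - (1-1/\kappa)\ell(u)$ over $[0,\kappa/2]$ by the same three-interval case split (the paper's Lemma~\ref{lem:calculus} does this via an explicit computation of $a'$ and convexity/concavity plus endpoint checks, whereas you evaluate $F$ directly in each regime, but the content is the same). Your numerical slacks ($1/\kappa^2$ in the middle and apex regions, $1/\kappa - 5/(4\kappa^2)$ at the interior minimum near the boundary) match the paper's computations.
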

We will prove Lemma \ref{lem:first_bound_density} in Section \ref{subsubsection_perron}.

We define $g: B \to \mathbb{R}_+$ by normalizing $h$:
\begin{equation}\label{g_def_eq_from_h}
 g(x) \stackrel{ \eqref{h_prod_def} }{:=} \frac{h(x)}{\max_{y \in B} h(y)},\qquad x \in B.
 \end{equation}

In Section \ref{subsub_growth_in_box}, $(\xi_t)_{t \geq 0}$ denotes the contact process on~$B=[0,\kappa R)^d$.
Denote by $(\mathscr{F}_t)_{t \geq 0}$  the natural filtration of~$(\xi_t)_{t \geq 0}$.
We define
\begin{equation}\label{N_X_def}
 N_t := \sum_{x\in B} \xi_t(x)= |\xi_t|  ,\qquad X_t := \sum_{x \in B} g(x) \xi_t(x).
 \end{equation}
 Roughly speaking, our goal is to show that $(N_t)$ grows exponentially as long as $N_t/R^d$ is small enough to guarantee that the fraction of  healthy individuals  in any ball of radius $R$ is close enough to $1$.
 We would like to achieve this using that the infection rate $\lambda$ is strictly greater than the recovery rate $1$.
 However, the effective outgoing infection rate of infected individuals near the boundary of the box $B$ can be strictly smaller than $1$, so
   it will be easier to prove that $(X_t)$ grows exponentially (using $P g \geq (1-\frac{1}{\kappa})^d g$ along the way).
     A lower bound on $N_0$ implies a lower bound on $X_0$ and a lower bound on $(X_t)$ implies a lower bound on $(N_t)$, since
\begin{equation} \label{eq:bounds_on_g}
g_{\min} :=  \min_{x \in B} g(x) \ge \left(\frac{\tfrac{1}{\kappa}}{\tfrac{1}{\kappa} + \tfrac{\kappa}{2}}\right)^d,\qquad
g_{\max} := \max_{x \in B} g(x) = 1,
\end{equation}
therefore $X_t$ and $N_t$ are comparable:
\begin{equation}\label{N_t_X_t_comparable}
g_{\min} \cdot  N_t \leq  X_t \leq N_t, \qquad t \geq 0.
\end{equation}
Roughly speaking, we will show that $X_t$ grows exponentially with rate at least $\frac{\lambda-1}{4}$ as long as $N_t/R^d$ is small enough, so in order to guarantee
 \eqref{eq:growth_in_box}, we define~$T_0=T_0(d,\lambda) > 0$ such  that
\begin{equation}\label{choose_T_0}
\frac{g_{\min} }{2} \exp\left\{ \frac{\lambda - 1}{4} {T_0}\right\} = \delta^{-1},
\end{equation}
where~$\delta=\delta(d,\lambda)$ is the constant of Lemma~\ref{lem:infect_small_boxes}.

However, we will only guarantee exponential growth of $X_t$ with rate at least~$\frac{\lambda-1}{4}$ as long as
 $N_t \leq \beta R^d$, where $\beta >0$ is small enough to guarantee that the fraction of  healthy individuals  in any ball of radius $R$ is close enough to $1$ (so that
 the contact process locally exhibits a positive expected rate of growth):
 \begin{equation}\label{beta_choice}
 \lambda \left(\left(1- \frac{1}{\kappa}\right)^d - \frac{\beta \cdot R^d}{|B(R)| \cdot g_{\min}}\right) > 1 + \frac12(\lambda - 1), \quad  \forall\,  R \ge 1.
\end{equation}
Such a choice of $\beta$ is possible by~\eqref{eq:choice_of_kappa}, so let us fix $\beta = \beta(d,\lambda)>0$ satisfying~\eqref{beta_choice} for
the rest of Section \ref{subsub_growth_in_box}.

\smallskip

In order to guarantee that $N_t \leq \beta R^d$ holds for $0 \leq t \leq T_0$, we will  argue that the exponential growth rate of $N_t$ is at most $2(\lambda-1)$
and choose the constant $\alpha=\alpha(d,\lambda)>0$ (cf.\  \eqref{bar_R_alpha_T_0_c_0})
 so that
\begin{equation}\label{eq:choice_of_alpha}
2\alpha \exp\{2(\lambda - 1) {T_0}\} = \beta.
\end{equation}

Define, for~$t \geq 0$,
\begin{equation}
\label{eq:def_of_a_and_b}
a(t) := \alpha \frac{ g_{\min}}{2} \exp\left\{\frac{\lambda - 1}{4}\cdot t\right\} R^d ,
\quad
b(t) := 2\alpha  \exp\{2(\lambda - 1)t\} R^d .
\end{equation}

Let us define the stopping times
\begin{equation}\label{tau_def}
\tau_a := \inf\{\, t: X_t \leq a(t) \, \} ,
\quad
\tau_b := \inf\{ \, t: N_t \geq b(t) \, \},
\quad \tau := \tau_a \wedge \tau_b.
\end{equation}
Note that for every~$0 \leq  t \leq {T_0}$ we have
\begin{equation}
\label{eq:condition_on_alpha_beta}
\frac{\alpha g_{\min}}{2} R^d \leq a(t) \leq X_t \leq N_t \leq b(t) \stackrel{ \eqref{eq:choice_of_alpha}  }{\leq } \beta R^d \quad\text{ on } \quad \{\tau > t\}.
\end{equation}
Our goal will be to show that $\mathbb{P}(\tau \leq T_0)$ is very small.
In order to do so, we define the stochastic processes $(U_t)_{0 \leq t \leq {T_0}}$ and $(V_t)_{0 \leq t \leq {T_0}}$ by
\begin{equation}\label{U_t_V_t_def}
 U_t := \exp\{-\varepsilon(X_t - a(t) + a(0))\} ,\quad V_t :=    \exp\{\varepsilon(N_t - b(t) + b(0))\},
\end{equation}
where
\begin{equation}\label{epsilon_choice}
  \varepsilon= \varepsilon(d,\lambda):=\frac{\lambda-1}{4(\lambda+1)} g_{\min}.
\end{equation}

\begin{proposition}[Exponential submartingales] \label{prop:supermartingales}
 If we define $R_\diamond$, $T_0$, $\alpha$, $\tau$, $(U_t)$ and $(V_t)$ as above and
assume~$|\xi_0| = \lfloor \alpha R^d \rfloor$, then for any $R \geq R_\diamond$
 \begin{equation}\label{U_t_V_t_are_supermartingales}
 (U_{t \wedge \tau})_{0 \leq t \leq {T_0}}\; \text{and}\; (V_{t \wedge \tau})_{0 \leq t \leq {T_0}} \;  \text{are supermartingales w.r.t. }(\mathscr{F}_t)_{ 0 \leq t \leq {T_0}}.
 \end{equation}
 \end{proposition}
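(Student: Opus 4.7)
Both assertions reduce, via Dynkin's formula applied to the finite-state Markov process $(\xi_t)$ on $B$, to verifying that $(\partial_t + \mathcal{L})F \leq 0$ holds on $\{\tau > t\}$ with $\mathcal{L}$ as in~\eqref{eq:generator_on_B} and $F$ taken to be $U_t$ or $V_t$ respectively; optional stopping at $\tau$ then yields the supermartingale property. I plan to compute the time-augmented generator in each case and bound it using the Perron-Frobenius inequality (Lemma~\ref{lem:first_bound_density}), the supercriticality condition~\eqref{beta_choice}, and Taylor expansions of the exponentials, verifying that the prescribed values of $\varepsilon$, $a(t)$, $b(t)$ are calibrated so that everything cancels.

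For $U_t$: a direct generator computation gives
\[
	\frac{(\partial_t + \mathcal{L})U_t}{U_t} = \varepsilon a'(t) + \sum_{x:\xi_t(x)=1}\Bigl[(e^{\varepsilon g(x)}-1) + \tfrac{\lambda}{|B(R)|}\sum_{y\in B(x,R)}(1-\xi_t(y))(e^{-\varepsilon g(y)}-1)\Bigr].
\]
I would apply the Taylor bounds $e^{\varepsilon g(x)}-1 \leq \varepsilon g(x) + \tfrac12 \varepsilon^2 g(x)^2 e^\varepsilon$ and $e^{-\varepsilon g(y)}-1 \leq -\varepsilon g(y) + \tfrac12 \varepsilon^2 g(y)^2$, then use~\eqref{eq:first_bound_density} and the trivial estimate $\tfrac{1}{|B(R)|}\sum_{y\in B(x,R)} g(y)\xi_t(y) \leq \tfrac{N_t}{|B(R)|}$ (from $g_{\max}=1$) to get
\[
	\frac{(\partial_t + \mathcal{L})U_t}{U_t} \leq \varepsilon a'(t) + \varepsilon X_t\Bigl[e^\varepsilon - \lambda\bigl(1-\tfrac1\kappa\bigr)^d + \tfrac{\lambda N_t}{g_{\min}|B(R)|} + \tfrac{\lambda\varepsilon}{2 g_{\min}}\Bigr],
\]
where~\eqref{N_t_X_t_comparable} is used once to replace $N_t^2$ by $N_t\, X_t / g_{\min}$. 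On $\{\tau > t\}$, \eqref{eq:condition_on_alpha_beta} ensures $N_t \leq \beta R^d$, and~\eqref{beta_choice} then forces $-\lambda(1-1/\kappa)^d + \tfrac{\lambda N_t}{g_{\min}|B(R)|} \leq -1 - \tfrac{\lambda-1}{2}$. The specific value of $\varepsilon$ in~\eqref{epsilon_choice} is designed so that $e^\varepsilon - 1 + \tfrac{\lambda\varepsilon}{2 g_{\min}} \leq \tfrac{\lambda-1}{4}$, making the bracket $\leq -\tfrac{\lambda-1}{4}$. Since $a'(t)/a(t) = (\lambda-1)/4$ and $a(t) \leq X_t$ on $\{\tau > t\}$, the two contributions $\varepsilon a'(t)$ and $-\tfrac{\lambda-1}{4}\varepsilon X_t$ cancel.

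For $V_t$: an analogous computation yields
\[
	\frac{(\partial_t + \mathcal{L})V_t}{V_t} = -\varepsilon b'(t) + \sum_{x:\xi_t(x)=1}\Bigl[(e^{-\varepsilon}-1) + \tfrac{\lambda}{|B(R)|}\sum_{y\in B(x,R)}(1-\xi_t(y))(e^{\varepsilon}-1)\Bigr].
\]
The crude bounds $(1-\xi_t(y))\leq 1$ and $|B(x,R) \cap B|\leq |B(R)|$ give an upper bound of $-\varepsilon b'(t) + N_t[(e^{-\varepsilon}-1)+\lambda(e^\varepsilon-1)]$; a Taylor expansion yields $(e^{-\varepsilon}-1)+\lambda(e^\varepsilon-1) \leq (\lambda-1)\varepsilon + C(\lambda+1)\varepsilon^2$ for an absolute constant $C$, which by~\eqref{epsilon_choice} is at most $2(\lambda-1)\varepsilon = \varepsilon b'(t)/b(t)$. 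On $\{\tau > t\}$ we have $N_t \leq b(t)$, so the sum is $\leq \varepsilon b'(t)$, cancelling the $-\varepsilon b'(t)$.

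The main technical obstacle is the delicate balancing of the quadratic Taylor errors against the first-order ``supercriticality'' drift of size $\lambda - 1$: for $U_t$ this drift is extracted from the Perron-Frobenius bound~\eqref{eq:first_bound_density} together with the choices~\eqref{eq:choice_of_kappa} of $\kappa$ and~\eqref{beta_choice} of $\beta$, whereas for $V_t$ it is visible directly in $\lambda > 1$. This balance is precisely what dictates the formula~\eqref{epsilon_choice}: the factor $g_{\min}$ compensates the $N_t$-vs-$X_t$ mismatch in~\eqref{N_t_X_t_comparable}, while the factor $1/(\lambda+1)$ controls both Taylor remainders $e^{+\varepsilon}-1-\varepsilon$ and $e^{-\varepsilon}-1+\varepsilon$ simultaneously.
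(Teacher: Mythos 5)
Your proposal is correct and follows essentially the same strategy as the paper, which establishes the supermartingale property by computing the time-augmented generator of $U$ and $V$ and showing the resulting drift is non-positive on $\{\tau > t\}$, with the Perron–Frobenius bound (Lemma~\ref{lem:first_bound_density}) feeding the supercritical drift and the Taylor remainders absorbed via the calibration of $\varepsilon$ in~\eqref{epsilon_choice}. The only real difference is organizational: the paper factors the argument into intermediate steps — Lemma~\ref{lem:low_density} packages the Perron–Frobenius and $\beta$-condition into the ``local supercriticality'' inequality~\eqref{eq:lower_bound_with_density}, and Lemma~\ref{lem:for_supermartingale} isolates the drift bounds for $e^{-\varepsilon X_t}$ and $e^{\varepsilon N_t}$ \emph{without} the deterministic $a(t),b(t)$ factors, which are reintroduced only when proving Proposition~\ref{prop:supermartingales} — whereas you carry $a(t)$ and $b(t)$ along in a single merged generator computation. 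Both are valid; the paper's split keeps the Taylor-error bookkeeping cleaner (the error terms $\mathcal{E}_x,\tilde{\mathcal{E}}_x$ are treated symmetrically and compared against $M\le\varepsilon^2$), while your version makes the cancellation $\varepsilon a'(t)$ versus $-\tfrac{\lambda-1}{4}\varepsilon X_t$ (and likewise for $b'(t)$) visible in one display. One small precision: in the inner sum for $U_t$ it should read $y \in B(x,R)\cap B$, as in~\eqref{eq:generator_on_B}, though this does not affect the estimates.
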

Before we prove Proposition \ref{prop:supermartingales}, we deduce Lemma~\ref{lem:growth_in_box} from it.

\begin{proof}[Proof of Lemma~\ref{lem:growth_in_box}] We have already fixed $T_0=T_0(d,\lambda)$ in \eqref{choose_T_0} and $\alpha=\alpha(d,\lambda)$ in \eqref{eq:choice_of_alpha}.  We will use Proposition \ref{prop:supermartingales}, so we assume $R \geq R_\diamond$.
 By the monotonicity of the contact process, it suffices to prove that~\eqref{eq:growth_in_box} holds when~$\xi_0$ satisfies~$|\xi_0|= \lfloor \alpha R^d \rfloor$.
   We start the proof of \eqref{eq:growth_in_box} by observing that
  \begin{multline}
  \mathbb{P}\left(|\xi_{T_0}| \leq \delta^{-1} \cdot \alpha  R^d \right) \stackrel{ \eqref{N_X_def}, \eqref{N_t_X_t_comparable} }{\leq}
  \mathbb{P}\left(X_{T_0} \leq \delta^{-1} \cdot \alpha  R^d\right) \stackrel{ \eqref{choose_T_0}, \eqref{eq:def_of_a_and_b} }{=} \\
  \mathbb{P}\left(X_{T_0} \leq a(T_0) \right)
   \stackrel{ \eqref{tau_def} }{\leq}   \mathbb{P}\left( \tau \leq T_0 \right)
  =  \mathbb{P}(\tau = \tau_a \le {T_0}) + \mathbb{P}(\tau = \tau_b \le {T_0}).
  \end{multline}
 We will bound the two terms on the right-hand side.
   We treat the second term first:
\begin{multline}\label{splitting_a_b_prob}
\exp\{\varepsilon \lfloor \alpha R^d\rfloor\} \stackrel{ \eqref{N_X_def}, \eqref{U_t_V_t_def} }{=}
 V_0  \stackrel{\eqref{U_t_V_t_are_supermartingales} }{\geq} \mathbb{E}[V_{{T_0} \wedge \tau}]  \geq \mathbb{E}[V_{\tau_b} \cdot \mathds{1}_{\{\tau = \tau_b \le {T_0} \}}]
 \stackrel{ \eqref{tau_def}, \eqref{U_t_V_t_def} }{\geq} \\
  \exp\{\varepsilon \cdot b(0)\} \cdot \mathbb{P}(\tau = \tau_b \le {T_0})
  \stackrel{ \eqref{eq:def_of_a_and_b} }{=}  \exp\{\varepsilon \cdot 2\alpha R^d \} \cdot \mathbb{P}(\tau = \tau_b \le {T_0}).
\end{multline}
 Rearranging this,
we obtain $\mathbb{P}(\tau = \tau_b \le {T_0}) \leq \exp\{- \varepsilon \alpha R^d \}.$

Similarly, we bound
\begin{multline*}
\exp\{-\varepsilon g_{\min} \lfloor \alpha R^d\rfloor \}\stackrel{ \eqref{N_X_def} }{=}\exp\{-\varepsilon g_{\min} N_0 \}
\stackrel{ \eqref{N_t_X_t_comparable} }{\geq }  \exp\{-\varepsilon  X_0 \}
 \stackrel{ \eqref{U_t_V_t_def} }{=} U_0 \stackrel{ \eqref{U_t_V_t_are_supermartingales} }{\geq}
 \\
  \mathbb{E}[U_{{T_0} \wedge \tau}]
 \geq \mathbb{E}[U_{\tau_a} \cdot \mathds{1}_{\{\tau = \tau_a \le {T_0}\}}] \stackrel{ \eqref{tau_def}, \eqref{U_t_V_t_def} }{\geq} \exp\{-\varepsilon a(0)\} \cdot \mathbb{P}(\tau = \tau_a \le {T_0}) \stackrel{ \eqref{eq:def_of_a_and_b} }{=} \\
  \exp\{-\varepsilon g_{\min} \textstyle{\frac{1}{2}} \alpha R^d  \} \cdot \mathbb{P}(\tau = \tau_a \le {T_0}).
\end{multline*}
Rearranging this and introducing a factor~$\tfrac12$ to take care of the rounding, we obtain $\mathbb{P}(\tau = \tau_a \le {T_0})\leq  \exp\{-\varepsilon g_{\min} \frac14\alpha R^d)  \}$.
Putting together the bounds that we have obtained on the two terms on the right-hand side of \eqref{splitting_a_b_prob}, we see that indeed there exists
$\bar{R}=\bar{R}(d,\lambda)<+\infty$ and $c_0=c_0(d,\lambda)>0$ such that \eqref{eq:growth_in_box} holds for any $R \geq \bar{R}$. The proof of
Lemma~\ref{lem:growth_in_box} is complete.
\end{proof}

It remains to prove  Proposition \ref{prop:supermartingales}.

Recall the definition of $R_\diamond=R_\diamond(d,\lambda)$ (cf.\ Lemma \ref{lem:first_bound_density}), $(N_t)$ and $(X_t)$ (cf.\ \eqref{N_X_def}), $\beta=\beta(d,\lambda)$ (cf.\ \eqref{beta_choice}) and $\varepsilon=\varepsilon(d,\lambda)> 0$ (cf.\ \eqref{epsilon_choice}).
\begin{lemma}[Exponential drift bounds] \label{lem:for_supermartingale}
For all~$t \geq 0$ the following statements hold.
\begin{enumerate}[(i)]
\item  We have
\begin{equation}\label{eq:bound_for_exp_N}
\left. \frac{d}{ds} \mathbb{E}\left[ e^{\varepsilon N_{t+s}} | \mathscr{F}_t\right] \right|_{s = 0} \leq 2 \varepsilon (\lambda - 1)\cdot  N_t \cdot e^{\varepsilon N_t}.
\end{equation}
\item For all $R \geq R_\diamond$, on the event
$\left\{ N_t \leq \beta R^d \right\}$,  we have
\begin{equation}\label{eq:bound_for_exp_X}
\left. \frac{d}{ds} \mathbb{E}\left[ e^{- \varepsilon X_{t+s}} | \mathscr{F}_t\right] \right|_{s = 0}  \leq -\frac{\varepsilon (\lambda - 1)}{4}\cdot X_t \cdot e^{-\varepsilon X_t}.
\end{equation}
\end{enumerate}
\end{lemma}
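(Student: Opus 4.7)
The plan is to prove both inequalities by directly computing the action of the infinitesimal generator $\mathcal{L}$ of the contact process on $B$ (see \eqref{eq:generator_on_B}) applied to the functionals $\xi \mapsto e^{\varepsilon N(\xi)}$ and $\xi \mapsto e^{-\varepsilon X(\xi)}$, and then invoking the standard identity $\left.\frac{d}{ds}\mathbb{E}[F(\xi_{t+s}) \mid \mathscr{F}_t]\right|_{s=0} = (\mathcal{L}F)(\xi_t)$. Both computations will rely on the elementary Taylor bounds $e^{u}-1 \leq u + u^2$ for $u \in [0,1]$ and $e^{-u}-1 \leq -u + u^2/2$ for $u \geq 0$, applied with $u \in \{\varepsilon, \varepsilon g(x), \varepsilon g(y)\}$. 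These are valid because the definition \eqref{epsilon_choice} forces $\varepsilon \leq 1/4$ (recall $g_{\min} \leq 1$).

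For part (i), a direct expansion yields $(\mathcal{L}e^{\varepsilon N})(\xi) = e^{\varepsilon N}\bigl[N(e^{-\varepsilon}-1) + (e^\varepsilon-1)\tfrac{\lambda}{|B(R)|}\sum_{x:\xi(x)=1}|\{y \in B(x,R)\cap B : \xi(y)=0\}|\bigr]$. Bounding the innermost cardinality trivially by $|B(R)|$ and the remaining sum by $N$ gives $(\mathcal{L}e^{\varepsilon N})(\xi) \leq N e^{\varepsilon N}\bigl[(e^{-\varepsilon}-1) + \lambda(e^\varepsilon - 1)\bigr]$. The Taylor bounds above then make this bracket at most $\varepsilon(\lambda-1) + \varepsilon^2(\lambda + 1/2)$, which is at most $2\varepsilon(\lambda-1)$ as long as $\varepsilon(\lambda+1/2) \leq \lambda-1$—comfortably satisfied since $\varepsilon \leq (\lambda-1)/(4(\lambda+1))$. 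This completes (i).

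For part (ii), the analogous computation yields
\begin{equation*}
\frac{(\mathcal{L}e^{-\varepsilon X})(\xi)}{e^{-\varepsilon X}} = \sum_{x:\xi(x)=1}\bigl(e^{\varepsilon g(x)} - 1\bigr) + \frac{\lambda}{|B(R)|}\sum_{x:\xi(x)=1}\sum_{\substack{y \in B(x,R) \cap B \\ \xi(y)=0}}\bigl(e^{-\varepsilon g(y)} - 1\bigr).
\end{equation*}
The main linear contribution of the double sum, after dropping the $\xi(y)=0$ restriction, is $-\varepsilon\lambda \sum_{x:\xi(x)=1}(Pg)(x)$, which by Lemma \ref{lem:first_bound_density} (available since $R \geq R_\diamond$) is at most $-\varepsilon\lambda(1-1/\kappa)^d X$. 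The correction for reinstating $\xi(y)=0$ is at most $\varepsilon\lambda N X/|B(R)|$, obtained by swapping the order of summation and using $|\xi \cap B(y,R)\cap B| \leq N$. The linear part of the recovery sum contributes $\varepsilon X$. Finally, the quadratic remainders are handled using $g(x)^2 \leq g(x)$ (since $g \leq g_{\max}=1$), $Pg \leq g_{\max}$, and $N \leq X/g_{\min}$ from \eqref{N_t_X_t_comparable}, giving a total quadratic bound of $\varepsilon^2 X(1 + \lambda/(2g_{\min}))$.

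Collecting everything, the drift is bounded by $\varepsilon X\bigl[1 - \lambda(1-1/\kappa)^d + \lambda N/|B(R)| + \varepsilon\bigl(1 + \lambda/(2g_{\min})\bigr)\bigr]$. On $\{N \leq \beta R^d\}$, the choice \eqref{beta_choice}—combined with $g_{\min}\leq 1$ to pass from its stronger form to the looser one needed here—forces the first three bracketed terms to be strictly less than $-(\lambda-1)/2$. It remains to check that the $\varepsilon$-correction contributes at most $(\lambda-1)/4$; substituting $\varepsilon=\frac{\lambda-1}{4(\lambda+1)}g_{\min}$ reduces this to the inequality $g_{\min}+\lambda/2 \leq \lambda+1$, which is trivial. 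This yields the claimed multiplier $-\varepsilon(\lambda-1)/4$. The main subtlety is purely bookkeeping: verifying that the three constants $\kappa$, $\beta$, and $\varepsilon$ were set in \eqref{eq:choice_of_kappa}, \eqref{beta_choice}, \eqref{epsilon_choice} precisely so as to absorb each error term as it arises; no single step is conceptually hard in isolation.
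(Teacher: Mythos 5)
Your proof is correct, and it follows essentially the same route as the paper's: compute the generator, split the increments into linear and Taylor-remainder parts, apply the Perron--Frobenius bound from Lemma~\ref{lem:first_bound_density} to the dominant linear term, and absorb the remaining errors using the choices \eqref{beta_choice} and \eqref{epsilon_choice}. The only cosmetic difference is that you inline the content of the paper's Lemma~\ref{lem:low_density} (the ``drop the $\xi(y)=0$ restriction and bound the correction'' step) directly into the drift computation, estimating the correction after swapping the order of summation rather than per-site, which produces the same bound up to the harmless slack $g_{\min}\leq 1$.
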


Before we prove Lemma \ref{lem:for_supermartingale}, let us deduce Proposition \ref{prop:supermartingales} from it.

\begin{proof}[Proof of Proposition \ref{prop:supermartingales}]
It suffices to prove that, for~$0 \leq t < {T_0}$,
\begin{equation}\label{eq:wanted_ineqs}
\left. \frac{d}{ds}  \mathbb{E}[U_{(t+s) \wedge \tau}|\mathscr{F}_t] \right|_{s = 0} \leq 0,\qquad \left. \frac{d}{ds}  \mathbb{E}[V_{(t+s) \wedge \tau}|\mathscr{F}_t] \right|_{s = 0} \leq 0.
\end{equation}

For the first inequality in~\eqref{eq:wanted_ineqs}, we assume that $R\geq R_\diamond$, and we estimate
\begin{align*}
& \left. \frac{d}{ds}  \mathbb{E}[U_{(t+s) \wedge \tau}|\mathscr{F}_t] \right|_{s = 0} =
\mathds{1}_{\{\tau > t\}} \left. \frac{d}{ds}  \mathbb{E}[  e^{ \varepsilon (a(t+s) - a(0))} e^{-\varepsilon X_{t+s}}    |\mathscr{F}_t] \right|_{s = 0} \\[.2cm]
&=  \mathds{1}_{\{\tau > t\}} e^{\varepsilon(a(t) - a(0))}  \left( \left. \frac{d}{ds}  \mathbb{E}[e^{-\varepsilon X_{t + s}}|\mathscr{F}_t] \right|_{s = 0} + \varepsilon a'(t) e^{-\varepsilon X_t} \right)\\[.2cm]
&\stackrel{(*)}{\leq} \mathds{1}_{\{\tau > t\}} e^{ \varepsilon (a(t) - a(0))} e^{-\varepsilon X_{t}}   \frac{\varepsilon(\lambda - 1)}{4} \left(-X_t + a(t)\right) \stackrel{(**)}{\leq} 0,
\end{align*}
where in $(*)$ we used \eqref{eq:bound_for_exp_X} (which can be applied, since $\{\tau > t\} \subseteq \left\{ N_t \leq \beta R^d \right\} $, cf.\  \eqref{eq:condition_on_alpha_beta}) together with $a'(t)=\frac{\lambda-1}{4}a(t)$ (cf.\ \eqref{eq:def_of_a_and_b}), and in $(**)$ we used that
$\{\tau > t\} \subseteq \left\{ X_t \geq a(t) \right\} $, cf.\  \eqref{eq:condition_on_alpha_beta}.

For the second inequality in~\eqref{eq:wanted_ineqs}, we analogously estimate
\begin{align*}
&\left. \frac{d}{ds}  \mathbb{E}[V_{(t+s) \wedge \tau}|\mathscr{F}_t] \right|_{s = 0} =
\mathds{1}_{\{\tau > t\}} \left. \frac{d}{ds}  \mathbb{E}[  e^{-\varepsilon(b(t+s) - b(0))} e^{\varepsilon N_{t + s}}  |\mathscr{F}_t] \right|_{s = 0} \\[.2cm]
&=  \mathds{1}_{\{\tau > t\}} e^{-\varepsilon(b(t) - b(0))}  \left( \left. \frac{d}{ds}  \mathbb{E}[e^{\varepsilon N_{t + s}}|\mathscr{F}_t] \right|_{s = 0}  -\varepsilon b'(t) e^{\varepsilon N_t} \right)\\[.2cm]
&\stackrel{\eqref{eq:def_of_a_and_b},\eqref{eq:bound_for_exp_N} }{\leq} \mathds{1}_{\{\tau > t\}}  e^{-\varepsilon(b(t) - b(0))} e^{\varepsilon N_{t }}   2\varepsilon (\lambda - 1) (N_t - b(t) ) \stackrel{ \eqref{eq:condition_on_alpha_beta} }{\leq} 0.
\end{align*}
The proof of Proposition \ref{prop:supermartingales} is complete.
\end{proof}

It remains to prove Lemma \ref{lem:for_supermartingale}.
Recall the definition of $R_\diamond=R_\diamond(d,\lambda)$ (cf.\ Lemma \ref{lem:first_bound_density}), $g: B \to \mathbb{R}_+$ (cf.\ \eqref{g_def_eq_from_h})
and $\beta=\beta(d,\lambda)$ (cf.\ \eqref{beta_choice}).
\begin{lemma}[Local supercriticality at low density]
\label{lem:low_density}   If~$R \ge R_\diamond$  and~$\xi \in \{0,1\}^B$ satisfies~$|\xi| \leq \beta R^d$, then for any~$x \in B$ we have
\begin{equation}\label{eq:lower_bound_with_density}
\frac{\lambda}{|B(R)|} \cdot \sum_{y \in B \cap B(x,R)}  (1-\xi(y))\cdot g(y) > \frac{1+ \lambda}{2} \cdot g(x).
\end{equation}
\end{lemma}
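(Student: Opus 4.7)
The plan is to decompose the sum on the left-hand side of \eqref{eq:lower_bound_with_density} into the full sum minus the contribution of infected sites, and then apply the Perron--Frobenius-type bound from Lemma~\ref{lem:first_bound_density} to the former while using the density assumption $|\xi|\leq \beta R^d$ on the latter.

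More explicitly, I will first write
\[
\frac{\lambda}{|B(R)|} \sum_{y \in B\cap B(x,R)} (1-\xi(y))\cdot g(y)
= \frac{\lambda}{|B(R)|} \sum_{y \in B\cap B(x,R)} g(y) \;-\; \frac{\lambda}{|B(R)|} \sum_{y \in B\cap B(x,R)} \xi(y)\, g(y).
\]
For the first piece, normalizing the inequality $Ph \geq (1-1/\kappa)^d h$ from Lemma~\ref{lem:first_bound_density} by $\max_B h$ gives the same inequality for $g$, so this term is bounded below by $\lambda (1-1/\kappa)^d g(x)$, provided $R \geq R_\diamond$. For the second piece, I use $g \leq g_{\max} = 1$ (cf.\ \eqref{eq:bounds_on_g}) and the hypothesis $|\xi|\leq \beta R^d$ to bound it above by $\lambda \beta R^d / |B(R)|$. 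To make this comparable with $g(x)$, I rewrite $\beta R^d / |B(R)|$ as $(\beta R^d / |B(R)|) \cdot (g(x)/g(x)) \leq (\beta R^d)/(|B(R)|\, g_{\min}) \cdot g(x)$, using $g(x)\geq g_{\min}$.

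Combining the two estimates yields
\[
\frac{\lambda}{|B(R)|} \sum_{y \in B\cap B(x,R)} (1-\xi(y))\cdot g(y)
\;\geq\; \lambda\!\left(\Bigl(1-\tfrac{1}{\kappa}\Bigr)^d - \frac{\beta R^d}{|B(R)|\, g_{\min}}\right) g(x),
\]
and the right-hand side is strictly greater than $\bigl(1+\tfrac{1}{2}(\lambda-1)\bigr)g(x) = \tfrac{1+\lambda}{2}\, g(x)$ by the very defining property \eqref{beta_choice} of $\beta$. There is no substantive obstacle here; the argument is simply a careful accounting in which the choice of $\beta$ in \eqref{beta_choice} is tailored precisely to absorb the error term coming from the infected-site subtraction, and the choice of $\kappa$ in \eqref{eq:choice_of_kappa} is what makes \eqref{beta_choice} solvable with a positive $\beta$ in the first place.
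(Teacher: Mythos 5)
Your proof is correct and follows essentially the same route as the paper: decompose the sum, bound the full sum via the normalized Perron--Frobenius inequality from Lemma~\ref{lem:first_bound_density}, bound the infected contribution by $\lambda|\xi|/|B(R)|$ using $g\leq 1$, absorb it into $g(x)$ via $g(x)\geq g_{\min}$, and invoke \eqref{beta_choice}. The only cosmetic difference is that you track the strict inequality to the final step, whereas the paper is slightly looser about it; this does not affect correctness.
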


Before we prove Lemma \ref{lem:low_density}, let us deduce Lemma \ref{lem:for_supermartingale} from it.

\begin{proof}[Proof of Lemma \ref{lem:for_supermartingale}]
We begin with the proof of~\eqref{eq:bound_for_exp_N}:
\begin{align}
\nonumber&\left. \frac{d}{ds} \mathbb{E}\left[ e^{\varepsilon N_{t+s}}|\mathscr{F}_t \right] \right|_{s = 0} = e^{\varepsilon N_t} \cdot \left. \frac{d}{ds}\mathbb{E}\left[ e^{\varepsilon (N_{t+s}-N_t)}|\mathscr{F}_t\right] \right|_{s=0}  \stackrel{ \eqref{eq:generator_on_B} }{=} \\[.2cm]
\nonumber& e^{\varepsilon N_t}  \sum_{x \in B} \xi_t(x)\left( e^{-\varepsilon} - 1 + \frac{\lambda}{|B(R)|}\cdot\sum_{y \in B(x,R) \cap B} (1-\xi_t(y))\cdot (e^{\varepsilon}-1) \right)\\[.2cm]
\label{eq:aux_eq_error}&\leq  e^{\varepsilon N_t} \sum_{x \in B} \xi_t(x) \left( e^{-\varepsilon} - 1+\lambda (e^{\varepsilon}-1) \right)
\stackrel{(**)}{\leq} 2 \varepsilon (\lambda - 1)\cdot  N_t \cdot e^{\varepsilon N_t},
\end{align}
where in $(**)$ we used \eqref{N_X_def} and that $e^{-\varepsilon} - 1+\lambda (e^{\varepsilon}-1) \leq 2 \varepsilon (\lambda - 1)$ holds by our choice of $\varepsilon$
in \eqref{epsilon_choice}.

Now we prove~\eqref{eq:bound_for_exp_X}. We have
\begin{equation} \label{eq:take_out_exp} \frac{d}{ds}\mathbb{E}\left.\left[e^{-\varepsilon X_{t+s}} | \mathscr{F}_t\right] \right|_{s= 0} = e^{-\varepsilon X_t} \cdot \frac{d}{ds} \mathbb{E}\left.\left[ e^{-\varepsilon (X_{t+s} - X_t)} | \mathscr{F}_t\right]\right|_{s = 0}\end{equation}
and
\begin{align}
\nonumber & \frac{d}{ds} \mathbb{E}\left.\left[ e^{-\varepsilon (X_{t+s} - X_t)} | \mathscr{F}_t\right]\right|_{s = 0} \stackrel{ \eqref{eq:generator_on_B} }{=} \\[.2cm]
\nonumber&     \sum_{\substack{x \in B}}  \xi_t(x) \left(  e^{\varepsilon g(x)} - 1  +  \frac{\lambda}{|B(R)|}\cdot\sum_{\substack{y \in B(x,R) \cap B}} (1-\xi_t(y))\cdot (e^{-\varepsilon g(y)} - 1) \right)  \\[.2cm]
\label{eq:aux_long_eq}&=   \varepsilon      \sum_{\substack{x \in B}}  \xi_t(x) \left(   g(x) -  \frac{\lambda}{|B(R)|}\cdot\sum_{\substack{y \in B(x,R) \cap B}} (1-\xi_t(y))\cdot  g(y) \right) \\[.2cm]
\label{eq:aux_long_eq_error}&\hspace{1cm} +   \sum_{\substack{x \in B}}  \xi_t(x) \left(  \mathcal{E}_x(\varepsilon) +  \frac{\lambda}{|B(R)|}\cdot\sum_{\substack{y \in B(x,R) \cap B}} (1-\xi_t(y))\cdot \tilde{\mathcal{E}}_y(\varepsilon) \right),
\end{align}
where we write
\begin{equation}\label{error_terms_def_g_eps}
 \mathcal{E}_x(\varepsilon) :=  e^{\varepsilon g(x)} - 1 -  \varepsilon g(x)\geq 0,\qquad \tilde{\mathcal{E}}_x(\varepsilon) := e^{-\varepsilon g(x)} - 1 + \varepsilon g(x)\geq 0.
\end{equation}
By our assumption $R \geq R_\diamond$ and Lemma~\ref{lem:low_density}, on the event~$\{N_t \leq \beta R^d \}$, the term in~\eqref{eq:aux_long_eq} is less than or equal to
\begin{equation}\label{linear_term_bound_g_eps}
\varepsilon    \sum_{x \in B} \xi_t(x) \left(g(x)- \frac{\lambda + 1}{2} g(x) \right) \stackrel{ \eqref{N_X_def} }{=} - \frac{\varepsilon(\lambda - 1)}{2} \cdot X_t.
\end{equation}
Let us denote $M=\max_{x \in B} \left(\mathcal{E}_x(\varepsilon) \vee \tilde{\mathcal{E}}_x(\varepsilon)\right)$. We bound the term in~\eqref{eq:aux_long_eq_error}
by
\begin{equation}\label{error_term_bound_g_eps}
\sum_{x \in B} \xi_t(x) ( M +\lambda M) \stackrel{ \eqref{N_X_def} }{=} M  (1+\lambda) N_t \stackrel{ \eqref{N_t_X_t_comparable} }{\leq} \frac{M  (1+\lambda)}{ g_{\min} } X_t \stackrel{(*)}{\leq } \frac{\varepsilon(\lambda - 1)}{4}  X_t,
\end{equation}
where in  $(*)$ we used
\begin{equation*}
 M \stackrel{\eqref{epsilon_choice}, \eqref{error_terms_def_g_eps} }{\leq} \max_{x \in B}\, ( \varepsilon g(x) )^2 \stackrel{\eqref{eq:bounds_on_g} }{\leq} \varepsilon^2
 \stackrel{\eqref{epsilon_choice}}{=} \frac{\varepsilon(\lambda-1)}{4(1+\lambda)}g_{\min}.
\end{equation*}
Putting together the upper bounds  we obtained for the terms in~\eqref{eq:aux_long_eq} and~\eqref{eq:aux_long_eq_error}, we see that the right-hand side of~\eqref{eq:take_out_exp}  satisfies the desired inequality for~\eqref{eq:bound_for_exp_X}.
The proof of Lemma \ref{lem:for_supermartingale} is complete.
\end{proof}

\begin{proof}[Proof of Lemma \ref{lem:low_density}]
Since~$g_{\max}= 1$, the left-hand side of~\eqref{eq:lower_bound_with_density} is larger than
\begin{equation}\label{intermediate_quantity}
\frac{\lambda}{|B(R)|}\cdot \left( \sum_{\substack{y \in B \cap B(x,R)}}\;  g(y) - |\xi| \right).
\end{equation}
Assuming $R \ge R_\diamond$, we can use~\eqref{eq:first_bound_density}, \eqref{g_def_eq_from_h} and the assumption~$|\xi| \leq \beta R^d$ to bound \eqref{intermediate_quantity} from below by
\begin{equation} \label{eq:kappa_beta}
\lambda \left(\left(1-\frac{1}{\kappa}\right)^d  g(x) - \frac{ \beta \cdot R^d}{|B(R)|} \right) \geq \lambda \left(\left(1- \frac{1}{\kappa}\right)^d - \frac{\beta \cdot R^d}{|B(R)|  g_{\min}}\right)  g(x).
\end{equation}
By our choice of $\beta$ in \eqref{beta_choice}, the right-hand side of~\eqref{eq:kappa_beta} is greater than or equal to~$\frac{1+\lambda}{2}\cdot g(x)$ for any~$R \ge R_\diamond$ and any~$x \in B$, completing the proof of \eqref{eq:lower_bound_with_density}.
\end{proof}

\subsubsection{Perron-Frobenius bounds}
\label{subsubsection_perron}

The goal of Section \ref{subsubsection_perron} is to prove Lemma \ref{lem:first_bound_density}.

Recall from \eqref{eq:def_of_ell} that $\ell(u)=
\frac{1}{\kappa} + \min\{u,\;\kappa -u \}, \, 0 \leq u \leq \kappa$.

 We define~$a: [0,\kappa] \to \mathbb{R}$ by
\begin{equation}
\label{eq:def_of_a}
a(u) :=
 \frac12 \int_{(u-1)\vee 0}^{(u+1)\wedge \kappa} \ell(v)\;\textrm{d}v - \left(1 - \frac{1}{\kappa}\right) \ell(u).
\end{equation}

\begin{lemma}[Plain calculus] \label{lem:calculus}
We have~$\min_{0 \leq u \leq \kappa } a(u) > 0$.
\end{lemma}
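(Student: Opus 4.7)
My plan is to exploit the piecewise-linear structure of $\ell$ and reduce the claim to a short case analysis in which $a(u)$ can be computed explicitly. The function $\ell$ is piecewise linear with only one interior kink (at $u=\kappa/2$) and is symmetric about that kink. A quick substitution $v\mapsto \kappa-v$ in the integral of \eqref{eq:def_of_a} shows that $a(\kappa-u)=a(u)$, so I restrict attention to $u\in[0,\kappa/2]$. The assumption $\kappa>4$ from \eqref{eq:choice_of_kappa} guarantees that $\kappa/2>2$, which separates several boundary layers from each other and thereby controls the case split below.

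Next I would split $[0,\kappa/2]$ into three regimes, according to which of the ``corners'' of $\ell$ lie in the integration window $[(u-1)\vee 0,\, (u+1)\wedge\kappa]$.
\begin{itemize}
\item \textbf{(A) Boundary regime $u\in[0,1]$.} Here $(u-1)\vee 0=0$ while $u+1\le 2<\kappa/2$, so $\ell(v)=\tfrac1\kappa+v$ on the whole window.
\item \textbf{(B) Generic interior regime $u\in[1,\kappa/2-1]$.} Here $[u-1,u+1]\subset[0,\kappa/2]$, so $\ell$ is linear on the window.
\item \textbf{(C) Peak-crossing regime $u\in[\kappa/2-1,\kappa/2]$.} Here $u-1>0$ and $u+1<\kappa$ (using $\kappa>4$), and the integral must be split at $\kappa/2$.
\end{itemize}

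In regime (B) the linearity of $\ell$ on a symmetric interval of length $2$ centered at $u$ gives $\tfrac12\int_{u-1}^{u+1}\ell\,dv=\ell(u)$, so immediately
\[
a(u)=\ell(u)-\Bigl(1-\tfrac1\kappa\Bigr)\ell(u)=\tfrac{1}{\kappa}\ell(u)\ge \tfrac{1}{\kappa^2}>0.
\]
This is the ``heart'' of the estimate and explains why the factor $1-\tfrac1\kappa$ is exactly the right size. In regime (A) I substitute $\ell(v)=\tfrac1\kappa+v$ and expand; an algebraic simplification yields an identity of the form
\[
a(u)=\frac{(u-1)^2}{4}+\frac{3u-1}{2\kappa}+\frac{1}{\kappa^2}, \qquad u\in[0,1],
\]
from which positivity is immediate for $u\ge 1/3$, and follows for $u\in[0,1/3]$ by bounding $(u-1)^2/4\ge 1/9$ against $-1/(2\kappa)\ge -1/8$ using $\kappa>4$. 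In regime (C) I parametrize $u=\kappa/2-s$ with $s\in[0,1]$, split the integral at the kink $\kappa/2$, and after straightforward but slightly lengthier algebra obtain
\[
a(u)=s\Bigl(1-\tfrac{s}{2}-\tfrac1\kappa\Bigr)+\tfrac{1}{\kappa^2},
\]
which is positive since the bracket is at least $1-\tfrac12-\tfrac14>0$ when $s\le 1$ and $\kappa>4$.

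The only genuine obstacle I foresee is bookkeeping: making sure the case boundaries line up (in particular that regimes (A)--(C) cover $[0,\kappa/2]$ without leaving a gap, which is where I use $\kappa/2-1\ge 1$, i.e.\ $\kappa\ge 4$) and that the boundary values match between adjacent cases. Once that is checked, none of the estimates is delicate; each case reduces to verifying positivity of an explicit polynomial in $u$ (or $s$) and $1/\kappa$.
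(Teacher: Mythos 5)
Your overall strategy (symmetry to restrict to $[0,\kappa/2]$, then split at $u=1$ and $u=\kappa/2-1$) mirrors the paper's proof, and your explicit formulas for $a(u)$ in each regime are correct; in particular regimes (B) and (C) are handled soundly. However, the bound you propose in regime (A) for $u\in[0,1/3]$ does not work: you claim
\[
a(u)\geq \frac{(u-1)^2}{4}+\frac{3u-1}{2\kappa}+\frac{1}{\kappa^2}\geq \frac19-\frac1{8}+\frac{1}{\kappa^2}=-\frac{1}{72}+\frac{1}{\kappa^2},
\]
but this last quantity is negative as soon as $\kappa\geq 9$, and $\kappa$ has no upper bound in \eqref{eq:choice_of_kappa} (it must grow as $\lambda\searrow 1$). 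Taking the minimum of $(u-1)^2/4$ and the minimum of $(3u-1)/(2\kappa)$ separately is too lossy because they are attained at opposite endpoints of $[0,1/3]$.

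The fix is straightforward and is essentially what the paper does: on all of $[0,1]$ the expression $a(u)=\frac{(u-1)^2}{4}+\frac{3u-1}{2\kappa}+\frac{1}{\kappa^2}$ is a convex quadratic in $u$, so its minimum is attained at the unique critical point $u_\star=1-\tfrac{3}{\kappa}\in(1/4,1)$ (using $\kappa>4$), and completing the square gives
\[
a(u)=\frac14\Bigl(u-1+\frac{3}{\kappa}\Bigr)^2+\frac{1}{\kappa}-\frac{5}{4\kappa^2}\geq \frac{1}{\kappa}-\frac{5}{4\kappa^2}>0,
\]
the last inequality holding for any $\kappa>5/4$. Replacing your $[0,1/3]$ subcase with this computation closes the gap; the rest of your argument then yields the lemma.
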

Before we prove Lemma \ref{lem:calculus}, let us deduce Lemma \ref{lem:first_bound_density} from it.

\begin{proof}[Proof of Lemma \ref{lem:first_bound_density}]
By \eqref{h_prod_def}, the left-hand side of~\eqref{eq:first_bound_density} is equal to
$$\prod_{i=1}^d \frac{1}{2R+1}  \sum_{\substack{y_i \in \{0,\ldots, \kappa R-1\},\\  |y_i - x_i| \leq R}} \ell \left( \frac{y_i}{R} \right), $$
so~\eqref{eq:first_bound_density} will follow if we prove that, for each~$x \in B$ and for each~$i \in \{1,\ldots, d\}$,
\begin{equation}\label{ineq_need_perronfrob}
 \frac{1}{2R+1}  \sum_{\substack{y_i \in \{0,\ldots, \kappa R-1\},\\ 0 \leq |y_i - x_i| \leq R}} \ell \left( \frac{y_i}{R} \right) \geq \left(1- \frac{1}{\kappa}\right) \ell \left(\frac{x_i}{R}\right).
 \end{equation}
This amounts to the same as proving the lemma in the case~$d= 1$, so we assume~$d = 1$ (and thus drop the subscript~$i$) in the rest of this proof.

Denote
$$ a_R(x) :=  \frac{1}{2R+1} \sum_{\substack{ y \in \{0,\ldots, \kappa R-1\},\\ 0 \leq |y - x| \leq R }} \ell \left(\frac{y}{R}\right) -  \left(1-\frac{1}{\kappa}\right)\ell\left( \frac{x}{R}\right),\;\; x \in \{0,\ldots, \kappa R - 1\}. $$
Comparing this to the definition of the function~$a$ from~\eqref{eq:def_of_a}, we readily obtain
$$ \lim_{R \to \infty} \max_{x \in \{0,\ldots, \kappa R-1\}} |a_R(x) - a(x/R)| = 0.$$
Then, by Lemma~\ref{lem:calculus}, we conclude that if~$R$ is large enough, we have~$a_R(x) > 0$ for all~$x \in \{0,\ldots,\kappa R - 1\}$, completing the proof of
\eqref{ineq_need_perronfrob}.
 \end{proof}

\begin{proof}[Proof of Lemma \ref{lem:calculus}]
 By symmetry, it is sufficient to prove that~$a(u) > 0$ for all~$u \in [0,\kappa/2]$.
 Recall from \eqref{eq:choice_of_kappa} that $\kappa >4$, thus~$\tfrac{\kappa}{2} + 1 < \kappa$, so we can write
$$a(u)= \frac12\int_0^{u+1} \ell(v)\;\textrm{d}v - \frac12\int_0^{(u-1)\vee 0} \ell(v)\;\textrm{d}v - \left(1-\frac{1}{\kappa}\right)\ell(u), \quad u \in [0,\tfrac{\kappa}{2}],$$
and then
\begin{align*}
a'(u) = \begin{cases}\frac12 \cdot \ell(u+1) - \left(1-\frac{1}{\kappa}\right)\cdot \ell'(u)&\text{if } u \in (0,1);\\[.2cm] \frac12 \cdot (\ell(u+1) - \ell(u-1)) - \left(1-\frac{1}{\kappa}\right)\cdot \ell'(u)&\text{if } u \in (1,\tfrac{\kappa}{2}). \end{cases}
\end{align*}
Using  that~$\ell(u) = (\tfrac{1}{\kappa} + u) \cdot \mathds{1}_{[0,\kappa/2)}(u) + (\tfrac{1}{\kappa} + \kappa -u) \cdot \mathds{1}_{[\kappa/2, \kappa]}(u)$,  we obtain
\begin{equation} \label{eq:a_derivative} a'(u) = \begin{cases} \frac{u}{2} + \tfrac{3}{2\kappa} - \tfrac{1}{2}& \text{if } u \in (0,1);\\[.2cm] \frac{1}{\kappa}& \text{if } u \in (1,\tfrac{\kappa}{2} - 1);\\[.2cm] - u+ \tfrac{\kappa}{2}    + \tfrac{1}{\kappa}- 1 &\text{if } u \in (\tfrac{\kappa}{2}-1,\tfrac{\kappa}{2}).\end{cases}\end{equation}
Note that the middle interval $(1,\tfrac{\kappa}{2} - 1)$ is non-empty, since $\kappa>4$.

\smallskip

 Using \eqref{eq:a_derivative} we see that~$a$ is concave on $[1,\kappa/2]$. One can easily compute
$a(1) = \frac{1}{\kappa^2} + \frac{1}{\kappa} > 0$ and $a(\kappa/2) = \frac{1}{\kappa^2} > 0$, so~$a(u) > 0$ for all~$u \in [1,\kappa/2]$.

Also, \eqref{eq:a_derivative} shows that~$a$ is convex on~$[0,1]$; to show that it is positive in this interval, note that~$a'(u) = 0$ is solved there at~$u_\star = 1 -\tfrac{3}{\kappa}$, for which we can compute
$a(u_\star) = \frac{1}{\kappa} - \frac{5}{4\kappa^2}> 0$, since~$\kappa > 5/4$. This completes the proof of Lemma \ref{lem:calculus}.
\end{proof}

\section{Coupling branching random walks and infection path indicators}
\label{section_graphical_construction}

The aim of Section \ref{section_graphical_construction} is to couple the infection path indicators of Definition \ref{Xi_graph_def} with independent branching random walks (BRWs)
and to bound the probabilities of the bad events that can ruin our coupling.
We construct the coupling in Section \ref{subsection_coupling}. The basic idea of the coupling is simple: (i) as long as we do not observe collisions of BRW particles, they coincide with the infection path indicators, (ii) if we observe collisions, we immediately remove all BRW particles with the same ancestor as the one that was involved in the collision, because we don't want them
to cause further collisions. We bound the generating function of the number of BRW family trees that survive these collisions in Section \ref{subsection_annihilating_lemma}  and in Section \ref{subsection_brw_estimates} we bound the terms that appear in our generating function estimate (i.e., we bound collision probabilities). In Section \ref{subsection_brw_facts} we collect some useful facts about branching processes.

\subsection{Coupling}
\label{subsection_coupling}

Let us fix a finite subset $\mathcal{X} \subset \mathbb{Z}^d$ for the rest of Section \ref{section_graphical_construction}.
The goal of Section \ref{subsection_coupling}  is to construct a coupling between the infection path indicators $\Xi_t(x,y)$ (cf.\ \eqref{Xi_graphical_def_eq})  and independent branching random walks $Z_t(x,y)$ (see below), where $x \in \mathcal{X}$ and $y \in \mathbb{Z}^d$. We consider the c.\`a.d.l.\`a.g.\ versions of all the stochastic processes that we define.

\begin{definition}[Labeled branching random walks (BRWs)]
\label{def_brw_from_X}
 At time zero, for each~$x \in \mathcal{X}$, there is one particle with label~$x$ at location~$x$.
 Independently, particles die with rate one and give birth to new particles with rate~$\lambda$; a newborn particle has the same label as the parent, and is placed at a site chosen uniformly at random in the translate of $B(R)$ centered at the position of the parent.

  For~$x \in \mathcal{X}$, $y \in \Z^d$ and $t \geq 0$ we let~$Z_t(x,y)$ be the number of particles with label~$x$ located at site~$y$ at time $t$.
\end{definition}

The label of a particle at time $t$ encodes the location of its ancestor at time $0$.
We have $Z_0(x,y)=\mathds{1}[ x=y \in \mathcal{X} ]$. For each $x \in \mathcal{X}$, the particles with label $x$ form a continuous-time BRW on~$\Z^d$ with one single ancestor at time zero at location $x$. The branching random walks with different labels evolve independently from each other.

\begin{claim}[Labeled branching processes]\label{claim_brw_bp} Denote by
\begin{equation}\label{Z_T_x}
Z_t(x):= \sum_{y \in \mathbb{Z}^d } Z_t(x,y), \quad t \geq 0, \quad x \in \mathcal{X}
\end{equation}
the total number of BRW particles with label $x$ at time $t$.
 For~$x\in \mathcal{X}$, the process~$(Z_t(x))_{t \ge 0}$ is a continuous-time branching process in which individuals die with rate one and give birth with rate~$\lambda$.  The branching processes with different labels evolve independently from each other.
\end{claim}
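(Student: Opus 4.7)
The plan is straightforward: this claim follows almost immediately by unpacking Definition~\ref{def_brw_from_X}. First, I would note that in the construction of the labeled BRW, every particle carries its own independent pair of exponential clocks---one of rate $1$ for death and one of rate $\lambda$ for birth---and that an offspring inherits the parent's label. The random location at which a birth is placed is chosen independently of everything else and, crucially, does not affect any of the birth or death rates. Consequently, for each fixed label $x \in \mathcal{X}$, passing from $(Z_t(x,y))_{y \in \mathbb{Z}^d}$ to the sum $Z_t(x) = \sum_{y \in \mathbb{Z}^d} Z_t(x,y)$ discards only spatial information and preserves the jump dynamics of the total label count.

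Next, I would verify the branching-process statement by checking the infinitesimal transition rates. Since each particle carrying label $x$ is independently killed at rate $1$ and independently produces a new particle (also labeled $x$) at rate $\lambda$, the superposition of independent Poisson clocks yields, conditionally on $Z_t(x) = n$, a transition $n \mapsto n-1$ at total rate $n$ and a transition $n \mapsto n+1$ at total rate $n\lambda$. These are exactly the rates of a continuous-time Markov branching process with per-capita birth rate $\lambda$ and per-capita death rate $1$. The initial condition $Z_0(x) = 1$ comes directly from Definition~\ref{def_brw_from_X}, since at time $0$ there is exactly one particle carrying label $x$ (located at $x$ itself).

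For the independence of the family $\{(Z_t(x))_{t \geq 0} : x \in \mathcal{X}\}$, I would appeal to the construction: particles with distinct labels never interact, since labels are inherited without modification and the death clock, birth clock and birth-location randomness of a label-$x$ particle are all independent of the corresponding data for label-$y$ particles whenever $y \neq x$. More formally, one can realize the labeled BRW by attaching, to each potential particle indexed by its ancestral lineage (a Ulam--Harris label augmented by the initial position), its own independent collection of Poisson processes; the $\sigma$-algebras generated by the different initial labels $x \in \mathcal{X}$ are then independent by construction, and the processes $(Z_\cdot(x))_{x \in \mathcal{X}}$ are functions of disjoint independent pieces.

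I do not anticipate any genuine obstacle here: the claim is essentially a bookkeeping statement saying that if we collapse the spatial coordinate of the labeled BRW we recover the underlying Yule-type birth--death branching process, and that independence across labels is built into the definition. The only thing to be mildly careful about is phrasing the construction precisely enough so that ``particles with different labels evolve independently'' has an unambiguous meaning in terms of the underlying probability space.
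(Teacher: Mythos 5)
Your proof is correct, and in fact the paper gives no explicit proof of this claim at all---it is stated as an immediate consequence of Definition~\ref{def_brw_from_X}, which is precisely the content you spell out. Your unpacking of the per-particle independent exponential clocks, the inheritance of labels, the irrelevance of the spatial displacement for the total count, and the disjointness of the randomness driving different labels is exactly the bookkeeping the authors leave implicit.
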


Next we define the following stopping times:
\begin{align}
\label{crowd_x}
\tau_x  &:= \min \left\{ \, t \, : \, \sum\nolimits_{y \in \mathbb{Z}^d }   Z_t(x,y)(Z_t(x,y)-1)   >0   \,  \right\}, \quad x \in \mathcal{X} \\
\label{crowd_x_x_prime}
\tau_{x,x'}  &:= \min \left\{ \, t \, : \, \sum\nolimits_{y \in \mathbb{Z}^d }  Z_t(x,y)Z_t(x',y)  >0   \,  \right\}, \quad x \neq x' \in \mathcal{X}
\end{align}
Thus $\tau_x$ is the first time when there are two particles with label $x$ at the same site, while $\tau_{x,x'}$ is the first time when a particle with label
$x$ and a particle with label $x'$ are at the same site. Both of these stopping times correspond to \emph{collisions}, and we will define \emph{annihilating BRWs}
by postulating that if a particle with label $x$ is involved in a collision then we immediately remove (annihilate) all particles with label $x$.
 We will denote by $\mathcal{X}_t$ the set of labels which did not yet get annihilated by time $t$.

\begin{definition}[Set of alive/annihilated labels]
\label{def_mathcal_X_t} For each $t \geq 0$  we define $\mathcal{X}_t \subseteq \mathcal{X}$ in the following way: let $\mathcal{X}_0=\mathcal{X}$. Then we increase $t$, and if $t$ reaches $\tau_x$ for some $x \in \mathcal{X}_t$ then we  remove $x$ from $\mathcal{X}_t$. Similarly, if $t$ reaches $\tau_{x,x'}$ where $x$ and $x'$ are both still in $\mathcal{X}_t$, then we remove both $x$ and $x'$ from $\mathcal{X}_t$. We say that $\mathcal{X}_t$ is the set of alive labels at time $t$. If $x \in \mathcal{X} \setminus \mathcal{X}_t$, we say that the label $x$ got annihilated by time $t$.
\end{definition}
See Figure~\ref{fig:coupling} for an illustration of this definition.

\begin{figure}[htb]
\begin{center}
\setlength\fboxsep{0pt}
\setlength\fboxrule{0pt}
\fbox{\includegraphics[width = .9\textwidth]{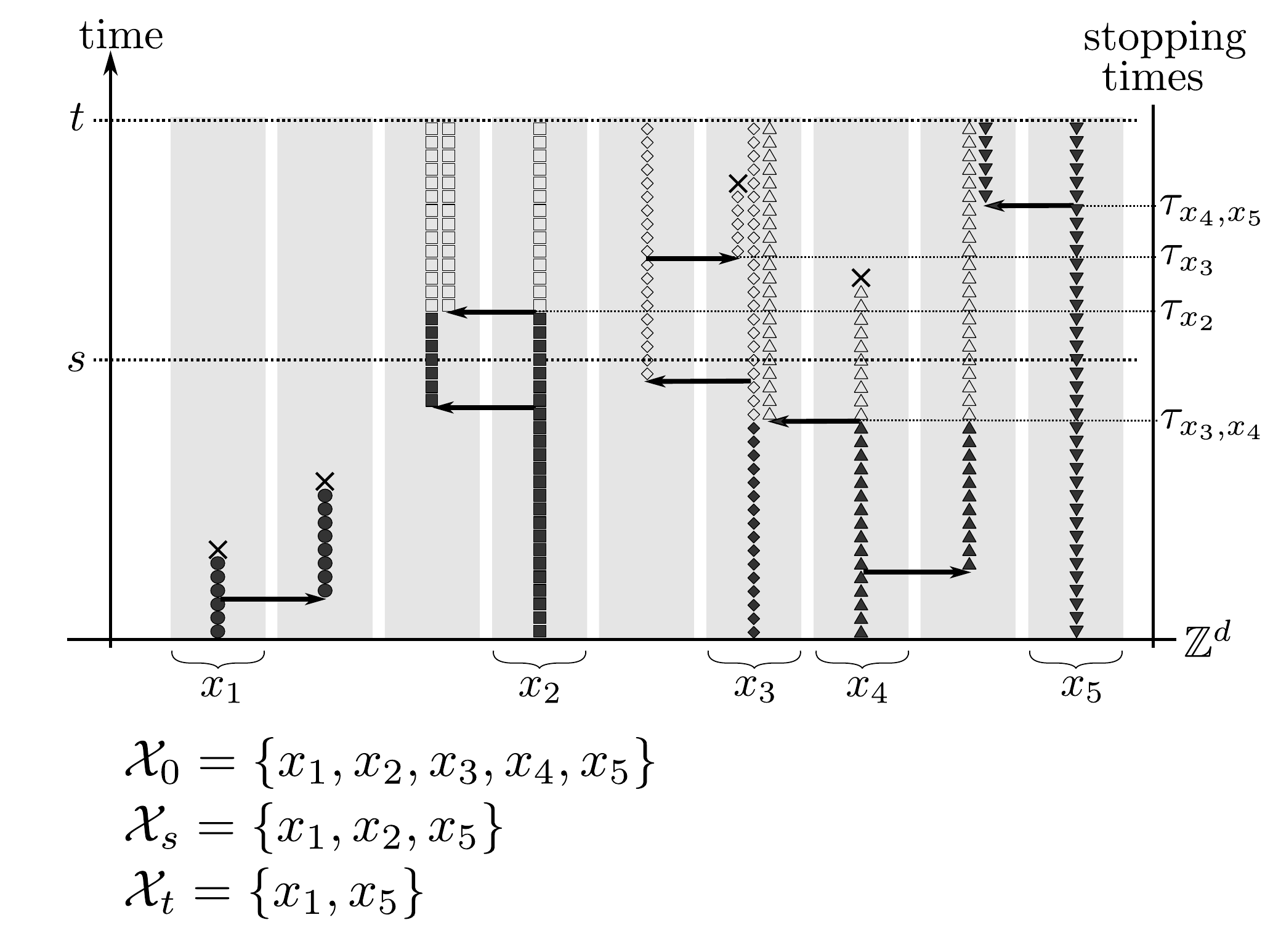}}
\end{center}
	\caption{\label{fig:coupling} Illustration of the coupling of infection paths and branching random walks, the stopping times~$\tau_x$ and~$\tau_{x,x'}$ and the label process~$(\mathcal{X}_t)_{t \ge 0}$ for a case where~$\mathcal{X}$ consists of five points,~$x_1,x_2,x_3,x_4,x_5$. The symbols~$\circ, \square, \diamond, \Delta, \nabla$ represent particles with labels~$x_1,x_2,x_3,x_4,x_5$, respectively.
 The dark (resp.\ light) color represents particles  whose label is alive (resp.\ annihilated).
 (Although the figure might give the impression that the dynamics is in discrete time, time is of course continuous). The contents of the sets~$\mathcal{X}_s$ and~$\mathcal{X}_t$ are made explicit for two times~$s < t$. Note  that at time~$\tau_{x_4,x_5}$, the label~$x_5$ remains alive, since it collides with a particle with label~$x_4$, which has already been annihilated.}
\end{figure}

Now we can state the key lemma of Section \ref{subsection_coupling} relating the infection path indicators $\Xi_t(x,y)$ (cf.\ Definition \ref{Xi_graph_def}), the independent labeled BRWs $Z_t(x,y)$ (cf.\ Definition \ref{def_brw_from_X}) and the set of alive labels $\mathcal{X}_t$ defined above.
\begin{lemma}[Coupling]  \label{lemma_coupling}
 There is a way to couple $(\Xi_t(x,y))_{x \in \mathcal{X},y \in \mathbb{Z}^d, t \geq 0}$ and
$(Z_t(x,y))_{x \in \mathcal{X},y \in \mathbb{Z}^d, t \geq 0}$ so that
\begin{equation}\label{coupling_of_Xi_Z_on_X_t_eq}
 \text{for any $x \in \mathcal{X}_{t}$ and $y \in \mathbb{Z}^d$ we have $Z_t(x,y)=\Xi_t(x,y)$.}
\end{equation}
\end{lemma}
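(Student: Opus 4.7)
The plan is to realize all processes simultaneously on a single probability space by combining independent building blocks, and then to read off the desired graphical construction from them. I would start by taking independent copies $(G^x)_{x \in \mathcal{X}}$ and $G^\circ$ of the graphical construction of Section~\ref{subsection_graphical_constr_of_cont_proc}, together with an additional independent family of exponential clocks attached to each potential BRW particle.

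For each $x \in \mathcal{X}$, I would define the BRW with label $x$ using $G^x$: start a single particle at $x$ at time $0$; while every label-$x$ particle is alone at its site (that is, up to the first self-collision time $\tau_x$), let each particle be killed at the next recovery mark of $G^x$ at its current site and give birth at the next infection arrow of $G^x$ out of its current site. Once a self-collision occurs at $\tau_x$, I would continue the dynamics of label-$x$ particles using their auxiliary per-particle exponential clocks, so that every particle has its own independent death and birth clocks thereafter. This yields the correct marginal BRW law for each label $x$, and the BRWs across different labels are jointly independent because they are driven by disjoint independent pieces of randomness.

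Next, I would use these BRW trajectories to define the single graphical construction $G$ that produces $(\Xi_t(x,y))$: at each site $y$ and time $t$, declare $G$'s Poisson events at $(y,t)$ to coincide with those of $G^x$ if there is a (necessarily unique) alive label $x \in \mathcal{X}_t$ whose BRW has a particle at $y$, and with those of $G^\circ$ otherwise. The disjointness of BRW ``territories'' for distinct alive labels, built into Definition~\ref{def_mathcal_X_t}, makes this rule unambiguous. To verify that $G$ has the correct marginal law, I would fix a site $y$ and analyze the sequence of source-switch times at $y$: each such switch occurs at a stopping time measurable with respect to Poisson events of the various $G^{x'}$'s and $G^\circ$ at sites other than $y$, while the Poisson events at $y$ of the source being switched into are independent of those events by the independence of the $G^{x'}$'s across sites. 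An iterated application of the strong Markov property then shows that the recovery-mark process of $G$ at $y$ is a rate-$1$ Poisson process and each outgoing infection-arrow process has rate $\lambda/|B(R)|$; independence across sites and edges is inherited from the $G^x$'s and $G^\circ$.

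Finally, to obtain \eqref{coupling_of_Xi_Z_on_X_t_eq} for $x \in \mathcal{X}_t$, I would argue inductively along any infection path in $G$ starting from $(x,0)$: at every space-time point of the path, the underlying site is currently occupied by a label-$x$ BRW particle, so the driving Poisson events of $G$ coincide with those of $G^x$; and conversely, each label-$x$ BRW particle at $y$ at time $s \le t$ corresponds to an infection path in $G$ from $(x,0)$ to $(y,s)$ built from arrows and time-line stretches in $G^x$. Because $x \in \mathcal{X}_t$ excludes double occupancy, $Z_t(x,y) \in \{0,1\}$ and the identity holds. I expect the main obstacle to be the rigorous verification that the spliced $G$ really does have i.i.d.\ Poisson marginals with the prescribed rates, which requires the stopping-time bookkeeping sketched above; once that is in place the coupling identity is a direct consequence of the fact that $G$ and $G^x$ agree on the entire label-$x$ trajectory while $x$ is alive.
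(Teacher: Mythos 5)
Your proposal is correct but takes a genuinely different route from the paper's. The paper constructs everything from a \emph{single} ``layered'' family of Poisson processes $N^\ell_y$, $N^\ell_{y,z}$ with $\ell\in\mathbb{N}$: it orders the particles at each site-time, always forcing the unique alive-label particle (if any) into position $\ell=1$, lets the $\ell$-th particle at $y$ use the layer-$\ell$ clocks, and simply declares the contact-process graphical construction to be the $\ell=1$ layer. This makes the marginal of the $\Xi$-process completely manifest — it never has to be verified — and pushes the entire stopping-time bookkeeping into the verification that the re-indexed BRW particles still get i.i.d.\ clocks. Your construction is the mirror image: you take independent copies $G^x$ (plus $G^\circ$ and per-particle clocks) so that the BRW marginals are essentially immediate before $\tau_x$, and you pay for it by having to \emph{splice} the contact-process graphical construction $G$ out of the $G^x$'s and $G^\circ$ — which is where the stopping-time work now sits. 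Both routes work and have roughly the same cost; the paper's is slightly tidier because the object one ultimately cares about ($\Xi$) has a manifestly correct law, while in your approach the harder marginal verification is precisely the one you need.

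One imprecision worth correcting in your sketch: the source-switch time at $y$ is \emph{not} always measurable with respect to Poisson events at sites other than $y$. When the currently-alive particle at $y$ dies, the switch time is exactly an arrival of the current source's recovery-mark process \emph{at $y$}, so it does look at events at $y$. That is harmless, but for the right reason: the switch time is a stopping time for the filtration generated by everything, and what the splicing argument actually needs is that the \emph{incoming} source's events at $y$ on $(\text{switch time},\infty)$ are fresh, i.e.\ independent of the history up to the switch time — which follows from the strong Markov property and independence of increments, not from the switch being determined off $y$. With that fix your verification that $G$ has the right law is sound, and the coupling identity then follows exactly as you say, since the spliced $G$ and $G^x$ agree at every site occupied by an alive label-$x$ particle, which is all of the label-$x$ trajectory while $x\in\mathcal{X}_t$.
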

Before we prove Lemma \ref{lemma_coupling}, let us make a couple of remarks.
\begin{remark}\label{remark_coupling_pitfalls}
 Our naive goal is to sample a configuration  $(\xi(x))_{x \in \mathcal{X}}$ distributed according to $\mu_{\lambda,R}$ using \eqref{upper_invariant_active_path}, but
 we want to use our independent labeled BRWs instead of the infection path indicators, since we want to argue that $(\xi(x))_{x \in \mathcal{X}}$ is close to being an i.i.d.\ Bernoulli
 configuration.
  Therefore,  we want to control the size of the set $\mathcal{X} \setminus \mathcal{X}_t$ of labels where \eqref{coupling_of_Xi_Z_on_X_t_eq} fails. Also, we want $\mathcal{X}$ to have a large cardinality, since percolation connectivity events involve the simultaneous status of many sites. Also, naively we would like to let $t \to \infty$, cf.\ \eqref{upper_invariant_active_path}.
    Nevertheless, if $\mathcal{X}$ is too ``fat'' then we see many collisions in the beginning, while making $t$ too big results in many collisions in the long run (since
    our BRWs are supercritical). In the end, it will be sufficient to choose a set $\mathcal{X}$ with a ``thin'' fractal structure (see Section \ref{section_renorm_intro}) and work with a large but fixed time $t$ (see Section \ref{section_proof_of_main}), while we are
    free to choose the range $R$ of the BRW jump distribution to be very large, which clearly helps if we want to reduce the number of collisions.
\end{remark}

\begin{proof}[Proof of Lemma \ref{lemma_coupling}] We begin by providing a suitable graphical construction of the independent labeled branching random walks $(Z_t(x,y))_{x \in \mathcal{X},y \in \mathbb{Z}^d, t \geq 0}$.

For each $y \in \mathbb{Z}^d$ and $\ell \in \mathbb{N}$, let $N^{\ell}_y$
denote a Poisson process of rate $1$ on $[0,\infty)$,  and for each $y,z \in \mathbb{Z}^d$ satisfying $z \in B(y,R)$ and each $\ell \in \mathbb{N}$, let $N^{\ell}_{y,z}$
denote a Poisson process of rate $\lambda/B(R)$ on $[0,\infty)$.
 All Poisson processes are independent.

\smallskip

  An arrival time of $N^{\ell}_y$ at time $t$ will be interpreted as a \emph{death mark} which kills the $\ell$'th particle at location $y$ at time $t$.  An arrival time of $N^{\ell}_{y,z}$ at time $t$ will be interpreted as a \emph{reproduction arrow}: assuming that the label of the $\ell$'th particle at location $y$ at time $t$ is $x$, this reproduction arrow will trigger this particle to produce a newborn particle with label $x$ at location $z$.

\smallskip

  To be more specific, for each~$x \in \mathcal{X}$, we start with  one particle with label~$x$ at location~$x$ at time zero. Assume that we have already constructed the evolution of the labeled BRW particle  configuration as well as the set of alive labels up to time $t$ in terms of these Poisson point processes. For~$x \in \mathcal{X}$, $y \in \Z^d$ we let~$Z_t(x,y)$ be the number of particles with label~$x$ located at site~$y$ at time $t$.
   Note that by Definition \ref{def_mathcal_X_t}, there is at most one particle with label belonging to $\mathcal{X}_t$ at site $y$  at time $t$ (since two such particles would
   have already annihilated each other by time $t$).
      Altogether, there are $\sum_{x \in \mathcal{X}} Z_t(x,y)$ particles sharing the same location $y$ at time $t$.
   We would like to talk about the $\ell$'th particle at location $y$ at time $t$ (where $1 \leq \ell \leq \sum_{x \in \mathcal{X}} Z_t(x,y)$), so let us order these particles with one rule in mind: if there is a particle at time $t$ at location $y$ with label belonging to $\mathcal{X}_t$, this particle has to be the first one in the ordering, so that it gets to use the Poisson processes with upper index $\ell=1$.
  Now each particle waits for the next instruction encoded by its own Poisson point processes. As soon as any of the particles receives the next instruction, we update the particle configuration, the set of alive vertices and the ordering of particles at each location accordingly. Let us note here that we assumed $|\mathcal{X}|<\infty$, and therefore the total number of particles remains finite after each such step, thus there is always a well-defined
``next instruction'', allowing us to proceed inductively.

\smallskip

It is easy to see that this construction indeed produces independent labeled BRWs  $(Z_t(x,y))_{x \in \mathcal{X},y \in \mathbb{Z}^d, t \geq 0}$ satisfying Definition \ref{def_brw_from_X}, which
also shows that the total number of particles does not explode in finite time.

\smallskip

 In order to define infection path indicators $(\Xi_t(x,y))_{x \in \mathcal{X},y \in \mathbb{Z}^d, t \geq 0}$ satisfying Definition \ref{Xi_graph_def} on the same probability space, we perform the graphical
 construction described in Section \ref{subsection_graphical_constr_of_cont_proc}, using the Poisson point processes with upper index $\ell=1$, more specifically, using  $N^{1}_y$, $y \in \mathbb{Z}^d$ as recovery marks and
 $N^{1}_{y,z}$ with $y,z \in \mathbb{Z}^d, z \in B(y,R)$ as infection arrows.

 \smallskip

 Now it is easy to prove \eqref{coupling_of_Xi_Z_on_X_t_eq} inductively: one just has to observe that the time evolution of $\Xi_t(x,y)$ (cf.\ \eqref{Xi_graphical_def_eq})
 agrees with that of $Z_t(x,y)$ as long as $x \in \mathcal{X}_t$ (since both graphical constructions use the same Poisson processes with upper index $\ell=1$ as an input, and they use this input to perform the same updates).
\end{proof}

Having proved Lemma \ref{lemma_coupling}, we see that the coupling \eqref{coupling_of_Xi_Z_on_X_t_eq} will only be useful if we can control the size of $\mathcal{X} \setminus \mathcal{X}_t$.
This is what we do in Section \ref{subsection_annihilating_lemma}.

\subsection{Generating function bounds}
\label{subsection_annihilating_lemma}

Variants of our next lemma have already appeared in our earlier work, cf.\ \cite[(5.14)]{RV15} and \cite[Lemma 2.4]{RV17}, however
those earlier lemmas pertained to annihilating random walks, while Lemma \ref{lemma_annihilation} below pertains to annihilating BRWs,
where the situation is a bit more complicated, since different types of annihilation events correspond to the collision times $\tau_x$ (cf.\ \eqref{crowd_x}), where only one label is involved,
and $\tau_{x,x'}$ (cf.\ \eqref{crowd_x_x_prime}), where a pair of distinct labels are involved.

Our next lemma bounds the generating function of the number of labels from $\mathcal{X}$ that got annihilated by time $t$ (cf.\ Definition \ref{def_mathcal_X_t}).
Let $\prec$ denote a total ordering of $\Z^d$.

\begin{lemma}[Generating function bounds]
\label{lemma_annihilation}
 For any $s \in [0,+\infty )$ we have
\begin{equation}\label{annihilation_momgen_ineq}
\mathbb{E}\left( s^{|\mathcal{X} \setminus \mathcal{X}_t|} \right)  \leq   \prod_{ x \in \mathcal{X} } \left( 1+ \mathbb{P}(\tau_{x} \leq t )s \right)  \prod_{x,y \in \mathcal{X} \, : \,  x \prec y} \left( 1+ \mathbb{P}(\tau_{x,y} \leq t ) s^2 \right).
\end{equation}
\end{lemma}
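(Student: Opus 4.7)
The plan is to define, for each realisation of the labelled BRWs, a canonical \emph{witness structure} $(S,P)$ that records, for every annihilated label, the collision event responsible for its removal. Precisely, I would let $\sigma_x := \inf\{u : x \notin \mathcal{X}_u\}$ be the annihilation time of label $x$, so that $\mathcal{Y}:=\mathcal{X}\setminus \mathcal{X}_t = \{x : \sigma_x \le t\}$. For each $x\in\mathcal{Y}$: if $\sigma_x = \tau_x$ then put $x$ into a set $S$; otherwise $\sigma_x = \tau_{x,y}$ for some $y$, and by Definition \ref{def_mathcal_X_t} such a collision removes $x$ only when $y$ is also alive at $\tau_{x,y}^-$, in which case $y$ is annihilated at the same instant, so $\sigma_y=\sigma_x$; then put $\{x,y\}$ into a set $P$. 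Since the collision times are functionals of a finite collection of independent Poisson processes, they are almost surely pairwise distinct, hence the partner $y$ is a.s.\ uniquely determined and $(S,P)$ is a.s.\ a disjoint pairing on $\mathcal{Y}$: the members of $S$ together with the vertices appearing in pairs of $P$ are all distinct, and $\mathcal{Y}= S \sqcup \bigsqcup P$, yielding the key identity
\begin{equation*}
|\mathcal{Y}| = |S|+2|P|.
\end{equation*}

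Next I would decompose according to the value of $(S,P)$:
\begin{equation*}
\mathbb{E}\bigl[s^{|\mathcal{Y}|}\bigr] = \sum_{(S_0,P_0)} s^{|S_0|+2|P_0|}\,\mathbb{P}\bigl((S,P)=(S_0,P_0)\bigr) \le \sum_{(S_0,P_0)} s^{|S_0|+2|P_0|}\,\mathbb{P}\Bigl(\bigcap_{x\in S_0}\{\tau_x\le t\} \cap \bigcap_{\{x,y\}\in P_0}\{\tau_{x,y}\le t\}\Bigr),
\end{equation*}
where the sum ranges over all disjoint pairings $(S_0,P_0)$ on $\mathcal{X}$. The crucial point is that on any such structure the labels involved are all distinct, so the events $\{\tau_x \le t\}$ (for $x\in S_0$) and $\{\tau_{x,y}\le t\}$ (for $\{x,y\}\in P_0$) depend on pairwise disjoint families of BRWs. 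By the independence of BRWs carrying different labels (Definition \ref{def_brw_from_X} and Claim \ref{claim_brw_bp}), these events are therefore mutually independent, and the joint probability factors as $\prod_{x\in S_0}\mathbb{P}(\tau_x\le t)\prod_{\{x,y\}\in P_0}\mathbb{P}(\tau_{x,y}\le t)$.

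Finally, dropping the disjointness constraint on $(S_0,P_0)$ (which only enlarges the sum, since all summands are nonnegative) decouples the sum into independent choices on singletons and on pairs, and the result factorises as
\begin{equation*}
\prod_{x\in\mathcal{X}}\bigl(1+s\,\mathbb{P}(\tau_x \le t)\bigr)\cdot\prod_{x\prec y}\bigl(1+s^2\,\mathbb{P}(\tau_{x,y}\le t)\bigr),
\end{equation*}
which is precisely the right-hand side of \eqref{annihilation_momgen_ineq}. The only delicate point in the argument is the almost-sure well-definedness of the witness $(S,P)$ — in particular, that no label is simultaneously involved in two distinct collision events — which reduces to the standard fact that finitely many independent continuous Poisson processes produce a.s.\ simple event times; every other step is a direct consequence of the construction.
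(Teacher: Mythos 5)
Your proof is correct and takes essentially the same approach as the paper's: both decompose the set of annihilated labels into singly-annihilated labels and annihilating pairs (your $(S,P)$ is the paper's $(\mathcal{B},\mathcal{A})$), use the identity $|\mathcal{X}\setminus\mathcal{X}_t|=|S|+2|P|$, bound the probability of the witness structure by the probability of the corresponding collision events, factor via the independence of BRWs with distinct labels, and then drop the disjointness constraint to obtain the product form. Your additional remark on the a.s.\ uniqueness of the collision partner is a sound clarification; the paper instead notes directly that ``any label can be annihilated at most once,'' which achieves the same purpose without invoking distinctness of collision times.
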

\begin{proof}  If  $e=\{ x,y \} \in \binom{\mathcal{X}}{2}$, let us denote $\tau_e:=\tau_{x,y}$ and
\begin{align}
 \eta_t( e )&:=\mathds{1}\left[ \, \tau_{e} < t, \;  \mathcal{X}_{(\tau_{e})_+ }=  \mathcal{X}_{(\tau_{e})_-}\setminus e    \,  \right], \quad e \in \binom{\mathcal{X}}{2},  \\
\eta_t(x)&:=\mathds{1}\left[ \, \tau_{x} < t, \;  \mathcal{X}_{(\tau_{x})_+ }=  \mathcal{X}_{(\tau_{x})_-}\setminus \{ x \}   \,  \right], \quad x \in \mathcal{X}.
\end{align}
In words: $\eta_t(e)$ (where $e=\{x,y\}$) is the indicator of the event that the labels $x$ and $y$ annihilated each other before time $t$,
while $\eta_t(x)$  is the indicator of the event that the collision of two particles with label $x$ annihilated the label $x$ before time $t$.

 Denote by $\mathcal{A}$ the set of edges $e \in \binom{\mathcal{X}}{2}$ for which $\eta_t(e)=1$ and denote by $\mathcal{B}$ the set of vertices $x \in \mathcal{X}$ for which
$\eta_t(x)=1$.

\smallskip

 Let us note that as soon as a label gets annihilated, it is gone forever, i.e.,  any label
 can be annihilated at most once, therefore the pair $(\mathcal{A}, \mathcal{B})$ is a random element of $H$, where
 $H$ denotes the set of pairs $(A,B)$ which satisfy (i) $A \subseteq \binom{\mathcal{X}}{2}$, (ii) $B \subseteq \mathcal{X}$, (iii) $A$ is a partial matching (i.e., $e \neq e' \in A$ implies $e \cap e' = \emptyset$) and (iv) $B$ is disjoint from
$\cup_{e \in A} e$.

Using the notation introduced above, we obtain
\begin{multline}
\mathbb{E}\left( s^{|\mathcal{X} \setminus \mathcal{X}_t|} \right)=\mathbb{E}\left(s^{2|\mathcal{A}|+|\mathcal{B}|} \right)  = \sum_{(A,B)\in H} \mathbb{P}(\mathcal{A}=A, \, \mathcal{B}=B)  s^{2|A|} s^{|B|} \leq \\
\sum_{(A,B)\in H} \mathbb{P}(\, \forall e \in A \, : \, \tau_e<t, \,\, \forall x \in B \, : \, \tau_x <t \, )  s^{2|A|} s^{|B|}\stackrel{(*)}{=}\\
\sum_{(A,B)\in H} \prod_{e \in A} \mathbb{P}( \tau_e<t) \prod_{x \in B} \mathbb{P}( \tau_x <t )  s^{2|A|} s^{|B|} \leq \\
\sum_{ A \subseteq \binom{\mathcal{X}}{2} } \sum_{B \subseteq \mathcal{X} } \prod_{e \in A} \mathbb{P}( \tau_e<t) \prod_{x \in B} \mathbb{P}( \tau_x <t )  s^{2|A|} s^{|B|}=\\
 \prod_{e \in \binom{\mathcal{X}}{2}} \left( 1+ \mathbb{P}(\tau_{e} < t ) s^2 \right)  \prod_{ x \in \mathcal{X} } \left( 1+ \mathbb{P}(\tau_{x} < t )s \right),
\end{multline}
where in $(*)$ we used criteria (iii) and (iv) of the definition of $H$ as well as the fact that branching random walks with different labels are independent, cf.\
Definition \ref{def_brw_from_X}. The proof of Lemma \ref{lemma_annihilation} is complete.
\end{proof}

In order to bound the right-hand side of \eqref{annihilation_momgen_ineq}, we need to bound  probabilities of form $\mathbb{P}(\tau_{x} \leq t )$ and $\mathbb{P}(\tau_{x,y} \leq t )$. This is what we will do in Section \ref{subsection_brw_estimates}.

\subsection{BRW collision probability bounds}
\label{subsection_brw_estimates}

The goal of Section \ref{subsection_brw_estimates} is to prove the following bounds on the probability that BRW particles with the same label (cf.\ \eqref{self_collision_bound_eq}) or different labels (cf.\ \eqref{pair_collision_bound_eq}) collide. We assume that birth rate of the BRW satisfies $\lambda>1$.

\begin{lemma}[Collision probability bounds] \label{lemma_crowd_bounds}
Recall the definitions of $\tau_{x}$ and $\tau_{x,y}$ from \eqref{crowd_x} and \eqref{crowd_x_x_prime}.
\begin{enumerate}[(i)]
\item \label{self_collision_bound}
For any $\lambda>1, T_0 \in \mathbb{R}_+$  we have
\begin{equation}\label{self_collision_bound_eq}
 \lim_{R \to \infty} \mathbb{P}(\tau_{0} \leq T_0 ) =0.
\end{equation}
\item \label{pair_collision_bound}
For any  $\lambda>1, T_0 \in \mathbb{R}_+$ there exist $C'=C'(\lambda,T_0)<+\infty$ and $C''=C''(\lambda,T_0)<+\infty$ such that
\begin{equation}\label{pair_collision_bound_eq}
 \mathbb{P}(\tau_{x,y} \leq T_0 ) \leq   \frac{C'}{R^{d}}  \exp\left(  -  \frac{|x-y|}{C'' R } \right), \quad x \neq y \in \mathbb{Z}^d, \quad R \in \mathbb{N}.
\end{equation}
\end{enumerate}
\end{lemma}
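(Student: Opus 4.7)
The plan is to bound both collision probabilities via the Markov inequality applied to the expected number of collisions in $[0,T_0]$. The key observation is that collisions can only be created at birth events: $\tau_x$ (resp.\ $\tau_{x,y}$) is the first time a newborn particle lands on a site already occupied by a different particle of the same label (resp.\ by a particle of the opposite label). If we denote by $\rho(t)$ the conditional instantaneous rate of such events at time $t$, Markov and Fubini give
\[ \mathbb{P}(\tau_\bullet \le T_0) \le \mathbb{E}\!\left[\int_0^{T_0} \rho(t)\, dt\right] = \int_0^{T_0} \mathbb{E}[\rho(t)]\, dt. \]

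For \eqref{self_collision_bound_eq}, the rate of self-collision satisfies $\rho(t) \le \lambda\, Z_t(0)^2/|B(R)|$: births occur at rate $\lambda Z_t(0)$, and each newborn is uniform in a set of $|B(R)|$ sites, of which at most $Z_t(0)$ host a pre-existing label-$0$ particle. By Claim \ref{claim_brw_bp}, $(Z_t(0))_{t\ge 0}$ is a continuous-time birth-death branching process with rates $\lambda$ and $1$, for which the standard ODE for $\mathbb{E}[Z_t(0)^2]$ yields $\sup_{t\le T_0}\mathbb{E}[Z_t(0)^2] \le C(\lambda,T_0) < \infty$. Hence $\mathbb{P}(\tau_0 \le T_0) \le \lambda\, T_0\, C(\lambda,T_0)/|B(R)|$, which vanishes as $R\to\infty$.

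For \eqref{pair_collision_bound_eq}, write $w := x-y$. An analogous rate calculation gives $\rho(t) \le (2\lambda/|B(R)|) \sum_{|p-p'|\le R} Z_t(x,p)\, Z_t(y,p')$; taking expectation and using independence of BRWs with different labels (Claim \ref{claim_brw_bp}), one obtains $\mathbb{E}[\rho(t)] \le (2\lambda/|B(R)|)\, f_t(w)$, where, with $q_t(z) := \mathbb{E}[Z_t(0,z)]$ and $\psi_t(z) := \sum_{y\in B(z,R)} q_t(y)$,
\[ f_t(w) \;:=\; \sum_{u\in\mathbb{Z}^d} q_t(u)\, \psi_t(u+w). \]
The trivial bounds $\psi_t(z) \le e^{(\lambda-1)t}$ and $\sum_u q_t(u) = e^{(\lambda-1)t}$ give $f_t(w) \le e^{2(\lambda-1)t}$, which together with the $1/|B(R)|$ prefactor already delivers the $1/R^d$ factor upon integration in $t$. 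The remaining (and main) task is to supplement this with exponential decay in $|w|/R$, i.e.\ to prove $f_t(w) \le C(\lambda,T_0)\, e^{-|w|/(C'(\lambda,T_0) R)}$ for all $t\le T_0$.

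For this key estimate, decompose $q_t$ by generation count. Let $g_k(t)$ be the expected number of alive generation-$k$ particles of a BRW started at the origin; solving $g_0'=-g_0$, $g_0(0)=1$, and $g_k' = \lambda g_{k-1} - g_k$, $g_k(0)=0$, gives $g_k(t) = (\lambda t)^k e^{-t}/k!$. Since each birth displaces the newborn by at most $R$ in $\ell^\infty$-norm, a generation-$k$ particle lies in $B(kR)$, so $q_t(u) = \sum_{k\ge 0} g_k(t)\, p_R^{(k)}(u)$, where $p_R^{(k)}$ is the $k$-fold convolution of the uniform measure on $B(R)$, vanishing outside $B(kR)$. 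Consequently
\[ \sum_{|u|\ge m} q_t(u) \;\le\; \sum_{k\ge m/R} \frac{(\lambda t)^k}{k!}\, e^{-t}, \]
and the Poisson Chernoff bound gives $\sum_{|u|\ge m} q_t(u) \le C\, e^{-m/(C'R)}$ uniformly in $t\le T_0$. To bound $f_t(w)$, split the sum according to whether $|u|\ge |w|/2$ (where the tail bound above is applied to $q_t(u)$, while $\psi_t \le e^{(\lambda-1)t}$ is kept trivial) or $|u|<|w|/2$ (where $|u+w|\ge |w|/2$ and the tail bound is applied inside $\psi_t(u+w)$). Both contributions are $\lesssim e^{-|w|/(C'R)}$, completing the proof upon integration in $t$. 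The main subtlety is this last step: a direct interpolation of the two bounds ``$\mathbb{P}\le C/R^d$'' and ``$\mathbb{P}\le C\, e^{-|w|/(CR)}$'' would only yield $R^{-d/2}$-decay, so it is crucial that $f_t$ itself is uniformly bounded in $R$ \emph{and} exhibits exponential decay in $|w|/R$, so that the $1/|B(R)|$ prefactor is preserved intact.
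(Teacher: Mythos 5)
Your part~(i) is essentially the paper's argument (Markov on the expected number of collisions, second moment of the branching process), so I'll focus on part~(ii), where you take a genuinely different route.

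The paper's proof of~(ii) introduces the ``co-location local time'' $L=\sum_z\int_0^{T_0+1}Z_s(x,z)Z_s(y,z)\,\mathrm{d}s$, shows $\mathbb{E}[L\,|\,\tau_{x,y}\le T_0]\ge 1/4$ (the two colliding particles have independent $\mathrm{EXP}(1)$ residual lifetimes, so they overlap for an order-one expected time), and hence $\mathbb{P}(\tau_{x,y}\le T_0)\le 4\mathbb{E}[L]$. By \eqref{rw_brw_expect_correspond} and Chapman--Kolmogorov, $\mathbb{E}[L]$ reduces to $\int_0^{T_0+1}p^{\lambda,R}_{2s}(x,y)\,\mathrm{d}s$, and the decisive input is a \emph{pointwise} heat-kernel bound $p^{\lambda,R}_t(x,y)\le\frac{C}{R^d}\exp(-|x-y|/(CR))$, which the paper derives from a Gaussian local-CLT estimate for the discrete-time spread-out walk via Poissonization (Lemmas~\ref{lemma_heat_kernel_bound_cont} and~\ref{lemma_transition_prob_bound}).

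You instead bound the expected \emph{collision intensity} directly, obtaining the $1/|B(R)|$ prefactor from the birth mechanism rather than from the spatial spreading of the heat kernel. This reduces the problem to bounding $f_t(w)=\sum_u q_t(u)\,\psi_t(u+w)$, which you handle with a generation decomposition $q_t=\sum_k g_k(t)\,p_R^{(k)}$, $g_k(t)=(\lambda t)^k e^{-t}/k!$. Because $p_R^{(k)}$ is supported in $B(kR)$, you get the exponential decay from the Poisson tail of the generation count alone (a ``hard light-cone'' bound), with no Gaussian estimate needed. This is correct: the tail bound $\sum_{|u|\ge m}q_t(u)\le Ce^{-m/(C'R)}$ plus the trivial bounds $\sum_u q_t(u)=e^{(\lambda-1)t}$ and $\psi_t\le e^{(\lambda-1)t}$ suffice for your two-case splitting of the sum (for $|w|\lesssim R$ the trivial bound $f_t\le e^{2(\lambda-1)T_0}$ already gives the claim after absorbing the bounded exponential into $C'$). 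Your remark that a naive interpolation of ``$\le C/R^d$'' and ``$\le Ce^{-|w|/CR}$'' would lose half the $R^{-d}$ power is an important observation, and your argument correctly avoids the loss by keeping the $1/|B(R)|$ factor outside and establishing the exponential decay for $f_t$ \emph{uniformly} in $R$. The trade-off between the two approaches: the paper's heat-kernel bound is a sharper, reusable pointwise estimate but needs the local CLT; your generation-decomposition is more elementary (only the Poisson structure of generation counts and the compact support of the jump kernel) and produces a tail bound, which the rate-based formulation makes sufficient. Both are fine here; neither is clearly shorter once all details are written out.
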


The proofs are standard, but we include them for completeness. We encourage the reader to skip the rest of Section \ref{subsection_brw_estimates} at first reading.

\begin{proof}[Proof of Lemma \ref{lemma_crowd_bounds}\eqref{self_collision_bound}] The argument is quite simple, but for brevity and to avoid introducing too much notation, we only sketch the main idea. Recall from \eqref{crowd_x} that $\tau_0$
denotes the first time when two particles with label $0$ collide. Note that these particles are all descendants of the particle that sits at site $0$ at time $0$.
 Let us denote by $N$ the total number of BRW particles
with label $0$ that are born before $T_0$. We have $\mathbb{E}(N^2)=C(T_0,\lambda)<+\infty$ by Claim \ref{claim_brw_bp}. The location of a newborn particle
is uniformly distributed in the translate of $B(R)$ centered at its parent,
thus the probability that it collides with a fixed particle that is already present is at most $1/|B(R)|$. The number of such potential collisions
up to time $T_0$ is bounded by $ \frac{1}{2} N (N-1)$, thus we obtain that the expected number of collisions of particles with label $0$ up to time $T_0$
is at most $\mathbb{E}( \frac{1}{2} N(N-1))/|B(R)|$, and this quantity goes to zero as $R \to \infty$, which implies \eqref{self_collision_bound_eq} by Markov's inequality.
\end{proof}

Before we prove Lemma \ref{lemma_crowd_bounds}\eqref{pair_collision_bound}, let us introduce some useful notation.

\begin{definition}[$R$-spread-out random walk  with jump rate $\lambda$ on $\mathbb{Z}^d$]\label{def_R_spread_out_rw}
 Denote by
 $(X_t)_{t\geq 0}$ the  continuous-time random walk on $\mathbb{Z}^d$ where the holding times between jumps
 are i.i.d.\ with $\mathrm{EXP}(\lambda)$ distribution, and the increments of the walk are i.i.d.\ with uniform distribution on $B(R)$.
  Denote by $p^{\lambda,R}_{t}(x,y)$ the transition kernel of $(X_t)_{t\geq 0}$, i.e.,
  \begin{equation}\label{cont_time_tr_k_def}
  p^{\lambda,R}_{t}(x,y)=\mathbb{P}(X_t=y \, | \, X_0=x ), \quad t \geq 0, \; x,y \in \mathbb{Z}^d.
  \end{equation}
 \end{definition}
Recalling the notion of  $Z_t(x,y)$ from Definition \ref{def_brw_from_X}, we note that by \cite[Proposition 1.21]{Li99} we have
 \begin{equation}\label{rw_brw_expect_correspond}
   \mathbb{E}(Z_t(x,y))= e^{(\lambda-1)t} p^{\lambda,R}_{t}(x,y),  \quad t \geq 0, \; x,y \in \mathbb{Z}^d.
 \end{equation}
The proof of Lemma \ref{lemma_crowd_bounds}\eqref{pair_collision_bound} will follow from the next bound.

\begin{lemma}[Heat kernel bound for $(X_t)$]
\label{lemma_heat_kernel_bound_cont}
 For any  $\lambda>1, T_* \in \mathbb{R}_+$ there exist $C^*=C^*(\lambda,T_*)<+\infty$ and $C^{\sstar}=C^{\sstar}(\lambda,T_*)<+\infty$ such that
\begin{equation}\label{heat_kernel_bound_eq_cont}
  p^{\lambda,R}_{t}(x,y) \leq   \frac{C^*}{R^{d}}  \exp\left(  -  \frac{|x-y|}{C^{\sstar} R } \right), \; \; 0 \leq t \leq T_*, \; \;   x \neq y  \in \mathbb{Z}^d, \; \; R \in \mathbb{N}.
\end{equation}
\end{lemma}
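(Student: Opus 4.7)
By translation invariance, it suffices to bound $p^{\lambda,R}_t(0,y)$ for $y\neq 0$. The plan is to decompose $X_t$ as a random sum: letting $N_t$ denote the number of jumps made by $(X_s)_{0\leq s\leq t}$, we have $N_t\sim\mathrm{Poisson}(\lambda t)$ (since the holding times are i.i.d.\ $\mathrm{EXP}(\lambda)$), and the successive increments $Y_1,Y_2,\ldots$ are i.i.d.\ uniform on $B(R)$, independent of $N_t$. This will give
\[
p^{\lambda,R}_t(0,y)\;=\;\sum_{n=0}^{\infty}\mathbb{P}(N_t=n)\,\mathbb{P}\!\left(\sum_{i=1}^{n}Y_i=y\right).
\]

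Next, I would bound the inner probability in two elementary ways. First, since $\|Y_i\|_\infty\leq R$, the event $\{\sum_{i=1}^{n}Y_i=y\}$ is impossible unless $n\geq \lceil |y|/R\rceil$, which provides the crucial lower cut-off on $n$. Second, for every $n\geq 1$ and every $y\in\mathbb{Z}^d$, conditioning on $Y_1,\dots,Y_{n-1}$ and using that $Y_n$ is uniform on $B(R)$ yields $\mathbb{P}(\sum_{i=1}^{n}Y_i=y)\leq 1/|B(R)|$, uniformly in $n$ and $y$. Combining these two bounds gives
\[
p^{\lambda,R}_t(0,y)\;\leq\;\frac{1}{|B(R)|}\,\mathbb{P}\!\left(N_t\geq \lceil |y|/R\rceil\right).
\]

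Finally, I would control the Poisson tail by the standard Chernoff estimate $\mathbb{P}(N_t\geq k)\leq e^{-k}\mathbb{E}[e^{N_t}]=\exp(\lambda t(e-1)-k)$, apply it with $k=\lceil |y|/R\rceil\geq |y|/R$, use $t\leq T_\ast$, and absorb $|B(R)|\geq (2R)^d$ into a dimension-dependent prefactor. This would yield \eqref{heat_kernel_bound_eq_cont} with, for instance, $C^\ast=2^{-d}\exp(\lambda T_\ast(e-1))$ and $C^{\sstar}=1$. I do not anticipate any serious obstacle here: the key is to cleanly separate the spatial cost (the uniform convolution bound $1/|B(R)|$, yielding the $R^{-d}$ prefactor) from the temporal cost (the Poisson tail, producing the exponential decay in $|x-y|/R$), so that no combinatorial path-counting factor creeps into the final estimate. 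A sharper approach via a local CLT would give better constants but is unnecessary for the statement required.
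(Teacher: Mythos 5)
Your proof is correct, and it takes a genuinely different and cleaner route than the paper's. The paper (see the proofs of Lemmas~\ref{lemma_heat_kernel_bound_cont} and~\ref{lemma_transition_prob_bound}) first establishes a Gaussian-type bound $q_n^R(x,y)\leq C_\diamond^n R^{-d}\exp(-|x-y|^2/(C_{\ddiamond}nR^2))$ for the \emph{discrete-time} $R$-spread-out walk (reducing to $d=1$ and comparing the $R$-spread-out step to $R^2$ lazy nearest-neighbour steps), and then sums over the Poisson number of jumps, converting the Gaussian tail into an exponential one via the inequality $-b^2/n\leq -2b+n$. You instead exploit the two elementary observations that (a) the $n$-step convolution of the uniform law on $B(R)$ is bounded by $1/|B(R)|$ pointwise (by conditioning on the first $n-1$ steps), which directly gives the $R^{-d}$ prefactor, and (b) reaching $y$ requires at least $\lceil|y|/R\rceil$ jumps, so the spatial decay is captured entirely by a Poisson tail bound. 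This bypasses the separate heat-kernel Lemma~\ref{lemma_transition_prob_bound} altogether; the trade-off is that your bound loses the Gaussian ($|x-y|^2$) decay in favour of the exponential ($|x-y|$) decay, but the lemma statement only asks for the latter, so nothing is lost. Your argument is shorter and more transparent.
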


Before we prove Lemma \ref{lemma_heat_kernel_bound_cont}, let us deduce Lemma \ref{lemma_crowd_bounds}\eqref{pair_collision_bound} from it.

\begin{proof}[Proof of Lemma \ref{lemma_crowd_bounds}\eqref{pair_collision_bound}]
Let $T_1:=T_0+1$, $L:= \sum_{z \in \mathbb{Z}^d} \int_0^{T_1} Z_s(x,z)Z_s(y,z) \, \mathrm{d}s$. First we note that
$\mathbb{E}(L \, | \, \tau_{x,y} \leq T_0 ) \geq \int_0^1 e^{-2s} \, \mathrm{d}s \geq \frac{1}{4}$ because at time $\tau_{x,y}$ a particle
with label $x$ and a particle with label $y$ share the same location, and the subsequent lifetimes of these particles  are
i.i.d.\ with $\mathrm{EXP}(1)$ distribution. From this we obtain
$\mathbb{P}(\tau_{x,y} \leq T_0 )\leq 4 \mathbb{E}(L)$, thus it is enough to bound
\begin{multline}\label{brw_L_expect_upper_bound}
  \mathbb{E}(L)=\sum_{z \in \mathbb{Z}^d} \int_0^{T_1} \mathbb{E}(Z_s(x,z))\mathbb{E}(Z_s(y,z)) \, \mathrm{d}s \stackrel{ \lambda>1, \eqref{rw_brw_expect_correspond} }{\leq} \\
  e^{2(\lambda-1)T_1} \sum_{z \in \mathbb{Z}^d} \int_0^{T_1}  p^{\lambda,R}_{s}(x,z)  p^{\lambda,R}_{s}(y,z)   \, \mathrm{d}s\stackrel{(*)}{=}
  e^{2(\lambda-1)T_1} \int_0^{T_1}  p^{\lambda,R}_{2s}(x,y)   \, \mathrm{d}s,
\end{multline}
where  $(*)$ is Chapman-Kolmogorov for $(X_t)_{t\geq 0}$. The desired bound \eqref{pair_collision_bound_eq} follows from \eqref{heat_kernel_bound_eq_cont} (with $T_*=2T_1$) and \eqref{brw_L_expect_upper_bound}.
\end{proof}

Before we prove Lemma \ref{lemma_heat_kernel_bound_cont}, we introduce some further notation.

\begin{definition}\label{def_R_spread_out_discrete_rw}
 Denote by $(Y_n)_{n \in \mathbb{N}}$ the discrete-time $R$-spread-out random walk on $\mathbb{Z}^d$: the increments of $(Y_n)$ are i.i.d.\ with
 uniform distribution on~$B(R)$.
 Denote by $q_{n}^{R}(x,y)$ the transition probabilities of $(Y_n)$, i.e.,
  \begin{equation}\label{discr_time_tr_kernel}
  q_{n}^{R}(x,y)=\mathbb{P}(Y_{n}=y \, | \, Y_0=x ), \quad n \in \mathbb{N}_0, \; x,y \in \mathbb{Z}^d.
  \end{equation}
 \end{definition}

The proof of Lemma \ref{lemma_heat_kernel_bound_cont} will follow from the next bound. Note that this bound is very crude, but it will suffice.

\begin{lemma}[Heat kernel bound for $(Y_n)$]\label{lemma_transition_prob_bound}
  There exist $C_\diamond<+\infty$ and $C_{\ddiamond}<+\infty$ such that
\begin{equation}\label{heat_kernel_bound_eq_discrete}
  q_{n}^{R}(x,y) \leq   \frac{C_{\diamond}^n  }{R^{d}}  \exp\left(  -  \frac{|x-y|^2}{C_{\ddiamond}  n R^2 } \right), \; \; n,R \in \mathbb{N}, \; \;   x \neq y  \in \mathbb{Z}^d.
\end{equation}
\end{lemma}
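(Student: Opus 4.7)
The strategy is to reduce to one dimension and combine a trivial uniform bound with a Chernoff/Hoeffding estimate. Since the uniform distribution on the $\ell^\infty$-ball $B(R)$ factorises as a product of $d$ independent copies of the uniform distribution on $\{-R,\dots,R\}$, the transition kernel factorises accordingly:
\[ q_n^R(0,z) = \prod_{j=1}^d \tilde q_n^{R}(0,z_j), \]
where $\tilde q_n^R$ denotes the one-dimensional analogue of $q_n^R$. Because $|z|^2=\max_j z_j^2 \leq \sum_j z_j^2$, a one-dimensional bound of the form $\tilde q_n^R(0,k)\leq \hat{C}^n R^{-1}\exp(-k^2/(\hat C' n R^2))$ implies the $d$-dimensional statement with $C_\diamond=\hat C^d$ and $C_{\ddiamond}=\hat C'$.

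For the one-dimensional case, let $S_n = U_1+\cdots+U_n$ with $U_i$ i.i.d.\ uniform on $\{-R,\dots,R\}$. First-step conditioning gives
\[ \mathbb{P}(S_n=k) \;=\; \frac{1}{2R+1}\,\mathbb{P}\bigl(S_{n-1}\in[k-R,k+R]\bigr), \]
which already supplies the desired prefactor $1/R$ (and in particular the crude bound $\mathbb{P}(S_n=k)\leq 1/(2R+1)$). To capture Gaussian decay in $k$, one applies a standard exponential Markov inequality: since each $U_i$ is centred and bounded by $R$, the cumulant generating function satisfies $\log\mathbb{E}[e^{\theta U_i}]\leq c\theta^2 R^2$ for $|\theta|\leq 1/R$, and optimising over $\theta$ in this range yields $\mathbb{P}(|S_{n-1}|\geq m)\leq 2\exp(-m^2/(CnR^2))$ in the meaningful regime $m\leq C nR$ (and $\mathbb{P}(|S_{n-1}|\geq m)=0$ for $m>(n-1)R$). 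Feeding this into the first-step identity with $m=|k|-R\geq |k|/2$ handles the case $|k|\geq 2R$.

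For $|k|<2R$, the bound $1/(2R+1)$ already dominates, as the Gaussian factor $\exp(-k^2/(\hat C' n R^2))$ is bounded below by the constant $\exp(-4/\hat C')$; the constants $\hat C, \hat C'$ are then tuned so that $\hat C^n \exp(-k^2/(\hat C' n R^2))\geq 1$ uniformly over $|k|<2R$ and $n\geq 1$. The degenerate case $n=1$ is checked directly from $\mathbb{P}(S_1=k)=\mathds{1}[|k|\leq R]/(2R+1)$, and for $|k|>nR$ both sides vanish. Assembling these estimates produces the claimed bound.

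I expect no serious obstacle: the statement is intentionally crude (witness the prefactor $C_\diamond^n$, far weaker than the $n^{-d/2}$ decay a local central limit theorem would provide), so no delicate optimisation is required. The only care needed is in matching the Chernoff regime ($|k|\gtrsim R$) with the trivial regime ($|k|\lesssim R$) and in verifying that the growing prefactor $C_\diamond^n$ comfortably absorbs the multiplicative losses (factors of $2$ from two-sided Hoeffding, constants from the low-$|k|$ regime) that arise along the way.
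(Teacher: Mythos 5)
Your proof is correct but takes a genuinely different route from the paper. Both proofs start with the same reduction: the $\ell^\infty$-ball factorises as a product of one-dimensional intervals, so the $d$-dimensional kernel is a product of one-dimensional ones, reducing the problem to $d=1$. From there the approaches diverge. The paper compares the $R$-spread-out walk to the lazy nearest-neighbour walk (the $R=1$ case): using the classical two-sided local CLT estimate $\frac{C'''}{\sqrt{m}}\mathds{1}[|y|\leq \sqrt{m}] \leq \tilde q^1_m(0,y) \leq \frac{C'}{\sqrt{m}}\exp(-y^2/(C''m))$, they observe $\tilde q^R_1(0,y) \leq C\,\tilde q^1_{R^2}(0,y)$ pointwise, take $n$-fold convolutions to get $\tilde q^R_n(0,y) \leq C^n\tilde q^1_{nR^2}(0,y)$, and then read off the bound from the upper half of the classical estimate. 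Your argument is more self-contained: you extract the $1/R$ prefactor from a first-step decomposition $\mathbb{P}(S_n=k) = (2R+1)^{-1}\mathbb{P}(S_{n-1}\in[k-R,k+R])$, obtain the Gaussian decay from Hoeffding's lemma applied to the bounded increments, and patch the regimes $|k|\geq 2R$ and $|k|<2R$ using the growing prefactor $C_\diamond^n$. The paper's argument is shorter once the classical lazy-walk heat kernel bound is taken as known; yours avoids citing that result at the cost of a mild case analysis. One small remark: the restriction $|\theta|\leq 1/R$ in your sub-Gaussian estimate is unnecessary — Hoeffding's lemma gives $\log\mathbb{E}[e^{\theta U_i}]\leq \theta^2 R^2/2$ for all $\theta\in\mathbb{R}$, and the optimal $\theta^*=m/(nR^2)$ always satisfies $|\theta^*|\leq 1/R$ in the nontrivial regime $m\leq nR$, so the boundary case you flag never actually arises.
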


Before we prove Lemma \ref{lemma_transition_prob_bound}, let us deduce Lemma \ref{lemma_heat_kernel_bound_cont} from it.

\begin{proof}[Proof of Lemma \ref{lemma_heat_kernel_bound_cont}] Let $0 \leq t \leq T_*$ and $x\neq y \in \mathbb{Z}^d$.
\begin{multline*}
 p^{\lambda,R}_{t}(x,y)\stackrel{ (*) }{=} \sum_{n=1}^{\infty} \frac{  e^{-\lambda t} (\lambda t)^n}{n!} q_{n}^{R}(x,y)
 \stackrel{ \eqref{heat_kernel_bound_eq_discrete} }{\leq}
 \sum_{n=1}^{\infty}  \frac{(\lambda T_*)^n}{n!} \frac{C_{\diamond}^n  }{R^{d}}  \exp\left(   \frac{-|x-y|^2}{C_{\ddiamond}  n R^2 } \right)
 \stackrel{(**)}{\leq} \\
  \sum_{n=0}^{\infty}  \frac{(\lambda T_* C_{\diamond} )^n}{n! R^d }   \exp\left(   \frac{ - 2 |x-y|}{ \sqrt{C_{\ddiamond}} R } + n \right) =
    \exp\left(   \frac{- 2 |x-y|}{ \sqrt{C_{\ddiamond}} R }  \right) \frac{ \exp\left( \lambda T_* C_{\diamond} e  \right)  }{R^d} ,
  \end{multline*}
  where in $(*)$ we used that  the continuous-time walk $(X_t)$ performs $\mathrm{POI}(\lambda t)$ steps in the time interval $[0,t]$
  (cf.\ Definitions \ref{def_R_spread_out_rw}, \ref{def_R_spread_out_discrete_rw}), and in $(**)$ we used the inequality
  $-b^2/n \leq -2b+n$ with $b=\frac{|x-y|}{ \sqrt{C_{\ddiamond} } R }$.
\end{proof}

\begin{proof}[Proof of Lemma \ref{lemma_transition_prob_bound}] Let us first observe that the coordinates of the $d$-dimensional random walk $(Y_n)$ evolve independently, therefore we have
\begin{equation}\label{rw_coordinates_evolve_indepenetly}
   q_{n}^{R}(x,y)=\prod_{i=1}^d \tilde{q}_n^R(x_i,y_i), \qquad x=(x_1,\dots,x_d), \quad y=(y_1,\dots,y_d),
\end{equation}
where $\tilde{q}_n^R(x_i,y_i)$ denote the transition probabilities of the $R$-spread-out random walk on $\mathbb{Z}$. From \eqref{rw_coordinates_evolve_indepenetly}
it follows that it is enough to prove \eqref{heat_kernel_bound_eq_discrete} when $d=1$. In the $d=1, R=1$ case (lazy nearest-neighbour random walk on $\mathbb{Z}$) the bounds
\begin{equation}\label{lazy_heat_kernel}
\frac{C'''}{\sqrt{n}} \mathds{1}[\, |y| \leq \sqrt{n}\,]   \leq  \tilde{q}^1_n(0,y) \leq \frac{C'}{\sqrt{n}} \exp\left( -\frac{y^2}{ C'' n } \right), \quad y \in \mathbb{Z}, \; n \in \mathbb{N}
\end{equation}
are classical (cf.\ \cite[Section 2]{LL10}). Using the lower bound of \eqref{lazy_heat_kernel} we obtain $\tilde{q}_1^R(0,y) \leq C \tilde{q}^1_{R^2}(0,y), \, y \in \mathbb{Z}$ for some $C<+\infty$, and taking the $n$-fold convolution of both sides of this inequality we obtain
\begin{equation}\label{upper_bound_on_R_spread_out_proof}
  \tilde{q}_n^R(0,y) \leq C^n \tilde{q}^1_{n R^2}(0,y) \stackrel{\eqref{lazy_heat_kernel}}{\leq} C^n \frac{C'}{\sqrt{n R^2 }  } \exp\left( -\frac{y^2}{ C'' n R^2 } \right),
  \quad y \in \mathbb{Z},
\end{equation}
from which the $d=1$ case of  \eqref{heat_kernel_bound_eq_discrete} follows.
\end{proof}

\subsection{Branching process facts}
\label{subsection_brw_facts}

Let us collect some formulas that will be relevant to us because of Claim \ref{claim_brw_bp}.

\begin{definition}\label{def_bp_pop_at_t}
Let $Z_t$ denote the population size at time $t$ of a branching process with birth rate $\lambda$ and death rate $1$, starting from $Z_0=1$.
\end{definition}

Given $\lambda>1$ we introduce the survival probabilities
\begin{align}
\label{sigma_lamb_T}
  \sigma(\lambda,t):=  \mathbb{P}(\, Z_t >0 \, )&=\frac{\lambda-1}{\lambda -e^{(1-\lambda)t } }, \; t \geq 0, \\
  \label{sigma_lambda}
   \sigma(\lambda):=\mathbb{P}(\, \forall \, t \geq 0 \, : \, Z_t >0 \, )&=1-1/\lambda.
\end{align}

Also note that for any $\lambda>1$ and $s \in [0,1]$ we have
\begin{equation}\label{Z_t_gen_fn}
  \mathbb{E}\left( s^{Z_t} \, | \, Z_t >0  \right)=  \frac{ \tilde{q}(\lambda,t)  s}{1-(1-\tilde{q}(\lambda,t))s}, \quad \tilde{q}(\lambda,t):= \frac{\lambda-1}{\lambda e^{(\lambda-1)t} -1 }.
\end{equation}

\begin{remark}
 In order to prove \eqref{sigma_lamb_T} and \eqref{Z_t_gen_fn}, it is enough to note that if one defines
$G(t,s)=\mathbb{E}(s^{Z_t})$ then $G(t,s)$ solves the PDE
\begin{equation}
  \partial_t G(t,s)=(\lambda s^2 -(\lambda+1)s+1) \partial_s G(t,s), \qquad G(0,s)=s,
\end{equation}
and that
\begin{equation}
  G(t,s)=  (1-\sigma(\lambda,t)) + \sigma(\lambda,t) \frac{ \tilde{q}(\lambda,t)  s}{1-(1-\tilde{q}(\lambda,t))s}
\end{equation}
also solves this PDE.
\end{remark}

\section{Renormalization}
\label{section_renorm_intro}

We will use the multi-scale renormalization scheme as in \cite{Ra15}, \cite{RV15} and \cite{RV17}, which in turn is a variant of the one introduced in \cite{sznitman_decoupling}. The idea is that if we see a percolation crossing event in a large annulus then this implies that many small and sparsely-located annuli are also crossed.

\smallskip

The renormalization involves the embedding of dyadic trees into $\mathbb{Z}^d$. For $n \in \mathbb{N}$, let us denote by $T_{(n)}=\{1,2\}^n$ the set of binary strings of length $n$
 (in particular, $T_{(0)}=\emptyset$).
  Denote by
\begin{equation}
T_n =\bigcup_{k=0}^n T_{(k)}
\end{equation}
the dyadic tree of depth $n$.
For $0 \leq k < n$ and $m \in T_{(k)}$, $m=(\xi_1,\dots,\xi_k)$,  we denote
by $m_1$ and $m_2$ the two children of $m$ in $T_{(k+1)}$:
\begin{equation}\label{def_eq_m1_m2}
 m_i=(\xi_1,\dots,\xi_k,i), \qquad i \in \{ 1, 2 \}.
 \end{equation}
Denote by $L \in \mathbb{N}$ the bottom scale of our renormalization.  We define the $n$'th scale by
\begin{equation} \label{def:scalesLn}
L_n := L \cdot 6^n , \quad n \in \mathbb{N}_0.
\end{equation}

\begin{definition}\label{def_proper_embedding_of_trees}
 The mapping $\mathcal{T}: T_n \to \mathbb{Z}^d$ is a \emph{proper embedding} of the tree $T_n$  with
root at the origin of $\mathbb{Z}^d$ if
\begin{enumerate}
 \item The root is mapped to $0$: $\mathcal{T}(\emptyset)=0$;
\item For any $0 \leq k \leq n$ and any $m \in T_{(k)}$ we have $\mathcal{T}(m) \in L_{n-k} \mathbb{Z}^d$;
\item For any $0 \leq k < n$ and any $m \in T_{(k)}$ the embedding of the children $m_1$ and $m_2$ of $m$ satisfy
\begin{equation}\label{tree_children_spread_out}
|\mathcal{T}(m_1) -\mathcal{T}(m)|= L_{n-k}, \qquad |\mathcal{T}(m_2) -\mathcal{T}(m)|= 2 L_{n-k}.
\end{equation}
\end{enumerate}
Let us denote by $\Lambda_{n,L}$ the set of proper embeddings of the tree $T_n$ with bottom scale $L$ into $\mathbb{Z}^d$ with root at $0$.
\end{definition}

\begin{remark}\label{remark_fractal} Let us denote $\mathscr{X}:=\cup_{m \in T_{(n)} } \{ \mathcal{T}(m) \}$. We have $|\mathscr{X}|= 2^n$ and $\mathrm{diam}(\mathscr{X}) \asymp 6^n$.
  Heuristically, for large $n$,  the set $ \mathscr{X}^*:= 6^{-n}\mathscr{X}$ is a fractal  with
dimension $\frac{\ln(2)}{\ln(6)}$, because if we blow $ \mathscr{X}^*$  up by a factor of six, we see two sets which are very similar to the set $\mathscr{X}^*$ that we started with.
Also, $|B(x,R)\cap \mathscr{X}| \asymp (\mathrm{diam}(\mathscr{X}) \wedge R)^{\frac{\ln(2)}{\ln(6)}}$ holds for any $x \in \mathscr{X}$.
The number $\frac{\ln(2)}{\ln(6)}$ will show up twice in later calculations, see the end of the proof of Lemma \ref{lemma_X_t_big} and the end of the proof
of Lemma \ref{lemma_ground_is_mostly_fertile}.
\end{remark}

We now recall three lemmas about proper embeddings from \cite{Ra15}. Occasionally we will slightly modify the formulation compared to \cite{Ra15} in order to fit our current purposes.
 The first lemma bounds the number of proper embeddings.

\begin{lemma}[Lemma 3.2 of \cite{Ra15}]
\label{lem:renorm_count} There exists a dimension-dependent constant $C_d > 0$  such that for every $n,L \in \mathbb{N}$ we have
\begin{equation}
\label{eq:renorm_count}
|\Lambda_{n,L}| \leq (C_d)^{2^n}, \qquad n \in \mathbb{N}.
\end{equation}
\end{lemma}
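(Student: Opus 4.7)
The plan is to prove the bound by induction on the construction of the embedding, exploiting the fact that each internal vertex of $T_n$ contributes only a bounded (dimension-dependent) number of choices for the positions of its two children. Specifically, I will fix $\mathcal{T}(\emptyset) = 0$ as required, and then for every internal node $m \in T_{(k)}$ with $0 \le k < n$, I will count the possible placements of its children $m_1, m_2$ given the already-chosen position $\mathcal{T}(m)$.

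The key observation is that for an internal node $m \in T_{(k)}$ we must have $\mathcal{T}(m) \in L_{n-k}\mathbb{Z}^d$ while the children satisfy $\mathcal{T}(m_i) \in L_{n-k-1}\mathbb{Z}^d$. Since $L_{n-k} = 6L_{n-k-1}$ by \eqref{def:scalesLn}, the vectors
\[
\frac{\mathcal{T}(m_1) - \mathcal{T}(m)}{L_{n-k-1}} \in \mathbb{Z}^d \quad \text{and} \quad \frac{\mathcal{T}(m_2) - \mathcal{T}(m)}{L_{n-k-1}} \in \mathbb{Z}^d
\]
lie on the $\ell^\infty$-spheres $S(6)$ and $S(12)$ respectively (cf.\ \eqref{tree_children_spread_out} and \eqref{sphere_def}). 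Consequently, given $\mathcal{T}(m)$, the number of admissible pairs $(\mathcal{T}(m_1), \mathcal{T}(m_2))$ is bounded by $|S(6)| \cdot |S(12)| =: \widetilde{C}_d$, a constant depending only on the dimension $d$. Crucially, this bound does not depend on the scale index $k$ nor on $L$, because the constraints only involve differences rescaled by $L_{n-k-1}$.

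Since the number of internal nodes of $T_n$ is $|T_{n-1}| = 2^n - 1$, and we make a constrained choice for the pair of children at each such node (with the root position already fixed to $0$), the total number of proper embeddings is at most
\[
|\Lambda_{n,L}| \;\le\; \widetilde{C}_d^{\,2^n - 1} \;\le\; (C_d)^{2^n}
\]
with $C_d := \widetilde{C}_d$. This gives \eqref{eq:renorm_count}. The proof is purely combinatorial; there is no genuine obstacle, only the bookkeeping verification that the scale ratio $L_{n-k}/L_{n-k-1} = 6$ and the constraints in Definition \ref{def_proper_embedding_of_trees} reduce the counting at each internal vertex to points on two fixed spheres in $\mathbb{Z}^d$.
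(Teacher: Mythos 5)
Your proof is correct and is essentially the same counting argument as in \cite[Lemma 3.2]{Ra15}, which the paper cites without reproducing: rescaling the displacement $\mathcal{T}(m_i)-\mathcal{T}(m)$ by $L_{n-k-1}$ turns the two constraints in \eqref{tree_children_spread_out} into membership in $S(6)$ and $S(12)$, so each of the $2^n-1$ internal nodes contributes at most $|S(6)|\cdot|S(12)|$ choices. Taking $C_d = |S(6)|\cdot|S(12)|$ gives \eqref{eq:renorm_count}.
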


The second lemma relates the notion of proper embeddings to that of crossing events (see Figure \ref{fig:renorm} for an illustration). Recall the notion of $*$-connected paths
from Definition \ref{def_paths} and the notion of spheres from \eqref{sphere_def}.

\begin{figure}[htb]
\begin{center}
\setlength\fboxsep{0pt}
\setlength\fboxrule{0pt}
\fbox{\includegraphics[width = 1\textwidth]{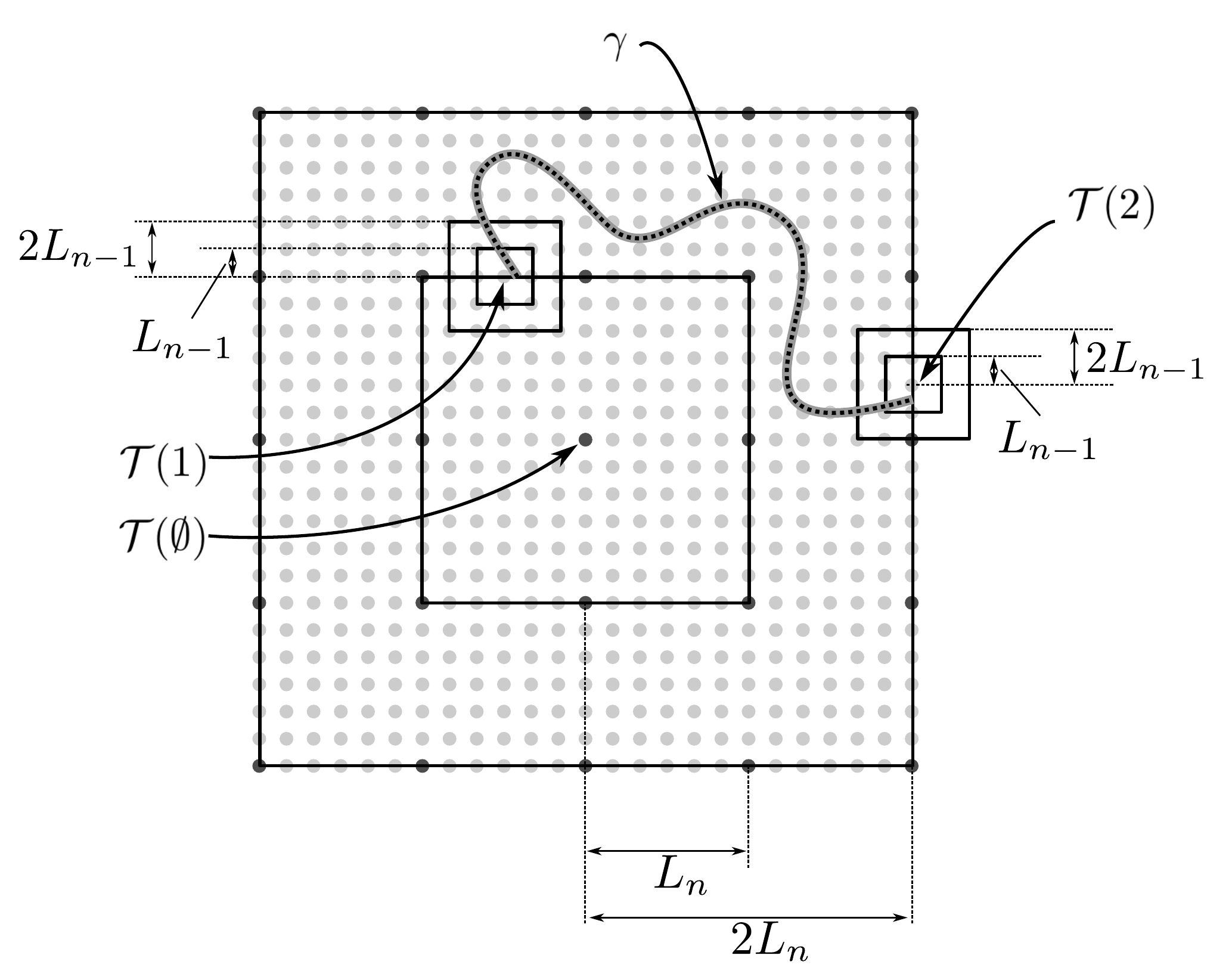}}
\end{center}
\caption{\label{fig:renorm} Illustration of the relation between the proper embedding $\mathcal{T}$
(see Definition \ref{def_proper_embedding_of_trees}) and
the path $\gamma$ that appears in Lemma \ref{lem:renorm_paths}. The light grey circles and dark grey circles represent points of the lattices $\mathcal{L}_{n-1}$ and $\mathcal{L}_n$, respectively.}
\end{figure}

\begin{lemma}[Lemma 3.3 of \cite{Ra15}]
\label{lem:renorm_paths}
If $\gamma$ is a $*$-connected path in $\Z^d$ which crosses the annulus at scale $L_n$:
$$\{\gamma\} \cap S(L_n-1) \neq \varnothing, \quad \{\gamma\} \cap S(2L_n) \neq \varnothing,$$
then there exists $\mathcal{T} \in \Lambda_{n,L}$ such that $\gamma$ crosses these bottom-level annuli:
\begin{equation}
\label{eq:renorm_paths} \{\gamma\} \cap S(\mathcal{T}(m),L-1) \neq \varnothing,\;\; \{\gamma\} \cap S(\mathcal{T}(m),2L) \neq \varnothing \quad \forall m \in T_{(n)}.
\end{equation}
\end{lemma}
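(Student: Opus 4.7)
The plan is to induct on $n$, exploiting the multiplicative factor $L_n/L_{n-1}=6$ to guarantee enough room for the recursive step. The base case $n=0$ is immediate: $\Lambda_{0,L}$ consists of the single embedding $\mathcal{T}(\emptyset)=0$, for which \eqref{eq:renorm_paths} reduces to the hypothesis.

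For the inductive step, assume the claim for $n-1$ and let $\gamma$ be a $*$-connected path with $\gamma\cap S(L_n-1)\neq\varnothing$ and $\gamma\cap S(2L_n)\neq\varnothing$. Pick witnesses $p_1\in\gamma\cap S(L_n-1)$ and $p_2\in\gamma\cap S(2L_n)$. The heart of the argument is to select $x_1,x_2\in L_{n-1}\mathbb{Z}^d$ lying on $S(L_n)$ and $S(2L_n)$ respectively and close to $p_1,p_2$ in $\ell^\infty$. I would do this coordinate by coordinate: for each $i\in\{1,2\}$ choose an index $i^*$ with $|p_{i,i^*}|=|p_i|$, fix $x_{i,i^*}$ to be $\pm L_n$ or $\pm 2L_n$ with the same sign as $p_{i,i^*}$ (this is possible because $L_n$ and $2L_n$ are multiples of $L_{n-1}$), and round each remaining coordinate of $p_i$ to the nearest multiple of $L_{n-1}$. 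This produces $|x_i-p_i|\leq L_{n-1}/2$ together with $|x_1|=L_n$ and $|x_2|=2L_n$.

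With $x_1,x_2$ in hand, I would verify that $\gamma$ crosses both $S(x_i,L_{n-1}-1)$ and $S(x_i,2L_{n-1})$. The inner side is guaranteed because $p_i$ lies within $L_{n-1}/2$ of $x_i$, while $p_{3-i}$ lies outside $B(x_i,2L_{n-1})$ since
\[|p_{3-i}-x_i|\geq\big||p_{3-i}|-|x_i|\big|\geq L_n>2L_{n-1},\]
and the $*$-connectedness of $\gamma$ forces it to touch every intermediate $\ell^\infty$-sphere around $x_i$. Applying the induction hypothesis to the translated path $\gamma-x_i$ then produces $\mathcal{T}_i\in\Lambda_{n-1,L}$ whose leaves witness the bottom-level crossings for the shifted path.

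Finally, set $\mathcal{T}(\emptyset)=0$ and $\mathcal{T}((i,\xi))=\mathcal{T}_i(\xi)+x_i$ for $i\in\{1,2\}$ and $\xi\in T_{n-1}$. Checking the three conditions of Definition \ref{def_proper_embedding_of_trees} is routine, using that $x_i\in L_{n-1}\mathbb{Z}^d\subseteq L_{n-k}\mathbb{Z}^d$ for $k\geq 1$, that translation preserves the intra-subtree parent-child distances from \eqref{tree_children_spread_out}, and that $|x_1|=L_n$, $|x_2|=2L_n$ handle the parent-child distances at the root; the leaf-crossing property then follows by translating back the conclusion of the inductive hypothesis. The only delicate point of the plan is the rounding step: one needs a point of $L_{n-1}\mathbb{Z}^d$ that lies \emph{exactly} on the prescribed sphere so that the root-level distances in $\mathcal{T}$ match \eqref{tree_children_spread_out}, while being close enough to $p_i$ for the inner crossing to be witnessed. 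The ratio $6=L_n/L_{n-1}$ is what makes both requirements compatible — comfortably more than the factor needed to separate $p_{3-i}$ from $B(x_i,2L_{n-1})$ while still absorbing the rounding error of order $L_{n-1}$.
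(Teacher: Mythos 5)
The paper cites this result from~\cite{Ra15} and does not reproduce the proof, so I am comparing your argument against the standard one in that reference, which it essentially matches: an induction on $n$, with the inductive step consisting of locating two suitably placed sub-centers $x_1\in S(L_n)\cap L_{n-1}\Z^d$, $x_2\in S(2L_n)\cap L_{n-1}\Z^d$, applying the hypothesis to the translates $\gamma-x_i$, and gluing the two resulting embeddings at the root. The verification of Definition~\ref{def_proper_embedding_of_trees} for the glued embedding and the use of the intermediate-value property of $\ell^\infty$-distance along a $*$-path are both correct.

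There is, however, a genuine off-by-one gap in the rounding step. You pick $p_1\in\gamma\cap S(L_n-1)$, choose a maximizing coordinate $i^*$ so that $|p_{1,i^*}|=L_n-1$, and set $x_{1,i^*}=\pm L_n$; this coordinate already contributes an error of exactly $1$, so your claimed bound $|x_1-p_1|\le L_{n-1}/2$ is wrong and the true bound is $|x_1-p_1|\le\max(1,L_{n-1}/2)$. The inner crossing $\{\gamma-x_1\}\cap S(L_{n-1}-1)\neq\varnothing$ is obtained from the intermediate-value argument only if $|x_1-p_1|\le L_{n-1}-1$. This holds for all $L_{n-1}\ge 2$, but fails when $L_{n-1}=1$, i.e.\ when $L=1$ and $n=1$. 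That case is not vacuous here: the proof of Proposition~\ref{prop_supercrit} invokes precisely the $L=1$ case of Lemma~\ref{lem:renorm_paths} (see the step marked $(**)$ there), so the lemma must hold for $L=1$ and the argument needs to cover it. The fix is simple and preserves the structure of your proof: instead of rounding $p_1\in\gamma\cap S(L_n-1)$, first use the same intermediate-value argument (applied around the origin) to produce $q_1\in\gamma\cap S(L_n)$ and $q_2\in\gamma\cap S(2L_n)$, and round those. Then $|q_{i,i^*}|$ is already an exact multiple of $L_{n-1}$, the coordinate-$i^*$ error is zero, and one gets $|x_i-q_i|\le\lfloor L_{n-1}/2\rfloor\le L_{n-1}-1$ for every $L_{n-1}\ge 1$, closing the gap uniformly in $L$.
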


Before we state the third lemma, we need some further notation.

For $0 \leq k \leq n$ and $m=(\xi_1,\dots,\xi_n) \in T_{(n)}$ we denote by
$\left. m \right|_k = (\xi_1,\dots,\xi_k) \in T_{(k)}$ the ancestor of $m$ at depth $k$.

The lexicographic distance of $m, m' \in T_{(n)}$ is defined by
\begin{equation}\label{def_eq_lex_dist}
\rho(m, m')= \min \{ \;  k \geq 0 \; : \; \left. m \right|_{n-k}=\left. m' \right|_{n-k} \; \}.
\end{equation}
For any $m \in T_{(n)}$ and $0 \leq k \leq n$ we define the set of $k$'th cousins of $m$ by
\begin{equation}\label{def_eq_tree_sphere}
T_{(n)}^{m,k}=\{ \, m' \in T_{(n)} \; : \; \rho(m,m')=k \, \},
\end{equation}
see Figure \ref{fig:canopy} for an illustration.  Note that the number of $k$'th cousins of $m$ is
\begin{equation}\label{cardinality_tree_sphere}
|T_{(n)}^{m,k}| \leq 2^{k}, \qquad  0 \leq k \leq n.
\end{equation}

\begin{figure}[htb]
\begin{center}
\setlength\fboxsep{0pt}
\setlength\fboxrule{0pt}
\fbox{\includegraphics[width = 0.5\textwidth]{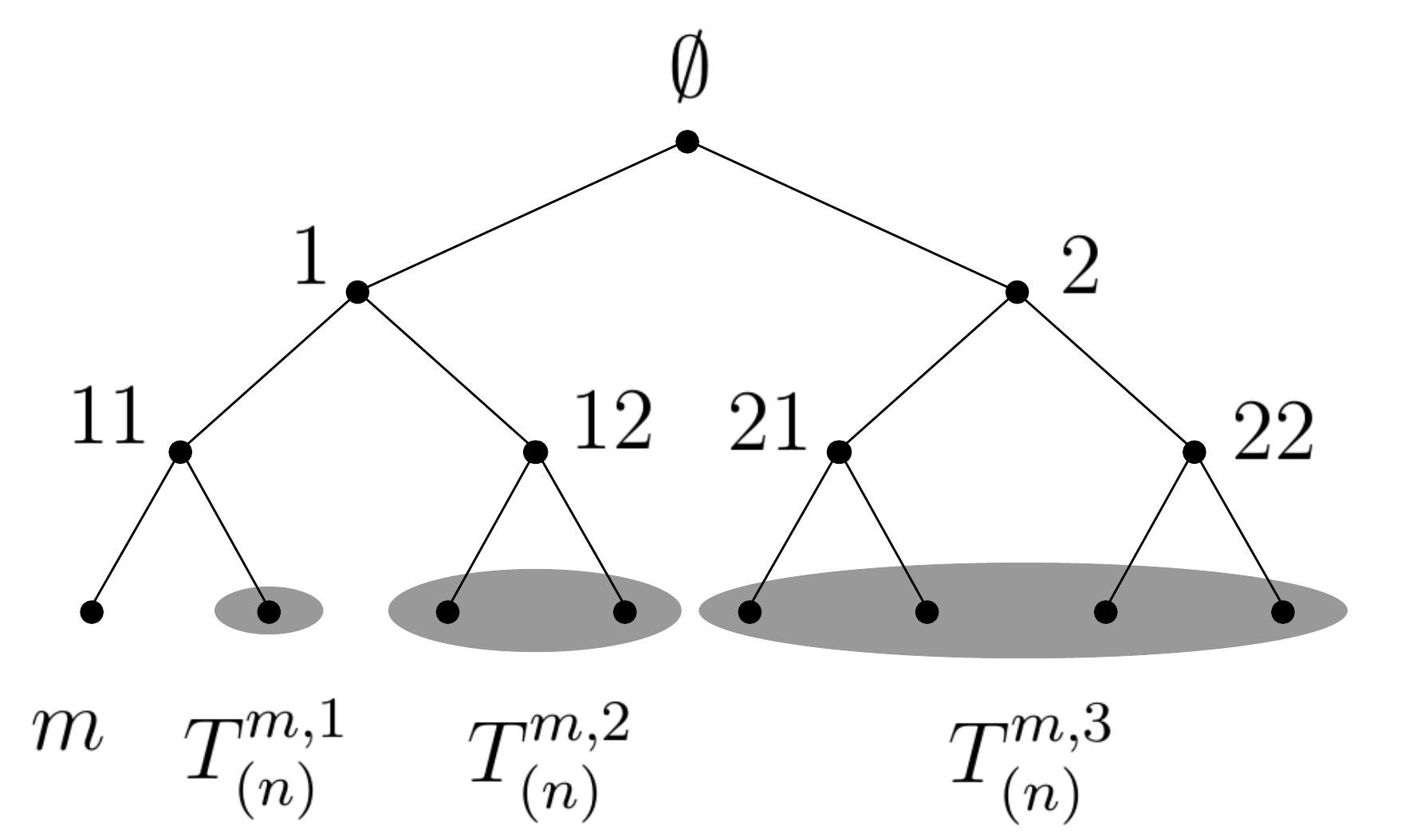}}
\end{center}
\caption{ \label{fig:canopy} An illustration of the subsets $T_{(n)}^{m,k}$ of leaves of $T_n$ defined in \eqref{def_eq_tree_sphere}.
The dyadic tree on the picture is of depth $n=3$ and the leaf denoted by $m$ is $111 \in T_{(n)}$.}
\end{figure}

The third lemma guarantees that if the lexicographic distance of $m$ and $m'$ is big then their embedded images are also far away from each other in $\mathbb{Z}^d$.

\begin{lemma}[Lemma 3.4 of \cite{Ra15}]
\label{lemma_far_in_tree_far_in_embedding} For all $n \in \mathbb{N}$, $\mathcal{T} \in \Lambda_{n,L}$, $m \in T_{(n)}$, $1 \leq k \leq n $,
$m' \in T_{(n)}^{m,k}$,  $y \in B(\mathcal{T}(m), 2L )$ and
 $z \in B(\mathcal{T}(m'), 2L )$, we have
 \begin{equation}\label{far_in_embedding_eq}
  |y-z| \geq L_{k-1} \stackrel{ \eqref{def:scalesLn} }{=} L 6^{k-1}.
 \end{equation}
\end{lemma}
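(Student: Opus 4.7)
My plan is to proceed by identifying the most recent common ancestor of $m$ and $m'$ in $T_n$ and then applying the triangle inequality twice: once to get a lower bound on the distance between the embedded images of the two distinct children of this ancestor that lie on the paths to $m$ and $m'$, and once to control the additional displacement incurred in descending from those children down to $m$ and $m'$ themselves.

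\smallskip

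Concretely, let $\bar m := m|_{n-k} = m'|_{n-k} \in T_{(n-k)}$ be the most recent common ancestor of $m$ and $m'$ (which lies at depth exactly $n-k$ by the definition \eqref{def_eq_lex_dist} of $\rho$ and the assumption $\rho(m,m')=k$). Denote by $\tilde m, \tilde m' \in T_{(n-k+1)}$ the two distinct children of $\bar m$ lying on the paths to $m$ and $m'$ respectively. By condition (3) of Definition~\ref{def_proper_embedding_of_trees} applied to $\bar m$, one of $|\mathcal{T}(\tilde m) - \mathcal{T}(\bar m)|$, $|\mathcal{T}(\tilde m') - \mathcal{T}(\bar m)|$ equals $L_k$ and the other equals $2L_k$ (cf.\ \eqref{tree_children_spread_out}); the reverse triangle inequality in the $\ell^\infty$-norm then yields
\begin{equation}\label{siblingsep}
|\mathcal{T}(\tilde m) - \mathcal{T}(\tilde m')| \;\geq\; 2L_k - L_k \;=\; L_k.
\end{equation}

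\smallskip

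Next, descending along the unique path in $T_n$ from $\tilde m$ (depth $n-k+1$) down to $m$ (depth $n$), condition~(3) of Definition~\ref{def_proper_embedding_of_trees} bounds the $i$-th step by $2L_{k-i}$ for $i=1,\dots,k-1$, so by the triangle inequality
\begin{equation}\label{descentbd}
|\mathcal{T}(m) - \mathcal{T}(\tilde m)| \;\leq\; \sum_{i=1}^{k-1} 2 L_{k-i} \;=\; 2L\sum_{j=1}^{k-1} 6^j \;=\; \frac{2(L_k - 6L)}{5},
\end{equation}
and the analogous bound holds with $(m,\tilde m)$ replaced by $(m',\tilde m')$ (when $k=1$ the sum is empty and $\mathcal{T}(m)=\mathcal{T}(\tilde m)$, so \eqref{descentbd} still holds). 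Combining \eqref{siblingsep} with two applications of \eqref{descentbd} through the triangle inequality gives
\begin{equation*}
|\mathcal{T}(m) - \mathcal{T}(m')| \;\geq\; L_k - \tfrac{4(L_k - 6L)}{5} \;=\; \tfrac{L_k + 24L}{5}.
\end{equation*}
An elementary check (using $L_k = 6 L_{k-1}$) shows that $\frac{L_k + 24L}{5} \geq L_{k-1} + 4L$ for every $k \geq 1$. A final triangle inequality, together with $|y - \mathcal{T}(m)|, |z - \mathcal{T}(m')| \leq 2L$, gives $|y-z| \geq L_{k-1} + 4L - 4L = L_{k-1}$, as desired.

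\smallskip

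There is no serious obstacle here: the proof is just careful bookkeeping of scales along the tree descent, and the factor-of-$6$ in \eqref{def:scalesLn} is precisely chosen so that the geometric sum in \eqref{descentbd} is dominated by the leading sibling-separation term in \eqref{siblingsep} with enough slack to absorb the two $2L$-balls. The only mild pitfall is to remember that the proper embedding specifies $\ell^\infty$-distances but not directions, which is exactly why we must rely on the reverse triangle inequality in \eqref{siblingsep} rather than any orthogonality-type argument.
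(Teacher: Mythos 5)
Your proof is correct. The paper itself does not reproduce the argument (Lemma~\ref{lemma_far_in_tree_far_in_embedding} is quoted from \cite{Ra15}), but your argument is the standard one for this lemma and coincides in structure with the proof there: pass to the most recent common ancestor, extract the separation $L_k$ between the two relevant children via the reverse triangle inequality on the $\ell^\infty$-distances $L_k$ and $2L_k$ imposed by \eqref{tree_children_spread_out}, bound the remaining descent by a geometric series, and observe that the base $6$ in \eqref{def:scalesLn} leaves exactly enough slack to absorb both the descent drift and the two balls of radius $2L$. All the bookkeeping (the sum evaluating to $\tfrac{2(L_k-6L)}{5}$, the lower bound $\tfrac{L_k+24L}{5}\ge L_{k-1}+4L$ reducing to $L_{k-1}+4L\ge 0$, and the $k=1$ degenerate case) checks out.
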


\section{Proof of main result}
\label{section_proof_of_main}

In this section we prove Theorem \ref{thm_main}. Section \ref{subsection_lower_bound} contains the proof of
 $\liminf_{R \to \infty } \lambda_p(R) \geq \frac{1}{1-p_c}$, which is easier and shorter than the proof of $\limsup_{R \to \infty } \lambda_p(R) \leq \frac{1}{1-p_c}$,
 which will be given in Section \ref{subsection_upper_bound}. But first, let us state and prove Claim \ref{claim_binomial_largedev_bound} and Lemma \ref{lemma_X_t_big}, which will be used in both subsections.

\begin{claim}\label{claim_binomial_largedev_bound} Let $a, \varepsilon, p, q \in (0,1)$. Let $X \sim \mathrm{Bin}(N,q)$.
If  $q \leq p \leq \left( \frac{\varepsilon}{2} \right)^{1/a}$ holds then  $\mathbb{P}\left( X \geq a N \right) \leq \varepsilon^N. $
\end{claim}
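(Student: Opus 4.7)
The plan is to reduce to the case $q = p$ by stochastic domination, then apply a crude union bound that exploits the assumption $p \le (\varepsilon/2)^{1/a}$.

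First, I will use that if $q \le p$ then $\mathrm{Bin}(N,q)$ is stochastically dominated by $\mathrm{Bin}(N,p)$, so it suffices to prove the bound for $X \sim \mathrm{Bin}(N,p)$. Next, I will estimate
\begin{equation*}
\mathbb{P}(X \ge aN) \;=\; \sum_{k = \lceil aN \rceil}^{N} \binom{N}{k} p^k (1-p)^{N-k} \;\le\; \sum_{k = \lceil aN \rceil}^{N} \binom{N}{k} p^k,
\end{equation*}
and then use $p \in (0,1)$ together with $k \ge aN$ to bound $p^k \le p^{aN}$, giving
\begin{equation*}
\mathbb{P}(X \ge aN) \;\le\; p^{aN} \sum_{k=0}^{N} \binom{N}{k} \;=\; (2 p^a)^N.
\end{equation*}

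Finally, the hypothesis $p \le (\varepsilon/2)^{1/a}$ yields $p^a \le \varepsilon/2$, so $(2 p^a)^N \le \varepsilon^N$, as required. There is no real obstacle here: the only minor point is that $aN$ need not be an integer, but this is harmless because $\{X \ge aN\} = \{X \ge \lceil aN \rceil\}$ for the integer-valued random variable $X$. The argument is deliberately lossy (we discard the factor $(1-p)^{N-k}$ and bound partial binomial sums by $2^N$), but the gap of $\varepsilon/2$ versus $\varepsilon$ in the hypothesis provides exactly enough room to absorb the $2^N$.
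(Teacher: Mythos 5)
Your proof is correct, but it takes a genuinely different route from the paper's. The paper applies a Chernoff-type argument in one line: by Markov's inequality, $\mathbb{P}(X \ge aN) \le \mathbb{E}(p^{-X})/p^{-aN} = (q/p + 1 - q)^N p^{aN}$, and then uses $q \le p$ and $1 - q \le 1$ to bound the base by $2$, giving $(2p^a)^N$. You instead reduce to $q = p$ by the coupling/stochastic-domination fact for binomials, then bound the tail of the binomial sum directly: discard $(1-p)^{N-k} \le 1$, use $p^k \le p^{aN}$ for $k \ge aN$, and bound the partial binomial sum by the full sum $2^N$. Both routes arrive at the same intermediate bound $(2p^a)^N$ and then close with $p^a \le \varepsilon/2$; the paper's version is slightly more compact and avoids invoking stochastic domination, whereas your version makes the ``lossiness'' of each step more transparent and, as you note, handles the non-integrality of $aN$ explicitly. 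Both are valid.
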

\begin{proof} In the inequality marked by $(*)$ we use $q \leq p$ and $1-q \leq 1$:
\begin{equation}
\mathbb{P}\left( X \geq a N \right) \leq  \mathbb{E}\left( p^{-X} \right) / p^{-aN} =
   (q/p+1-q)^N p^{aN} \stackrel{(*)}{\leq} \left( 2 p^a \right)^N \leq \varepsilon^N.
   \end{equation}
\end{proof}

Recall from Section \ref{section_renorm_intro} that we denote by $L$ the bottom scale of our renormalization scheme. Let us  recall the notion of a proper embedding $\mathcal{T} \in \Lambda_{n,L}$ of the dyadic tree of depth $n$ from Definition \ref{def_proper_embedding_of_trees}.

 Given some  $\mathcal{T} \in \Lambda_{n,L}$
we will define the finite subset $\mathcal{X}$ of $\mathbb{Z}^d$ by
\begin{equation}\label{mathcal_X_tree}
 \mathcal{X}=\mathcal{X}(\mathcal{T})= \bigcup_{m  \in T_{(n)}} B( \mathcal{T}(m),2L)
\end{equation}
Note that it follows from Lemma \ref{lemma_far_in_tree_far_in_embedding} that for any $n  \in \mathbb{N}$ and any $\mathcal{T} \in \Lambda_{n,L}$
\begin{equation}\label{disjoint_X_cardinality}
 \text{the balls } B(\mathcal{T}(m), 2L ), \, m \in  T_{(n)} \text{ are disjoint, thus } |\mathcal{X}|=|B(2L)|\cdot 2^n.
\end{equation}
 Given such a set $\mathcal{X}$, we consider a joint realization of the labeled BRWs $Z_t(x,y)$ and the infection path indicators $\Xi_t(x,y)$, where $x \in \mathcal{X}, y \in \mathbb{Z}^d, 0 \leq t \leq T_0$, which satisfies the properties stated in Lemma  \ref{lemma_coupling}.

\smallskip

Let us recall from  Definition \ref{def_mathcal_X_t} the notion of the set of labels $\mathcal{X}_t$ that are alive at time $t$. Recall from  Lemma \ref{lem:renorm_count}
the constant $C_d$ that quantifies the combinatorial complexity of our renormalization scheme.
As we have already discussed in Remark \ref{remark_coupling_pitfalls}, we want to control the number $ |\mathcal{X} \setminus \mathcal{X}_{T_0}|$ of labels that got annihilated by time $T_0$. This is what we do in the next lemma.

\begin{lemma}[Few collisions if $R$ is large]\label{lemma_X_t_big}
 For any value of
$\lambda$, $L$ and $T_0$, there exists $R_0=R_0(\lambda,L,T_0) \in \mathbb{N}$ such that for
any $R \geq R_0$, any $n \in \mathbb{N}$ and any $\mathcal{T} \in \Lambda_{n,L}$ we have
\begin{equation}\label{X_t_big_eq}
  \mathbb{P}\left(\, |\mathcal{X} \setminus \mathcal{X}_{T_0}| \geq  2^n /2 \, \right) \leq \left( 2 C_d  \right)^{-2^n}.
\end{equation}
\end{lemma}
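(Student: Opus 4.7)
The strategy is an exponential Markov bound applied to the generating-function estimate of Lemma \ref{lemma_annihilation}. Fix a real number $s > 1$ large enough that
\[
\frac{1}{2}\log s - \log(2C_d) \;\geq\; 1;
\]
the value of $s$ depends only on $C_d$, i.e.\ only on the dimension $d$. By Markov's inequality,
\[
\mathbb{P}\bigl(|\mathcal{X}\setminus\mathcal{X}_{T_0}|\geq 2^n/2\bigr)\;\leq\; s^{-2^n/2}\,\mathbb{E}\!\left(s^{|\mathcal{X}\setminus\mathcal{X}_{T_0}|}\right),
\]
so it suffices to show that, for $R$ large enough (depending on $\lambda,L,T_0$), the right-hand side of \eqref{annihilation_momgen_ineq} (with $t=T_0$) is at most $s^{2^n/2}\cdot(2C_d)^{-2^n}$. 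Using $1+z\leq e^z$, this reduces to establishing
\begin{equation}\label{eq:plan_target}
s\,\Sigma_1 \;+\; s^2\,\Sigma_2 \;\leq\; 2^n,\qquad \Sigma_1:=\sum_{x\in\mathcal{X}}\mathbb{P}(\tau_x\leq T_0),\quad \Sigma_2:=\sum_{\substack{x,y\in\mathcal{X}\\x\prec y}}\mathbb{P}(\tau_{x,y}\leq T_0).
\end{equation}

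The first sum is trivial: by translation invariance $\mathbb{P}(\tau_x\leq T_0)=\mathbb{P}(\tau_0\leq T_0)$, and by \eqref{disjoint_X_cardinality} we have $|\mathcal{X}|=|B(2L)|\cdot 2^n$, so $\Sigma_1\leq |B(2L)|\cdot 2^n\cdot \mathbb{P}(\tau_0\leq T_0)$, which is at most $\tfrac{1}{2s}\cdot 2^n$ once $R$ is large, by Lemma \ref{lemma_crowd_bounds}\eqref{self_collision_bound}.

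The heart of the argument is bounding $\Sigma_2$, which I would split according to whether $x,y$ lie in the same ball $B(\mathcal{T}(m),2L)$ or in two different balls. The ``same ball'' contribution involves $2^n\cdot\binom{|B(2L)|}{2}$ pairs, each bounded by $C'/R^d$ via Lemma \ref{lemma_crowd_bounds}\eqref{pair_collision_bound}, giving a contribution $O(2^n/R^d)$, which is harmless. For the ``different ball'' contribution I organise pairs of leaves by their lexicographic distance $k=\rho(m,m')\in\{1,\dots,n\}$: for each such $k$ there are at most $2^n\cdot 2^k$ ordered pairs (cf.\ \eqref{cardinality_tree_sphere}), and by Lemma \ref{lemma_far_in_tree_far_in_embedding} the corresponding pairs $(x,y)$ satisfy $|x-y|\geq L\cdot 6^{k-1}$, so Lemma \ref{lemma_crowd_bounds}\eqref{pair_collision_bound} yields
\[
\Sigma_2^{\mathrm{diff}} \;\leq\; 2^n\cdot|B(2L)|^2\cdot\frac{C'}{R^d}\sum_{k=1}^{\infty}2^k\exp\!\left(-\frac{L\cdot 6^{k-1}}{C''R}\right).
\]

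The main technical step is to evaluate the series above. Splitting the sum at $k_0:=\lceil\log_6 R\rceil+1$, the tail $k>k_0$ is bounded by a constant (depending on $L,C''$) times $2^{k_0}$ via a geometric comparison, while for $k\leq k_0$ one uses $\exp(\cdot)\leq 1$ to bound the sum by $2^{k_0+1}$. Since $2^{k_0}\leq 4R^{\log_6 2}$, the full series is $O(R^{\log_6 2})$, and therefore $\Sigma_2^{\mathrm{diff}}=O\bigl(2^n\cdot R^{\log_6 2-d}\bigr)$. As $d\geq 1>\log_6 2$ (this is where the fractal exponent from Remark \ref{remark_fractal} appears), this tends to zero as $R\to\infty$, so $s^2\Sigma_2\leq \tfrac{1}{2}\cdot 2^n$ for $R$ large.

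Combining the bounds on $\Sigma_1$ and $\Sigma_2$ yields \eqref{eq:plan_target}, completing the proof. The main obstacle is the series estimate in the between-balls contribution: the $2^k$ growth from the number of $k$-th cousins must be beaten by the exponential spatial decay coming from the heat-kernel-type bound in Lemma \ref{lemma_crowd_bounds}\eqref{pair_collision_bound}, and the resulting polynomial factor $R^{\log_6 2}$ must be absorbed by $R^{-d}$, which is exactly why the proof works for all $d\geq 1$ uniformly in $n$.
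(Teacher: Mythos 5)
Your proposal is correct and takes essentially the same route as the paper: exponential Chebyshev applied to the generating-function bound of Lemma~\ref{lemma_annihilation} combined with $1+z\leq e^z$, the single-collision estimate from Lemma~\ref{lemma_crowd_bounds}\eqref{self_collision_bound}, and the pair-collision sum organized by lexicographic tree distance via Lemma~\ref{lemma_far_in_tree_far_in_embedding}, \eqref{cardinality_tree_sphere} and Lemma~\ref{lemma_crowd_bounds}\eqref{pair_collision_bound}, with the decisive exponent $\frac{\ln 2}{\ln 6}-d<0$. The only cosmetic differences are that you evaluate the series $\sum_k 2^k e^{-L6^{k-1}/(C''R)}$ by a dyadic split at $k_0\approx\log_6 R$ rather than the paper's integral/Gamma-function comparison (both yield $O(R^{\ln 2/\ln 6})$), and you separate the same-ball pair contribution explicitly while the paper folds it into the $k=0$ term of its sum.
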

Before we prove Lemma \ref{lemma_X_t_big}, let us stress that the same $R_0$ works for all $n$:
this is because $\mathcal{X}$ is spread-out on all scales, cf.\ Remark \ref{remark_fractal}.
\begin{proof}[Proof of Lemma \ref{lemma_X_t_big}] Let us fix the value of $\lambda$, $L$ and $T_0$. Given any $n \in \mathbb{N}$ and $\mathcal{T} \in \Lambda_{n,L}$ we define $\mathcal{X}$ by
\eqref{mathcal_X_tree}, and for any $s \geq 1$ we can bound
\begin{equation}\label{momgen_exp_bound_matchcal_X}
\mathbb{E}\left( s^{|\mathcal{X} \setminus \mathcal{X}_{T_0}|} \right) \stackrel{ \eqref{annihilation_momgen_ineq} }{\leq }
\exp\left( |\mathcal{X}| \mathbb{P}(\tau_{0} \leq T_0 ) s + |\mathcal{X}| \max_{x \in \mathcal{X}}
\sum_{y \in \mathcal{X}\setminus \{ x \} }  \mathbb{P}(\tau_{x,y} \leq T_0 ) s^2
 \right).
\end{equation}
We will show that there exists $R_0$ such that for
any $R \geq R_0$ we have
\begin{align}
\label{tau_0_easy_bound}
\mathbb{P}(\tau_{0} \leq T_0 ) & \leq (48 C^2_d |B(2L)| )^{-1},\\
\label{tau_x_y_harder_bound}
\forall \, n \in \mathbb{N}, \, \mathcal{T} \in \Lambda_{n,L} \; : \;
\max_{x \in \mathcal{X}}
\sum_{y \in \mathcal{X}\setminus \{ x \} }  \mathbb{P}(\tau_{x,y} \leq T_0 )  & \leq (576 C_d^4 |B(2L)|)^{-1}.
\end{align}
 Before we prove \eqref{tau_0_easy_bound}
and \eqref{tau_x_y_harder_bound}, let us show that \eqref{X_t_big_eq} follows from them. First observe that
\begin{equation}\label{momgen_annih_ineq_simple}
  \mathbb{E}\left( (12 C_d^2)^{|\mathcal{X} \setminus \mathcal{X}_{T_0}|} \right)
  \stackrel{ \eqref{disjoint_X_cardinality}, \eqref{momgen_exp_bound_matchcal_X},\eqref{tau_0_easy_bound}, \eqref{tau_x_y_harder_bound}     }{ \leq }
  \exp\left( 2^n/2 \right)\leq 3^{2^n/2}.
\end{equation}
Now the desired \eqref{X_t_big_eq} follows using the exponential Chebyshev's inequality:
\begin{equation}
   \mathbb{P}\left(\, |\mathcal{X} \setminus \mathcal{X}_{T_0}| \geq  2^n /2 \, \right) \leq
    \mathbb{E}\left( (12 C_d^2)^{|\mathcal{X} \setminus \mathcal{X}_{T_0}|} \right) / (12 C_d^2)^{2^n/2}
    \stackrel{ \eqref{momgen_annih_ineq_simple} }{ \leq }
    \left( 2 C_d  \right)^{-2^n}.
\end{equation}
It remains to prove \eqref{tau_0_easy_bound}
and \eqref{tau_x_y_harder_bound}. Note that \eqref{tau_0_easy_bound} holds by Lemma \ref{lemma_crowd_bounds}\eqref{self_collision_bound} for
large enough $R$. The proof of  \eqref{tau_x_y_harder_bound} is more complicated: for any
$n \in \mathbb{N}$, $\mathcal{T} \in \Lambda_{n,L}$, $m  \in T_{(n)}$ and  $x \in B( \mathcal{T}(m),2L)$
\begin{multline}\label{first_ln2_ln_6}
\sum_{y \in \mathcal{X}\setminus \{ x \} }   \mathbb{P}(\tau_{x,y} \leq T_0 ) \stackrel{ \eqref{pair_collision_bound_eq} }{\leq}
 \frac{C'}{R^{d}} \sum_{y \in \mathcal{X} }    \exp\left(  -  \frac{|y-x|}{C'' R } \right)
 \stackrel{ \eqref{def_eq_tree_sphere}, \eqref{mathcal_X_tree}  }{=}
  \\
\frac{C'}{R^{d}} \sum_{k=0}^n \sum_{m' \in T_{(n)}^{m,k} } \sum_{y \in B( \mathcal{T}(m'),2L)  }  \exp\left(  -  \frac{|y-x|}{C'' R } \right)
\stackrel{ \eqref{cardinality_tree_sphere}, \eqref{far_in_embedding_eq} }{\leq} \\
\frac{C'}{R^{d}} \sum_{k=0}^n 2^{k} |B(2L)|  \exp\left(  -  \frac{L 6^{k-1} }{C'' R } \right)
\leq \\
\frac{C' }{R^{d}} \int_{-\infty}^\infty 2^{x+1}|B(2L)|  \exp\left(  -  \frac{L 6^{x-2} }{C'' R } \right)\, \mathrm{d}x
 \stackrel{(*)}{=}\\
 \frac{C' 2 |B(2L)|}{R^d \ln(6)} \left( \frac{C'' R}{L 6^{-2}} \right)^{\frac{\ln(2)}{\ln(6)}}\int_0^\infty u^{ \frac{\ln(2)}{\ln(6)}-1}e^{-u}\mathrm{d}u=
  \widehat{C} R^{\frac{\ln(2)}{\ln(6)}-d}
\end{multline}
for some $\widehat{C}=\widehat{C}(L,C',C'')=\widehat{C}(\lambda,L,T_0)$, where in $(*)$ we performed the substitution
$u=\frac{L 6^{x-2} }{C'' R }$. Thus \eqref{tau_x_y_harder_bound}  holds for large enough $R$, since $\frac{\ln(2)}{\ln(6)} <d$.
The proof of Lemma \ref{lemma_X_t_big} is complete.
\end{proof}

\subsection{Lower bound on the percolation threshold}
\label{subsection_lower_bound}

The aim of this subsection is to prove $\liminf_{R \to \infty } \lambda_p(R) \geq \frac{1}{1-p_c}$.
 It is enough to  show that the following proposition holds. Recall the definition of the percolation event $\mathrm{Perc}$ from
\eqref{perc_def_eq}.
  \begin{proposition}[Subcritical behavior] \label{prop_subcrit}
  For any $\lambda < \frac{1}{1-p_c}$ there exists some $R_0=R_0(\lambda)$ such that if $R \geq R_0$ then $\mu_{\lambda,R}(\mathrm{Perc})=0$.
  \end{proposition}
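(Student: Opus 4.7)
The plan is to combine the finite-time-horizon stochastic domination with the BRW coupling of Section \ref{section_graphical_construction} and the renormalization scheme of Section \ref{section_renorm_intro}. First I would use that $\mu_{\lambda,R,T_0}$ stochastically dominates $\mu_{\lambda,R}$ and that $\mathrm{Perc}$ is an increasing event to reduce the task to showing $\mu_{\lambda,R,T_0}(\mathrm{Perc}) = 0$ for a well-chosen $T_0$ and all sufficiently large $R$. The hypothesis $\lambda < 1/(1-p_c)$ together with \eqref{sigma_lambda} yields $\sigma(\lambda) = \max(1 - 1/\lambda, 0) < p_c$, so by \eqref{sigma_lamb_T} I fix $T_0$ such that $\sigma(\lambda, T_0) < p_c$. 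I then pick $\varepsilon \in (0, 1/(2 C_d))$ (with $C_d$ from Lemma \ref{lem:renorm_count}) and a scale $L$ so large that the Bernoulli site percolation crossing probability
$$p_L := \pi_{\sigma(\lambda, T_0)}\bigl(S(L-1) \stackrel{\pi}{\longleftrightarrow} S(2L) \text{ inside } B(2L)\bigr)$$
satisfies $p_L \le (\varepsilon/2)^2$; this is possible because $\sigma(\lambda, T_0)<p_c$ makes the site percolation process subcritical, and in this regime annulus-crossing probabilities tend to zero with $L$ (sharpness of the subcritical phase). Finally, I fix $R \ge R_0(\lambda, L, T_0)$ as in Lemma \ref{lemma_X_t_big}.

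Given $n \in \mathbb{N}$ and a proper embedding $\mathcal{T} \in \Lambda_{n, L}$, I would apply the coupling of Lemma \ref{lemma_coupling} to the set $\mathcal{X} := \mathcal{X}(\mathcal{T})$ defined in \eqref{mathcal_X_tree}. By self-duality of the contact process (time-reversal of the graphical construction), a sample $\xi \sim \mu_{\lambda,R,T_0}$ restricted to $\mathcal{X}$ has the same law as $(\max_{y \in \mathbb{Z}^d} \Xi_{T_0}(x, y))_{x \in \mathcal{X}}$, and combining this with \eqref{coupling_of_Xi_Z_on_X_t_eq} gives
$$\xi(x) = \tilde{\xi}(x) := \mathds{1}[Z_{T_0}(x) > 0] \quad \text{for } x \in \mathcal{X}_{T_0}, \qquad \xi(x) \le 1 \quad \text{for } x \in \mathcal{X} \setminus \mathcal{X}_{T_0}.$$
By Claim \ref{claim_brw_bp} and \eqref{sigma_lamb_T}, the family $(\tilde{\xi}(x))_{x \in \mathcal{X}}$ is i.i.d.\ Bernoulli($\sigma(\lambda, T_0)$). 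If $\{0 \stackrel{\xi}{\longleftrightarrow} S(2 L_n)\}$ occurs then some nearest-neighbor $\xi$-open path crosses the annulus at scale $L_n$, and by Lemma \ref{lem:renorm_paths} there exists $\mathcal{T} \in \Lambda_{n, L}$ for which every event
$$\mathrm{Cross}_m(\mathcal{T}) := \bigl\{S(\mathcal{T}(m), L-1) \stackrel{\xi}{\longleftrightarrow} S(\mathcal{T}(m), 2L) \text{ inside } B(\mathcal{T}(m), 2L)\bigr\}$$
occurs simultaneously for all $m \in T_{(n)}$. Since the balls $B(\mathcal{T}(m), 2L)$ are pairwise disjoint by Lemma \ref{lemma_far_in_tree_far_in_embedding}, on the event $\{|\mathcal{X} \setminus \mathcal{X}_{T_0}| < 2^n/2\}$ at most $2^n/2$ of these balls contain an annihilated label; for the remaining $\ge 2^n/2$ balls $\xi$ coincides with $\tilde{\xi}$ throughout, so $\mathrm{Cross}_m(\mathcal{T})$ pulls back to the analogous $\tilde{\xi}$-crossing event, which has probability $p_L$ and is independent across different $m$ (by disjointness of the balls).

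Combining Lemma \ref{lemma_X_t_big} with Claim \ref{claim_binomial_largedev_bound} (applied with $a = 1/2$, $N = 2^n$, $q = p_L$) gives
$$\mathbb{P}\Bigl(\bigcap_{m \in T_{(n)}} \mathrm{Cross}_m(\mathcal{T})\Bigr) \le \mathbb{P}\bigl(|\mathcal{X} \setminus \mathcal{X}_{T_0}| \ge 2^n/2\bigr) + \mathbb{P}\bigl(\mathrm{Bin}(2^n, p_L) \ge 2^n/2\bigr) \le (2 C_d)^{-2^n} + \varepsilon^{2^n},$$
and a union bound over $\mathcal{T} \in \Lambda_{n, L}$ together with $|\Lambda_{n, L}| \le C_d^{2^n}$ (Lemma \ref{lem:renorm_count}) yields
$$\mathbb{P}\bigl(0 \stackrel{\xi}{\longleftrightarrow} S(2 L_n)\bigr) \le 2^{-2^n} + (C_d \varepsilon)^{2^n} \xrightarrow[n \to \infty]{} 0,$$
since $C_d \varepsilon < 1/2$. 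Hence $\mu_{\lambda, R, T_0}(0 \stackrel{\xi}{\longleftrightarrow} \infty) = 0$, and translation invariance gives $\mu_{\lambda, R, T_0}(\mathrm{Perc}) = 0$, completing the proof. The main obstacle in this scheme is the loss on the annihilated labels $\mathcal{X} \setminus \mathcal{X}_{T_0}$, where the BRW coupling breaks down and only $\xi(x) \le 1$ is available; Lemma \ref{lemma_X_t_big} is the essential tool that makes this loss exponentially rare in $2^n$, and this forces the parameters to be chosen in the order $T_0, \varepsilon, L, R$, with $R$ adapted last so that the density of broken couplings remains negligible on every scale relevant to the renormalization.
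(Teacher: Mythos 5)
Your proof is correct and follows essentially the same route as the paper's: the coupling with independent branching random walks (Lemma \ref{lemma_coupling}), control of the annihilated labels via Lemma \ref{lemma_X_t_big}, reduction to bottom-level annulus crossings via Lemma \ref{lem:renorm_paths}, and a binomial tail bound (Claim \ref{claim_binomial_largedev_bound}) on the number of i.i.d.\ BRW-survival crossings. The only cosmetic difference is your route to the finite-time truncation — you pass to $\mu_{\lambda,R,T_0}$ by stochastic domination and then invoke self-duality, whereas the paper works directly with $\xi^*_T(x):=\mathds{1}[\exists\, y: \Xi_T(x,y)>0]$ and the pointwise bound $\xi^*_T \ge \xi^*_\infty$ from the graphical construction, sidestepping duality.
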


Let us fix some $\lambda < \frac{1}{1-p_c}$  for the rest of Section \ref{subsection_lower_bound}. Recall the notation  $A \stackrel{\xi}{\longleftrightarrow} B$
from Definition \ref{def_connected_in_a_config}.

\begin{lemma}[Annulus crossing]\label{lemma_subcrit_1}
There exists $L \in \mathbb{N}$ and $R_0 \in \mathbb{N}$ such that for any $n \in \mathbb{N}$ and any $R \geq R_0$ we have
\begin{equation}
 \mu_{\lambda,R}\left( B(L_{n}) \stackrel{ \xi }{\longleftrightarrow}  B(2L_n)^c  \right) \leq 2 \cdot  2^{-2^n}, \quad \text{where} \quad L_n  \stackrel{ \eqref{def:scalesLn} }{=}L \cdot 6^n.
\end{equation}
\end{lemma}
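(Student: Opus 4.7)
My plan is to combine four ingredients: (i) the standard fact that $\mu_{\lambda,R,T_0}$, the law of $\xi_{T_0}^{\underline{1}}$, stochastically dominates $\mu_{\lambda,R}$; (ii) self-duality of the contact process, to re-express $\{\xi_{T_0}^{\underline{1}}(y)=1\}$ as a survival event emanating from $y$; (iii) the labeled-BRW coupling of Section~\ref{section_graphical_construction} applied to the time-reversed graphical construction; and (iv) the renormalization of Section~\ref{section_renorm_intro}. To fix constants: since $\lambda < \frac{1}{1-p_c}$, \eqref{sigma_lambda} gives $\sigma(\lambda) = 1 - \tfrac{1}{\lambda} < p_c$, so I pick $p' \in (\sigma(\lambda), p_c)$; by \eqref{sigma_lamb_T}, $\sigma(\lambda,T_0)\searrow\sigma(\lambda)$, so I pick $T_0$ with $\sigma(\lambda,T_0) \leq p'$. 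Since Bernoulli site percolation at density $p' < p_c$ is subcritical, $\pi_{p'}(B(L-1)\stackrel{\xi}{\longleftrightarrow} S(2L)) \to 0$ as $L\to\infty$, so I pick $L$ with $\varepsilon := \pi_{p'}(B(L-1)\stackrel{\xi}{\longleftrightarrow} S(2L)) \leq 1/(16 C_d^2)$, where $C_d$ is from Lemma~\ref{lem:renorm_count}. Finally, Lemma~\ref{lemma_X_t_big} supplies $R_0 = R_0(\lambda,L,T_0)$.

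Any nearest-neighbor open path witnessing $\{B(L_n) \stackrel{\xi}{\longleftrightarrow} B(2L_n)^c\}$ is $*$-connected, so using the stochastic domination, Lemma~\ref{lem:renorm_paths}, and Lemma~\ref{lem:renorm_count},
\begin{equation*}
\mu_{\lambda,R}\bigl(B(L_n) \stackrel{\xi}{\longleftrightarrow} B(2L_n)^c\bigr) \leq C_d^{2^n} \max_{\mathcal{T}\in\Lambda_{n,L}}\mathbb{P}\Bigl(\bigcap_{m \in T_{(n)}} A_m(\mathcal{T})\Bigr),
\end{equation*}
where $A_m(\mathcal{T})$ is the event that $\xi_{T_0}^{\underline{1}}$ has an open path inside $B(\mathcal{T}(m),2L)$ from $B(\mathcal{T}(m),L-1)$ to $S(\mathcal{T}(m),2L)$.

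Now fix $\mathcal{T}$ and set $\mathcal{X} = \bigcup_{m\in T_{(n)}} B(\mathcal{T}(m),2L)$. By self-duality and the time-symmetry (in distribution) of the Poisson graphical construction, $\xi_{T_0}^{\underline{1}}(y) = 1$ iff the dual contact process started from $\{y\}$ is nonempty at time $T_0$; applying Lemma~\ref{lemma_coupling} to the time-reversed graphical construction with label set $\mathcal{X}$ yields independent labeled BRWs $\hat Z^{(y)}$, $y\in\mathcal{X}$, and an alive-label set $\mathcal{X}_{T_0}$ such that on $\{y\in\mathcal{X}_{T_0}\}$ one has $\xi_{T_0}^{\underline{1}}(y) = U^{(y)} := \mathds{1}[\hat Z^{(y)}_{T_0} \neq \emptyset]$. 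By Claim~\ref{claim_brw_bp} and \eqref{sigma_lamb_T}, the $U^{(y)}$'s are i.i.d.\ Bernoulli$(\sigma(\lambda,T_0))$. Setting $\xi''(y) := U^{(y)} \vee \mathds{1}[y \notin \mathcal{X}_{T_0}]$, each $A_m$ is increasing and determined by the restriction to $B(\mathcal{T}(m),2L)\subseteq\mathcal{X}$, so $A_m(\xi_{T_0}^{\underline{1}}) \subseteq A_m(\xi'')$. Split on the annihilation count: Lemma~\ref{lemma_X_t_big} gives $\mathbb{P}(|\mathcal{X}\setminus\mathcal{X}_{T_0}|\geq 2^n/2)\leq (2C_d)^{-2^n}$. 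On the complement, the good-leaf set $G:=\{m : B(\mathcal{T}(m),2L)\subseteq\mathcal{X}_{T_0}\}$ has $|G|\geq 2^n/2$ and $\xi''=U$ on $B(\mathcal{T}(m),2L)$ for $m\in G$. Summing over candidate good sets and using the independence of BRWs across disjoint labels,
\begin{equation*}
\mathbb{P}\Bigl(\bigcap_m A_m(\xi''),\; |\mathcal{X}\setminus\mathcal{X}_{T_0}|<2^n/2\Bigr) \leq \sum_{\substack{G_0\subseteq T_{(n)}\\|G_0|\geq 2^n/2}} \prod_{m\in G_0}\mathbb{P}(A_m(U)) \leq 2^{2^n}\varepsilon^{2^n/2},
\end{equation*}
because $\mathbb{P}(A_m(U)) \leq \varepsilon$ by monotonicity of $A_m$ and $\sigma(\lambda,T_0)\leq p'$. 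Multiplying by $C_d^{2^n}$, the annihilation piece contributes $C_d^{2^n}(2C_d)^{-2^n}=2^{-2^n}$, and the main piece contributes $(4C_d^2\varepsilon)^{2^n/2}\leq 4^{-2^n/2}=2^{-2^n}$ by the choice $\varepsilon\leq 1/(16 C_d^2)$; their sum is $2\cdot 2^{-2^n}$.

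The main obstacle is the dual coupling: Section~\ref{section_graphical_construction} builds the labeled-BRW coupling only for the forward graphical construction, so one must verify that Lemma~\ref{lemma_coupling} transplants cleanly to the time-reversed construction and that the resulting BRWs indexed by $y \in \mathcal{X}$ are genuinely independent across labels. A secondary subtlety is that the annihilation structure of the BRWs is a priori correlated with the survival indicators $U^{(y)}$, but this is sidestepped by summing over candidate values of the random good-leaf set $G$ rather than conditioning on it.
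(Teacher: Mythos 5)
Your proposal has the same architecture as the paper's proof: truncate to a finite time horizon, renormalize via Lemmas \ref{lem:renorm_paths} and \ref{lem:renorm_count}, compare bottom-level crossings to crossings in an i.i.d.\ Bernoulli field given by the BRW survival indicators, and absorb annihilations via Lemma \ref{lemma_X_t_big}. Your choice of constants delivers the same final bound $2\cdot 2^{-2^n}$, and the sum-over-candidate-good-sets bound is a valid substitute for the paper's separate large-deviation estimates on the two counts.

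The genuine difference --- and where your proposal leaves a hole --- is the detour through $\xi_{T_0}^{\underline{1}}$, self-duality and time reversal. You correctly flag as the ``main obstacle'' that Lemma \ref{lemma_coupling} is built for the forward graphical construction and would need to be transplanted to the time-reversed one, but you do not resolve this. The paper sidesteps it entirely by never passing through $\xi_{T_0}^{\underline{1}}$: it works with $\xi^*_T(x) := \mathds{1}[\,\exists\, y : \Xi_T(x,y) > 0\,]$ (cf.\ \eqref{def_xi_star_T}), the indicator that the forward infection path from $(x,0)$ reaches time $T$. By Claim \ref{claim_graphical_upper_invariant}, $\xi^*_\infty := \lim_{T\to\infty}\xi^*_T$ has law $\mu_{\lambda,R}$, and $\xi^*_T \geq \xi^*_\infty$ pointwise on the same graphical construction, which gives the required domination of the increasing crossing event without invoking the all-ones process. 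Crucially, $\xi^*_T(x)$ is determined by $(\Xi_T(x,y))_y$ on the \emph{forward} construction --- exactly the object that Lemma \ref{lemma_coupling} couples to the labeled BRWs --- so the identity $\xi^*_T(x)=\zeta(x):=\mathds{1}[Z_T(x)>0]$ on the alive-label set $\mathcal{X}_T$ falls out directly, with no time reversal. Self-duality is still used, but only inside the proof of Claim \ref{claim_graphical_upper_invariant}, not in the comparison argument. In short, the obstacle you identify is self-inflicted: parameterizing the finite-time truncation via forward infection paths from each $x$ rather than via $\xi_{T_0}^{\underline{1}}$ makes the time-reversal step vacuous.
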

 Before proving Lemma \ref{lemma_subcrit_1}, we first note that Proposition \ref{prop_subcrit} follows from it:
\begin{proof}[Proof of Proposition \ref{prop_subcrit}] Observing that
\begin{equation}
\mathrm{Perc} \subseteq \bigcup_{m \geq 1}\bigcap_{n \geq m} \{ B(L_{n}) \stackrel{ \xi }{\longleftrightarrow}  B(2L_n)^c  \},
\end{equation}
one concludes that if Lemma \ref{lemma_subcrit_1}  holds then $\mu_{\lambda,R}(\mathrm{Perc}) = 0$.
\end{proof}
 It remains to prove Lemma \ref{lemma_subcrit_1}.
Recalling \eqref{sigma_lambda} we note that $\sigma(\lambda)<p_c$, so let us fix some $p\in (\sigma(\lambda), p_c)$.
Given this value of $p$ we can use the results about the exponential decay of the cluster radius in subcritical Bernoulli percolation (cf.\
\cite[Section 5.2]{Gr99}) to
 choose (and fix for the rest of Section \ref{subsection_lower_bound}) $L$ such that under the product Bernoulli distribution $\pi_p$ with density $p$ we have
\begin{equation}\label{subcrit_benoulli_perc_crossing}
  \pi_p \left( B( L ) \stackrel{ \xi }{\longleftrightarrow}  B(2L-1)^c \right) \leq (4 C_d)^{-2},
\end{equation}
where $C_d$ was defined in Lemma \ref{lem:renorm_count}.

\smallskip

It follows from \eqref{sigma_lamb_T} and \eqref{sigma_lambda} that $t \mapsto \sigma(\lambda,t)$ is continuous, moreover $\lim_{t \to \infty} \sigma(\lambda,t)=\sigma(\lambda)$, therefore we can choose (and fix for the rest of Section \ref{subsection_lower_bound}) $T$ such that
\begin{equation}\label{sigma_p}
\sigma(\lambda,T)=p.
\end{equation}

Let us recall from Definition \ref{Xi_graph_def} that $\Xi_t(x,y)$ denotes the indicator of the event that
there is an infection path connecting $(x,0)$ to $(y,t)$ in the graphical construction of the contact process. Let us define
\begin{equation}\label{def_xi_star_T}
  \xi^*_T(x):= \mathds{1}\left[ \, \exists \, y \in \mathbb{Z}^d \, : \, \Xi_T(x,y) >0    \, \right], \qquad x \in \mathbb{Z}^d.
\end{equation}
Having fixed the bottom scale $L$ of our renormalization scheme, let us  recall the notion of a proper embedding $\mathcal{T} \in \Lambda_{n,L}$ of the dyadic tree of depth $n$ from Definition
\ref{def_proper_embedding_of_trees}. The following lemma will give the proof of Lemma \ref{lemma_subcrit_1}.

\begin{lemma}[Bottom-level annuli crossings in $\xi^*_T$] \label{lemma_subcrit_spread_out} Having fixed $\lambda$, $L$ and $T$ as above, there exists $R_0 \in \mathbb{N}$ such that for
any $R \geq R_0$, any $n \in \mathbb{N}$ and any $\mathcal{T} \in \Lambda_{n,L}$ we have
 \begin{equation}\label{subcrit_spread_out_eq}
   \mathbb{P}\left( \bigcap_{m  \in T_{(n)}} \left\{  B(\mathcal{T}(m),  L ) \stackrel{ \xi^*_T }{\longleftrightarrow}  B(\mathcal{T}(m), 2L-1)^c \right\}    \right)
   \leq 2 \left( 2 C_d  \right)^{-2^n}.
 \end{equation}
\end{lemma}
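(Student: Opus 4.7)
The plan combines the BRW coupling of Lemma~\ref{lemma_coupling} with the collision bound of Lemma~\ref{lemma_X_t_big} and the subcritical Bernoulli input~\eqref{subcrit_benoulli_perc_crossing}. Define $\mathcal{X}$ as in~\eqref{mathcal_X_tree}; by~\eqref{disjoint_X_cardinality} the $2^n$ balls $\{B(\mathcal{T}(m),2L):m\in T_{(n)}\}$ are pairwise disjoint. For $x\in\mathcal{X}$ set $B_x:=\mathds{1}[Z_T(x)>0]$. By Claim~\ref{claim_brw_bp} together with~\eqref{sigma_lamb_T}, the variables $\{B_x\}_{x\in\mathcal{X}}$ are i.i.d.\ Bernoulli with parameter $\sigma(\lambda,T)=p$ (the value fixed in~\eqref{sigma_p}). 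For each $m\in T_{(n)}$ let $\tilde E_m$ be the crossing event $B(\mathcal{T}(m),L)\leftrightarrow B(\mathcal{T}(m),2L-1)^c$ computed from the configuration $(B_y)_{y\in B(\mathcal{T}(m),2L)}$. Since distinct $m$ correspond to disjoint label sets and BRWs with different labels are independent, the events $\{\tilde E_m\}_{m\in T_{(n)}}$ are mutually independent, and~\eqref{subcrit_benoulli_perc_crossing} yields $\mathbb{P}(\tilde E_m)\leq (4C_d)^{-2}$.

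Let $A:=\{|\mathcal{X}\setminus \mathcal{X}_T|\geq 2^n/2\}$. Lemma~\ref{lemma_X_t_big} (with $T_0=T$) gives $\mathbb{P}(A)\leq (2C_d)^{-2^n}$ for $R\geq R_0(\lambda,L,T)$. On $A^c$ fewer than $2^n/2$ labels are annihilated, and since annihilated labels are spread across disjoint balls, at most $2^n/2$ balls contain an annihilated label; hence at least $2^n/2$ of them are \emph{good}, meaning all labels inside belong to $\mathcal{X}_T$. For a good ball indexed by $m$, the coupling identity~\eqref{coupling_of_Xi_Z_on_X_t_eq} combined with~\eqref{def_xi_star_T} implies $\xi^*_T(x)=B_x$ for every $x\in B(\mathcal{T}(m),2L)$. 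Moreover, any nearest-neighbour path from $B(\mathcal{T}(m),L)$ to $B(\mathcal{T}(m),2L-1)^c$ can be truncated at its first exit of $B(\mathcal{T}(m),2L-1)$, so a minimal such path lies entirely inside $B(\mathcal{T}(m),2L)$; thus, on the event that ball $m$ is good, the bottom-level crossing event $E_m$ of \eqref{subcrit_spread_out_eq} in $\xi^*_T$ coincides with $\tilde E_m$.

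Let $E$ denote the intersection on the left-hand side of~\eqref{subcrit_spread_out_eq}. The previous paragraph gives
$$ E\cap A^c \;\subseteq\; \Bigl\{\, \sum_{m\in T_{(n)}} \mathds{1}[\tilde E_m] \geq 2^n/2 \,\Bigr\}, $$
and the sum on the right is stochastically dominated by $\mathrm{Bin}(2^n,(4C_d)^{-2})$. Applying Claim~\ref{claim_binomial_largedev_bound} with $N=2^n$, $a=1/2$, $q=(4C_d)^{-2}$ and $\varepsilon=(2C_d)^{-1}$ (so that $(\varepsilon/2)^{1/a}=(4C_d)^{-2}=q$, and the intermediate parameter may be taken equal to $q$) bounds this probability by $\varepsilon^{2^n}=(2C_d)^{-2^n}$. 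Combining with the estimate on $\mathbb{P}(A)$ yields $\mathbb{P}(E)\leq 2(2C_d)^{-2^n}$, which is~\eqref{subcrit_spread_out_eq}. The main technical issue is identifying the right random decomposition that renders the $2^n$ bottom-level crossings into independent Bernoulli events on a set of near-full probability; annihilated labels are the obstruction, and their scarcity (quantified by Lemma~\ref{lemma_X_t_big}) is exactly what allows the binomial large-deviation bound to absorb the combinatorial factor $(C_d)^{2^n}$ produced when one later union-bounds over proper tree embeddings via Lemma~\ref{lem:renorm_paths}.
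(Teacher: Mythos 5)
Your proposal is correct and follows essentially the same route as the paper's proof. You decompose $\mathbb{P}(E) \le \mathbb{P}(A) + \mathbb{P}(E \cap A^c)$ and show $E \cap A^c \subseteq \{\sum_m \mathds{1}[\tilde E_m] \ge 2^n/2\}$; the paper phrases this as the single inclusion $E \subseteq \{|\mathcal{X}\setminus\mathcal{X}_T| + Y_n \ge 2^n\}$ (with $Y_n$ being your $\sum_m\mathds{1}[\tilde E_m]$) followed by a union bound, which is logically equivalent. All the key ingredients — the coupling of Lemma~\ref{lemma_coupling}, the few-collisions bound of Lemma~\ref{lemma_X_t_big}, the i.i.d.\ Bernoulli structure of $\zeta$, the Bernoulli crossing input~\eqref{subcrit_benoulli_perc_crossing}, and Claim~\ref{claim_binomial_largedev_bound} with the same parameter choices — appear in the paper's argument, and your handling of the truncation to $B(\mathcal{T}(m),2L)$ and the independence of the $\tilde E_m$ across disjoint balls is the same.
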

Before we prove Lemma \ref{lemma_subcrit_spread_out}, let us deduce Lemma \ref{lemma_subcrit_1} from it.

\begin{proof}[Proof of Lemma \ref{lemma_subcrit_1}]
Recall from \eqref{upper_invariant_active_path} that if we define $\xi^*_\infty:= \lim_{t \to \infty } \xi^*_t(x) $ then the law of the configuration $\xi^*_\infty$ is
$\mu_{\lambda,R}$. Also note that $\xi^*_T(x) \geq \xi^*_\infty(x)$, thus the equation marked by $(*)$ below holds, since
$\{ B(L_{n}) \stackrel{ \xi }{\longleftrightarrow}  B(2L_n)^c \}$ is an increasing event:
\begin{multline}\label{we_use_monotone}
\mu_{\lambda,R}\left( B(L_{n}) \stackrel{ \xi }{\longleftrightarrow}  B(2L_n)^c  \right) \stackrel{(*)}{\leq} \mathbb{P} \left( B(L_{n}) \stackrel{ \xi^*_T }{\longleftrightarrow}  B(2L_n)^c \right) \stackrel{(**)}{\leq} \\
 \mathbb{P}\left( \bigcup_{ \mathcal{T} \in \Lambda_{n,L}  }  \bigcap_{m  \in T_{(n)}} \left\{  B(\mathcal{T}(m),  L ) \stackrel{ \xi^*_T }{\longleftrightarrow}  B(\mathcal{T}(m), 2L-1)^c \right\}   \right) \stackrel{(***)}{\leq}   \\
 \sum_{ \mathcal{T} \in \Lambda_{n,L}  } 2 \left( 2 C_d  \right)^{-2^n} \stackrel{ \eqref{eq:renorm_count} }{\leq} (C_d)^{2^n} 2 \left( 2 C_d  \right)^{-2^n}=2 \cdot 2^{-2^n},
\end{multline}
where $(**)$ holds by Lemma \ref{lem:renorm_paths}, $(***)$ holds by the union bound and \eqref{subcrit_spread_out_eq}  (as soon $R \geq R_0$, where $R_0$ appears in the statement of Lemma \ref{lemma_subcrit_spread_out}). The proof of Lemma \ref{lemma_subcrit_1} is complete.
\end{proof}

It remains to prove Lemma  \ref{lemma_subcrit_spread_out}.
Having already fixed $L$ (the bottom scale of our renormalization scheme), for any proper embedding $\mathcal{T} \in \Lambda_{n,L}$
we define the finite subset $\mathcal{X}$ of $\mathbb{Z}^d$ by \eqref{mathcal_X_tree}.
Note that in order to determine the outcome of the event that appears in \eqref{subcrit_spread_out_eq}, it is enough to observe the random variables $\xi^*_T(x), x \in \mathcal{X}$. Given this set $\mathcal{X}$ we construct a joint realization of  $Z_t(x,y)$ and $\Xi_t(x,y)$, where $x \in \mathcal{X}, y \in \mathbb{Z}^d, 0 \leq t \leq T$ as in Lemma  \ref{lemma_coupling}.
Recall from \eqref{Z_T_x} that $Z_T(x)$ denotes the number of BRW particles with label $x$ at time $T$, where $x \in \mathcal{X}$. We define a random configuration of zeros and ones $\left(\zeta(x)\right)_{x \in \mathcal{X}}$  by letting
\begin{equation}\label{zeta_def}
  \zeta(x):= \mathds{1}\left[ \, 0<  Z_T(x)  \, \right]= \mathds{1}\left[ \, \exists \, y \in \mathbb{Z}^d \, : \, Z_T(x,y) >0    \, \right], \quad x \in \mathcal{X},
\end{equation}
thus $\zeta(x)$ is the indicator of the event that the number of BRW particles with label $x$ is nonzero at time $T$.
Let us define the random variable
\begin{equation}\label{def_Y_n}
  Y_n:= \sum_{m \in T_{(n)} }  \mathds{1}\left[  B(\mathcal{T}(m),  L ) \stackrel{ \zeta }{\longleftrightarrow}  B(\mathcal{T}(m), 2L-1)^c  \right],
\end{equation}
thus $Y_n$ is the number of bottom-level annuli crossed by $\zeta$.
Let us state a lemma, which (together with Lemma \ref{lemma_X_t_big}) will give the proof of Lemma  \ref{lemma_subcrit_spread_out}.

\begin{lemma}[Bottom-level annuli crossings in $\zeta$]\label{lemma_Y_n_not_too_big} Having fixed $\lambda$, $L$ and $T$ as above, for any $n \in \mathbb{N}$ and any $\mathcal{T} \in \Lambda_{n,L}$ we have
  \begin{equation}\label{Y_n_not_too_big}
    \mathbb{P}\left( Y_n \geq  2^n /2 \right) \leq \left( 2 C_d  \right)^{-2^n}.
  \end{equation}
\end{lemma}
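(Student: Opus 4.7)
\textbf{Proof plan for Lemma \ref{lemma_Y_n_not_too_big}.}

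The plan is to exploit three facts that have already been set up: (i) the BRWs with different labels are independent (Claim~\ref{claim_brw_bp}), (ii) the bottom-scale balls $B(\mathcal{T}(m),2L)$ for $m \in T_{(n)}$ are pairwise disjoint (a consequence of Lemma~\ref{lemma_far_in_tree_far_in_embedding}, since $L_{k-1} \geq L$ for $k \geq 1$), and (iii) the bottom-scale crossing probability under $\pi_p$ is tiny by our choice of $L$ in \eqref{subcrit_benoulli_perc_crossing}. Combining (i) and (ii) will show that the $2^n$ indicator random variables summed in \eqref{def_Y_n} are \emph{independent} Bernoulli variables, each of parameter at most $(4C_d)^{-2}$; then a straightforward application of Claim~\ref{claim_binomial_largedev_bound} yields the claimed bound.

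First I would verify that $\bigl(\zeta(x)\bigr)_{x \in \mathcal{X}}$ is an i.i.d.\ family. Indeed, by Definition~\ref{def_brw_from_X} the BRWs with different labels are independent, so the events $\{Z_T(x) > 0\}$ for $x \in \mathcal{X}$ are mutually independent. Moreover, for each $x \in \mathcal{X}$, $Z_T(x)$ has the law of the branching process $Z_T$ of Definition~\ref{def_bp_pop_at_t}, so
\begin{equation*}
\mathbb{P}(\zeta(x)=1) \;=\; \mathbb{P}(Z_T > 0) \;\stackrel{\eqref{sigma_lamb_T},\eqref{sigma_p}}{=}\; p.
\end{equation*}
Hence $\zeta|_{\mathcal{X}}$ has exactly the law of $\pi_p$ restricted to $\mathcal{X}$.

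Next, for each $m \in T_{(n)}$, the indicator
\begin{equation*}
I_m := \mathds{1}\bigl[\,B(\mathcal{T}(m),L) \stackrel{\zeta}{\longleftrightarrow} B(\mathcal{T}(m),2L-1)^c\,\bigr]
\end{equation*}
is measurable with respect to $\bigl(\zeta(x)\bigr)_{x \in B(\mathcal{T}(m),\,2L-1)}$, and the sets $B(\mathcal{T}(m),2L-1) \subset B(\mathcal{T}(m),2L)$ are disjoint as $m$ ranges over $T_{(n)}$. Combined with the independence just established, this shows that $(I_m)_{m \in T_{(n)}}$ are mutually independent. Moreover, since $\zeta|_{B(\mathcal{T}(m),2L-1)}$ has the law of $\pi_p$ and $B(\mathcal{T}(m),L) \subset B(L) + \mathcal{T}(m)$, translation invariance and \eqref{subcrit_benoulli_perc_crossing} give
\begin{equation*}
\mathbb{P}(I_m = 1) \;\leq\; \pi_p\bigl(B(L) \stackrel{\xi}{\longleftrightarrow} B(2L-1)^c\bigr) \;\leq\; (4C_d)^{-2}.
\end{equation*}

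Finally, $Y_n = \sum_{m \in T_{(n)}} I_m$ is stochastically dominated by $X \sim \mathrm{Bin}(2^n, (4C_d)^{-2})$. Apply Claim~\ref{claim_binomial_largedev_bound} with $N = 2^n$, $a = \tfrac12$, $q = (4C_d)^{-2}$, $p = (4C_d)^{-2}$ and $\varepsilon = (2C_d)^{-1}$; the hypothesis $p \leq (\varepsilon/2)^{1/a} = (4C_d)^{-2}$ is satisfied with equality, and the conclusion gives
\begin{equation*}
\mathbb{P}\bigl(Y_n \geq 2^n/2\bigr) \;\leq\; \mathbb{P}\bigl(X \geq 2^n/2\bigr) \;\leq\; (2C_d)^{-2^n},
\end{equation*}
which is exactly \eqref{Y_n_not_too_big}. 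There is no real obstacle here: all the work was done earlier in fixing $L$ and $T$ so that the marginal crossing probability is sufficiently small to beat the combinatorial factor $(C_d)^{2^n}$ coming from Lemma~\ref{lem:renorm_count}, and in setting up the coupling so that the $\zeta(x)$'s are genuinely independent.
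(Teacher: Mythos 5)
Your proof is correct and follows essentially the same route as the paper: $(\zeta(x))_{x\in\mathcal{X}}$ is i.i.d.\ Bernoulli($p$), disjointness of the bottom-scale balls makes the $2^n$ crossing indicators independent Bernoulli with parameter at most $(4C_d)^{-2}$, and Claim~\ref{claim_binomial_largedev_bound} with the same parameter choice finishes. One small slip worth flagging: the event $B(\mathcal{T}(m),L) \stackrel{\zeta}{\longleftrightarrow} B(\mathcal{T}(m),2L-1)^c$ is measurable with respect to $\zeta$ restricted to $B(\mathcal{T}(m),2L)$, not $B(\mathcal{T}(m),2L-1)$, since the crossing path's endpoint lies on $S(\mathcal{T}(m),2L)$ and its $\zeta$-value matters; this is harmless here because \eqref{disjoint_X_cardinality} already gives disjointness of the radius-$2L$ balls, so the independence conclusion is unaffected.
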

\begin{proof}
  The random variables $\left(\zeta(x)\right)_{x \in \mathcal{X}}$ are i.i.d.\ by Claim \ref{claim_brw_bp}.
Note that $\mathbb{E}( \zeta(x)) = \sigma(\lambda,T) = p$ by Claim \ref{claim_brw_bp}, \eqref{sigma_lamb_T} and \eqref{sigma_p}, thus
$Y_n \sim \mathrm{BIN}(2^n, q)$ with $q \leq (4 C_d)^{-2}$ (cf.\ \eqref{subcrit_benoulli_perc_crossing}).
 Choosing $N=2^n$, $a=1/2$, $p=(4 C_d)^{-2}$ and $\varepsilon=\left( 2 C_d  \right)^{-1}$ in Claim \ref{claim_binomial_largedev_bound}, the desired inequality  \eqref{Y_n_not_too_big}  follows, since $p \leq \left( \frac{\varepsilon}{2} \right)^{1/a}$ indeed holds.
\end{proof}

\begin{proof}[Proof of Lemma  \ref{lemma_subcrit_spread_out}] Having already fixed $\lambda$, $L$ and $T$ as above, let us choose $R_0=R_0(\lambda,L,T)$ as in
Lemma \ref{lemma_X_t_big}. In order to prove Lemma  \ref{lemma_subcrit_spread_out},
it is enough to show that the inclusion
\begin{equation}\label{bad_inclusion_subcr}
  \bigcap_{m  \in T_{(n)}} \left\{  B(\mathcal{T}(m),  L ) \stackrel{ \xi^*_T }{\longleftrightarrow}  B(\mathcal{T}(m), 2L-1)^c \right\} \subseteq
  \{   |\mathcal{X} \setminus \mathcal{X}_T| + Y_n \geq  2^n   \}
\end{equation}
holds,
because \eqref{bad_inclusion_subcr} together with Lemma \ref{lemma_X_t_big}, Lemma \ref{lemma_Y_n_not_too_big} and the union bound give
\eqref{subcrit_spread_out_eq} for any $R \geq R_0$.

\smallskip

 Note that by Lemma \ref{lemma_coupling} (cf.\ the definitions \eqref{def_xi_star_T} and \eqref{zeta_def}) we have
$\xi^*_T(x)=\zeta(x)$ for all  $x \in \mathcal{X}_T$,
therefore if $B(\mathcal{T}(m),  L ) \stackrel{ \xi^*_T }{\longleftrightarrow}  B(\mathcal{T}(m), 2L-1)^c$ holds for some $m \in  T_{(n)}$ then either
there exists a vertex $x \in B(\mathcal{T}(m),2L) \setminus \mathcal{X}_T$ or we have
$B(\mathcal{T}(m),  L ) \stackrel{ \zeta }{\longleftrightarrow}  B(\mathcal{T}(m), 2L-1)^c$. Now taking into consideration that $|T_{(n)}|=2^n$
and that the union in the definition \eqref{mathcal_X_tree} of $\mathcal{X}$ is disjoint (cf.\ \eqref{disjoint_X_cardinality}),
we obtain \eqref{bad_inclusion_subcr}. The proof of Lemma  \ref{lemma_subcrit_spread_out} is complete.
\end{proof}

The proof of $\liminf_{R \to \infty } \lambda_p(R) \geq \frac{1}{1-p_c}$ is complete.

\subsection{Upper bound on the percolation threshold}
\label{subsection_upper_bound}

The aim of this subsection is to prove $\limsup_{R \to \infty } \lambda_p(R) \leq \frac{1}{1-p_c}$.

 It is enough to  show that the following proposition holds.
  \begin{proposition}[Supercritical behavior]\label{prop_supercrit}
  For any $\lambda > \frac{1}{1-p_c}$ there exists some $R_1=R_1(\lambda)$ such that if $R \geq R_1$ then $\mu_{\lambda,R}(\mathrm{Perc})=1$.
  \end{proposition}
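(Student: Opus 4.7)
The plan is to adapt the renormalization scheme of Section~\ref{section_renorm_intro} to the supercritical regime by combining the BRW coupling of Lemma~\ref{lemma_coupling} with the coarse-grained lower bound from Theorem~\ref{thm:density_good_boxes}. Fix $\lambda > \frac{1}{1-p_c}$ so that $\sigma(\lambda) = 1 - 1/\lambda > p_c$; choose $p \in (p_c, \sigma(\lambda))$ and $T_0$ with $\sigma(\lambda, T_0) > p$, both possible by \eqref{sigma_lamb_T}--\eqref{sigma_lambda}. For each proper embedding $\mathcal{T} \in \Lambda_{n,L}$, take $\mathcal{X} = \mathcal{X}(\mathcal{T})$ as in \eqref{mathcal_X_tree} and define $\zeta(x) = \mathds{1}[Z_{T_0}(x) > 0]$; by Claim~\ref{claim_brw_bp} these are i.i.d.\ Bernoulli with parameter $\sigma(\lambda, T_0) > p > p_c$, so $\zeta$ contains a nearest-neighbor open crossing from $B(\mathcal{T}(m), L)$ to $B(\mathcal{T}(m), 2L-1)^c$ with probability at least $1 - (4C_d)^{-2}$ for $L$ large (a supercritical analogue of \eqref{subcrit_benoulli_perc_crossing}).

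The heart of the proof is to substitute this i.i.d.\ structure into a stochastic \emph{lower} bound on $\mu_{\lambda,R}$. In the subcritical proof one uses $\xi^*_T \ge \xi^*_\infty$ directly, but here the monotone comparison goes in the wrong direction. Instead, I would exploit stationarity: $\xi \sim \mu_{\lambda,R}$ has the same law as $\xi_{T_0}$ started from $\xi_0 \sim \mu_{\lambda,R}$, and Theorem~\ref{thm:density_good_boxes} guarantees that $\xi_0$ has a density of good coarse boxes at least $1 - \exp(-R^\delta)$ for any $\delta > \ln(2)/\ln(6)$, each containing at least $\alpha R^d$ infected seeds in every sub-box of side $R$. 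Inside such a good box, seeds are abundant enough that Lemma~\ref{lemma_coupling} identifies the infected status at time $T_0$ at each $x \in \mathcal{X}_{T_0}$ with $\zeta(x)$; Lemma~\ref{lemma_X_t_big} bounds the exceptional set by $|\mathcal{X} \setminus \mathcal{X}_{T_0}| < 2^n/2$ with probability at least $1 - (2C_d)^{-2^n}$ once $R \ge R_0(\lambda, L, T_0)$.

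Finally, I would run the renormalization as in Section~\ref{subsection_lower_bound}, with the bottom-scale \emph{bad} event being the absence of the open crossing from $B(\mathcal{T}(m), L)$ to $B(\mathcal{T}(m), 2L-1)^c$. A failure of the analogous crossing at scale $L_n$ for $\xi \sim \mu_{\lambda,R}$ would force, via Lemma~\ref{lem:renorm_paths}, the existence of some $\mathcal{T} \in \Lambda_{n,L}$ whose bottom-scale crossings all fail; unioning over $|\Lambda_{n,L}| \le (C_d)^{2^n}$ and combining the Bernoulli crossing bound with Claim~\ref{claim_binomial_largedev_bound} and Lemma~\ref{lemma_X_t_big} yields a probability of order $2^{-2^n}$, which is summable, and a Borel--Cantelli-type argument delivers $\mu_{\lambda,R}(\mathrm{Perc}) = 1$. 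The main obstacle is producing the stochastic lower bound on $\mu_{\lambda,R}$ in the middle step: Theorem~\ref{thm:density_good_boxes} is designed precisely for this purpose, and the numerical condition $\delta > \ln(2)/\ln(6)$ is exactly what is needed to beat the combinatorial factor $(C_d)^{2^n}$ arising from the proper embeddings, matching the fractal dimension of the embedded binary tree recorded in Remark~\ref{remark_fractal}.
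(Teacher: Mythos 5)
Your high-level roadmap (BRW coupling, Theorem~\ref{thm:density_good_boxes}, renormalization, $\ln 2/\ln 6$ vs.\ $d$) correctly identifies the ingredients the paper uses, and you also correctly recognize that monotonicity is not available here. But the crucial middle step is missing, and two further pieces of the architecture are off.

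The decisive gap is the sentence ``Inside such a good box, seeds are abundant enough that Lemma~\ref{lemma_coupling} identifies the infected status at time~$T_0$ at each $x \in \mathcal{X}_{T_0}$ with $\zeta(x)$.'' Lemma~\ref{lemma_coupling} only gives $Z_t(x,y)=\Xi_t(x,y)$ for alive labels; it says nothing about the state $\xi_{T_0}(x)$ of a contact process started from $\xi_0\sim\mu_{\lambda,R}$, which is a \emph{max over initial seeds reaching~$x$}, not a survival event of the BRW started \emph{at}~$x$. The paper resolves this by representing $\mu_{\lambda,R}$ via forward infection paths split at time $T+1$ (Definition~\ref{def_ingredients_supercrit}, \eqref{uim_repr}): $\xi^*(x)=\mathds{1}[\exists y:\Xi_{T+1}(x,y)>0,\ \xi^{\sstar}(y)=1]$ with $\xi^{\sstar}$ an \emph{independent} copy of $\mu_{\lambda,R}$ sitting at time $T+1$. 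One then needs the quantitative Lemma~\ref{lemma_survival_to_T_suffices}: conditioned on the BRW from~$x$ surviving to time~$T$, the conditional probability that none of its $Z_T(x)$ particles produces a child that lands, within the window $[T,T+1]$, on a site where $\xi^{\sstar}=1$ is at most $p_*^{Z_T(x)}$ (cf.\ \eqref{p_star}, \eqref{condexp_fertile_landing}), and this survives the generating function computation \eqref{here_is_where_we_use_many_children} only because $\mathbb{E}(p_*^{Z_T}\mathds{1}[Z_T>0])$ can be made tiny (Claim~\ref{claim_supcrit_T_choice}). Good boxes enter here precisely because the jump in $[T,T+1]$ samples uniformly from a translate of $[0,R)^d$, whose $\xi^{\sstar}$-density is at least~$\alpha$. ``Seeds are abundant'' is not a substitute for this argument: without the extra jump, the conditional independence setup of \eqref{condindep_zeta_star}, and Lemma~\ref{lemma_ground_is_mostly_fertile} controlling BRW families that wander into bad boxes, you cannot conclude that $\zeta$ and $\xi^*$ agree on all but $o(2^n)$ sites of~$\mathcal{X}$.

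Two further issues. First, your bottom-scale event is a mere crossing of $B(\mathcal{T}(m),L)\to B(\mathcal{T}(m),2L-1)^c$, but this lacks the uniqueness built into $H_L(x)$ in \eqref{high_connect}; without uniqueness you cannot glue adjacent good boxes into a single infinite cluster, which is what the coarse-grained argument of \eqref{eta_coarse_grained} relies on. Second, your concluding renormalization is stated in terms of the failure of an \emph{open} crossing, whereas Lemma~\ref{lem:renorm_paths} only converts the \emph{presence} of a crossing into bottom-scale crossings; to obtain $\mu_{\lambda,R}(\mathrm{Perc})=1$ the paper instead applies the renormalization to $*$-connected \emph{closed} paths in the coarse-grained configuration~$\overline\eta$ and runs a Peierls argument (cf.\ \eqref{no_star_crossing_in_overline_eta_whp}, \eqref{peierls_bound}). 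As stated, your final step does not produce a proof of percolation.
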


Let us fix some $\lambda > \frac{1}{1-p_c}$  for the rest of Section \ref{subsection_upper_bound}.

For any $x \in \mathbb{Z}^d$ and $L \in \mathbb{N}$
we will define the  event $H_L(x)$ involving the restriction of the configuration $\xi$ to the box with center $x$ and radius $L$ which occurs if the percolation configuration restricted to the box $B(x,L)$ is ``locally supercritical''. More precisely, let $H_L(x)$ denote the set of configurations $\xi \in \{0,1\}^{\Z^d}$ that satisfy
\begin{equation}\label{high_connect}
H_L(x) = \left\{\begin{array}{c} \xi \in \{0,1\}^{\Z^d}:  \xi|_{B(x,L)}\text{ has a unique open cluster of }\\
\text{  diameter greater than or equal to $L$, moreover } \\
\text{ this cluster intersects all the faces of $B(x,L)$}\end{array} \right\}.
\end{equation}

\begin{remark}\label{remark_not_monotone} The event $H_L(x)$ is not monotone in the variable $\xi$, so we cannot use stochastic domination to compare the probability of
$H_L(x)$ under various measures (like we did in the inequality marked by $(*)$ in \eqref{we_use_monotone}).
\end{remark}

Recall that the constant $C_d$ was defined in Lemma \ref{lem:renorm_count} and that $\sigma(\lambda)$ was defined in \eqref{sigma_lambda}.

\begin{claim}[Choice of $L$]\label{claim_local_supercrit_probab} Having already fixed $\lambda$ as above, we can choose  $L \in \mathbb{N}$ (and fix it for the rest of Section \ref{subsection_upper_bound}) such that
under the product Bernoulli measure $\pi_{\sigma(\lambda)}$ with density $\sigma(\lambda)$  we have
\begin{equation}\label{local_supcrit_prob}
  \pi_{\sigma(\lambda)}\left[ \, \xi \in H_L(0)^c \, \right] <  (4 C_d)^{-6}.
\end{equation}
\end{claim}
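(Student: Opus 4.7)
The plan is to reduce Claim~\ref{claim_local_supercrit_probab} to a standard property of supercritical Bernoulli site percolation on $\mathbb{Z}^d$. Since we have fixed $\lambda > 1/(1-p_c)$, we have $p := \sigma(\lambda) = 1 - 1/\lambda > p_c$, placing us in the supercritical regime. It is therefore enough to prove that for every $p > p_c$,
\begin{equation*}
\lim_{L \to \infty} \pi_p(H_L(0)^c) = 0,
\end{equation*}
and then to pick $L$ large enough to make this probability smaller than $(4C_d)^{-6}$.

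I would unpack $H_L(0)$ into two sub-events and control each separately. Recall that $H_L(0)$ requires (i) the existence of an open cluster in $B(0,L)$ touching all $2d$ faces (which is automatically of diameter $\geq L$), and (ii) uniqueness: no other open cluster in $B(0,L)$ has diameter $\geq L$. For (i), I would invoke the Grimmett--Marstrand slab theorem: for any $p > p_c$, the probability of an open crossing between any two fixed opposite faces of $B(0,L)$ tends to $1$ as $L \to \infty$. The FKG inequality applied to the $d$ (increasing) face-to-face crossing events, together with a standard connecting-path / ``sprinkling'' argument and the uniqueness of the infinite cluster, then shows that a single open cluster inside $B(0,L)$ touches all $2d$ faces with probability tending to $1$.

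For (ii), I would use exponential decay of the finite-cluster radius in the supercritical regime: for every $p > p_c$ there is $c = c(p) > 0$ with $\pi_p(|C(0)| < \infty,\ \mathrm{diam}(C(0)) \geq L) \leq e^{-cL}$ (a standard consequence of the Grimmett--Marstrand slab theorem, following Chayes--Chayes--Grimmett--Kesten--Schonmann in $d=2$ and the work of Pisztora and Antal--Pisztora in $d\geq 3$). A union bound over the $|B(0,L)| = O(L^d)$ possible starting vertices then shows that, with probability tending to $1$, every open cluster in $B(0,L)$ of diameter $\geq L$ is a subset of the (unique) infinite cluster, and hence is itself unique. Combining (i) and (ii) via a union bound yields $\pi_p(H_L(0)^c) \to 0$, and we select $L$ accordingly. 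The main (mild) nuisance, rather than a genuine obstacle, is that several of the classical references phrase these results for bond percolation; transferring them to the site setting (e.g.\ via the standard reduction to bond percolation on a modified lattice, and noting that $\mathrm{diam}(C(0)) \leq |C(0)|$) is routine and requires no additional ideas.
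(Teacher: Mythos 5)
Your overall reduction---show $\pi_p(H_L(0)^c)\to 0$ as $L\to\infty$ for $p=\sigma(\lambda)>p_c$, then choose $L$---is the right one, and it is essentially what the paper does by citing Theorem~(7.61) of Grimmett's \emph{Percolation}, which is precisely this statement for bond percolation, and noting that the underlying block argument of Pisztora and Deuschel--Pisztora carries over to site percolation. However, your attempt to re-derive this from more primitive facts has a genuine gap in part~(ii). From the truncated exponential decay $\pi_p(\lvert C(0)\rvert<\infty,\ \mathrm{diam}\,C(0)\ge L)\le e^{-cL}$ and a union bound over $O(L^d)$ vertices you correctly conclude that, with high probability, every cluster of $\xi|_{B(0,L)}$ of diameter $\ge L$ consists entirely of vertices of the infinite cluster $C_\infty$. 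But the step ``and hence is itself unique'' does not follow: two distinct clusters $C_1,C_2$ of $\xi|_{B(0,L)}$, each of diameter $\ge L$, can both be contained in $C_\infty$ while remaining disconnected \emph{inside} $B(0,L)$, because the paths joining them in $C_\infty$ may leave the box. Ruling out such ``two tentacles of the infinite cluster entering the box without meeting inside it'' is exactly the hard part of Pisztora's theorem and cannot be obtained by exponential decay of finite clusters together with Burton--Keane uniqueness.

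Part~(i) has the same issue hidden in the phrase ``a standard connecting-path / sprinkling argument'': FKG gives that the $d$ axis-crossings of $B(0,L)$ occur simultaneously with high probability, but showing that they are realized by a single cluster is again the content of the block renormalization, not a routine add-on to Grimmett--Marstrand. So you have correctly identified the supercritical ingredients, but the assembly of them into $\pi_p(H_L(0)^c)\to 0$ is precisely the theorem the paper cites (Grimmett's Theorem~(7.61), i.e.\ Pisztora~1996 / Deuschel--Pisztora~1996, adapted to site percolation), and your sketch does not provide a substitute for that multi-scale argument.
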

\begin{proof}
By \eqref{sigma_lambda} our assumption  $\lambda > \frac{1}{1-p_c}$ implies  $\sigma(\lambda)>p_c $.
The local supercriticality event defined in \eqref{high_connect} is formulated in terms of Bernoulli \emph{site} percolation, and it follows from \cite[Theorem (7.61)]{Gr99} that if we consider the analogous event $H^*_L(0)$ for Bernoulli \emph{bond} percolation then
  $\pi_{p}\left(  \xi \in H^*_L(0) \right) \to 1$ holds as $L \to \infty$ for any $p$  above the Bernoulli bond percolation threshold of $\mathbb{Z}^d$.
  The proof is based on a block argument originally developed in \cite{Pi96} and \cite{DP96} which, as mentioned in the latter reference, works equally well for site percolation. We thus omit further details of the proof of Claim \ref{claim_local_supercrit_probab}.
  \end{proof}

Having fixed the bottom scale $L$ of our renormalization scheme, let us  recall the notion of a proper embedding $\mathcal{T} \in \Lambda_{n,L}$ of the dyadic tree of depth $n$ from Definition \ref{def_proper_embedding_of_trees}. The following lemma will give the proof of Proposition \ref{prop_supercrit}.

\begin{lemma}[Bottom level local supercriticality under $\mu_{\lambda,R}$]\label{lemma_supercrit_spread_out} Having fixed $\lambda$ and $L$ as above, there exists $R_1 \in \mathbb{N}$ such that for
any $R \geq R_1$, any $n \in \mathbb{N}$ and any $\mathcal{T} \in \Lambda_{n,L}$ we have
 \begin{equation}\label{supercrit_spread_out_eq}
   \mu_{\lambda,R}\left( \bigcap_{m  \in T_{(n)}} \left\{ \, \xi \in H_L(\mathcal{T}(m)  )^c \,  \right\}    \right)
   \leq 5 \left( 2 C_d  \right)^{-2^n}.
 \end{equation}
\end{lemma}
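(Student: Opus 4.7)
The strategy follows the blueprint of Lemma \ref{lemma_subcrit_spread_out}, but the non-monotonicity of $H_L$ (Remark \ref{remark_not_monotone}) forces us to genuinely compare $\mu_{\lambda,R}$ on both sides with an i.i.d.\ Bernoulli field; this is precisely where Theorem \ref{thm:density_good_boxes} will enter. By continuity of $t \mapsto \sigma(\lambda,t)$ (see \eqref{sigma_lamb_T}) and of $p \mapsto \pi_p(H_L(0)^c)$ (a polynomial in $p$), first fix $T$ large enough that $\sigma(\lambda,T) > p_c$ and $\pi_{\sigma(\lambda,T)}\bigl(H_L(0)^c\bigr) \leq 2(4C_d)^{-6}$. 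Given $\mathcal{T} \in \Lambda_{n,L}$, set $\mathcal{X} := \bigcup_{m \in T_{(n)}} B(\mathcal{T}(m),2L)$ and apply Lemma \ref{lemma_coupling} to couple $(\Xi_T(x,y))$ with independent labeled BRWs $(Z_T(x,y))$. Let $\zeta(x) := \mathds{1}[Z_T(x) > 0]$, so $\bigl(\zeta(x)\bigr)_{x \in \mathcal{X}}$ are i.i.d.\ Bernoulli$(\sigma(\lambda,T))$; on $\mathcal{X}_T$, $\Xi_T(x,y) = Z_T(x,y)$. Extending the graphical construction past time $T$ and setting $\omega(y) := \mathds{1}[(y,T) \rightsquigarrow \infty]$, time-stationarity yields $\omega \sim \mu_{\lambda,R}$ independent of the BRWs up to time $T$; using that $(x,0)\rightsquigarrow\infty$ iff $\exists\, y : (x,0)\rightsquigarrow (y,T)$ and $(y,T)\rightsquigarrow\infty$, we have for every $x \in \Z^d$,
$$\xi(x) = \max_{y \in \Z^d} \Xi_T(x,y) \cdot \omega(y),$$
so $\xi(x) = \max_{y\,:\,Z_T(x,y)\ge 1} \omega(y) \leq \zeta(x)$ for $x \in \mathcal{X}_T$.

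The crucial new ingredient, for which Theorem \ref{thm:density_good_boxes} is needed, is the bound $\mathbb{P}(D) \leq 3(2C_d)^{-2^n}$ for $R$ large enough, where
$$D := \bigl\{\exists\, x \in \mathcal{X}_T : \zeta(x) = 1 \text{ and } \xi(x) = 0\bigr\}.$$
On $D$ some BRW starting from $x \in \mathcal{X}$ survives to time $T$ yet every descendant lands on an $\omega$-closed site. By standard supercritical branching-process estimates (Section \ref{subsection_brw_facts}), a surviving BRW has of order $e^{(\lambda-1)T}$ descendants at time $T$, spread within $O(R)$ of its starting point, hence concentrated in $O(1)$ coarse-grained boxes of side $\kappa R$. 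Applying Theorem \ref{thm:density_good_boxes} to the independent field $\omega$, in the weakened form (density $1 - \exp(-R^\delta)$ for some $\delta > \ln 2/\ln 6$) pointed out in the remark following that theorem, the relevant coarse-grained boxes are good for $\omega$ with overwhelming probability; each such good box contains at least $\alpha R^d$ sites where $\omega = 1$, and a second-moment argument together with the independence of $\omega$ from the BRW shows that the BRW descendants hit such a site with overwhelming conditional probability. The fractal structure of $\{\mathcal{T}(m) : m \in T_{(n)}\}$ ensures that only $\sim 2^n$ distinct coarse-grained boxes of side $\kappa R$ are relevant (as in the computation at the end of the proof of Lemma \ref{lemma_X_t_big}, which is the very origin of the exponent $\ln 2/\ln 6$); a careful union bound over these boxes then delivers the claimed estimate for $\mathbb{P}(D)$.

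With this in hand, the proof concludes as in Lemma \ref{lemma_subcrit_spread_out}. The balls $B(\mathcal{T}(m),2L)$ are pairwise disjoint by Lemma \ref{lemma_far_in_tree_far_in_embedding}, so the indicators $\mathds{1}[\zeta|_{B(\mathcal{T}(m),L)} \in H_L(\mathcal{T}(m))^c]$ are i.i.d.\ Bernoulli with parameter $q \leq 2(4C_d)^{-6}$; Claim \ref{claim_binomial_largedev_bound} with $a=1/2$, $\varepsilon=(2C_d)^{-1}$ gives $\mathbb{P}(Y_n \geq 2^n/2) \leq (2C_d)^{-2^n}$ where $Y_n := \sum_m \mathds{1}[\zeta|_{B(\mathcal{T}(m),L)} \in H_L(\mathcal{T}(m))^c]$. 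On the complement of the three bad events $D$, $\{|\mathcal{X}\setminus\mathcal{X}_T| \geq 2^n/2\}$ and $\{Y_n \geq 2^n/2\}$, at least $2^n/2$ leaves $m$ satisfy $B(\mathcal{T}(m),L) \subseteq \mathcal{X}_T$ (since the balls are disjoint, so fewer than $2^n/2$ of them meet $\mathcal{X}\setminus\mathcal{X}_T$) and at least $2^n/2$ satisfy $\zeta|_{B(\mathcal{T}(m),L)} \in H_L(\mathcal{T}(m))$; a pigeonhole argument gives some $m$ with both, and on $D^c$ this yields $\xi|_{B(\mathcal{T}(m),L)} = \zeta|_{B(\mathcal{T}(m),L)} \in H_L(\mathcal{T}(m))$, contradicting $\bigcap_m\{\xi \in H_L(\mathcal{T}(m))^c\}$. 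A union bound and Lemma \ref{lemma_X_t_big} then yield \eqref{supercrit_spread_out_eq}, the stated constant $5$ arising as $1 + 3 + 1$ from the three bad events. \emph{The main obstacle} is the estimate $\mathbb{P}(D) \leq 3(2C_d)^{-2^n}$: combining the spatial spread of supercritical BRWs with the quantitative coarse-grained domination of Theorem \ref{thm:density_good_boxes} in a way that beats the combinatorial factor $2^n$ uniformly in $n$ is delicate, and the condition $\delta > \ln 2/\ln 6$ (matching the fractal dimension of $\{\mathcal{T}(m)\}$) is precisely what makes the argument close.
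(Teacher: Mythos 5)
Your overall architecture (Bernoulli comparison via the BRW coupling, a good-box theorem for the independent ``landing field'', renormalization pigeonhole) matches the paper's, but the central estimate you leave as ``the main obstacle'' has two genuine gaps, one of which makes the claim as stated impossible.

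First, the event $D := \{\exists\, x \in \mathcal{X}_T : \zeta(x) = 1,\ \xi(x) = 0\}$ is a \emph{union} over $x \in \mathcal{X}$, and its probability does not go to zero as $n \to \infty$ for any fixed $R$. Indeed, take any fixed leaf $\mathcal{T}(m_0)=:x_0$. The event that the BRW with label $x_0$ survives to time $T$ with exactly one descendant at some site $y_1$, that $x_0 \in \mathcal{X}_T$ (no collisions involving $x_0$; by (5.16)--(5.17) this fails with a small $n$-independent probability), and that $\omega(y_1)=0$ (probability $1-\rho(\lambda,R)$, bounded away from $0$ as $R \to \infty$ by \eqref{eq:sec_asymp}) has a strictly positive, $n$-independent probability, and is contained in $D$. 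So $\mathbb{P}(D) \geq c(\lambda,L,T,R) > 0$ for all $n$, while $3(2C_d)^{-2^n} \to 0$. The pigeonhole needs $D^c$ (agreement of $\xi$ and $\zeta$ on \emph{every} $x \in \mathcal{X}_T$), but that strong event cannot be made overwhelmingly likely. The paper circumvents this by using a \emph{count} event, $\bigl\{\sum_{x \in \mathcal{X}}(\zeta(x)-\zeta^*(x)) \geq 2^n/3\bigr\}$ (Lemma \ref{lemma_survival_to_T_suffices}), matched by a $1/2+1/3+1/6=1$ accounting inclusion (\eqref{bad_inclusion_supercrit}), so that the per-$x$ failure probability need only be small, not zero.

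Second, even the per-$x$ failure probability is not controlled by the argument you sketch. Conditionally on the BRW up to time $T$, the descendant locations $\{y : Z_T(x,y) \geq 1\}$ are \emph{deterministic}, and the independence of $\omega$ then only lets you bound $\mathbb{P}(\omega(y)=0 \text{ for all such } y)$ using the \emph{marginal} and \emph{joint} law of $\omega$ at those fixed points. Theorem \ref{thm:density_good_boxes} gives a coarse-grained, density-$\alpha$ guarantee and says nothing about any specified finite collection of sites; since $\mu_{\lambda,R}$ is positively associated, there is no a priori Bernoulli-type or second-moment bound preventing all the fixed $y$'s from lying in a sparse region of $\omega$. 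The paper resolves this precisely by working over $[T,T+1]$ with $\xi^{\sstar}(\cdot)=\mathds{1}[(\cdot,T+1)\rightsquigarrow\infty]$: each time-$T$ particle, \emph{conditionally independently} of $\mathcal{G}$, performs a fresh jump uniform over $B(\cdot,R)$, which lands on an open site of a good $R$-box with conditional probability $\geq \alpha/3^d$ regardless of where those open sites sit. That uniform jump is the randomization your sketch is missing; without it, the per-$x$ bound $\leq p_*^{Z_T(x)}$ (cf.\ \eqref{condexp_fertile_landing}) simply does not follow. Relatedly, the condition \eqref{many_children_Z_T} in the paper's choice of $T$ is what makes the resulting generating-function estimate close; your choice of $T$ (only $\sigma(\lambda,T)>p_c$ and $\pi_{\sigma(\lambda,T)}(H_L(0)^c)$ small) does not ensure this.
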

Before we prove Lemma \ref{lemma_supercrit_spread_out}, let us deduce Proposition \ref{prop_supercrit} from it.
\begin{proof}[Proof of Proposition \ref{prop_supercrit}]
Let us consider $R \geq R_1$, where $R_1$ is defined in Lemma \ref{lemma_supercrit_spread_out}.
 Given a percolation configuration $\xi \in \{0, 1 \}^{\mathbb{Z}^d}$ we will define an auxiliary
coarse-grained  percolation configuration $\eta \in \{0, 1 \}^{\mathbb{Z}^d}$  by letting
\begin{equation}\label{eta_coarse_grained}
 \eta(z):= \mathds{1}\left[ \, \xi \in H_L( L z  ) \, \right], \quad \overline{\eta}(z):=1-\eta(z),  \qquad  z \in \mathbb{Z}^d.
  \end{equation}
  First we show that in order to prove $\mu_{\lambda,R}( \xi \in \mathrm{Perc})=1$ (i.e., that a configuration $\xi$ with distribution $\mu_{\lambda,R}$ percolates, cf.\ \eqref{perc_def_eq}),
  it is enough to show that the corresponding $\eta$ percolates. Indeed, if $\eta \in \mathrm{Perc}$, i.e., if there is a sequence of vertices $z_1,z_2,\ldots \in \mathbb{Z}^d$ which forms an infinite nearest neighbour simple path (cf.\ Definition \ref{def_paths})
and $\eta(z_i)=1$ for each $i \in \mathbb{N}$, then by \eqref{high_connect} and \eqref{eta_coarse_grained}, for each $i \in \mathbb{N}$ the ball $B(L z_i,L)$ contains a unique special
$\xi$-cluster with a big diameter, moreover the balls $B(L z_i,L)$ and $B(L z_{i+1},L)$ overlap in a way that \eqref{high_connect} guarantees that their unique special $\xi$-clusters must intersect,
thus there is a $\xi$-cluster which intersects $B(L z_i,L)$ for each $i \in \mathbb{N}$, therefore $\xi$ percolates.

\smallskip

It remains to show $\mu_{\lambda,R}( \eta \in \mathrm{Perc})=1$. Recalling the notation of $*$-connectedness from Definition \ref{def_connected_in_a_config} and the notion of spheres from \eqref{sphere_def}, we will show that
it is unlikely to see a $*$-connected $\eta$-closed path crossing a large annulus:
\begin{equation}\label{no_star_crossing_in_overline_eta_whp}
\mu_{\lambda,R} \left( S(6^n-1) \stackrel{*\overline{\eta}}{ \longleftrightarrow} S(2 \cdot 6^n)   \right) \leq 5 \cdot 2^{-2^n}, \qquad n \in \mathbb{N}.
\end{equation}
Before we prove \eqref{no_star_crossing_in_overline_eta_whp}, let us first deduce $\mu_{\lambda,R}( \eta \in \mathrm{Perc})=1$ from it
using a variant of the classical Peierls argument, see \cite{Pei36} and \cite[Section 1.4]{Gr99}.
Let us consider a copy $F$ of $\mathbb{Z}^2$ inside $\mathbb{Z}^d$ for which $0 \in F$. Denote by $A_n$ the event that  $S(6^n)$ is connected to infinity by
an $\eta$-open nearest neighbour simple path that lies within $F$.
For every $n \in \mathbb{N}$ we have
\begin{multline}\label{peierls_bound}
\mu_{\lambda,R} \left( A_n^c  \right) \stackrel{(*)}{\leq}
\mu_{\lambda,R}\left( \bigcup_{k=n}^{\infty} \bigcup_{|z|\leq 12,\, z \in F} \left\{   S(6^k z ,6^k-1) \stackrel{*\overline{\eta}}{\longleftrightarrow} S(6^k z ,2 \cdot 6^k)   \right\}  \right) \\
\stackrel{\eqref{no_star_crossing_in_overline_eta_whp}}{\leq}  \sum_{k=n}^\infty 5^3  2^{-2^k},
\end{multline}
where $(*)$ holds because if the set of sites $z' \in F$ for which $z' \stackrel{\eta}{\longleftrightarrow} S(6^n)$ holds  is finite then $S(6^n-1)$ is surrounded by a $*$-connected $\overline{\eta}$-open path (cf.\
Definition \ref{def_paths}) that lies within $F$, cf.\   Definition 4, Definition 7 and Corollary 2.2 of \cite{Kesten82}.
We obtain $\mu_{\lambda,R}( \eta \in \mathrm{Perc})=1$ by letting $n \to \infty$ in \eqref{peierls_bound}.

It remains to show \eqref{no_star_crossing_in_overline_eta_whp}:
\begin{multline}
\mu_{\lambda,R} \left( S(6^n-1) \stackrel{*\overline{\eta}}{ \longleftrightarrow} S(2 \cdot 6^n)   \right)
\stackrel{(**)}{\leq} \sum_{\mathcal{T} \in \Lambda_{n,1} } \mu_{\lambda,R}\left( \bigcap_{m \in T_{(n)}} \{\, \overline{\eta}(\mathcal{T}(m))=1\, \}  \right)
 \\
 \stackrel{(***)}{\leq}
 (C_d)^{2^n} 5 \left( 2 C_d  \right)^{-2^n}= 5 \cdot 2^{-2^n}, \quad n \in \mathbb{N},
\end{multline}
where $(**)$ holds by the $L=1$ case of Lemma \ref{lem:renorm_paths} and the union bound,
while $(***)$ follows using Lemma \ref{lem:renorm_count}, \eqref{eta_coarse_grained} and Lemma \ref{lemma_supercrit_spread_out} together with the fact that $\mathcal{T} \in \Lambda_{n,1}$
if and only if $L\mathcal{T} \in \Lambda_{n,L}$, cf.\ Definition \ref{def_proper_embedding_of_trees}. The proof of Proposition \ref{prop_supercrit} is complete.
\end{proof}

It remains to prove Lemma \ref{lemma_supercrit_spread_out}.

\begin{remark}\label{remark_supercrit_plan_heu}
In our subcritical proof (i.e.,  Section \ref{subsection_lower_bound}), we fixed the time horizon $T$
and defined a configuration $\xi^*_T$  in \eqref{def_xi_star_T} whose law stochastically dominates $\mu_{\lambda,R}$, and later in the proof of Lemma \ref{lemma_subcrit_spread_out}
we have shown that the configuration $\xi^*_T$ is actually quite similar to the i.i.d.\ configuration $\zeta$ defined in \eqref{zeta_def}.
However, as we have already discussed in Remark \ref{remark_not_monotone}, now we cannot resort to stochastic domination in a way we did in Section \ref{subsection_lower_bound}. This time we will show that one can choose $T$ such that the law of $\zeta$ is a good enough approximation of  $\mu_{\lambda,R}$ to yield Lemma \ref{lemma_supercrit_spread_out}. We will argue that
if there is an infection path from $x$ reaching up to time $T$, then actually there are many such paths, and at least one of them can be continued to infinity with high probability.
 We will prove this by
placing an independent copy of the upper invariant configuration $\xi^{\sstar}$ at time $T+1$ (which represents the starting points of infection paths from $T+1$ to infinity) and using that the coarse-grained boxes  associated to $\xi^{\sstar}$ are mostly good (cf.\ Definition \ref{def:good_boxes})  by Theorem \ref{thm:density_good_boxes}. The time interval $[T,T+1]$ is needed because we will require our infection paths (more precisely, the BRW particles at time $T$ with label $x$) to perform one jump in $[T,T+1]$. We need this jump because
 the definition of good boxes  only guarantees that the empirical density of $\xi^{\sstar}$ is higher than $\alpha$ in translates of
 the box $[0,R )^d$, and a jump of range $R$ allows us pick a uniform sample from a translate of $[0,R )^d$ that contains a particle with label $x$ at time $T$.
 \end{remark}

Our next goal is to fix a time horizon $T$. Recall that we  defined the constant $\alpha=\alpha(\lambda,d) \in (0,1)$ in Proposition \ref{prop:growth_to_adjacent_box} (and that the same $\alpha$ appears in Theorem \ref{thm:density_good_boxes}). Having already fixed the value of $\lambda$, let us define the constant $p_* \in (0,1)$ by
\begin{equation}\label{p_star}
  p_*  := 1-(1-e^{-\lambda})e^{-2}\alpha/3^{d}.
\end{equation}

Recall the notion of the branching process $(Z_t)$ and $\sigma(\lambda,t)$ from Definition \ref{def_bp_pop_at_t} and \eqref{sigma_lamb_T}. The inequality \eqref{local_supcrit_prob_sigma_lambda_T}
below is just a variant of \eqref{local_supcrit_prob}.
 The inequality \eqref{many_children_Z_T} below
quantifies the heuristic that after a long time $T$, a supercritical branching process has either already died out (i.e., $Z_T=0$) or otherwise the population $Z_T$ at time $T$  is very big.
\begin{claim}[Choice of $T$] \label{claim_supcrit_T_choice}
 Having already fixed $\lambda$ and $L$,  we can choose  $T \in \mathbb{R_+}$ (and fix it for the rest of Section \ref{subsection_upper_bound}) such that the following inequalities both hold:
\begin{align}
\label{local_supcrit_prob_sigma_lambda_T}
\pi_{\sigma(\lambda,T)}\left[ \, \xi \in H_L(0)^c \, \right] \leq & \, (4 C_d)^{-6},
   \\
\label{many_children_Z_T}
  \mathbb{E}\left( \, p_*^{Z_T} \mathds{1}[ \, 0 < Z_T \, ] \, \right) \leq & \, \left( 2^{|B(2L)|+1} C_d \right)^{-9}.
\end{align}
\end{claim}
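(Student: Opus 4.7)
The plan is to choose $T$ sufficiently large and verify both inequalities separately via continuity and limit arguments; no interaction between the two conditions will be needed, so we just pick $T$ large enough that each holds.

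For the first inequality, I would observe that the event $H_L(0)$ defined in \eqref{high_connect} depends only on the coordinates $\xi(y)$ for $y \in B(0,L)$, so the map $p \mapsto \pi_p[\xi \in H_L(0)^c]$ is a polynomial in $p$, and in particular is continuous. From the explicit formula \eqref{sigma_lamb_T} we have $\sigma(\lambda,T) \to \sigma(\lambda)$ as $T \to \infty$, and Claim \ref{claim_local_supercrit_probab} gives the strict inequality $\pi_{\sigma(\lambda)}[H_L(0)^c] < (4C_d)^{-6}$. Combining these, for $T$ larger than some $T_1$, we have $\pi_{\sigma(\lambda,T)}[H_L(0)^c] \leq (4C_d)^{-6}$.

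For the second inequality, the plan is to write the expectation explicitly using the generating function formula \eqref{Z_t_gen_fn}. Since $\mathbb{P}(Z_T > 0) = \sigma(\lambda,T)$, we have
\begin{equation*}
\mathbb{E}\left( p_*^{Z_T} \mathds{1}[Z_T > 0] \right) = \sigma(\lambda,T) \cdot \mathbb{E}\left( p_*^{Z_T} \mid Z_T > 0 \right) = \sigma(\lambda,T) \cdot \frac{\tilde{q}(\lambda,T)\, p_*}{1 - (1-\tilde{q}(\lambda,T))\, p_*}.
\end{equation*}
From the formulas \eqref{sigma_lamb_T} and \eqref{Z_t_gen_fn}, as $T \to \infty$ we have $\sigma(\lambda,T) \to 1 - 1/\lambda < 1$ and $\tilde q(\lambda,T) \to 0$. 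Since $p_* < 1$ (cf.\ \eqref{p_star}), the denominator tends to $1 - p_* > 0$ while the numerator tends to $0$, so the whole expression tends to $0$. Hence for $T$ larger than some $T_2$, the right-hand side is below $\left( 2^{|B(2L)|+1} C_d \right)^{-9}$.

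Taking $T := \max(T_1, T_2)$ then simultaneously satisfies both conditions, completing the proof. There is no significant obstacle here, since both statements reduce to elementary continuity/limit properties of expressions already made fully explicit in Section \ref{subsection_brw_facts}, combined with the already-established estimate of Claim \ref{claim_local_supercrit_probab}.
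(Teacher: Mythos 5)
Your proof is correct and takes essentially the same approach as the paper: continuity of $p \mapsto \pi_p[H_L(0)^c]$ together with $\sigma(\lambda,T)\to\sigma(\lambda)$ and the strict inequality from Claim \ref{claim_local_supercrit_probab} for the first bound, and the fact that $\mathbb{E}(p_*^{Z_T}\mathds{1}[0<Z_T])\to 0$ (via $\lambda>1$, \eqref{sigma_lamb_T}, \eqref{Z_t_gen_fn}) for the second. The only difference is that you spell out the generating-function computation of the expectation explicitly, which the paper leaves implicit.
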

\begin{proof} The function $p \mapsto \pi_{p}\left[ \, \xi \in H_L(0)^c \, \right]$ is continuous (in fact it is a polynomial), since the event $\{\xi \in H_L(0)^c \}$ only depends on
$\xi(x), x \in B(L)$. Also note that $\lim_{t \to \infty}\sigma(\lambda,t)=\sigma(\lambda)$ by \eqref{sigma_lamb_T} and \eqref{sigma_lambda}, thus \eqref{local_supcrit_prob_sigma_lambda_T} holds by \eqref{local_supcrit_prob} for large enough $T$.

\smallskip

Using $\lambda>1$, \eqref{sigma_lamb_T} and \eqref{Z_t_gen_fn} we see that $\lim_{t \to \infty }  \mathbb{E}\left( \, p_*^{Z_t} \mathds{1}[ \, 0 < Z_t \, ] \, \right) =0$,
so \eqref{many_children_Z_T} holds for large enough $T$. The proof of Claim \ref{claim_supcrit_T_choice} is complete.
\end{proof}

\begin{remark} At this point the choice of $T$ in Claim \ref{claim_supcrit_T_choice} seems unmotivated. We decided to fix $T$ early in the proof in order to
make it clear that there is no circularity in the definition of the parameters $\lambda$, $L$, $T$ and $R$.
We will use \eqref{local_supcrit_prob_sigma_lambda_T} later on to prove \eqref{Z_n_largedev_bd}
like we have already used \eqref{subcrit_benoulli_perc_crossing} to prove \eqref{Y_n_not_too_big}.
We will use \eqref{many_children_Z_T} later in the proof of Lemma \ref{lemma_survival_to_T_suffices} (cf.\ \eqref{here_is_where_we_use_many_children}).
\end{remark}

Having already fixed $L$ (the bottom scale of our renormalization scheme), for any proper embedding $\mathcal{T} \in \Lambda_{n,L}$
we define the finite subset $\mathcal{X}$ of $\mathbb{Z}^d$ by \eqref{mathcal_X_tree}.
Note that in order to determine the outcome of the event that appears in \eqref{supercrit_spread_out_eq}, it is enough to observe the random variables $\xi(x), x \in \mathcal{X}$.

\begin{definition}\label{def_ingredients_supercrit} Let us consider
\begin{itemize}
\item the coupling of  $Z_t(x,y)$ and $\Xi_t(x,y)$, $x \in \mathcal{X}, y \in \mathbb{Z}^d, 0 \leq t \leq T+1$, cf.\ Lemma  \ref{lemma_coupling},
\item an independent  $\{0,1\}^{\mathbb{Z}^d}$ -valued
 random variable $\left(\xi^{\sstar}(x)\right)_{x \in \mathbb{Z}^d}$ with    $\mu_{\lambda,R}$  distribution.
\end{itemize}
Using these ingredients, we  construct
\begin{align}
\label{xi_star}
\xi^{*}(x) & := \mathds{1}\left[  \, \exists y \in \mathbb{Z}^d \, : \;  \Xi_{T+1}(x,y)>0 \text{ and }  \xi^{\sstar}(y)=1 \,  \right], \qquad x \in \mathcal{X}, \\
\label{zeta_star}
  \zeta^*(x) & := \mathds{1}\left[ \, \exists y \in \mathbb{Z}^d \, : \;  Z_{T+1}(x,y)>0 \text{ and }  \xi^{\sstar}(y)=1 \, \right],  \qquad x \in \mathcal{X}, \\
\label{zeta_def_supcrit}
  \zeta(x) & := \mathds{1}\left[ \, 0<  Z_T(x)  \, \right]= \mathds{1}\left[ \, \exists \, y \in \mathbb{Z}^d \, : \, Z_T(x,y) >0    \, \right], \qquad x \in \mathcal{X}.
\end{align}
\end{definition}

\begin{remark}  We will show that $(\xi^*(x))_{x \in \mathcal{X}}$ has the upper invariant distribution $\mu_{\lambda,R}$ and  $(\zeta(x))_{x \in \mathcal{X}}$ has product Bernoulli distribution $\pi_{\sigma(\lambda,T)}$. Loosely speaking, we want to show that $\xi^*$ and $\zeta$ are close, and we will achieve
this by showing that $\zeta^*$ is  close to both $\xi^*$ and $\zeta$.
\end{remark}

Note that it follows from \eqref{zeta_star} and \eqref{zeta_def_supcrit} that
\begin{equation}\label{zeta_geq_zeta_star}
  \zeta(x) \geq \zeta^*(x), \quad \text{thus} \quad \zeta(x)-\zeta^*(x)=\zeta(x)(1-\zeta^*(x)), \quad   x \in \mathcal{X}.
\end{equation}

The proof of Lemma \ref{lemma_supercrit_spread_out} will easily follow from the next lemma.

\begin{lemma}[$\zeta$ and $\zeta^*$ are close]\label{lemma_survival_to_T_suffices}
 Having already fixed $\lambda$, $L$ and $T$ as above, there exists $R_2=R_2(\lambda,L,T) \in \mathbb{N}$ such that for
any $R \geq R_2$, $n \in \mathbb{N}$ and  $\mathcal{T} \in \Lambda_{n,L}$ we have
\begin{equation}\label{survival_to_T_suffices_eq}
 \mathbb{P}\left( \sum_{x \in \mathcal{X} } \left( \zeta(x) - \zeta^{*}(x) \right) \geq  2^n /3 \right) \leq 3 \left( 2 C_d  \right)^{-2^n}.
\end{equation}
\end{lemma}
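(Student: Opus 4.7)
My plan is to decompose the event $\{\zeta(x) > \zeta^*(x)\}$ according to the size of the BRW population $Z_T(x)$ at time $T$, using a threshold $M = M(\lambda, L) \in \mathbb{N}$ to be chosen as a large constant. Setting $U_x := \mathds{1}[0 < Z_T(x) \leq M]$ and $W_x := \mathds{1}[Z_T(x) > M, \zeta^*(x) = 0]$, one has $\zeta(x) - \zeta^*(x) \leq U_x + W_x$, and it suffices to bound each of $\mathbb{P}(\sum_x U_x \geq 2^n/6)$ and $\mathbb{P}(\sum_x W_x \geq 2^n/6)$ by a bounded multiple of $(2C_d)^{-2^n}$.

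The sum $\sum_x U_x$ is the easy part. By Markov applied to \eqref{many_children_Z_T}, $\mathbb{E}[U_x] \leq p_*^{-M}(2^{|B(2L)|+1}C_d)^{-9}$, so taking $M$ with $p_*^{-M} \leq (2^{|B(2L)|+1}C_d)^{3}$ gives $\mathbb{E}[U_x] \leq (2^{|B(2L)|+1}C_d)^{-6}$. Since BRWs with distinct labels are independent (Definition \ref{def_brw_from_X}), the family $(U_x)_{x \in \mathcal{X}}$ is i.i.d., and Claim \ref{claim_binomial_largedev_bound} applied with $N = |\mathcal{X}| = |B(2L)| \cdot 2^n$, $a = 1/(6|B(2L)|)$, $\varepsilon = (2C_d)^{-1/|B(2L)|}$ and $p = (\varepsilon/2)^{1/a} = (2^{|B(2L)|+1}C_d)^{-6}$ yields $\mathbb{P}(\sum_x U_x \geq 2^n/6) \leq (2C_d)^{-2^n}$.

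For $\sum_x W_x$, call $z \in \mathbb{Z}^d$ \emph{fertile} with respect to $\xi^{\sstar}$ if $B(z, R)$ contains an $R$-subbox inside a $\xi^{\sstar}$-good $\kappa R$-box; this yields $|\{y \in B(z,R) : \xi^{\sstar}(y) = 1\}| \geq \alpha |B(R)|/3^d$, and Theorem \ref{thm:density_good_boxes} together with a union bound over the at most $2^d$ candidate coarse boxes gives $\mathbb{P}_{\xi^{\sstar}}[z \text{ non-fertile}] \leq 2^d e^{-\gamma R^d}$. By the coupling of Lemma \ref{lemma_coupling} and the very definition of $p_*$ in \eqref{p_star}, each label-$x$ particle sitting at a fertile position at time $T$ produces, via its own $[T, T+1]$ Poisson processes (independently of other particles of label $x$), a label-$x$ descendant at a $\xi^{\sstar}$-open site at time $T+1$ with probability at least $1 - p_*$. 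Letting $F_x$ denote the number of fertile label-$x$ particles at time $T$, split $W_x \leq W_x^{(1)} + W_x^{(2)}$ with $W_x^{(1)} := \mathds{1}[Z_T(x) > M, F_x \geq M/2, \zeta^*(x) = 0]$ and $W_x^{(2)} := \mathds{1}[Z_T(x) > M, F_x < M/2]$. On $W_x^{(1)}$, conditionally on $\xi^{\sstar}$ and the BRW up to time $T$ one has $\mathbb{P}(\zeta^*(x) = 0) \leq p_*^{M/2}$, and the $(W_x^{(1)})_{x \in \mathcal{X}}$ are conditionally independent given $\xi^{\sstar}$ (the remaining randomness is the BRW dynamics, which is independent across labels); so once $M$ is enlarged so that $p_*^{M/2} \leq (2^{|B(2L)|+1}C_d)^{-6}$, a conditional application of Claim \ref{claim_binomial_largedev_bound} delivers $\mathbb{P}(\sum_x W_x^{(1)} \geq 2^n/12 \mid \xi^{\sstar}) \leq (2C_d)^{-2^n}$, hence also unconditionally.

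The main obstacle is $\sum_x W_x^{(2)}$: the event of having few fertile label-$x$ particles is governed by $\xi^{\sstar}$, whose randomness is shared across all labels, so independence is only conditional on $\xi^{\sstar}$. A naive Markov bound gives $\mathbb{E}[W_x^{(2)}] \leq 2 \cdot 2^d e^{-\gamma R^d}$, whence summing over $x$ and applying Markov again only produces a probability estimate that is constant in $n$---far from the required $(2C_d)^{-2^n}$ decay. To fix this one has to condition on $\xi^{\sstar}$ and secure a pointwise bound on $\mathbb{P}(W_x^{(2)} \mid \xi^{\sstar})$ that is uniform in $x \in \mathcal{X}$ for $\xi^{\sstar}$ in a suitable ``good'' set. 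The key input is Theorem \ref{thm:density_good_boxes}, used to couple the coarse-grained indicator field $\omega_{\xi^{\sstar}}$ to a product Bernoulli field of density $1 - e^{-\gamma R^d}$; this makes non-fertility events at disjoint $\kappa R$-boxes independent, so that, combined with the spatial concentration of a single-label BRW by time $T$ (all particles of label $x$ stay within $B(x, O(\lambda R T))$ with very high probability), one obtains exponential concentration of $F_x$ near $Z_T(x)$. After this decorrelation step, an exponential-deviation inequality on the underlying product-Bernoulli field provides the required uniform bound, and Claim \ref{claim_binomial_largedev_bound} applied conditionally closes the argument, once $R_2 = R_2(\lambda, L, T)$ is chosen large enough.
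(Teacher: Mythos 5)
Your decomposition $\zeta(x)-\zeta^*(x)\le U_x+W_x$ with $W_x\le W_x^{(1)}+W_x^{(2)}$ is a genuinely different route from the paper's, which instead (i) isolates the indicator $\overline{\epsilon}(x)$ that \emph{some} label-$x$ particle sits in a bad $\kappa R$-box at time $T$, proves in Lemma~\ref{lemma_ground_is_mostly_fertile} that $\sum_x\overline{\epsilon}(x)$ is small, and (ii) on the complement handles \emph{all} values of $Z_T(x)$ at once via the conditional inequality $\mathbb{E}\bigl[(\zeta(x)-\zeta^*(x))\epsilon(x)\mid\mathcal{G}\bigr]\le p_*^{Z_T(x)}\zeta(x)$ and a product generating-function bound, with no threshold $M$ at all. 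Your approach has the appeal of reducing everything to Claim~\ref{claim_binomial_largedev_bound}, but it runs into two genuine problems.

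First, your two requirements on $M$ are mutually inconsistent given~\eqref{many_children_Z_T} as stated. For the $U_x$ term you need $\mathbb{E}[U_x]\le (2^{|B(2L)|+1}C_d)^{-6}$, and your Markov bound $\mathbb{E}[U_x]\le p_*^{-M}(2^{|B(2L)|+1}C_d)^{-9}$ forces $p_*^{-M}\le (2^{|B(2L)|+1}C_d)^{3}$. For $W_x^{(1)}$ you need $p_*^{M/2}\le (2^{|B(2L)|+1}C_d)^{-6}$, i.e.\ $p_*^{-M}\ge (2^{|B(2L)|+1}C_d)^{12}$. Since $2^{|B(2L)|+1}C_d>1$, no $M$ satisfies both. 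One could salvage this by changing the exponent in~\eqref{many_children_Z_T} (choose $M$ first, then $T$ large enough), but as written your proof conflicts with the constants already fixed in Claim~\ref{claim_supcrit_T_choice}. The paper's generating-function step, which uses $s\,p_*^{Z_T(x)}+1\le 2$ after taking expectation, completely avoids the threshold and hence this tension.

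Second, the $W_x^{(2)}$ plan does not close. You correctly identify that conditioning on $\xi^{\sstar}$ restores independence across labels, and you propose to find a high-probability set of $\xi^{\sstar}$ on which $\mathbb{P}(W_x^{(2)}\mid\xi^{\sstar})$ is \emph{uniformly} small in $x\in\mathcal{X}$ so that Claim~\ref{claim_binomial_largedev_bound} applies. But such a uniform bound is not available: with probability roughly $|\mathcal{Z}|\,e^{-\gamma R^d}\asymp 2^n e^{-\gamma R^d}$ some bad $\kappa R$-box sits near some $x\in\mathcal{X}$, and for a fixed $R$ this probability does \emph{not} fall below $(2C_d)^{-2^n}$ as $n\to\infty$. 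So on a non-negligible set of $\xi^{\sstar}$ the pointwise bound fails for a few labels $x$, and a naive union bound over those $x$'s wastes a factor of $|\mathcal{X}|$. The paper's Lemma~\ref{lemma_ground_is_mostly_fertile} replaces the pointwise goal with a \emph{counting} argument: using the first-passage control of Lemma~\ref{lemma_no_travel_far}, one bounds $\sum_x\overline{\epsilon}(x)$ deterministically by $K\cdot\sum_{z\in\mathcal{Z}}\overline{\omega}(z)$, where $K\lesssim R^{\ln 2/\ln 6}$ is a combinatorial multiplicity constant coming from the fractal structure of $\mathcal{X}$ (Lemma~\ref{lemma_far_in_tree_far_in_embedding}) and $\sum_{z\in\mathcal{Z}}\overline{\omega}(z)$ is stochastically bounded by a $\mathrm{Bin}(|\mathcal{Z}|,e^{-\gamma R^d})$ random variable via Theorem~\ref{thm:density_good_boxes}. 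The crucial inequality $\frac{\ln 2}{\ln 6}<d$ then makes $K\,e^{-\gamma R^d}$ small for $R$ large, and Claim~\ref{claim_binomial_largedev_bound} applies to the bad-box count (which \emph{is} binomial), not to the correlated family $(W_x^{(2)})_x$. Your sketch gestures at this but neither identifies the multiplicity constant nor explains why the binomial input should be the bad-box count rather than the per-label events.
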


Before we prove Lemma \ref{lemma_survival_to_T_suffices}, let us deduce Lemma \ref{lemma_supercrit_spread_out} from it.

\begin{proof}[Proof of Lemma \ref{lemma_supercrit_spread_out}]
We first note that
\begin{equation}\label{uim_repr}
\text{the law of $\left(\xi^{*}(x)\right)_{x \in \mathcal{X} }$
 is the same as the restriction of $\mu_{\lambda,R}$ to $\mathcal{X}$,}
 \end{equation}
    as we now explain. Recalling the graphical construction of Section \ref{subsection_graphical_constr_of_cont_proc}, we may think about $\xi^{\sstar}(y)$ as the indicator
of the event that there is an infection path from $(y,T+1)$ to infinity (cf.\ Claim \ref{claim_graphical_upper_invariant}) and $\Xi_{T+1}(x,y)$ as the indicator
of the event that there is an infection path from $(x,0)$ to $(y,T+1)$ (cf.\ \eqref{Xi_graphical_def_eq}). With this interpretation,  $\xi^{*}(x)$ (defined by \eqref{xi_star}) becomes the indicator of the event that there exists an
infection path from $(x,0)$ to infinity, thus \eqref{uim_repr} holds by Claim \ref{claim_graphical_upper_invariant}. We thus have
\begin{equation}\label{xi_star_invariant}
\mu_{\lambda,R}\left( \bigcap_{m  \in T_{(n)}} \left\{ \, \xi \in H_L(\mathcal{T}(m))^c \,  \right\}    \right) \stackrel{ \eqref{uim_repr} }{=}
\mathbb{P}\left( \bigcap_{m  \in T_{(n)}} \left\{ \, \xi^* \in H_L( \mathcal{T}(m) )^c \,  \right\}    \right).
\end{equation}
Let  $Z_n:= \sum_{m \in T_{(n)} }  \mathds{1}\left[ \, \zeta \in H_L(\mathcal{T}(m))^c  \, \right]$, i.e., $Z_n$ denotes the number of bottom-level boxes in
which local supercriticality fails for $\zeta$.   Next we will prove that the inclusion
\begin{equation}\label{bad_inclusion_supercrit}
  \bigcap_{m  \in T_{(n)}} \left\{ \, \xi^* \in H_L( \mathcal{T}(m) )^c \,  \right\} \subseteq \{   |\mathcal{X} \setminus \mathcal{X}_{T+1}| +
  \sum_{x \in \mathcal{X} } \left( \zeta(x) - \zeta^{*}(x) \right) + Z_n \geq  2^n   \}
\end{equation}
holds.
Indeed: first recall that $\zeta(x) \geq \zeta^*(x)$ for any $x \in \mathcal{X}$ (cf.\ \eqref{zeta_geq_zeta_star}).
Also note that by Lemma \ref{lemma_coupling} (cf.\ the definitions \eqref{xi_star} and \eqref{zeta_star}) we have
$\xi^*(x)=\zeta^*(x)$ for all  $x \in \mathcal{X}_{T+1}$. Using these observations we will also show that
 if $ \xi^* \in H_L( \mathcal{T}(m) )^c $  holds for some $m \in  T_{(n)}$ then at least one of the following three events must happen:
\begin{enumerate}[(i)]
\item \label{first_i} there exists  $x \in B(\mathcal{T}(m),2L) \setminus \mathcal{X}_{T+1}$,
\item \label{second_ii} there exists
$x \in B(\mathcal{T}(m),2L)$ for which $\zeta(x)=1$ and $\zeta^*(x)=0$,
\item \label{third_iii} $ \zeta \in H_L( \mathcal{T}(m) )^c $  holds.
\end{enumerate}
Indeed, if \eqref{first_i} and \eqref{second_ii} does not occur then we have $\xi^*(x)=\zeta^*(x)=\zeta(x)$ for all $x \in B(\mathcal{T}(m),2L)$, therefore
 \eqref{third_iii} must hold by our assumption that $ \xi^* \in H_L( \mathcal{T}(m) )^c $ holds.
 Taking into consideration that $|T_{(n)}|=2^n$
and that the union in the definition \eqref{mathcal_X_tree} of $\mathcal{X}$ is disjoint (cf.\ \eqref{disjoint_X_cardinality}),
we obtain \eqref{bad_inclusion_supercrit}.

\smallskip

It remains to bound the probability of the event on the r.h.s.\ of \eqref{bad_inclusion_supercrit}.
If we let $R_1:=R_0 \vee R_2$ (where $R_0$ and $R_2$ appear in the statements of Lemmas \ref{lemma_X_t_big} and \ref{lemma_survival_to_T_suffices}, respectively), then
for
any $R \geq R_1$, $n \in \mathbb{N}$ and  $\mathcal{T} \in \Lambda_{n,L}$ we have
\begin{align}
\label{ineq_X_T_plus_1_big}
\mathbb{P}\left(\, |\mathcal{X} \setminus \mathcal{X}_{T+1}| \geq  2^n /2 \, \right) & \stackrel{ \eqref{X_t_big_eq} }{\leq} \left( 2 C_d  \right)^{-2^n}, \\
\label{ineq_zeta_and_zeta_star_do_not_differ_that_much}
 \mathbb{P}\left( \sum_{x \in \mathcal{X} } \left( \zeta(x) - \zeta^{*}(x) \right) \geq  2^n /3 \right) & \stackrel{ \eqref{survival_to_T_suffices_eq} }{\leq} 3 \left( 2 C_d  \right)^{-2^n}, \\
\label{Z_n_largedev_bd}
\mathbb{P}\left( Z_n \geq  2^n /6 \right) & \leq \left( 2 C_d  \right)^{-2^n},
\end{align}
where \eqref{Z_n_largedev_bd} can be proved analogously to \eqref{Y_n_not_too_big} using that $Z_n \sim \mathrm{BIN}(2^n,q)$ with
$q \leq (4 C_d)^{-6}$ (cf.\ \eqref{local_supcrit_prob_sigma_lambda_T}) and choosing $N=2^n$, $a=1/6$, $p=(4 C_d)^{-6}$ and $\varepsilon=(2C_d)^{-1}$ in  Claim \ref{claim_binomial_largedev_bound}, since $p \leq \left( \frac{\varepsilon}{2} \right)^{1/a}$.
 Using the inequalities \eqref{ineq_X_T_plus_1_big}-\eqref{Z_n_largedev_bd} together with
\eqref{xi_star_invariant} and \eqref{bad_inclusion_supercrit}, the desired inequality \eqref{supercrit_spread_out_eq} follows by the union bound.
\end{proof}

It remains to prove Lemma \ref{lemma_survival_to_T_suffices}. We proceed with making the ideas of Remark \ref{remark_supercrit_plan_heu} rigorous.

\smallskip

 Recall from Definition \ref{def:good_boxes} that the box $\kappa Rz+[0,\kappa R)^d$ (where $z \in \mathbb{Z}^d$) is good for $\xi^{\sstar}$
 if each of its sub-boxes of form $R z' +[ 0,R)^d$ (where  $z' \in \mathbb{Z}^d$) contains at least $\alpha R^d$ infected sites in the configuration $\xi^{\sstar}$.
  Somewhat simplifying the notation introduced in \eqref{eq:def_of_omega_xi}, we define the configurations $\omega$ and $\overline{\omega}$ of zeros and ones on the coarse-grained lattice by
\begin{equation}\label{def_omega_balazs}
  \omega(z):= \mathds{1}\left[ \text{ $\kappa Rz+[0,\kappa R)^d$  is good for $\xi^{\sstar}$ } \right],
 \; \; \overline{\omega}(z):=1-\omega(z),
   \; \; z \in \mathbb{Z}^d.
\end{equation}

Let us define the indicators $\overline{\epsilon}(x)$ and $\epsilon(x)$ for any  $x \in \mathcal{X}$ by
\begin{equation}\label{def_epsilon_bad_event}
\overline{\epsilon}(x):= \mathds{1} \left[ \, \exists y  \, : \, Z_T(x,y)>0 \text{ and } \overline{\omega}\left( \left\lfloor \frac{y}{\kappa R} \right\rfloor \right)=1 \, \right],
\;\; \epsilon(x):=1-\overline{\epsilon}(x).
\end{equation}
Thus $\overline{\epsilon}(x)$ is the indicator of the bad event that there is a BRW particle with label $x$ in a bad box at time $T$.
The following lemma will be used in the proof of Lemma \ref{lemma_survival_to_T_suffices}.

\begin{lemma}[Few particles land on bad boxes]\label{lemma_ground_is_mostly_fertile}
 Having already fixed $\lambda$, $L$ and $T$ as above, there exists $R_2=R_2(\lambda,L,T) \in \mathbb{N}$ such that for
any $R \geq R_2$, any $n \in \mathbb{N}$ and any $\mathcal{T} \in \Lambda_{n,L}$ we have
\begin{equation}\label{ground_is_mostly_fertile_eq}
 \mathbb{P}\left( \sum_{x \in \mathcal{X} } \overline{\epsilon}(x) \geq \frac{2}{9} 2^n  \right) \leq 2 \left( 2 C_d  \right)^{-2^n}.
\end{equation}
\end{lemma}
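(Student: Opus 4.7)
The plan is to combine Theorem~\ref{thm:density_good_boxes} with the independence of the labelled BRWs (Claim~\ref{claim_brw_bp}). By Theorem~\ref{thm:density_good_boxes} and Strassen's theorem, I can couple $\overline{\omega}$ with an i.i.d.\ field $\overline{\omega}^{\mathrm{iid}}$ of Bernoulli$(e^{-\gamma R^d})$ variables satisfying $\overline{\omega}(z) \leq \overline{\omega}^{\mathrm{iid}}(z)$ for every $z$ and independent of the entire BRW family (possible because $\xi^{\sstar}$ is independent of the BRWs in Definition~\ref{def_ingredients_supercrit}). Defining $\overline{\epsilon}^{\mathrm{iid}}(x)$ from $\overline{\omega}^{\mathrm{iid}}$ exactly as $\overline{\epsilon}(x)$ was defined from $\overline{\omega}$ in \eqref{def_epsilon_bad_event}, we get $\overline{\epsilon}(x) \leq \overline{\epsilon}^{\mathrm{iid}}(x)$ for each $x \in \mathcal{X}$, so it suffices to prove \eqref{ground_is_mostly_fertile_eq} with $\overline{\epsilon}^{\mathrm{iid}}$ in place of $\overline{\epsilon}$.

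The next step is to estimate $\mathbb{E}[s^{\sum_x \overline{\epsilon}^{\mathrm{iid}}(x)}]$ for some $s > 1$ and then apply the exponential Markov inequality. Conditionally on $\overline{\omega}^{\mathrm{iid}}$, the indicators $\{\overline{\epsilon}^{\mathrm{iid}}(x)\}_{x \in \mathcal{X}}$ are independent because BRWs with distinct labels are independent, and \eqref{rw_brw_expect_correspond} plus a union bound over $y$ give
\begin{equation}\label{eq:plan1}
\mathbb{P}(\overline{\epsilon}^{\mathrm{iid}}(x)=1 \mid \overline{\omega}^{\mathrm{iid}}) \leq e^{(\lambda-1)T} \sum_{y} p^{\lambda,R}_T(x,y)\,\overline{\omega}^{\mathrm{iid}}(\lfloor y/\kappa R\rfloor).
\end{equation}
Writing $c := (s-1)e^{(\lambda-1)T}$ and $W_z := \sum_{x \in \mathcal{X}}\sum_{y:\lfloor y/\kappa R\rfloor = z} p^{\lambda,R}_T(x,y)$, the inequality $\prod_x(1 + a_x) \leq \exp(\sum_x a_x)$ together with \eqref{eq:plan1} and Fubini's theorem yield, after taking expectation over $\overline{\omega}^{\mathrm{iid}}$,
\begin{equation}\label{eq:plan2}
\mathbb{E}[s^{\sum_x \overline{\epsilon}^{\mathrm{iid}}(x)}] \leq \exp\!\left(e^{-\gamma R^d} \sum_z (e^{c W_z} - 1)\right).
\end{equation}

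The key analytical input I will need is a uniform-in-$n$ bound of the form $\max_z W_z \leq C_0\, R^{\log 2/\log 6}$ with $C_0 = C_0(d,\lambda,L,T)$ independent of $n$. This should follow from the heat kernel estimate \eqref{heat_kernel_bound_eq_cont} on $p^{\lambda,R}_T$ combined with the separation property \eqref{far_in_embedding_eq} of proper embeddings, which forces $|\{m \in T_{(n)}: |\mathcal{T}(m)-z^*| \leq r\}| \leq \mathrm{const}\cdot (r/L)^{\log 2/\log 6}$ uniformly in $n$ (cf.\ Remark~\ref{remark_fractal}); the geometric summation is essentially the one carried out in the chain of inequalities ending in \eqref{first_ln2_ln_6}. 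Combined with $\sum_z W_z = |\mathcal{X}| = |B(2L)|\cdot 2^n$ and $e^u - 1 \leq u e^u$, this gives $\sum_z(e^{cW_z}-1) \leq c\,|B(2L)|\cdot 2^n \cdot \exp(c C_0 R^{\log 2/\log 6})$.

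Plugging this into \eqref{eq:plan2} and applying exponential Markov, I will obtain
\begin{equation*}
\mathbb{P}\!\left(\textstyle\sum_{x \in \mathcal{X}} \overline{\epsilon}^{\mathrm{iid}}(x) \geq \tfrac{2}{9}2^n\right) \leq \exp\!\left(2^n\!\left[-\tfrac{2}{9}\log s + c|B(2L)|\exp(c C_0 R^{\log 2/\log 6} - \gamma R^d)\right]\right).
\end{equation*}
Fixing $s > 1$ so that $\frac{2}{9}\log s \geq 2\log(2C_d)$ and then choosing $R_2 = R_2(\lambda,L,T)$ large enough (using that $\gamma R^d$ dominates $R^{\log 2/\log 6}$, since $\log 2/\log 6 < d$) so that the error term inside the brackets is at most $\log(2C_d)$ for $R \geq R_2$, the desired bound \eqref{ground_is_mostly_fertile_eq} follows. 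The main obstacle is exactly the uniform-in-$n$ estimate on $\max_z W_z$: the fractal exponent $\log 2/\log 6$ from Remark~\ref{remark_fractal} plays a crucial role here, just as in the proof of Lemma~\ref{lemma_X_t_big}, and the whole strategy succeeds only because this exponent is strictly smaller than the spatial dimension $d$.
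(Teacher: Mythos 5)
Your proposal is correct, and the route is genuinely different from the paper's. The paper first introduces a localization step: it defines a cutoff radius~$r$ so that (by Lemma~\ref{lemma_no_travel_far}) few labels have particles that wander further than~$r\kappa R$; after intersecting with this event, the sum~$\sum_x \delta_r(x)\overline{\epsilon}(x)$ is bounded \emph{deterministically} by~$K\sum_{z\in\mathcal{Z}}\overline{\omega}(z)$, where~$K \leq C^{\sstar} R^{\ln 2/\ln 6}$ and~$|\mathcal{Z}| \leq C^* 2^n$, and one then only needs a binomial tail bound (Claim~\ref{claim_binomial_largedev_bound}) for~$\sum_{z\in\mathcal{Z}}\overline{\omega}(z)$ via Theorem~\ref{thm:density_good_boxes}. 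Your approach skips the cutoff entirely: you couple~$\overline{\omega}$ to an i.i.d.\ majorant via Strassen (legitimate, since~$\xi^{\sstar}$ is independent of the BRWs, so the coupling can be built on a product space), exploit the conditional independence of~$\{\overline{\epsilon}^{\mathrm{iid}}(x)\}_{x\in\mathcal{X}}$ given~$\overline{\omega}^{\mathrm{iid}}$, and replace the deterministic range cutoff by the exponential tail of the heat kernel (Lemma~\ref{lemma_heat_kernel_bound_cont}); this pushes everything through a generating-function bound more in the spirit of Lemma~\ref{lemma_X_t_big}. The crux is your claim~$\max_z W_z \leq C_0 R^{\ln 2/\ln 6}$ uniformly in~$n$, which you state but do not spell out; it does hold, by the layer-cake argument: the packing bound from Lemma~\ref{lemma_far_in_tree_far_in_embedding} gives~$|\{m \in T_{(n)}:|\mathcal{T}(m)-w|\leq \rho\}| \leq C(\rho/L)^{\ln 2/\ln 6}$ uniformly in~$n$ and in~$w\in\Z^d$, and integrating this against the exponential tail of~$p^{\lambda,R}_T$ (plus a bounded diagonal term, since~\eqref{heat_kernel_bound_eq_cont} excludes~$x=y$) yields the desired~$R^{\ln 2/\ln 6}$ growth. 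What the paper's version buys is that it never needs a heat kernel estimate at this point — only the crude travel-distance tail of~$(X_t)$ — at the cost of the extra Lemma~\ref{lemma_no_travel_far}; what your version buys is a shorter, unified generating-function scheme that also gives the slightly sharper constant~$(2C_d)^{-2^n}$ in place of~$2(2C_d)^{-2^n}$. Both arguments hinge on the same inequality~$\tfrac{\ln 2}{\ln 6} < d$.
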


Before we prove Lemma \ref{lemma_ground_is_mostly_fertile}, let us deduce Lemma \ref{lemma_survival_to_T_suffices} from it.

\begin{proof}[Proof of Lemma \ref{lemma_survival_to_T_suffices}]
Let us choose $R_2=R_2(\lambda, L, T)$ as in Lemma \ref{lemma_ground_is_mostly_fertile} and let us consider any $R \geq R_2$,
$n \in \mathbb{N}$ and any $\mathcal{T} \in \Lambda_{n,L}$. Let
\begin{equation}\label{Y_n_prome_def}
  Y'_n:=  \sum_{x \in \mathcal{X} } \left( \zeta(x) - \zeta^{*}(x) \right)\epsilon(x), \qquad s:=\left(2^{|B(2L)|+1} C_d \right)^9.
\end{equation}
  It is enough to prove
 \begin{equation}\label{momgen_ineq}
 \mathbb{E} \left( s^{Y'_n} \right)  \leq 2^{ |B(2L)| 2^n},
 \end{equation}
  because then  $\mathbb{P}\left( Y'_n \geq \frac{1}{9} 2^n  \right) \leq  \left( 2 C_d  \right)^{-2^n}$ follows by the exponential Chebyshev's inequality, and this bound together with \eqref{ground_is_mostly_fertile_eq} implies  \eqref{survival_to_T_suffices_eq}.

In order to prove \eqref{momgen_ineq},
we  define the sigma-algebra $\mathcal{G}$ generated by the random variables $(Z_t(x,y)), x \in \mathcal{X}, y \in \mathbb{Z}^d, 0 \leq t \leq T$ and $(\xi^{\sstar}(y) )_{y \in \mathbb{Z}^d}$.

 Note that it follows from the definitions  \eqref{zeta_def_supcrit}, \eqref{def_omega_balazs} and \eqref{def_epsilon_bad_event} that
 \begin{equation}\label{G_measurable}
 \left(\zeta(x)\right)_{x \in \mathcal{X}}, \; \left(\omega(z)\right)_{z \in \mathbb{Z}^d}, \; \; \left(\epsilon(x)\right)_{x \in \mathcal{X}}\;
 \text{ are all $\mathcal{G}$-measurable.}
 \end{equation}
  Observe that
\begin{equation}\label{condindep_zeta_star}
 \zeta^*(x), x \in \mathcal{X} \text{ are conditionally independent given } \mathcal{G},
\end{equation}
 since the BRW particles used in the definition of $\zeta^*(x), x\in \mathcal{X}$ (cf.\ \eqref{zeta_star})  reproduce and die independently from each other and $\xi^{\sstar}$ on $[T,T+1]$  (cf.\ Definitions \ref{def_brw_from_X} and \ref{def_ingredients_supercrit}).
Recalling the definitions of $p_*$ from \eqref{p_star} and $Z_T(x)$ from \eqref{Z_T_x},  we will prove
\begin{equation}\label{condexp_fertile_landing}
  \mathbb{E}\left( \left(\zeta(x) - \zeta^{*}(x) \right)\epsilon(x) \,   | \,  \mathcal{G} \right) \leq  p_*^{Z_T(x)} \zeta(x), \qquad x \in \mathcal{X}.
\end{equation}
 Before we show \eqref{condexp_fertile_landing}, let us deduce \eqref{momgen_ineq} from it:
\begin{multline} \label{here_is_where_we_use_many_children}
 \mathbb{E} \left( s^{Y'_n} \right)
   \stackrel{ \eqref{Y_n_prome_def} }{=}
   \mathbb{E} \left( \mathbb{E} \left( \prod_{x \in \mathcal{X} } s^{\left( \zeta(x) - \zeta^{*}(x) \right)\epsilon(x)} \, \big| \, \mathcal{G} \right) \right)
   \stackrel{ \eqref{G_measurable}, \eqref{condindep_zeta_star} }{=}
   \\
 \mathbb{E} \left(  \prod_{x \in \mathcal{X} } \mathbb{E} \left(  s^{\left( \zeta(x) - \zeta^{*}(x) \right)\epsilon(x)} \, \big| \, \mathcal{G} \right) \right)
    \stackrel{ \eqref{zeta_geq_zeta_star}, \eqref{condexp_fertile_landing} }{\leq } \mathbb{E}\left( \prod_{x \in \mathcal{X} }  \left( s p_*^{Z_T(x)} \zeta(x) +1  \right) \right)
    \stackrel{ \eqref{zeta_def_supcrit} }{=}
    \\
   \prod_{x \in \mathcal{X} }  \mathbb{E}  \left( s p_*^{Z_T(x)} \mathds{1}\left[ \, 0<  Z_T(x)  \, \right] +1  \right)
   \stackrel{ \eqref{many_children_Z_T}, \eqref{Y_n_prome_def} }{\leq } \prod_{x \in \mathcal{X} }   \left( 1 +1  \right)
   \stackrel{ \eqref{disjoint_X_cardinality} }{=} 2^{  |B(2L)| 2^n }.
\end{multline}
It remains to show \eqref{condexp_fertile_landing}. We begin by observing that
\begin{multline}\label{fertile_landing_alternate_form}
 \mathbb{E}\left( \left(\zeta(x) - \zeta^{*}(x) \right)\epsilon(x) \,   | \,  \mathcal{G} \right) \stackrel{\eqref{zeta_geq_zeta_star}, \eqref{G_measurable} }{=}
 \mathbb{E}\left( 1 - \zeta^{*}(x) \,   | \,  \mathcal{G} \right) \zeta(x) \epsilon(x)\stackrel{ \eqref{zeta_star} }{=} \\
   \mathbb{P}\left(\, \forall y' \in \mathbb{Z}^d \, : \;  Z_{T+1}(x,y') \cdot  \xi^{\sstar}(y')=0
       \, \big| \,  \mathcal{G}\, \right) \zeta(x) \epsilon(x).
\end{multline}
The goal is to upper bound the r.h.s.\ of \eqref{fertile_landing_alternate_form} by  $p_*^{Z_T(x)} \zeta(x)$. At time $T$ there are $Z_T(x)$ BRW particles with label $x$, so it is enough to show that if $\epsilon(x)=1$ then each of these particles, with probability at least $1-p_*$, independently from the others, will produce an offspring which is located at a site
 with positive $\xi^{\sstar}$ value at time $T+1$. In order to show this, first
observe that if $\epsilon(x)=1$ then for any $y$ the condition $Z_T(x,y)>0$ implies $\omega\left( \lfloor \frac{y}{\kappa R} \rfloor \right)=1$ (cf.\ \eqref{def_epsilon_bad_event}), which in turn implies
  $\sum_{y' \in B(y,R)} \xi^{\sstar}(y') \geq \alpha R^d$ (cf.\  \eqref{def_omega_balazs}). So if there is a particle at time $T$ at location $y$, which
  \begin{enumerate}[(i)]
  \item \label{i_success} does not die in $[T,T+1]$,
  \item \label{ii_success} produces at least one child in $[T,T+1]$,
  \item \label{iii_success} this child lands at a site $y'$ with $\xi^{\sstar}(y')=1$,
  \item \label{iiii_success} this child does not die until $T+1$,
\end{enumerate}
then $Z_{T+1}(x,y') \cdot  \xi^{\sstar}(y')>0$. Now \eqref{i_success} occurs with probability $e^{-1}$, given this \eqref{ii_success} occurs
with probability $1-e^{-\lambda}$, given this \eqref{iii_success} occurs with probability at least $\alpha R^d /|B(R)| \geq \alpha / 3^d$, given this
\eqref{iiii_success} occurs with probability at least $e^{-1}$. Altogether, the probability that \eqref{i_success}-\eqref{iiii_success} all occur is at least $1-p_*$ (cf.\
\eqref{p_star}). Since in $[T,T+1]$ the BRW particles reproduce and die independently from each other and  $\mathcal{G}$, we indeed obtain that the r.h.s.\ of
\eqref{fertile_landing_alternate_form} is upper bounded by the r.h.s.\ of \eqref{condexp_fertile_landing}. The proof
of Lemma \ref{lemma_survival_to_T_suffices} is complete.
\end{proof}

It remains to prove Lemma \ref{lemma_ground_is_mostly_fertile}. Given some $r \in \mathbb{N}$ let us define
\begin{equation}\label{delta_R_def_eq}
  \overline{\delta}_r(x):=\mathds{1} \left[ 0 < \sum_{y \in \mathbb{Z}^d \setminus B(x,r\kappa R)} Z_T(x,y)   \right], \; \;
  \delta_r(x):=1-\overline{\delta}_r(x), \; \;
   x \in \mathcal{X},
\end{equation}
thus  $\overline{\delta}_r(x)$ is the indicator of the bad event that a particle with label $x$ travels too far from $x$.
In the
 next lemma we control the number of $x \in \mathcal{X}$ for which this bad event occurs.
This bound will be useful in the proof of Lemma \ref{lemma_ground_is_mostly_fertile}.
\begin{lemma}[Few particles travel too far]\label{lemma_no_travel_far}
 Having fixed  $\lambda$, $\kappa$, $L$ and $T$ as above, we can choose $r \in \mathbb{N}$ (and fix it for the rest of Section \ref{subsection_upper_bound}) such that for
any $R \in \mathbb{N}$, any $n \in \mathbb{N}$ and any $\mathcal{T} \in \Lambda_{n,L}$ we have
\begin{equation}\label{no_travel_far_eq}
 \mathbb{P}\left( \sum_{x \in \mathcal{X} } \overline{\delta}_r(x) \geq  2^n /9 \right) \leq  \left( 2 C_d  \right)^{-2^n}.
\end{equation}
\end{lemma}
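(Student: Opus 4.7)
By Claim \ref{claim_brw_bp}, the BRWs with different labels are independent, and by translation invariance of the BRW dynamics, each $\overline{\delta}_r(x)$ has the same distribution as $\overline{\delta}_r(0)$. Hence $\{\overline{\delta}_r(x)\}_{x \in \mathcal{X}}$ is an i.i.d.\ family of Bernoulli$(q(r))$ random variables, where $q(r):=\mathbb{P}(\overline{\delta}_r(0)=1)$, and so, using \eqref{disjoint_X_cardinality},
$$\sum_{x \in \mathcal{X}} \overline{\delta}_r(x) \;\sim\; \mathrm{Bin}\!\left(|B(2L)|\cdot 2^n,\, q(r)\right).$$
The plan is then to invoke Claim \ref{claim_binomial_largedev_bound} with $N=|B(2L)|\cdot 2^n$, $a=(9|B(2L)|)^{-1}$, and $\varepsilon:=(2C_d)^{-1/|B(2L)|}$, which give $aN=2^n/9$ and $\varepsilon^N=(2C_d)^{-2^n}$. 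This reduces the problem to choosing $r$, depending only on $\lambda,\kappa,L,T$, such that $q(r)\leq (\varepsilon/2)^{1/a}=(\varepsilon/2)^{9|B(2L)|}$, a strictly positive constant.

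The key step, and the only part requiring care, is an $R$-uniform tail bound on $q(r)$. If any particle with label $0$ lies outside $B(0,r\kappa R)$ at time $T$, then its lineage back to the initial ancestor must contain at least $r\kappa+1$ birth events, since each birth displaces the newborn from its parent by a vector in $B(R)$, which has $\ell^\infty$-norm at most $R$. Let $\Upsilon_T$ denote the total number of birth events accumulated by the BRW with label $0$ by time $T$; then $\{\overline{\delta}_r(0)=1\}\subseteq\{\Upsilon_T\geq r\kappa\}$. By Claim \ref{claim_brw_bp}, the distribution of $\Upsilon_T$ depends only on $\lambda$ and $T$ (and not on $R$), and a routine computation using $\mathbb{E}[Z_s]=e^{(\lambda-1)s}$ yields
$$\mathbb{E}[\Upsilon_T]=\lambda\int_0^T e^{(\lambda-1)s}\,\mathrm{d}s <\infty,$$
so Markov's inequality gives $\mathbb{P}(\Upsilon_T\geq r\kappa)\to 0$ as $r\to\infty$, with a rate that does not depend on $R$.

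Combining the two ingredients, we pick $r=r(\lambda,\kappa,L,T)$ large enough that $q(r)\leq (\varepsilon/2)^{9|B(2L)|}$, and then Claim \ref{claim_binomial_largedev_bound} immediately yields the desired inequality \eqref{no_travel_far_eq} for every $R\in\mathbb{N}$, $n\in\mathbb{N}$, and $\mathcal{T}\in\Lambda_{n,L}$. The structure is thus entirely parallel to the application of Claim \ref{claim_binomial_largedev_bound} in the proof of Lemma \ref{lemma_Y_n_not_too_big}; the only genuinely new ingredient is the deterministic observation that the maximal spatial displacement of a BRW particle by time $T$ is at most $R$ times the number of births in its lineage, which converts the spatial tail bound into an $R$-free branching-process tail bound.
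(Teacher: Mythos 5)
Your proof is correct and follows essentially the same path as the paper's: the same i.i.d.\ reduction to a binomial using \eqref{disjoint_X_cardinality}, the same application of Claim~\ref{claim_binomial_largedev_bound} with parameters $N=|B(2L)|\cdot 2^n$, $a=(9|B(2L)|)^{-1}$ and $\varepsilon=(2C_d)^{-1/|B(2L)|}$, reducing the problem to the $R$-uniform tail bound $q(r)\leq \left(2^{|B(2L)|+1}C_d\right)^{-9}$ for $r$ large. The only place you diverge is the proof of that tail bound. The paper bounds $\mathbb{E}[\overline{\delta}_r(0)]\leq \sum_{y\notin B(0,r\kappa R)}\mathbb{E}[Z_T(0,y)]$, invokes the random-walk representation \eqref{rw_brw_expect_correspond} to rewrite this as $e^{(\lambda-1)T}\mathbb{P}(X_T>r\kappa R)$, and then uses that more than $r\kappa$ jumps are needed, with the jump count $N_T\sim\mathrm{POI}(\lambda T)$ being $R$-independent. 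You instead observe deterministically that a particle farther than $r\kappa R$ from the origin must have accumulated more than $r\kappa$ births in its lineage, so $\{\overline{\delta}_r(0)=1\}\subseteq\{\Upsilon_T\geq r\kappa+1\}$ with $\Upsilon_T$ the total birth count, whose distribution depends only on $\lambda,T$, and apply Markov's inequality to $\Upsilon_T$. Both are first-moment arguments exploiting the same deterministic displacement-per-birth bound; yours is a bit more elementary (it does not need \eqref{rw_brw_expect_correspond}), while the paper's Poisson tail decays exponentially in $r$ rather than polynomially, a difference that is immaterial here since only a fixed threshold needs to be met. A cosmetic remark: you write $\Upsilon_T\geq r\kappa$ although your own reasoning gives the slightly stronger $\Upsilon_T\geq r\kappa+1$; this does not affect the argument.
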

\begin{proof}
 The random variables $\left(\overline{\delta}_r(x) \right)_{x \in \mathcal{X}}$ are i.i.d., since the BRWs with different labels are independent (cf.\ Definition \ref{def_brw_from_X}), therefore
 $$
 \sum_{x \in \mathcal{X} } \overline{\delta}_r(x) \sim \mathrm{BIN}\left( |B(2L)| 2^n, \mathbb{E}\left( \overline{\delta}_r(x) \right)  \right)
 $$
 by \eqref{disjoint_X_cardinality}.
The inequality \eqref{no_travel_far_eq} will follow by choosing
$N= |B(2L)| 2^n $, $a=(9 |B(2L)|)^{-1}$, $p=\mathbb{E}\left( \overline{\delta}_r(x) \right)$ and $\varepsilon=(2 C_d)^{-1/|B(2L)|}$  in Claim \ref{claim_binomial_largedev_bound} if we show $p \leq \left( \frac{\varepsilon}{2} \right)^{1/a}$, i.e.,
$\mathbb{E}\left( \overline{\delta}_r(x) \right) \leq  \left( 2^{|B(2L)|+1} C_d \right)^{-9}$. Thus we bound
\begin{equation}\label{brw_rw_expect}
  \mathbb{E}\left( \overline{\delta}_r(x) \right)\stackrel{ \eqref{delta_R_def_eq} }{ \leq } \sum_{y \in \mathbb{Z}^d \setminus B(x,r\kappa R)} \mathbb{E}\left( Z_T(x,y) \right)
  \stackrel{ \eqref{rw_brw_expect_correspond} }{=} e^{(\lambda-1)T} \mathbb{P}\left( X_T > r\kappa R \right),
\end{equation}
where $(X_t)$ denotes the continuous-time $R$-spread-out random walk on $\mathbb{Z}^d$ with jump rate $\lambda$ with $X_0= 0$ (cf.\ Definition \ref{def_R_spread_out_rw}).
If we denote by $N_T$ the number of jumps that $(X_t)$ performs on $[0,T]$ then we have $N_T \sim \mathrm{POI}(\lambda T) $ and $\mathbb{P}\left( X_T > r\kappa R \right)\leq \mathbb{P}\left( N_T > r \kappa \right) $, so if we choose $r$ big enough that
$\mathbb{P}\left( N_T > r \kappa \right) \leq  e^{(1-\lambda)T} \left( 2^{|B(2L)|+1} C_d \right)^{-9}$ holds then the statement of Lemma \ref{lemma_no_travel_far} also holds with the same choice of $r$.
\end{proof}

\begin{proof}[Proof of Lemma \ref{lemma_ground_is_mostly_fertile}]
We have already fixed $r$ in the statement of Lemma \ref{lemma_no_travel_far}. It is enough to show that
\begin{equation}\label{no_travel_far_epsilon_not_many}
\exists  R_2 \in \mathbb{N}  : \, \forall  R \geq R_2, \, n \in \mathbb{N}, \, \mathcal{T} \in \Lambda_{n,L}  : \,
\mathbb{P}\left( \sum_{x \in \mathcal{X} } \delta_r(x)\overline{\epsilon}(x) \geq  \frac{2^n}{9} \right) \leq \left( 2 C_d  \right)^{-2^n},
\end{equation}
because \eqref{no_travel_far_eq} and \eqref{no_travel_far_epsilon_not_many} together give the desired \eqref{ground_is_mostly_fertile_eq}.

In order  to prove \eqref{no_travel_far_epsilon_not_many}, we need some notation. For any $z \in \mathbb{Z}^d$ and $x \in \mathcal{X}$ we
say that  $z$ endangers $x$ and denote
\begin{equation}\label{endangers_def}
 z \stackrel{R}{\rightarrow} x \quad \text{ if } \quad  \left(z \kappa R + [0,\kappa R)^d \right)\cap B(x,r\kappa R) \neq \emptyset.
 \end{equation}
Let us also introduce $\mathcal{Z}=\mathcal{Z}( \mathcal{T}, R ) \subset \mathbb{Z}^d $ and $K=K(\mathcal{T},R) \in \mathbb{N} $ by
\begin{equation}\label{Z_K}
  \mathcal{Z}:=\{\, z \in \mathbb{Z}^d \, : \, \exists \, x \in \mathcal{X} \, : \, z \stackrel{R}{\rightarrow} x \, \},
 \quad K:= \max_{z \in \mathbb{Z}^d } \sum_{ x \in \mathcal{X} } \mathds{1}\left[ \,  z \stackrel{R}{\rightarrow} x \, \right].
\end{equation}
Note that we did not emphasize the dependence of  $\mathcal{Z}$ and $K$ on $\lambda$, $\kappa$, $L$, $T$ and $r$, because the values of these constants
have already been fixed.
Next we observe that it follows from \eqref{def_epsilon_bad_event}, \eqref{delta_R_def_eq}, and \eqref{Z_K} that
\begin{equation}\label{delta_eps_ruin_simple}
  \delta_r(x)\overline{\epsilon}(x)  \leq
  \mathds{1}\left[ \,  \exists \, z \in \mathcal{Z} \, : \, z \stackrel{R}{\rightarrow} x \text{ and } \overline{\omega}(z)=1 \, \right]  , \quad x \in \mathcal{X}.
\end{equation}
We can therefore proceed with the proof of \eqref{no_travel_far_epsilon_not_many} by bounding
\begin{equation} \label{ruined_x_interchange_sums_bound}
\sum_{x \in \mathcal{X} }  \delta_r(x)\overline{\epsilon}(x) \stackrel{ \eqref{delta_eps_ruin_simple} }{\leq}
\sum_{x \in \mathcal{X} }\sum_{z \in \mathcal{Z}}
\mathds{1}\left[ \,  z \stackrel{R}{\rightarrow} x \, \right]\overline{\omega}(z) \stackrel{ \eqref{Z_K} }{\leq} K  \sum_{z \in \mathcal{Z}} \overline{\omega}(z).
\end{equation}
It follows from Theorem \ref{thm:density_good_boxes} and \eqref{def_omega_balazs} that
\begin{equation}\label{bad_boxes_binomial}
 \text{$\sum_{z \in \mathcal{Z}} \overline{\omega}(z)$ is stochastically dominated by $\mathrm{BIN}( |\mathcal{Z}|, q_R)$ },
 \end{equation}
where $q_R=\exp\left(-\gamma R^d \right)$ if $R \geq R_{\sstar}=R_{\sstar}(\lambda)$. Next we will show that
\begin{align}
\label{Z_card_bound_short}
 \exists  C^* : & \, \forall  R \in \mathbb{N},\, n \in \mathbb{N},\, \mathcal{T} \in \Lambda_{n,L}  : \,
 |\mathcal{Z}|= |\mathcal{Z}( \mathcal{T}, R )| \leq C^* 2^n, \\
\label{K_final_fractal_bounds}
 \exists  C^{\sstar} : & \, \forall  R \in \mathbb{N},\, n \in \mathbb{N},\, \mathcal{T} \in \Lambda_{n,L}  : \,
 K=K(\mathcal{T},R) \leq C^{\sstar} R^{\frac{\ln(2)}{\ln(6)}}.
\end{align}
We do not emphasize the dependence of  $C^*$ and $C^{\sstar}$ on $\lambda$, $\kappa$, $L$, $T$ and $r$, because the values of these constants
have already been fixed.
We begin with the proof of \eqref{Z_card_bound_short}:
\begin{multline}\label{Z_cardinality_bound}
|\mathcal{Z}| \stackrel{ \eqref{Z_K} }{\leq}  \sum_{z \in \mathbb{Z}^d} \sum_{x \in \mathcal{X} } \mathds{1}\left[ \,  z \stackrel{R}{\rightarrow} x \, \right]
\leq
 |\mathcal{X}| \max_{x \in \mathbb{Z}^d } \sum_{z \in \mathbb{Z}^d}  \mathds{1}\left[ \,  z \stackrel{R}{\rightarrow} x \, \right]
 \stackrel{ \eqref{disjoint_X_cardinality}, \eqref{endangers_def} }{\leq} \\
  |B(2L)| 2^n \max_{x \in \mathbb{Z}^d } | \kappa R \mathbb{Z}^d \cap B(x,(r+1)\kappa R)  |
 \leq |B(2L)| 2^n (2r+3)^d.
\end{multline}
In order to prove \eqref{K_final_fractal_bounds}, we first bound
\begin{multline}\label{K_bound_tree}
K \stackrel{  \eqref{mathcal_X_tree}, \eqref{disjoint_X_cardinality}, \eqref{Z_K}  }{=}
\max_{z \in \mathbb{Z}^d } \sum_{ m' \in T_{(n)} } \sum_{ x \in B( \mathcal{T}(m'),2L ) } \mathds{1}\left[ \,  z \stackrel{R}{\rightarrow} x \, \right]
\stackrel{ \eqref{endangers_def} }{\leq} \\
 |B(2L)| \max_{z \in \mathbb{Z}^d } \sum_{ m' \in T_{(n)} } \mathds{1}\left[ \, | z\kappa R- \mathcal{T}(m')  |\leq (r+1)\kappa R +2L   \, \right]
 \stackrel{(*)}{\leq}
 \\
 |B(2L)| \max_{m \in T_{(n)} } \sum_{ m' \in T_{(n)} } \mathds{1}\left[ \, | \mathcal{T}(m)-\mathcal{T}(m')  |\leq 2(r+1)\kappa R +4L   \, \right],
\end{multline}
where $(*)$ follows from the triangle inequality.
Next we show that the r.h.s.\ of \eqref{K_bound_tree} can be upper bounded by $|B(2L)| 2^{k_0}$, where $k_0$ is the smallest integer for which
 $L 6^{k_0-1} > 2(r+1)\kappa R +4L$. Indeed, if $\rho(m, m') \geq k_0$ (cf.\ \eqref{def_eq_lex_dist}) then
 $| \mathcal{T}(m)-\mathcal{T}(m') | \geq L_{k_0-1}=L 6^{k_0-1}$ by Lemma \ref{lemma_far_in_tree_far_in_embedding}, and
 the number of $m' \in T_{(n)}$ for which $\rho(m, m') < k_0$ is less than or equal to $2^{k_0}$, cf.\ \eqref{def_eq_lex_dist}.
   From these bounds the inequality  \eqref{K_final_fractal_bounds} easily follows using a bit of calculus.

 By \eqref{ruined_x_interchange_sums_bound}, \eqref{bad_boxes_binomial}, \eqref{Z_card_bound_short} and
\eqref{K_final_fractal_bounds} we see that in order to prove the desired \eqref{no_travel_far_epsilon_not_many}, it is enough to show that there exists  $R_2 \in \mathbb{N}$ such that  for all $R \geq R_2$ and $n \in \mathbb{N}$
we have \begin{equation}
\mathbb{P}\left( C^{\sstar} R^{\frac{\ln(2)}{\ln(6)}}  Z_n' \geq \frac{2^n}{9}    \right) \leq \left( 2 C_d  \right)^{-2^n}, \quad \text{where} \quad Z_n' \sim \mathrm{BIN}(C^* 2^n, q_R).
\end{equation}
We want to apply Claim \ref{claim_binomial_largedev_bound} with $N=C^* 2^n$, $a=(9 C^{\sstar}  R^{\frac{\ln(2)}{\ln(6)}} C^* )^{-1}$, $p=q_R=\exp\left(-\gamma R^d \right)$ and $\varepsilon=(2 C_d)^{-{1/C^*}}$, so we need $p \leq \left( \frac{\varepsilon}{2} \right)^{1/a}$, i.e., $\exp\left(-\gamma R^d \right) \leq
(2^{C^*+1} C_d)^{-9C^{\sstar} R^{\frac{\ln(2)}{\ln(6)}}}$, but this inequality clearly holds for large enough $R$, since $\frac{\ln(2)}{\ln(6)} <d$.
The proof of Lemma \ref{lemma_ground_is_mostly_fertile} is complete.
\end{proof}

The proof of $\limsup_{R \to \infty } \lambda_p(R) \leq \frac{1}{1-p_c}$ is complete.

\begin{remark} \label{rem_sketch_measure} Let us  sketch how the methods of Section \ref{subsection_upper_bound} can be used to show that for any $\lambda >1$ the sequence of probability measures $(\mu_{\lambda,R})_{R=1}^{\infty}$ weakly converges   to the Bernoulli product measure $\pi_{\sigma(\lambda)}$ (cf.\ \eqref{sigma_lambda})  as $R \to \infty$.
It is enough to show that for any  finite subset $\mathcal{X}$ of $\mathbb{Z}^d$ and any $\varepsilon>0$ the total variation distance
$\mathrm{d}_{\mathrm{TV}}(  \mu_{\lambda,R} |_{\mathcal{X}},   \pi_{\sigma(\lambda)}|_{\mathcal{X}})$ is less than or equal to $\varepsilon$ if $R$ is large enough.
Similarly to Claim \ref{claim_supcrit_T_choice}, let us fix $T$ big enough so that $\mathrm{d}_{\mathrm{TV}}(  \pi_{\sigma(\lambda)}|_{\mathcal{X}} ,   \pi_{\sigma(\lambda,T)}|_{\mathcal{X}})\leq \varepsilon/2 $ (cf.\ \eqref{sigma_lamb_T}) and $ \mathbb{E}\left( \, p_*^{Z_T} \mathds{1}[ \, 0 < Z_T \, ] \, \right) \leq \frac{\varepsilon}{8|\mathcal{X}| }$ (cf.\ Definition \ref{def_bp_pop_at_t} and \eqref{p_star}). It remains to show $\mathrm{d}_{\mathrm{TV}}(  \mu_{\lambda,R} |_{\mathcal{X}},   \pi_{\sigma(\lambda,T)}|_{\mathcal{X}}) \leq \varepsilon/2$. Given our $\mathcal{X}$, let us define $\xi^*(x), \zeta^*(x)$ and $\zeta(x)$ for all $x \in \mathcal{X}$ as in Definition \ref{def_ingredients_supercrit}.
 It is enough to prove that $\mathbb{P}(\, \exists \, x \in \mathcal{X} \, : \, \xi^*(x) \neq  \zeta(x) \,)\leq \varepsilon/2 $ holds if $R$ is big enough, since
 $ \xi^*|_{\mathcal{X}} \sim  \mu_{\lambda,R} |_{\mathcal{X}}$ and
$\zeta|_{\mathcal{X}} \sim \pi_{\sigma(\lambda,T)}|_{\mathcal{X}}$. It is enough to show that
$\mathbb{P}( \xi^*(x) \neq  \zeta^*(x) )\leq \frac{\varepsilon}{4 |\mathcal{X}|} $ and $\mathbb{P}( \zeta^*(x) \neq  \zeta(x) )\leq \frac{\varepsilon}{4 |\mathcal{X}|} $ hold  for every $x \in \mathcal{X}$ if $R$ is big enough. Similarly to the proof of  Lemma \ref{lemma_supercrit_spread_out}, we have
\begin{multline*}
  \mathbb{P}( \xi^*(x) \neq  \zeta^*(x) ) \leq \mathbb{P}( \, \exists \, y \in \mathbb{Z}^d \, : \, \Xi_{T+1}(x,y) \neq Z_{T+1}(x,y) ) \stackrel{ \eqref{coupling_of_Xi_Z_on_X_t_eq} }{\leq} \\
   \mathbb{P}(x \notin \mathcal{X}_{T+1})
\stackrel{ \eqref{crowd_x}, \eqref{crowd_x_x_prime} }{\leq}      \mathbb{P}( \tau_x \leq T+1 ) + \sum_{y \in \mathcal{X} \setminus \{x \} }
   \mathbb{P}( \tau_{x,y} \leq T+1 ),
\end{multline*}
and the r.h.s.\ is smaller than $\frac{\varepsilon}{4 |\mathcal{X}|}$ if $R$ is large enough by Lemma \ref{lemma_crowd_bounds}.

Recalling the definition of the indicators $\epsilon(x)$ and $\overline{\epsilon}(x)$ from \eqref{def_epsilon_bad_event}, we have
\begin{equation*}
\mathbb{P}\left( \zeta^*(x) \neq  \zeta(x) \right) \stackrel{ \eqref{zeta_geq_zeta_star} }{\leq} \mathbb{E}\left[ (\zeta(x)-\zeta^*(x))\epsilon(x)  \right] +
 \mathbb{E}\left[ \overline{\epsilon}(x)\right] \leq \frac{\varepsilon}{8|\mathcal{X}| } + \frac{\varepsilon}{8|\mathcal{X}| } = \frac{\varepsilon}{4|\mathcal{X}| }
\end{equation*}
if $R$ is large enough, since
 $\mathbb{E}\left[ (\zeta(x)-\zeta^*(x))\epsilon(x)  \right]\leq \mathbb{E}\left( \, p_*^{Z_T} \mathds{1}[ \, 0 < Z_T \, ] \, \right) \leq \frac{\varepsilon}{8|\mathcal{X}| } $ holds by \eqref{condexp_fertile_landing} and our assumptions on $T$, while one can show that $\mathbb{E}\left[ \overline{\epsilon}(x)\right] \leq \frac{\varepsilon}{8|\mathcal{X}| }$ holds
 if $R$ is large using similar (but simpler) ideas as the ones that we used to prove Lemma \ref{lemma_ground_is_mostly_fertile}.
\end{remark}

\section{Appendix: Liggett-Steif stochastic domination for discrete time}
\label{subsection_appendix}

Fix~$p \in (0,1)$ and recall from Definition \ref{def_cellular_automaton} the notion of the discrete-time process~$(\eta_n)_{n \geq 0}$ with parameter~$p$. Also recall from Definition \ref{def_cellular_upper_inv} the notion of~$\nu_p$, the upper invariant measure of this process. Our goal in this section is to prove Theorem~\ref{thm:apply_ls} by recalling the steps of the proof of the analogous result for the upper invariant measure of the contact process in~\cite{LS06}.

By Claim~\ref{cl:oriented_percolation}, it suffices to prove Theorem~\ref{thm:apply_ls} for the case~$d = 1$, so we assume this from now on.

\subsection{Downward FKG property}
One of the ingredients of the proof of Theorem~\ref{thm:apply_ls} is Proposition \ref{prop:vdb} below.

If $\eta \in \{0,1\}^{\mathbb{Z}}$ and $\Lambda \subseteq \mathbb{Z}$,  we say that $\eta \equiv 0$ on $\Lambda$ if $\eta(z)=0$ for all $z \in \Lambda$.
\begin{definition}\label{downward_fkg}
 Let~$\mu$ be a probability measure on~$(\{0,1\}^{\Z}, \mathcal{F})$.
\begin{enumerate}
\item $\mu$ is called \emph{positively associated} if, for any two $\mathcal{F}$-measurable and increasing sets~$B, B' \subset \{0,1\}^{\Z}$, we have~$\mu(B \cap B') \ge \mu(B) \mu(B')$.
\item $\mu$ is called \emph{downward FKG} if, for any finite~$\Lambda \subset {\Z}$, the conditional measure~$\mu(\cdot \mid \eta \equiv 0 \text{ on } \Lambda)$ is positively associated.
\end{enumerate}
\end{definition}
\begin{proposition}\label{prop:vdb}
The  measure~$\nu_p$   is downward FKG.
\end{proposition}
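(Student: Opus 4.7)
I will mimic the argument given by Liggett and Steif in \cite{LS06} for the upper invariant measure of the continuous-time contact process, transposed to our discrete-time oriented percolation, where the iid Bernoulli graphical representation makes the argument somewhat more transparent. Extending the field $(Z(z,n))$ to $n \in \mathbb{Z}_{\leq 0}$ by stationarity, $\nu_p$ can be realized as the law of the configuration $\eta$ defined by $\eta(z) = 1$ iff there exists an infinite backward path $z = z_0, z_{-1}, z_{-2}, \ldots$ with $z_{-k-1} \in \{z_{-k}, z_{-k}+\vec{e}_1\}$ and $Z(z_{-k},-k) = 1$ for all $k \geq 0$. In particular each $\{\eta(z) = 1\}$ is a coordinate-wise increasing event in the iid Bernoulli field $Z$.

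Fix a finite $\Lambda \subset \mathbb{Z}$. The core of the proof is a backward cluster-exploration: for each $x \in \Lambda$ algorithmically reveal the cluster $C(x,0)$ of space-time points reachable backward from $(x,0)$ along $Z=1$ sites, together with the outer boundary $\partial C(x,0)$ of $Z=0$ sites that terminate the exploration; set $\bar C = \bigcup_{x\in\Lambda} C(x,0)$. One then directly verifies: (a) $\{\eta \equiv 0 \text{ on }\Lambda\} = \{|\bar C|<\infty\}$; (b) conditionally on any realization of $\bar C$, the $Z$ values on the complement of $\bar C \cup \partial \bar C$ remain iid Bernoulli$(p)$; (c) if $|\bar C|<\infty$ and $y \notin \Lambda$, any infinite backward path from $(y,0)$ must avoid $\bar C$, since upon entering the finite set $\bar C$ such a path can continue only inside $\bar C \cup \partial \bar C$ and is thus trapped. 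Items (b) and (c) together imply that, conditionally on $\bar C$, each $\eta(y)$ with $y \notin \Lambda$ is a coordinate-wise increasing function of iid Bernoulli$(p)$ variables, so that the classical FKG inequality for product measures yields positive association of the conditional law of $(\eta(y))_{y\notin\Lambda}$ given $\bar C$.

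The main obstacle is to remove the conditioning on $\bar C$: a mixture of positively associated measures is not in general positively associated. As in \cite{LS06}, this is handled by first working at finite time horizons, where all relevant clusters are automatically finite and (a)--(c) apply verbatim. One establishes downward FKG inductively for the finite-time marginal $\nu_p^{(n)}$ (the law of $\eta_n$ started from $\underline{1}$), exploiting that the conditional distributions of $(\eta(y))_{y \notin \Lambda}$ given $\bar C = c$ are stochastically monotone in $c$, and combining this monotonicity with an Ahlswede--Daykin-type lattice inequality for the joint law of $\bar C$ and the unrevealed $Z$-field. Passing to the weak limit $n \to \infty$ -- positive association is preserved by weak limits, and conditioning on the finite-volume cylinder event $\{\eta \equiv 0 \text{ on }\Lambda\}$ is continuous since this event has positive probability -- then transfers positive association from $\nu_p^{(n)}(\,\cdot \mid \eta \equiv 0 \text{ on }\Lambda)$ to $\nu_p(\,\cdot \mid \eta \equiv 0 \text{ on }\Lambda)$, yielding Proposition \ref{prop:vdb}.
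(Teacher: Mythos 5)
The paper does not actually carry out a proof of Proposition~3.9: it explicitly states that the details are omitted and that the result ``can be deduced from \cite[Theorem~3.1]{vdBHK06} analogously to \cite[(20)]{vdBHK06}''. So the paper's ``proof'' is a citation to a general theorem of van den Berg--H\"aggstr\"om--Kahn on the downward FKG property of limiting distributions of certain monotone chains built from iid data, while your proposal is a from-scratch exploration argument. Your preparatory steps are sound: the realization of $\nu_p$ via infinite backward oriented paths in a stationary iid field, the backward cluster exploration, items (a)--(c), and the observation that conditionally on $\bar C$ the law of $(\eta(y))_{y\notin\Lambda}$ is that of an increasing function of a fresh iid Bernoulli field, hence positively associated by the product-measure FKG inequality. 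You also correctly pinpoint the genuine difficulty, namely that $\nu_p(\cdot \mid \eta\equiv 0\text{ on }\Lambda)$ is a \emph{mixture} over realizations of $\bar C$ of positively associated measures, and such a mixture need not itself be positively associated.

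The gap is that your proposed resolution of this difficulty does not hold up as written. Passing to finite time horizons does not remove the mixture: the event $\{\eta_n\equiv 0\text{ on }\Lambda\}$ is still a union over cluster realizations, and one is conditioning on exactly the same kind of decreasing event, so the same obstruction reappears verbatim. The stochastic monotonicity you invoke ($c\mapsto\mu_c$ decreasing in reverse inclusion) is indeed correct, but to go from ``each $\mu_c$ is positively associated and $c\mapsto\mu_c$ is monotone'' to ``the mixture is positively associated'' one would in addition need some positive-dependence structure on the law of $\bar C$ conditional on $A=\{\eta\equiv 0\text{ on }\Lambda\}$; since $A$ is a \emph{decreasing} event in the iid field and $\bar C$ is an \emph{increasing} function of it, and since conditioning a product measure on a decreasing event does not in general preserve positive association, this is precisely where the work has to happen. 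Your appeal to ``an Ahlswede--Daykin-type lattice inequality for the joint law of $\bar C$ and the unrevealed $Z$-field'' is left completely unspecified -- which four functions, on which lattice, and why the hypotheses hold are not stated -- and this is exactly the nontrivial content of van den Berg--H\"aggstr\"om--Kahn's Theorem~3.1 (and its supporting lemmas), not something that can be waved through. Finally, the phrase ``as in \cite{LS06}'' is misleading: Liggett--Steif do not prove the downward FKG property themselves but likewise cite \cite{vdBHK06} for it. So as it stands, the crux of the argument -- promoting conditional positive association given $\bar C$ to positive association given $A$ -- is not established.
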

Note that the continuous-time analogue of this result (i.e., that the upper invariant measure of the contact process on $\mathbb{Z}$ is downward FKG) is
proved in \cite{vdBHK06}, see equation (20) therein.  As it is pointed out in the remark after \cite[Theorem 2.1]{LS06}, the paper \cite{vdBHK06} is mainly devoted to the discrete time setting, and the continuous time results are deduced from them.
We decided to omit the details of the proof of Proposition \ref{prop:vdb}, which can be deduced from \cite[Theorem 3.1]{vdBHK06} analogously to \cite[(20)]{vdBHK06}.

\subsection{Liggett's auxiliary renewal measure}\label{subsect_liggett_renewal}

We now follow~\cite{Li95} to give a number of definitions. Define the function~$F:\mathbb{N} \to \mathbb{R}$ by letting
$$F(1) =  1 \quad \text{and}\quad F(k) = -\frac{2}{(a-b)^2}\sum_{j=0}^k c_j c_{k-j}a^{2j}b^{2(k-j)},\;\;k > 1,$$
where
\begin{equation} \label{a_b_c_k}
a = \frac{1-p+\sqrt{1-p}}{p},\;
 b= \frac{1-p-\sqrt{1-p}}{p},\;
  c_k = \frac{(2k)!}{4^k(k!)^2(2k-1)},\; k \geq 0.
\end{equation}

The following proposition is  a special case of \cite[Proposition~2.1]{Li95} so we omit the proof.
\begin{proposition}\label{liggett_formula_F} If
$p \geq \frac{3}{4}$ then~$F$ is positive and non-increasing, moreover the inequality~$\sum_k F(k) < \infty$ also holds.
\end{proposition}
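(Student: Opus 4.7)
The plan is to establish all three properties of $F$---positivity, monotonicity, and summability---simultaneously via a single contour-integral representation. Write $H(k) := \sum_{j=0}^k c_j c_{k-j} a^{2j} b^{2(k-j)}$, so that $F(k) = -2 H(k)/(a-b)^2$ for $k \geq 2$, and observe (using $\sum_{k\geq 0} c_k x^k = -\sqrt{1-x}$) that $H(k) = [x^k]\, G(x)$ for the generating function $G(x) := \sqrt{(1-a^2 x)(1-b^2 x)}$. Since $a > 0 > b$ with $a^2 > b^2$, the function $G$ is analytic on $\mathbb{C}\setminus[1/a^2, 1/b^2]$, has boundary values $G(t\pm i 0) = \pm i\sqrt{(a^2 t-1)(1-b^2 t)}$ on the cut, and behaves at infinity as $G(x) = |ab|\, x + O(1)$, where $|ab| = -ab = (1-p)/p$.

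Starting from Cauchy's formula $H(k) = (2\pi i)^{-1}\oint_{|x|=\rho} G(x)\, x^{-k-1}\, dx$ and deforming the contour outward around a keyhole bypassing the branch cut, the keyhole will contribute $2i\int_{1/a^2}^{1/b^2}\sqrt{(a^2 t - 1)(1-b^2 t)}\, t^{-k-1}\, dt$, while the circle at infinity will contribute $2\pi i |ab|$ when $k=1$ and vanish for $k \geq 2$. Using the identity $2|ab| + a^2 + b^2 = (a-b)^2$, this yields the unified representation
\[
F(k) \;=\; \frac{2}{\pi(a-b)^2}\int_{1/a^2}^{1/b^2}\frac{\sqrt{(a^2 t - 1)(1-b^2 t)}}{t^{k+1}}\, dt, \qquad k \geq 1.
\]
As a pleasant consistency check, at $k=1$ the integral evaluates to $\pi(a-b)^2/2$, recovering the prescribed value $F(1) = 1$.

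From this representation, all three properties become transparent. Positivity is immediate since the integrand is strictly positive on $(1/a^2, 1/b^2)$. For monotonicity,
\[
F(k) - F(k+1) \;=\; \frac{2}{\pi(a-b)^2}\int_{1/a^2}^{1/b^2}\sqrt{(a^2 t - 1)(1-b^2 t)}\cdot\frac{t-1}{t^{k+2}}\, dt,
\]
and the hypothesis $p \geq 3/4$ is precisely equivalent to $a^2 \leq 1$, so the integration interval lies in $[1,\infty)$ and $(t-1) \geq 0$ throughout. Summability follows from summing the geometric series under the integral to obtain
\[
\sum_{k\geq 1} F(k) \;=\; \frac{2}{\pi(a-b)^2}\int_{1/a^2}^{1/b^2}\frac{\sqrt{(a^2 t - 1)(1-b^2 t)}}{t(t-1)}\, dt,
\]
which is finite; even in the borderline case $p=3/4$ where $1/a^2 = 1$, the numerator factor $\sqrt{a^2 t - 1} = \sqrt{t-1}$ tames the $(t-1)^{-1}$ singularity to an integrable $(t-1)^{-1/2}$.

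The main technical obstacle will be the contour-deformation step itself: one must correctly identify the branch of $\sqrt{\cdot}$ above and below the cut, expand $G$ to sufficient order at infinity, and in particular handle the non-vanishing large-circle contribution for $k=1$, which is precisely what makes the integral formula consistent with the separately prescribed initial value $F(1) = 1$. Once the integral representation is in place, the three remaining conclusions reduce to elementary one-variable sign observations on the integration interval.
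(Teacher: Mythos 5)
The paper does not prove this proposition---it invokes~\cite[Proposition~2.1]{Li95} and omits the argument---so your generating-function/contour-integral route is an independent approach. The core of it is correct for $k\ge 2$: the identity $\sum_{k\ge0}H(k)x^k = \sqrt{(1-a^2x)(1-b^2x)}=:G(x)$ holds, $G$ continues analytically to $\mathbb{C}\setminus[1/a^2,1/b^2]$, and the contour deformation does yield
$$F(k) = \frac{2}{\pi(a-b)^2}\int_{1/a^2}^{1/b^2}\frac{\sqrt{(a^2 t - 1)(1-b^2 t)}}{t^{k+1}}\,dt, \qquad k\ge 2,$$
from which positivity, the decrease on $\{2,3,\dots\}$, and summability of $\sum_{k\ge2}F(k)$ follow as you describe, using $p\ge 3/4\iff 1/a^2\ge 1$.

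However, the extension to $k=1$ and the ``consistency check'' are wrong. The branch of $G$ with $G(0)=1$ is positive on $(-\infty,1/a^2)$, so $G(x)\sim\sqrt{a^2b^2x^2}=|ab|\,|x|=-|ab|\,x$ as $x\to-\infty$; by uniqueness of the Laurent expansion, $G(x)=-|ab|\,x+O(1)$ at infinity, and correspondingly $G(t\pm i0)=\mp i\sqrt{(a^2t-1)(1-b^2t)}$, the opposite of what you wrote. (Your two stated signs are also mutually inconsistent: with both of them the deformation forces $H(1)>0$, yet $H(1)=-\tfrac12(a^2+b^2)<0$.) The large circle therefore contributes $-2\pi i|ab|$ at $k=1$, and a short computation---or the direct substitution $t\mapsto \tfrac{a^2+b^2}{2}+\tfrac{a^2-b^2}{2}\cos\theta$, which evaluates the $k=1$ integral to $\pi\bigl(\tfrac{a^2+b^2}{2}-|ab|\bigr)$---gives
$$\frac{2}{\pi(a-b)^2}\int_{1/a^2}^{1/b^2}\frac{\sqrt{(a^2 t - 1)(1-b^2 t)}}{t^{2}}\,dt = \frac{a^2+b^2-2|ab|}{(a-b)^2} = \left(\frac{a+b}{a-b}\right)^2 = 1-p \;\ne\; 1.$$
So the integral representation genuinely fails at $k=1$; $F(1)=1$ is a separately prescribed value, strictly larger than the extrapolated formula.

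The concrete gap this leaves is $F(1)\ge F(2)$: your telescoping difference identity uses the $k=1$ formula. It is easily patched---from $t\ge 1$, $F(2)\le\frac{2}{\pi(a-b)^2}\int\frac{\sqrt{(a^2t-1)(1-b^2t)}}{t^2}\,dt=1-p<1=F(1)$, or compute directly $F(2)=-\tfrac{2}{(a-b)^2}H(2)=\tfrac{(a+b)^2}{4}=\tfrac{(1-p)^2}{p^2}\le 1$ for $p\ge\tfrac12$---but you must supply it. Your closed form for $\sum_{k\ge1}F(k)$ is likewise off by a bounded amount, though the finiteness conclusion is unaffected. With the sign corrected and $F(1)\ge F(2)$ checked separately, the argument goes through.
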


   The following estimate will also be useful.
\begin{lemma}\label{lem:bound_psi} For any~$k \geq 2$ we have
	$$F(k) \leq \frac{4}{(a-b)^2}a^{2k}.$$
\end{lemma}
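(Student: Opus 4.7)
The plan is to bound the convolution $S_k := \sum_{j=0}^k c_j c_{k-j}a^{2j}b^{2(k-j)}$ directly, by isolating the two negative contributions coming from $c_0$. A brief inspection of \eqref{a_b_c_k} shows that $c_0 = \frac{0!}{4^0(0!)^2(-1)} = -1$, while for every $k \geq 1$ one has $c_k = \frac{1}{2k-1}\binom{2k}{k}4^{-k} > 0$. Hence, for any $k \geq 2$, the boundary indices $j=0$ and $j=k$ in the sum defining $S_k$ contribute $c_0 c_k b^{2k} = -c_k b^{2k}$ and $c_k c_0 a^{2k} = -c_k a^{2k}$ respectively, while every interior index $1 \leq j \leq k-1$ contributes a non-negative term (the product of two positive $c$'s and of $a^{2j}$ with $b^{2(k-j)}$, the latter being an even power of a real number and thus non-negative). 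Consequently
\[
S_k \;\geq\; -c_k\bigl(a^{2k}+b^{2k}\bigr) \qquad \text{for all } k \geq 2.
\]

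I would then record two elementary facts that convert this into the desired inequality. First, setting $u := \sqrt{1-p} \in (0,1)$ one computes $|a| = u(1+u)/p$ and $|b| = u(1-u)/p$, so $|a|/|b| = (1+u)/(1-u) > 1$ and therefore $b^{2k} \leq a^{2k}$. Second, the classical estimate $\binom{2k}{k} \leq 4^k$ gives $c_k \leq 1/(2k-1) \leq 1$ for every $k \geq 1$. Substituting both facts into the display above yields $S_k \geq -c_k\cdot 2a^{2k} \geq -2a^{2k}$, and multiplying through by the negative factor $-\tfrac{2}{(a-b)^2}$ (which flips the inequality) gives
\[
F(k) \;=\; -\tfrac{2}{(a-b)^2}\,S_k \;\leq\; \tfrac{4}{(a-b)^2}\,a^{2k},
\]
which is exactly the claim of Lemma~\ref{lem:bound_psi}.

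The argument is a direct piece of bookkeeping and I do not expect any serious obstacle; the only point worth flagging is that the boundary indices $j=0$ and $j=k$ are genuinely distinct precisely when $k \geq 2$ (for $k=1$ they collapse into a single term and $F(1)$ is defined by hand), which is why the lemma is stated for $k \geq 2$. One could reformulate the argument via the generating-function identity $\sum_{k \geq 0}c_k x^k = -\sqrt{1-x}$, which identifies $S_k$ with the $k$-th Taylor coefficient of $\sqrt{(1-a^2z)(1-b^2z)}$, but I see no real advantage over the direct accounting sketched above and would not take that route.
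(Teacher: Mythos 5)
Your proof is correct and follows essentially the same line as the paper's own argument: isolate the two boundary terms $j=0$ and $j=k$ (the only negative ones, since $c_0=-1$ and $c_j\geq 0$ for $j\geq 1$), drop the non-negative interior terms, and then use $c_k\leq 1$ together with $|a|\geq|b|$ (hence $b^{2k}\leq a^{2k}$) to finish. The only cosmetic difference is that you spell out the computation $|a|=u(1+u)/p$, $|b|=u(1-u)/p$ to justify $|a|\geq|b|$, whereas the paper simply asserts it.
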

\begin{proof}
	Note that~$a \geq 0$,~$b < 0$ with~$a \geq |b|$,~$c_0 = -1$ and~$c_k \in [0,1]$ for~$k \geq 1$ (since~${2k\choose k} \leq 4^k$). Hence, in the sum in the definition of~$F(k)$, the first and last terms are negative and the other terms are non-negative, and we have
	$$F(k) \leq -\frac{2}{(a-b)^2} (c_0 c_k a^0 b^{2k} + c_kc_0a^{2k}b^0) \leq \frac{4}{(a-b)^2}a^{2k}.$$
\end{proof}

\begin{definition}\label{def_renew_measure}
 By Proposition \ref{liggett_formula_F},
 we can define the probability distribution~$\psi$  on~$\mathbb{N}$ by setting~$\psi([n,\infty)) = F(n)$, and also the (unique) translation invariant renewal measure~$\Psi$ on~$\{0,1\}^\Z$ with the property that the distance between two successive 1's are independent and with distribution~$\psi$. We denote by $\eta'$ the
 random element of $\{0,1\}^\Z$ with distribution $\Psi$.
\end{definition}
If $k \leq \ell \in \mathbb{Z}$, let us denote $[k,\ell]:=\{k,\dots,\ell\}$.
 Our next Proposition will easily follow from the results of~\cite{Li95}.
\begin{proposition}\label{prop:ligg_dom}
	Assume~$p \geq \frac{3}{4}$. For any~$k \in \mathbb{N}$,
	$$ \Psi(\, \eta' \equiv 0 \text{ on } [0,k]\, ) \ge \nu_p(\,  \eta \equiv 0 \text{ on }  [0,k+1] \, ) .$$
\end{proposition}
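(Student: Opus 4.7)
The strategy is to adapt to the discrete-time setting the proof of the continuous-time analog given by Liggett in \cite{Li95}. Write $u_j := \nu_p(\eta \equiv 0 \text{ on } [0,j-1])$ for the $\nu_p$-hole probability over $j$ consecutive sites and $v_k := \Psi(\eta' \equiv 0 \text{ on } [0,k])$; the statement to prove becomes $v_k \ge u_{k+2}$. The first step will be to express $v_k$ explicitly in terms of $F$. Using the length-biasing/inspection formula for a stationary renewal process on $\mathbb{Z}$ with gap distribution $\psi$ (so that a gap of length $\ell$ occurs length-biased with probability $\ell\psi(\ell)/E[\psi]$ and the origin sits uniformly inside it), one obtains
\[
v_k \;=\; \frac{1}{\sum_{j \ge 1} F(j)}\sum_{j \ge k+2} F(j),
\]
the normalizing sum being finite by Proposition~\ref{liggett_formula_F}.

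Next, I would derive a linear recursion for $(u_j)$ from the stationarity of $\nu_p$ under the dynamics \eqref{eq:def_of_eta}. Conditioning on the independent noise $(Z(z,1))_{z \in [0,k-1]}$ and decomposing according to the subset $S \subseteq [0,k-1]$ on which $Z(z,1)=1$ (so that the constraint $\eta_1(z)=0$ reduces to $\eta_0(z)=\eta_0(z+1)=0$ for those $z \in S$), one obtains
\[
u_k \;=\; \sum_{S \subseteq [0,k-1]} p^{|S|}(1-p)^{k-|S|}\, \nu_p\bigl(\eta \equiv 0 \text{ on } S \cup (S+1)\bigr).
\]
Since $S \cup (S+1)$ decomposes into disjoint runs of consecutive integers, the right-hand side is a polynomial in joint hole probabilities of $\nu_p$ on finite unions of intervals. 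Downward FKG (Proposition~\ref{prop:vdb}) enters precisely here to relate these joint hole probabilities back to hole probabilities on single intervals, so that one can close the recursion in terms of $u_0,u_1,\dots,u_{k+1}$ alone. This is the discrete analog of the differential identity exploited by Liggett, with sums over subsets $S$ playing the role of his generator calculus.

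The crux of the argument is then to verify that the sequence $\tilde v_j := (\sum_{i \ge 1} F(i))^{-1}\sum_{i \ge j+2} F(i)$ satisfies the inequality dual to this recursion, after which induction on $k$ yields $v_k \ge u_{k+2}$. This reduces to an algebraic identity for the generating function $\sum_k F(k)z^k$: the specific form of $F$, with $a,b$ the roots of the quadratic $p\alpha^2 - 2(1-p)\alpha - (1-p) = 0$ dictated by the dynamics (so that $a+b = 2(1-p)/p$ and $ab = -(1-p)/p$) and the Catalan-type coefficients $c_k = \binom{2k}{k}/(4^k(2k-1))$ arising from the square-root expansion that appears on inverting this quadratic equation, is constructed precisely so that this identity holds. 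The main obstacle is this final generating-function verification, which is an intricate but essentially combinatorial computation transcribed from Liggett's continuous-time argument. The hypothesis $p \ge \tfrac{3}{4}$ enters here as the exact condition ensuring $a \le 1$, which is in turn needed to guarantee that $F$ is positive, non-increasing and summable (Proposition~\ref{liggett_formula_F}) so that $\psi$ is a bona fide probability on $\mathbb{N}$ and the inductive comparison closes; the tail bound $F(k) \le 4(a-b)^{-2}a^{2k}$ from Lemma~\ref{lem:bound_psi} will be used to justify all manipulations with the infinite sums and, downstream, to produce the explicit density $\theta_2(p) = 1 - a^2$ in Theorem~\ref{thm:apply_ls}.
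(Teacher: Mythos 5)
Your plan diverges substantially from the paper's argument, and there is a concrete gap in the route you propose. The paper does \emph{not} attempt to derive and close a recursion for the hole probabilities~$u_j = \nu_p(\eta \equiv 0 \text{ on } [0,j-1])$. Instead it quotes Liggett's equation~(1.4) in~\cite{Li95} verbatim: for the renewal measure~$\Psi$ built from~$F$ and the finite oriented-percolation process~$(\eta^A_n)$, the avoidance probability~$P_{p,\Psi}(\eta' \cap \eta^A_n = \varnothing)$ is non-increasing in~$n$. That monotonicity is a statement about the \emph{finite-state process}, proved by Liggett through the algebraic identity that $F$ was designed to satisfy; it does not reference the upper invariant measure~$\nu_p$ at all. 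The passage to~$\nu_p$ is then accomplished by taking~$n \to \infty$ and invoking Lemma~\ref{lem:or_perc1} (time reversal and duality), which identifies the limit with~$\nu_p(\eta \equiv 0 \text{ on } [0,k+1])$. Downward FKG plays no role in this proposition; in the paper it appears only later, in Lemma~\ref{lem:quase_dom} and Lemma~\ref{lem:quase2}, to convert the $\liminf$ bound of Corollary~\ref{corollary_liminf_nu} into the actual domination statement. Also note that~\cite{Li95} is already a discrete-time paper (the $(A_n)$ model there), so there is no continuous-to-discrete ``transcription'' to carry out --- one simply matches parameters, as the paper does.

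The genuine gap in your approach is the direction of the downward FKG inequality. Your stationarity expansion
\[
u_k = \sum_{S \subseteq [0,k-1]} p^{|S|}(1-p)^{k-|S|}\,\nu_p\bigl(\eta \equiv 0 \text{ on } S \cup (S+1)\bigr)
\]
is correct, but the joint hole probabilities on the disjoint runs making up~$S \cup (S+1)$ are controlled by downward FKG only from \emph{below}: positive association gives $\nu_p(\eta \equiv 0 \text{ on } A \sqcup B) \geq \nu_p(\eta \equiv 0 \text{ on } A)\,\nu_p(\eta \equiv 0 \text{ on } B)$. Substituting this yields a \emph{lower} bound on $u_k$ in terms of products of the $u_j$'s, whereas to establish $v_k \geq u_{k+2}$ you need an \emph{upper} bound on $u_{k+2}$ (your formula for $v_k$ via the inspection paradox is correct and gives the comparison target). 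So the recursion does not close in the direction you need, and the planned generating-function induction cannot get off the ground without a different idea. Your peripheral observations --- the renewal formula $v_k = \bigl(\sum_{j\geq 1} F(j)\bigr)^{-1}\sum_{j \geq k+2}F(j)$, the identification of $a,b$ as roots of $p\alpha^2 - 2(1-p)\alpha - (1-p)=0$, and the reading of $p \geq 3/4$ as $a \leq 1$ --- are all correct and do underlie Liggett's construction, but they support the citation route, not the recursion route.
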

Before we prove Proposition \ref{prop:ligg_dom}, we state a corollary, which will be a key ingredient of Theorem~\ref{thm:apply_ls}.
 Recall the definition of $\theta_2(p)$ from \eqref{theta_2_R_def}.
\begin{corollary}\label{corollary_liminf_nu} If $p \geq \frac{3}{4}$ then
$  \liminf_{k \to \infty} \nu_p(\,  \eta \equiv 0 \text{ on }  [0,k] \, )^{1/k} \leq 1-\theta_2(p).$
\end{corollary}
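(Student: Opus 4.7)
The plan is to combine Proposition~\ref{prop:ligg_dom}, which controls $\nu_p$ via the stationary renewal measure $\Psi$, with a direct renewal-theoretic identity for $\Psi(\eta' \equiv 0 \text{ on an interval})$, and then exploit the exponential tail bound on $F$ provided by Lemma~\ref{lem:bound_psi}. Note that $1-\theta_2(p) = a^2$, where $a$ is as in~\eqref{a_b_c_k}, and that for $p>3/4$ we have $a<1$ (while for $p=3/4$ we have $a=1$ and the claim is trivial since $\theta_2(3/4)=0$). So from now on assume $p > 3/4$.

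First I will observe that, by Proposition~\ref{prop:ligg_dom} applied with $k-1$ in place of $k$,
\begin{equation*}
\nu_p\bigl(\eta \equiv 0 \text{ on } [0,k]\bigr) \leq \Psi\bigl(\eta' \equiv 0 \text{ on } [0,k-1]\bigr), \qquad k \geq 1.
\end{equation*}
The point is then to estimate the right-hand side. Since $\Psi$ is the stationary renewal measure with inter-arrival distribution $\psi$ satisfying $\psi([j,\infty))=F(j)$ and finite mean $\mu = \sum_{j\ge 1} F(j)$ (finiteness by Proposition~\ref{liggett_formula_F}), a standard renewal-theoretic computation gives
\begin{equation*}
\Psi\bigl(\eta' \equiv 0 \text{ on } [0,n-1]\bigr) = \frac{1}{\mu}\sum_{j \geq n+1} F(j), \qquad n \geq 0.
\end{equation*}
One quick way to see this is to decompose according to the location of the first renewal $\geq 0$: by stationarity and length-biasing of the inter-arrival covering the origin, this location takes value $j \in \mathbb{N}_0$ with probability $F(j+1)/\mu$, and the event of no renewal in $[0,n-1]$ is precisely that this location is at least $n$. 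One may sanity-check the formula against the deterministic and geometric $\psi$.

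Next I will apply Lemma~\ref{lem:bound_psi}, which (since $p>3/4$ implies $a<1$) gives a geometric majorant:
\begin{equation*}
\sum_{j \geq k+1} F(j) \leq \frac{4}{(a-b)^2}\cdot\frac{a^{2(k+1)}}{1-a^2}, \qquad k \geq 1.
\end{equation*}
Combining the previous three displays yields a constant $C=C(p)$ such that $\nu_p(\eta \equiv 0 \text{ on } [0,k]) \leq C\cdot a^{2k}$ for all $k \geq 1$. Taking $k$-th roots and letting $k\to\infty$ gives $\liminf_{k\to\infty}\nu_p(\eta \equiv 0 \text{ on } [0,k])^{1/k} \leq a^2 = 1-\theta_2(p)$, as desired.

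There is no real obstacle here: the only non-routine ingredient is the renewal identity, and that is a short Palm-theoretic computation. Everything else is plugging Lemma~\ref{lem:bound_psi} into a geometric series and extracting the exponential rate.
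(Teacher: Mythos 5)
Your proposal is correct and follows exactly the route the paper sketches: Proposition~\ref{prop:ligg_dom} reduces the problem to estimating $\Psi(\eta' \equiv 0 \text{ on }[0,k-1])$, the renewal identity you derive (via the forward-recurrence-time distribution) is precisely the paper's stated formula $\Psi(\eta' \equiv 0 \text{ on }[0,k-1]) = \Psi(\eta'(0)=1)\sum_{i\ge 1}F(k+i)$ since $\Psi(\eta'(0)=1)=1/\mu$, and the geometric tail from Lemma~\ref{lem:bound_psi} together with $a^2=1-\theta_2(p)$ finishes it. Your separate handling of $p=3/4$ (where $a=1$ and the bound is vacuous) is a sensible precaution that the paper leaves implicit.
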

The proof of Corollary \ref{corollary_liminf_nu} easily follows from Proposition \ref{prop:ligg_dom}, the formula $\Psi(\, \eta \equiv 0 \text{ on } [0,k-1]\, )=\Psi( \eta(0)=1 ) \sum_{i=1}^{\infty}F(k+i)$, Lemma \ref{lem:bound_psi}
 and the fact that
$a^2=1-\theta_2(p)$, cf.\ \eqref{a_b_c_k}. We omit the details.

Before we prove Proposition \ref{prop:ligg_dom}, let us introduce some further notation.
\begin{definition}\label{def_joint_measure_ori_renew}
We take a probability space with probability measure $P_{p,\Psi}$ under which (i) the independent Bernoulli($p$) random variables~$Z(z,n), z \in \mathbb{Z}, n \in \mathbb{N}$ required for the definition of the oriented percolation process
 (cf.\ Definition \ref{def_cellular_automaton})  and (ii) independently a random configuration~$\eta'$ with distribution~$\Psi$  are both defined.
Given some $A \subseteq \mathbb{Z}$, denote by $(\eta^{A}_n)$ the oriented percolation process started from initial state $\eta_0(z)=\mathds{1}[ \, z \in A  \, ]$.
\end{definition}
The proof of Proposition \ref{prop:ligg_dom} will follow from the next lemma.
\begin{lemma}\label{lem:or_perc1}
	For any~$k \in \mathbb{N}$, and any $p \geq \frac{3}{4}$ we have
	\begin{equation}\label{or_perc1_eq}
\lim_{n \to \infty} P_{p,\Psi}\left(\, \eta^{[0,k]}_n \cap \eta'= \varnothing  \,\right)= \nu_p \left(\,  \eta \equiv 0 \text{ on }  [0,k+1] \, \right).
\end{equation}
\end{lemma}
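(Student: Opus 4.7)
The plan is to prove Lemma~\ref{lem:or_perc1} via a time reversal duality for the graphical construction of $(\eta_n)$, which rewrites the event on the left hand side of~\eqref{or_perc1_eq} as an event for a process that converges weakly to $\nu_p$ as $n\to\infty$.

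In the graphical construction associated with the Bernoullis $Z(z,n)$, call an \emph{open forward path} of length $n$ a sequence $(x_0,0),\ldots,(x_n,n)$ with $x_{i+1}\in\{x_i-1,x_i\}$ and $Z(x_i,i)=1$ for $i\in\{1,\ldots,n\}$, so that $\eta^A_n(y)=1$ iff such a path from some $x\in A$ exists. Reading open paths in reverse, one obtains a dual process $(\zeta^B_m)_{0\le m\le n}$ with $\zeta^B_0=B$ and
$\{\eta^{[0,k]}_n\cap B\neq\varnothing\}=\{\zeta^B_n\cap[0,k]\neq\varnothing\}$. Because~\eqref{eq:def_of_eta} imposes the openness check at the target but not at the source of each one step arrow, the auxiliary field $W^B_m(z):=Z(z,n-m)\,\zeta^B_m(z)$ is seen to satisfy the reflected forward dynamics
$$
W^B_{m+1}(x)=Z(x,n-m-1)\cdot \max\bigl(W^B_m(x),W^B_m(x-1)\bigr),\qquad 0\le m\le n-2,
$$
starting from the $p$-thinning $W^B_0(z)=Z(z,n)\,B(z)$, while $\zeta^B_n(x)=\max(W^B_{n-1}(x),W^B_{n-1}(x-1))$ holds without an additional openness check. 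This yields the deterministic-$B$ identity
$$
P_p\bigl(\eta^{[0,k]}_n\cap B=\varnothing\bigr)=P_p\bigl(W^{B}_{n-1}\equiv 0\text{ on }[-1,k]\bigr),
$$
in which the single site enlargement from $[0,k]$ to $[-1,k]$ is produced precisely by the openness free last step.

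Relabelling $\widetilde{Z}(x,m):=Z(x,n-m)$ (still an i.i.d.\ Bernoulli($p$) family) and averaging over $\eta'\sim\Psi$ independent of the $Z$'s, $W^{\eta'}_m$ becomes a reflected oriented percolation process started from the independent $p$-thinning $\eta'\cdot\widetilde{Z}(\cdot,0)$ of $\eta'$, which is a translation invariant random configuration with strictly positive density of $1$'s. The classical attractiveness argument for one dimensional oriented percolation (whose upper invariant measure $\nu_p$ is also the upper invariant of the reflected dynamics, by spatial symmetry of $\nu_p$) shows that starting from any such translation invariant initial distribution of positive density the process converges weakly to $\nu_p$. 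Combined with the translation invariance of $\nu_p$ this gives
$$
\lim_{n\to\infty}P_{p,\Psi}\bigl(W^{\eta'}_{n-1}\equiv 0\text{ on }[-1,k]\bigr)=\nu_p(\eta\equiv 0\text{ on }[-1,k])=\nu_p(\eta\equiv 0\text{ on }[0,k+1]),
$$
which is exactly~\eqref{or_perc1_eq}.

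The main obstacle is the duality bookkeeping in the first step: one has to correctly identify the dual as a reflected oriented percolation process with a $p$-thinned initial condition and an openness free last step, and to recognise that together these two modifications produce the single site enlargement of $[0,k]$ to $[0,k+1]$ built into the statement of the lemma. Once this identity is secured, the convergence $n\to\infty$ is a standard attractiveness argument combined with the translation invariance of the renewal measure $\Psi$.
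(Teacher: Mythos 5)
Your proof is correct and follows essentially the same route as the paper's: a time-reversal / duality computation converts the event $\{\eta^{[0,k]}_n \cap \eta' = \varnothing\}$ into an event about a reflected oriented-percolation process started from a $p$-thinned copy of $\eta'$, and then a convergence-to-$\nu_p$ result together with translation invariance of $\nu_p$ gives the answer. Your $W^{B}_{m}$ is, up to reflection, the paper's process $\doublehat{\eta}_m$ (in the paper the $p$-thinning is built into the extra $Z(\cdot,0)$-check in \eqref{hat_eta_first_step}), and the passage from $[0,k]$ to $[-1,k]$ corresponds to the paper's use of $\mathcal{S}(\mathds{1}_{[0,k]})=\mathds{1}_{[-1,k]}$ in Claim~\ref{cl:or_1}.

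One caveat: the statement that the reflected process started from a translation-invariant positive-density initial law converges weakly to $\nu_p$ is considerably more than \emph{attractiveness}. Attractiveness only sandwiches $\mathcal{L}(W^{\eta'}_{n})$ between $\delta_{\underline{0}}$ and $\nu_p$; to obtain the lower bound one needs the complete convergence theorem for one-dimensional oriented percolation, which is exactly what the paper invokes via \cite{D84} in Lemma~\ref{lem:durr_or_perc}. Labeling this step \emph{classical attractiveness} understates what is being used, even though the conclusion is correct. (Also, the parenthetical appeal to spatial symmetry of $\nu_p$ is unnecessary: translation invariance alone turns $\tilde{\nu}_p(\eta\equiv 0\text{ on }[-1,k])$ into $\nu_p(\eta\equiv 0\text{ on }[0,k+1])$ whether or not $\nu_p$ equals its reflection $\tilde{\nu}_p$.)
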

Notice that we have $k$ on the l.h.s., but $k+1$ on the r.h.s.
The proof of Lemma \ref{lem:or_perc1} follows from standard arguments of time reversal and duality for oriented percolation. We include a proof for completeness, but postpone it to Section~\ref{subsection_oriented_paths_and_duality}.

\begin{proof}[Proof of Proposition \ref{prop:ligg_dom}]
	We start by noting that the transition rules of our process~$(\eta_n)_{n \geq 0}$ with parameter~$p$ are the same as the ones of the process~$(A_n)_{n\geq 0}$ of~\cite{Li95}, with parameters~$q = p$ in the notation of that paper (see the beginning of the Introduction there). The assumption for the aforementioned Proposition~2.1 of~\cite{Li95} is that~$p > \frac12$ and~$q \geq 4p(1-p)$, which when~$p = q$ means simply that~$p \geq \frac34$.
	
\smallskip

	Equation~(1.4) in~\cite{Li95} gives $(*)$ below for any finite subset $A$ of $\mathbb{Z}$:
	
	$$\Psi(\, \eta' \cap A = \varnothing\, )=P_{p,\Psi}(\, \eta' \cap \eta^A_0 = \varnothing\, )  \stackrel{(*)}{\geq} P_{p,\Psi}(\, \eta' \cap \eta^A_1 = \varnothing\, ).
$$
	This can be iterated to yield
	$
	\Psi(\, \eta' \cap A = \varnothing\, )  \geq P_{p,\Psi}(\, \eta' \cap \eta^A_n = \varnothing\, )$,  $n \in \mathbb{N}.
	$
	Applying this to~$A=[0,k]$ and  using \eqref{or_perc1_eq}, we obtain Proposition \ref{prop:ligg_dom}.
\end{proof}

\subsection{$\nu_p$ dominates product Bernoulli}\label{subsect_oriented_perco_dominates_ber}
The goal of Section \ref{subsect_oriented_perco_dominates_ber} is to prove Theorem~\ref{thm:apply_ls}. The key ingredients are
Proposition~\ref{prop:vdb}, Corollary~\ref{corollary_liminf_nu} and the following result, which
is a variant of \cite[Proposition 2.2]{LS06}.
\begin{lemma}\label{lem:quase_dom} If $\nu$ is a probability measure on~$(\{0,1\}^{\Z}, \mathcal{F})$ with the downward FKG property (cf.\ Definition \ref{downward_fkg})
and
\begin{equation}
 \liminf_{k \to \infty} \nu(\,  \eta \equiv 0 \text{ on }  [0,k] \, )^{1/k} \leq 1-\theta
\end{equation}
holds for some $\theta \in (0,1)$, then
$\nu(\, \eta(0) = 1 \mid \eta \equiv 0 \text{ on } [1,k]\, ) \geq \theta$, $k \in \mathbb{N}$
\end{lemma}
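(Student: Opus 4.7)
The plan is to reduce the lemma to a monotonicity property for the ratios $r_k := f(k)/f(k-1)$, where we set $f(k) := \nu(\eta \equiv 0 \text{ on } [0,k])$. In the intended application $\nu = \nu_p$ is translation invariant, so $\nu(\eta \equiv 0 \text{ on } [1,k]) = f(k-1)$ and
\[ \nu(\eta(0) = 1 \mid \eta \equiv 0 \text{ on } [1,k]) = 1 - r_k; \]
it therefore suffices to prove $r_k \leq 1 - \theta$ for every $k \in \mathbb{N}$. (Translation invariance is not made explicit in the statement of the lemma but certainly holds in the intended application to $\nu_p$; we take it for granted here.)

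The key step is to show that the sequence $(r_k)_{k \geq 1}$ is non-decreasing in $k$. Applying translation invariance once more, one may rewrite
\[ r_{k+1} = \nu(\eta(k) = 0 \mid \eta \equiv 0 \text{ on } [0,k-1],\; \eta(-1) = 0), \qquad r_k = \nu(\eta(k) = 0 \mid \eta \equiv 0 \text{ on } [0,k-1]). \]
By the downward FKG hypothesis applied with $\Lambda = [0,k-1]$, the conditional measure $\nu(\,\cdot \mid \eta \equiv 0 \text{ on } [0,k-1])$ is positively associated. Since $\{\eta(-1) = 0\}$ and $\{\eta(k) = 0\}$ are both decreasing events, they are positively correlated under this conditional measure, and the inequality $r_{k+1} \geq r_k$ follows at once.

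Once monotonicity is established, the sequence $(r_k)$ is bounded above by $1$ and thus converges to some limit $r \in [0,1]$. From the telescoping identity $f(k) = f(0) \prod_{i=1}^{k} r_i$ one deduces that the geometric mean $(\prod_{i=1}^{k} r_i)^{1/k}$ converges to $r$, whence $f(k)^{1/k} \to r$ as $k \to \infty$. The hypothesis $\liminf_k f(k)^{1/k} \leq 1 - \theta$ then forces $r \leq 1 - \theta$, and since $r_k \leq r$ for every $k$, we conclude $r_k \leq 1 - \theta$, which is the desired inequality.

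The main obstacle is the monotonicity step: without the downward FKG assumption, the asymptotic hypothesis on $f(k)^{1/k}$ would only control the geometric mean of the $r_k$'s, and there would be no mechanism to upgrade this to a uniform bound. The role of downward FKG is thus precisely to turn an asymptotic statement about $f(k)$ into a pointwise inequality for each individual ratio $r_k$.
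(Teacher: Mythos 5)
Your proof is correct and follows the same approach as the argument the paper defers to, namely the proof of \cite[Proposition 2.2]{LS06}: reduce the lemma to showing the conditional ratios $r_k = \nu(\,\eta(0)=0 \mid \eta\equiv 0 \text{ on } [1,k]\,)$ are non-decreasing, obtain this monotonicity from downward FKG (adding one more conditioned zero to the left can only increase the conditional probability of a zero on the right), and then use the rate hypothesis to bound $\lim_k r_k$. You are also right to flag that translation invariance of $\nu$ is tacitly used here (it is not explicit in the statement of Lemma~\ref{lem:quase_dom}, but it is needed to identify $\nu(\,\eta\equiv 0\text{ on }[1,k]\,)$ with $f(k-1)$, and it holds in the intended application $\nu=\nu_p$).
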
	
We omit the proof, which is a step-by-step replica of the proof of \cite[Proposition 2.2]{LS06}. Note that by
Proposition~\ref{prop:vdb} and Corollary~\ref{corollary_liminf_nu} we can apply Lemma \ref{lem:quase_dom} with $\nu=\nu_p$ and $\theta=\theta_2(p)$, from
which the next lemma follows.
\begin{lemma}\label{lem:quase2}
Let~$\Lambda, \Lambda' \subset \mathbb{N}$ be disjoint and finite. Then,
$$\nu_p(\, \eta(0)= 1 \mid \eta \equiv 0 \text{ on } \Lambda,\; \eta \equiv 1 \text{ on } \Lambda' \, ) \geq \theta_2(p). $$
\end{lemma}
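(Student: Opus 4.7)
The plan is to derive Lemma \ref{lem:quase2} by chaining Lemma \ref{lem:quase_dom} (applied to $\nu_p$, whose hypotheses are supplied by Proposition \ref{prop:vdb} and Corollary \ref{corollary_liminf_nu}) with two short FKG-type comparisons, exploiting the fact that conditioning $\nu_p$ on $\eta \equiv 0$ on any finite subset of $\mathbb{Z}$ yields a positively associated measure.

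First, I would invoke Lemma \ref{lem:quase_dom} with $\nu = \nu_p$ and $\theta = \theta_2(p)$: the downward FKG property of $\nu_p$ is Proposition \ref{prop:vdb}, and the required $\liminf$ bound is Corollary \ref{corollary_liminf_nu}. This gives
$$\nu_p\bigl(\eta(0) = 1 \,\big|\, \eta \equiv 0 \text{ on } [1,k]\bigr) \geq \theta_2(p), \qquad k \in \mathbb{N}.$$
Next, I would extend the bound to an arbitrary finite $\Lambda \subset \mathbb{N}$: pick $k$ large enough that $\Lambda \subset [1,k]$ and write $\Lambda'' := [1,k] \setminus \Lambda$. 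By Proposition \ref{prop:vdb}, the conditional measure $\mu := \nu_p(\cdot \mid \eta \equiv 0 \text{ on } \Lambda)$ is positively associated. The event $\{\eta(0) = 1\}$ is increasing and $\{\eta \equiv 0 \text{ on } \Lambda''\}$ is decreasing, so applying FKG to $\{\eta(0)=1\}$ and the complementary increasing event $\{\eta \not\equiv 0 \text{ on } \Lambda''\}$ and rearranging yields $\mu(\eta(0) = 1 \mid \eta \equiv 0 \text{ on } \Lambda'') \leq \mu(\eta(0) = 1)$, that is,
$$\nu_p\bigl(\eta(0) = 1 \,\big|\, \eta \equiv 0 \text{ on } \Lambda\bigr) \geq \nu_p\bigl(\eta(0) = 1 \,\big|\, \eta \equiv 0 \text{ on } [1,k]\bigr) \geq \theta_2(p).$$

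Finally, I would introduce the conditioning on $\eta \equiv 1$ on $\Lambda'$. Since $\Lambda \cap \Lambda' = \varnothing$, the combined conditioning event has positive probability. Working again under the positively associated measure $\mu = \nu_p(\cdot \mid \eta \equiv 0 \text{ on } \Lambda)$, both $\{\eta(0)=1\}$ and $\{\eta \equiv 1 \text{ on } \Lambda'\}$ are increasing, so positive association gives
$$\nu_p\bigl(\eta(0)=1 \,\big|\, \eta \equiv 0 \text{ on } \Lambda,\; \eta \equiv 1 \text{ on } \Lambda'\bigr) \geq \nu_p\bigl(\eta(0)=1 \,\big|\, \eta \equiv 0 \text{ on } \Lambda\bigr) \geq \theta_2(p),$$
which is the desired conclusion.

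There is no genuine obstacle in this argument once Lemma \ref{lem:quase_dom} has been established; it is a mechanical application of the positive-association property guaranteed by downward FKG. The only point requiring a moment of care is the direction of the inequality in the intermediate step, where one of the two events is decreasing rather than increasing and hence an appropriate complementation is needed before invoking FKG.
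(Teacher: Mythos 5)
Your proof is correct and takes essentially the same route as the paper, which omits the details and refers to the proof of Proposition~2.3 in \cite{LS06}: namely, invoke Lemma~\ref{lem:quase_dom} (via Proposition~\ref{prop:vdb} and Corollary~\ref{corollary_liminf_nu}) and then use the positive association supplied by the downward FKG property of $\nu_p$ to pass from conditioning on $\{\eta \equiv 0 \text{ on } [1,k]\}$ to the more general conditioning event. You have correctly spelled out the two FKG comparisons the paper leaves implicit, including the complementation needed to handle the decreasing event $\{\eta \equiv 0 \text{ on } [1,k]\setminus\Lambda\}$.
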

\begin{proof}
This can be obtained as an immediate consequence of Lemma~\ref{lem:quase_dom} and another application of Proposition \ref{prop:vdb}, so we omit the details (see the proof of Proposition~2.3 in~\cite{LS06}).
\end{proof}
\begin{proof}[Proof of Theorem~\ref{thm:apply_ls}]
As observed in~\cite{LS06}, a coupling between~$\nu_p$ and the Bernoulli product measure $\pi_{\theta_2(p)}$ which establishes the required domination can now be constructed sequentially, using Lemma~\ref{lem:quase2}. This completes the proof of Theorem~\ref{thm:apply_ls}.
\end{proof}

\subsection{Oriented paths and duality: proof of Lemma~\ref{lem:or_perc1}}\label{subsection_oriented_paths_and_duality}

As already mentioned, the proof of Lemma~\ref{lem:or_perc1} will depend on standard tools of time reversal and duality.  Rather than considering a time reversal of the process~$(\eta_n)$ itself, it will be more convenient to pass to an auxiliary process~$(\hat{\eta}_n)$, and then consider its dual process~$(\doublehat{\eta}_n)$.

In order to define these processes, recall the Bernoulli($p$) random variables
\begin{equation}
\{Z(z,n):z \in \Z,\;n\in \mathbb{N}_0\}
\end{equation} used in the definition of~$(\eta_n)$, see~\eqref{eq:def_of_eta}. Let us introduce notation to refer to the oriented site percolation paths produced from~$Z(\cdot,\cdot)$.

\begin{definition}\label{def_active_path_op}
	Given~$w,z \in \Z$ and~$m \leq n \in \mathbb{N}_0$, we write~$(w,m) \rightsquigarrow (z,n)$ if there exists a sequence
	$w =z_0 ,\; z_1,\;\ldots,\; z_{n-m} = z$
	such that
	\begin{align*}&z_{i+1} - z_i \in \{-1,0\},\quad i \in \{0,\ldots, n-m-1\} \\&\qquad\text{and}\quad Z(z_i,m+i)=1,\quad i \in \{0,\ldots,n-m\}.\end{align*}
\end{definition}
(Note that, with this notation,~$(z,n) \rightsquigarrow (z,n)$ if and only if~$Z(z,n) = 1$).

Given~$\eta \in \{0,1\}^\Z$, define~$\mathcal{S}(\eta) \in \{0,1\}^\Z$ by
$$[\mathcal{S}(\eta)](z) = \max(\eta(z),\eta(z+1)),\quad z \in \Z.$$
It follows from~\eqref{eq:def_of_eta} that, given an initial configuration~$\eta_0 \in \{0,1\}^\Z$, we have
\begin{equation}\label{eq:or_paths_basic}
\eta_n(z) = \mathds{1}\left[ \, \exists z': [\mathcal{S}(\eta_0)](z') = 1 \text{ and } (z',1) \rightsquigarrow (z,n) \, \right],\quad n \geq 1,\;z \in \Z.\end{equation}
We now define a process~$(\hat{\eta}_n)_{n \geq 0}$ on~$\{0,1\}^\Z$ for arbitrary~$\hat{\eta}_0$  by setting
\begin{equation}\label{eq:or_paths}\hat{\eta}_n(z) = \mathds{1}\left[ \, \exists z': \hat{\eta}_0(z') = 1 \text{ and } (z',0) \rightsquigarrow (z,n) \, \right],\quad n \geq 1,\;z \in \Z.\end{equation}
Note that the evolution of~$(\hat{\eta}_n)$ is given by
\begin{align}
\label{hat_eta_first_step}
&\hat{\eta}_1(z) = Z(z,1)\cdot \max\left(Z(z,0)\cdot \hat{\eta}_0(z),\;Z(z+1,0) \cdot \hat{\eta}_0(z+1)\right),\\[.2cm]
\label{hat_eta_other_steps}
&\hat{\eta}_{n+1}(z) = Z(z,n+1) \cdot \max\left(\hat{\eta}_n(z),\; \hat{\eta}_n(z+1)\right),\quad n \geq 1
\end{align}
that is, apart from the first step, the evolution of~$(\hat{\eta}_n)$ follows the same rule as that of~$(\eta_n)$. The following observation, which follows from comparing~\eqref{eq:or_paths_basic} and~\eqref{eq:or_paths}, will be useful.
\begin{claim}\label{cl:or_1}
	For any~$\eta \in \{0,1\}^\Z$ and~$n \geq 1$, the law of~$\eta_n$ given~$\eta_0 = \eta$ is equal to the law of~$\hat{\eta}_{n-1}$ given~$\hat{\eta}_0 =\mathcal{S}(\eta)$.
\end{claim}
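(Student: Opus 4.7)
The plan is to compare the two defining formulas \eqref{eq:or_paths_basic} and \eqref{eq:or_paths} on the common probability space carrying the i.i.d.\ Bernoulli$(p)$ field $\{Z(z,k):z \in \Z,\;k \in \mathbb{N}_0\}$, and then exploit the time-stationarity of this field. Substituting $\hat{\eta}_0 = \mathcal{S}(\eta)$ into \eqref{eq:or_paths} gives
\begin{equation*}
\hat{\eta}_{n-1}(z) = \mathds{1}\bigl[\,\exists\, z'\in \Z:\; [\mathcal{S}(\eta)](z')=1 \text{ and } (z',0)\rightsquigarrow (z,n-1)\,\bigr],
\end{equation*}
while \eqref{eq:or_paths_basic} applied to $\eta_0 = \eta$ gives the same indicator except that $(z',0)\rightsquigarrow(z,n-1)$ is replaced by $(z',1)\rightsquigarrow(z,n)$. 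So the claim reduces to showing that the random set of $z$'s for which some such path exists has the same joint law in both cases.

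Next I would introduce the time-shifted field $\tilde Z(z,k) := Z(z,k+1)$ for $z \in \Z$ and $k \in \mathbb{N}_0$. Since $\{Z(z,k)\}$ is i.i.d., the collection $\{\tilde Z(z,k)\}$ has the same joint distribution as $\{Z(z,k)\}$. Unwinding Definition~\ref{def_active_path_op}, one checks directly that a space-time path $w = z_0,z_1,\ldots,z_n = z$ with $z_{i+1}-z_i \in \{-1,0\}$ witnesses $(w,1)\rightsquigarrow (z,n)$ under $Z$ if and only if the shifted path (same spatial sequence, starting one unit earlier in time) witnesses $(w,0)\rightsquigarrow (z,n-1)$ under $\tilde Z$. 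Hence the random configurations
\begin{equation*}
 z \longmapsto \mathds{1}\bigl[\,\exists z':[\mathcal{S}(\eta)](z')=1,\; (z',1)\rightsquigarrow_{Z}(z,n)\,\bigr]
\end{equation*}
and
\begin{equation*}
 z \longmapsto \mathds{1}\bigl[\,\exists z':[\mathcal{S}(\eta)](z')=1,\; (z',0)\rightsquigarrow_{\tilde Z}(z,n-1)\,\bigr]
\end{equation*}
are pointwise \emph{equal} as measurable functions of the underlying fields. By distributional equality of $Z$ and $\tilde Z$, the two joint laws (the first being that of $\eta_n$ given $\eta_0=\eta$, the second that of $\hat{\eta}_{n-1}$ given $\hat{\eta}_0=\mathcal{S}(\eta)$) coincide.

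There is no real obstacle: the only subtlety is bookkeeping the indices, namely that in \eqref{eq:or_paths_basic} the first layer of Bernoulli variables consumed is $Z(\cdot,1)$ (because $\mathcal{S}(\eta_0)$ already encodes the effect of $Z(\cdot,0)$ through the \emph{definition} of $\eta_1$ in \eqref{eq:def_of_eta}), whereas in \eqref{hat_eta_first_step}–\eqref{hat_eta_other_steps} the process $\hat{\eta}$ starts consuming $Z$ from layer $0$. Once this is noted, the identification of the two laws is just the time-shift argument above.
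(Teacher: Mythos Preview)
Your proof is correct and follows exactly the approach the paper intends: compare \eqref{eq:or_paths_basic} with \eqref{eq:or_paths} and invoke the time-shift invariance of the i.i.d.\ field $Z$. One minor quibble: your closing parenthetical that $\mathcal{S}(\eta_0)$ ``encodes the effect of $Z(\cdot,0)$'' is slightly off---the process $(\eta_n)$ never uses $Z(\cdot,0)$ at all (see \eqref{eq:def_of_eta}, where the first layer consumed is $Z(\cdot,1)$)---but this does not affect the argument.
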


Recall the notation of the probability measure $P_{p,\Psi}$ from Definition \ref{def_joint_measure_ori_renew}, under which
we can define the process $(\hat{\eta}^{\mathcal{A}'}_n)$, where $\mathcal{A}'=\{ \, z \in \mathbb{Z}\, : \, \eta'(z)=1\}$, so that
$\hat{\eta}^{\mathcal{A}'}_0=\eta'$, where
  $\eta'$ is independent from $Z(z,n), z \in \mathbb{Z}, n \in \mathbb{N}$ and the law of $\eta'$ is the renewal measure $\Psi$ of Definition \ref{def_renew_measure}.
\begin{lemma} \label{lem:durr_or_perc}
	For any~$k \in \mathbb{N}$,
	\begin{equation}
\lim_{n \to \infty} P_{p,\Psi}\left(\, \hat{\eta}^{\mathcal{A}'}_n \equiv 0 \text{ on } [0,k] \, \right)=\nu_p(\, \eta \equiv 0 \text{ on } [0,k]  \, ).
\end{equation}
\end{lemma}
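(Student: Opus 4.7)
The plan is to use path reversal to recast the event on the left-hand side of the claimed identity in terms of a forward cluster from $[0,k]$, and then pass to the limit using the standard self-duality of oriented site percolation.

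For Step~1, I start from the path representation $\hat\eta_n^{\mathcal{A}'}(z) = \mathds{1}[\exists z' \in \mathcal{A}' : (z',0)\rightsquigarrow(z,n)]$, which identifies the event of interest with $\{\mathcal{A}' \cap D_n = \varnothing\}$, where the backward cluster $D_n := \{z' \in \mathbb{Z} : (z',0)\rightsquigarrow(z,n)\text{ for some }z \in [0,k]\}$ depends only on the environment $\{Z(\cdot,i):0 \le i \le n\}$. Reversing an open path $z_0,\ldots,z_n$ via $w_j := z_{n-j}$ produces a path with increments in $\{0,1\}$ using the relabeled environment $\tilde Z(z,m) := Z(z,n-m)$; a subsequent reflection $u_j := -w_j$ restores increments in $\{-1,0\}$ using $Z''(u,m) := Z(-u,n-m)$. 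Since $Z''$ has the same joint Bernoulli($p$) law as $Z$, this yields $D_n \stackrel{d}{=} -\hat\eta_n^{[-k,0]}$. The stationary renewal measure $\Psi$ is invariant under both translation and reflection (by uniqueness of stationary renewal measures with a given gap law), so $\Psi(\eta' \equiv 0 \text{ on } D_n) \stackrel{d}{=} \Psi(\eta' \equiv 0 \text{ on } \hat\eta_n^{[0,k]})$; together with the independence of $\eta'$ from the $Z$-field, this gives
\[
P_{p,\Psi}\bigl(\hat\eta_n^{\mathcal{A}'} \equiv 0 \text{ on } [0,k]\bigr) = \mathbb{E}\bigl[\Psi\bigl(\eta' \equiv 0 \text{ on } \hat\eta_n^{[0,k]}\bigr)\bigr].
\]

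For Step~2, I pass to the limit in $n$. The event $\{\hat\eta_n^{[0,k]} = \varnothing\}$ is non-decreasing in $n$, and its limit is the extinction event $E := \{\hat\eta^{[0,k]} \text{ dies out}\}$. On $E$, the inner probability is eventually $\Psi(\eta' \equiv 0 \text{ on } \varnothing) = 1$. On $E^c$, the asymptotic shape theorem for one-dimensional oriented site percolation (in the spirit of Durrett's classical results) guarantees that inside the growing survival cone the configuration of $\hat\eta_n^{[0,k]}$ converges to the nontrivial measure $\nu_p$, and in particular eventually contains arbitrarily long intervals of consecutive $1$'s; since $\Psi(\eta' \equiv 0 \text{ on } [0,m-1]) = F(m) \le Ca^{2m} \to 0$ by Lemma~\ref{lem:bound_psi} (recall $a<1$ for $p>3/4$), the inner probability tends to zero a.s.\ on $E^c$. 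Bounded convergence then yields
\[
\lim_{n \to \infty} P_{p,\Psi}\bigl(\hat\eta_n^{\mathcal{A}'} \equiv 0 \text{ on } [0,k]\bigr) = P(E) = P\bigl(\hat\eta^{[0,k]} \text{ dies out}\bigr).
\]

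For Step~3, I apply the path-reversal identity of Step~1 with $\mathcal{A}' = \underline{1}$ (i.e., replacing $\Psi$ by the trivial measure concentrated on the all-ones configuration) to obtain $P(\hat\eta_n^{\underline{1}} \equiv 0 \text{ on } [0,k]) = P(\hat\eta_n^{[0,k]} = \varnothing)$. The right-hand side converges to $P(E)$ by monotonicity, while the left-hand side converges to $\nu_p(\eta \equiv 0 \text{ on } [0,k])$: indeed, $\mathcal{S}(\underline{1}) = \underline{1}$, so Claim~\ref{cl:or_1} gives $\hat\eta_{n-1}^{\underline{1}} \stackrel{d}{=} \eta_n^{\underline{1}}$, which converges in distribution to $\nu_p$. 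Combining this with Step~2 completes the proof.

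The main technical delicacy is carrying out Step~1 with enough care to handle the slightly non-standard first step of $\hat\eta$, which uses $Z(\cdot,0)$ in addition to $Z(\cdot,1)$. The argument nevertheless goes through cleanly because the full set of time indices $\{0,1,\ldots,n\}$ is permuted by the reversal $m \mapsto n-m$, which preserves the joint Bernoulli($p$) distribution of the environment, and the reflection/translation invariance of $\Psi$ removes the need for a precise distributional identity between $D_n$ and $\hat\eta_n^{[0,k]}$.
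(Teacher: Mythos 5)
Your proof is correct in outline but takes a genuinely different route from the one the paper has in mind. The paper's intended argument (as the remark immediately after the lemma makes explicit) is a direct appeal to Durrett's complete convergence theorem: since $\mathcal{A}'$ is almost surely a doubly-infinite set of positive density, the process $\hat\eta^{\mathcal{A}'}_n$ survives almost surely and its law converges weakly to $\nu_p$, from which the displayed limit is immediate. You instead apply the duality that the paper has already built (in the spirit of Claims~\ref{cl:or_1}--\ref{cl:or_2}) twice: first to rewrite the left-hand side as $P_{p,\Psi}\bigl(\hat\eta_n^{[0,k]} \cap \mathcal{A}' = \varnothing\bigr)$, reducing to the behaviour of the process with a \emph{finite} seed $[0,k]$; and then a second time with $\Psi$ replaced by $\delta_{\underline{1}}$ to identify the extinction probability $P(E)$ with $\nu_p(\eta \equiv 0 \text{ on } [0,k])$. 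This decouples the duality bookkeeping from the probabilistic ingredient, and has the advantage of only requiring the shape/complete-convergence machinery for finite initial conditions, which is the form in which it is most standardly proved.

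One place where you should tighten the argument is Step~2. The assertion that, on the survival event, $\hat\eta_n^{[0,k]}$ \emph{almost surely eventually} contains arbitrarily long intervals of consecutive $1$'s does not follow directly from the cited convergence, which is distributional; what the complete convergence theorem gives is that, conditionally on survival, the probability that $\hat\eta_n^{[0,k]}$ contains such an interval (somewhere in the linearly growing cone) tends to $1$. This is enough: the bounded random variables $\Psi(\eta' \equiv 0 \text{ on } \hat\eta_n^{[0,k]})$ then converge to $\mathds{1}_E$ in probability, and boundedness upgrades this to $L^1$ convergence, which is all you need for the final display. You should also note that the positivity of $\nu_p(\eta \equiv 1 \text{ on } [0,m-1])$, which makes long runs of $1$'s typical under $\nu_p$, relies on positive association of $\nu_p$ (which holds since $\nu_p$ is the increasing limit of the positively associated laws of $\eta_n^{\underline 1}$); alternatively, one can bypass long runs of $1$'s entirely by observing that $\hat\eta_n^{[0,k]}$ attains positive density in a growing cone while $\mathcal{A}'$ is an independent positive-density set, so the two intersect with conditional probability tending to one. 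Finally, in Step~3 it is worth making explicit that Step~1's derivation only used reflection- and translation-invariance of $\Psi$, properties which $\delta_{\underline{1}}$ shares, so substituting $\delta_{\underline{1}}$ is legitimate. With these clarifications the proof is sound.
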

In other words, the law of $\hat{\eta}^{\mathcal{A}'}_n$ and the law of $\hat{\eta}^{\mathbb{Z}}_n$ converge weakly to the same limit $\nu_p$ as $n \to \infty$.
We omit the details of the proof, which can be derived from the fact that $\eta'$ has positive density using the results of~\cite{D84}.

We now introduce yet another process~$(\doublehat{\eta}_n)_{n \geq 0}$ by taking~$\doublehat{\eta}_0 \in \{0,1\}^\Z$ arbitrarily and setting
\begin{align*}
&\doublehat{\eta}_1(z) = Z(z,1)\cdot \max\left(Z(z,0)\cdot \doublehat{\eta}_0(z),\;Z(z-1,0) \cdot \doublehat{\eta}_0(z-1)\right),\\[.2cm]
&\doublehat{\eta}_{n+1}(z) = Z(z,n+1) \cdot \max\left(\doublehat{\eta}_n(z),\; \doublehat{\eta}_n(z-1)\right),\quad n \geq 1.
\end{align*}
That is,~$(\doublehat{\eta}_n)$ is defined in the same way as~$(\hat{\eta}_n)$, except that the direction of propagation is inverted. In particular,~$(\doublehat{\eta}_n)$ satisfies a similar property as~\eqref{eq:or_paths}, except that the percolation paths in the definition of~`$\rightsquigarrow$' should be replaced by paths that move to the right rather than to the left. With this observation at hand, one easily verifies the following duality relation.
\begin{claim}\label{cl:or_2}
	For any~$\bar{\eta},\bar{\bar{\eta}} \in \{0,1\}^\Z$ and~$n \geq 1$, we have
	$$P_p(\, \hat{\eta}_n \cap \bar{\bar{\eta}} = \varnothing \mid \hat{\eta}_0 = \bar{\eta}\, ) =
 P_p(\, \doublehat{\eta}_n \cap \bar{\eta} = \varnothing \mid \doublehat{\eta}_0 = \bar{\bar{\eta}} \, ).$$
\end{claim}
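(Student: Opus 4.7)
The plan is to give an explicit percolation-path representation for $\doublehat\eta_n$ mirroring \eqref{eq:or_paths} for $\hat\eta_n$, and then match the two sides of the identity pathwise via time-reversal of the i.i.d.\ field $Z(\cdot,\cdot)$. Since the underlying Bernoulli marks are i.i.d.\ in time, reversing time transforms one process into (the law of) the other with the direction of propagation flipped, and the duality reads off immediately.

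First I would repeat, verbatim, the derivation leading from the one-step rules \eqref{hat_eta_first_step}--\eqref{hat_eta_other_steps} to the path formula \eqref{eq:or_paths}, but with the roles of ``$z$'' and ``$z+1$'' in those displays replaced by ``$z$'' and ``$z-1$''. This yields, for all $n\geq 1$ and $z\in\Z$,
\begin{equation*}
\doublehat\eta_n(z) = \mathds{1}\bigl[\exists\, z' : \doublehat\eta_0(z')=1 \text{ and } (z',0) \rightsquigarrow_{+} (z,n)\bigr],
\end{equation*}
where $(z',0)\rightsquigarrow_{+}(z,n)$ means that there exists a sequence $z'=z_0,z_1,\ldots,z_n=z$ with $z_{i+1}-z_i\in\{0,+1\}$ for $0\leq i<n$ and $Z(z_i,i)=1$ for $0\leq i\leq n$. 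I will call such a sequence a \emph{right-path}, as opposed to the \emph{left-paths} of Definition \ref{def_active_path_op}.

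Next I would introduce the time-reversed field $\tilde Z(w,k):=Z(w,n-k)$ for $w\in\Z$ and $0\leq k\leq n$. Because the family $\{Z(z,k)\}$ is i.i.d.\ Bernoulli($p$), the restrictions of $Z$ and $\tilde Z$ to $\Z\times\{0,\ldots,n\}$ are equal in joint law. The key pathwise step is the substitution $w_j:=z_{n-j}$, which furnishes a bijection between $Z$-left-paths from $(z',0)$ to $(z,n)$ and $\tilde Z$-right-paths from $(z,0)$ to $(z',n)$: one checks $w_{j+1}-w_j = z_{n-j-1}-z_{n-j}\in\{0,+1\}$ and $\tilde Z(w_j,j)=Z(z_{n-j},n-j)=1$. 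Note this step uses crucially that Definition \ref{def_active_path_op} requires the Bernoulli marks to be on at \emph{both} endpoints $i=0$ and $i=n$ of the path, so the two endpoints are symmetric and survive the reversal cleanly.

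Finally I would conclude as follows. By \eqref{eq:or_paths}, the event $\{\hat\eta_n\cap\bar{\bar\eta}=\varnothing\}$ under $\hat\eta_0=\bar\eta$ is the $Z$-measurable event that no pair $(z',z)$ with $\bar\eta(z')=\bar{\bar\eta}(z)=1$ is joined by a $Z$-left-path from $(z',0)$ to $(z,n)$. The bijection of the previous paragraph identifies this, as a subset of the sample space, with the $\tilde Z$-measurable event that no pair $(z,z')$ with $\bar{\bar\eta}(z)=\bar\eta(z')=1$ is joined by a $\tilde Z$-right-path from $(z,0)$ to $(z',n)$; by the first step (applied with $\tilde Z$ in place of $Z$), this is exactly $\{\doublehat\eta_n\cap\bar\eta=\varnothing\}$ under $\doublehat\eta_0=\bar{\bar\eta}$ and computed using $\tilde Z$. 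Since $Z$ and $\tilde Z$ have the same distribution on $\Z\times\{0,\ldots,n\}$, the two events have the same probability, which is the claim. The argument is essentially the classical oriented-percolation duality, so there is no serious obstacle; the only subtle point is the symmetric treatment of the endpoint Bernoulli marks, which is guaranteed by Definition \ref{def_active_path_op}.
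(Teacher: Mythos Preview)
Your proof is correct and follows exactly the route the paper indicates: the paper merely states that $(\doublehat{\eta}_n)$ satisfies the analogue of \eqref{eq:or_paths} with right-moving paths and that ``one easily verifies'' the duality from there, and you have carried out precisely this verification via the time-reversal bijection $w_j=z_{n-j}$, $\tilde Z(\cdot,k)=Z(\cdot,n-k)$. The only point worth noting is that in the paper the path formula \eqref{eq:or_paths} is the \emph{definition} of $\hat\eta_n$ and the one-step rules \eqref{hat_eta_first_step}--\eqref{hat_eta_other_steps} are derived from it, whereas $\doublehat\eta_n$ is defined by its one-step rules; so strictly speaking you are deriving the right-path formula from the dynamics of $\doublehat\eta$, not ``repeating verbatim'' the paper's derivation in the other direction---but the equivalence is trivial either way.
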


\begin{proof}[Proof of Lemma~\ref{lem:or_perc1}]
	For any~$k,n \in \mathbb{N}$ we have
	\begin{equation*}
P_{p,\Psi}\left( \eta_n^{[0,k]} \cap \eta'=\varnothing  \right)=
P_{p,\Psi}\left( \hat{\eta}_{n-1}^{[-1,k]} \cap \eta'=\varnothing  \right)=
P_{p,\Psi}\left( \doublehat{\eta}_{n-1}^{\mathcal{A}'} \cap [-1,k]=\varnothing  \right),
\end{equation*}
	where the first equality follows from Claim~\ref{cl:or_1} and the second from Claim~\ref{cl:or_2}. Next, by translation invariance and symmetry considerations, the right-hand side above is equal to $P_{p,\Psi}\left( \hat{\eta}_{n-1}^{\mathcal{A}'} \cap [0,k+1]=\varnothing  \right)$.
		Taking~$n \to \infty$, Lemma~\ref{lem:or_perc1}  follows using Lemma~\ref{lem:durr_or_perc}.
	
\end{proof}

\medskip

 {\bf Acknowledgements:}
 The work of B.~R\'ath is partially supported by
Postdoctoral Fellowship NKFI-PD-121165 and grant NKFI-FK-123962 of NKFI
(National Research, Development and Innovation Office), the Bolyai Research
Scholarship of the Hungarian Academy of Sciences and the \'UNKP-19-4-BME-85
New National Excellence Program of the Ministry for Innovation and Technology.  The authors would like to thank Thomas Beekenkamp and Markus Heydenreich for helpful discussions, Jan Swart for suggesting the reference~\cite{SV86} and Stein Andreas Bethuelsen for pointing out the argument presented in Remark \ref{remark_stein_ideas}.

{\footnotesize

\end{document}